\let\underbrace\LaTeXunderbrace
\DeclareMathAlphabet{\matheulerfrak}{U}{euf}{m}{n}
\DeclareMathAlphabet{\mathpgoth}{OT1}{pgoth}{m}{n}
\tikzstyle{vecArrow} = [thick, decoration={markings,mark=at position
\tikzstyle{innerWhite} = [semithick, white,line width=1.4pt, shorten >= 4.5pt]
\tikzstyle{vecEq} = [thick,
\theoremstyle{definition}
\newtheorem{definition}{Definition}[section]
\newtheorem{assumption}[definition]{Assumption}
\newtheorem{theorem}[definition]{Theorem}
\newtheorem{example}[definition]{Example}
\newtheorem{proposition}[definition]{Proposition}
\newtheorem{corollary}[definition]{Corollary}
\newtheorem{lemma}[definition]{Lemma}
\newtheorem{conjecture}[definition]{Conjecture}
\newtheorem{remark}[definition]{Remark}
\newtheorem{defprop}[definition]{Definition/Proposition}
\newtheorem{thm}{Theorem}
\newcommand{\sE}{\mathsf{E}}
\newcommand{\Vect}{\mathsf{Vect}}
\newcommand{\Vectb}{\textup{Vect}}
\newcommand{\BA}{{\mathbb{A}}}
\newcommand{\BC}{{\mathbb{C}}}
\newcommand{\BG}{{\mathbb{G}}}
\newcommand{\BK}{{\mathbb{K}}}
\newcommand{\BM}{{\mathbb{M}}}
\newcommand{\BN}{{\mathbb{N}}}
\newcommand{\BP}{{\mathbb{P}}}
\newcommand{\BQ}{{\mathbb{Q}}}
\newcommand{\BR}{{\mathbb{R}}}
\newcommand{\BT}{{\mathbb{T}}}
\newcommand{\BZ}{{\mathbb{Z}}}
\newcommand{\CA}{{\mathcal A}}
\newcommand{\CB}{{\mathcal B}}
\newcommand{\CC}{{\mathcal C}}
\newcommand{\CD}{{\mathcal D}}
\newcommand{\CE}{{\mathcal E}}
\newcommand{\CF}{{\mathcal F}}
\newcommand{\CM}{{\mathcal M}}
\newcommand{\CO}{{\mathcal O}}
\newcommand{\CS}{{\mathcal S}}
\newcommand{\CU}{{\mathcal U}}
\newcommand{\CV}{{\mathcal V}}
\newcommand{\CX}{{\mathcal X}}
\newcommand{\CZ}{{\mathcal Z}}
\newcommand{\1}{\mathbf 1}
\newcommand{\bfT}{{\bf T}}
\newcommand{\myfatslash}{\mathbin{\mkern-6mu\fatslash}}
\newcommand{\FP}{{\mathfrak{P}}}
\newcommand{\Fg}{{\mathfrak{g}}}
\newcommand{\FM}{{\mathfrak{M}}}
\newcommand{\FX}{{\mathfrak{X}}}
\newcommand{\FZ}{{\mathfrak{Z}}}
\newcommand{\FS}{{\mathfrak{S}}}
\newcommand{\FY}{\mathfrak{Z}}
\newcommand{\Ft}{{\mathfrak{t}}}
\newcommand{\fsl}{{\mathfrak{sl}}}
\DeclareMathOperator{\Aut}{Aut}
\DeclareMathOperator{\Sym}{Sym}
\DeclareMathOperator{\id}{id}
\DeclareMathOperator{\im}{im}
\newcommand{\Ext}{\mathcal{E}\text{xt}}
\newcommand{\lppbig}{\big(\hspace{-0.2em}\big(}
\newcommand{\rppbig}{\big)\hspace{-0.2em}\big)}
\newcommand{\lpp}{(\hspace{-0.15em}(}
\newcommand{\rpp}{)\hspace{-0.15em})}
\newcommand{\GL}{{\rm GL}}
\newcommand{\PGL}{{\rm PGL}}
\newcommand{\pt}{{\mathsf{pt}}}
\newcommand{\congpf}{\xymatrix@1@=15pt{\ar[r]^-\sim&}}
\newcommand{\sgn}{\mathrm{sgn}}
\newcommand{\ch}{\mathrm{ch}}
\newcommand{\td}{\mathrm{td}}
\newcommand{\Coh}{\mathrm{Coh}}
\newcommand{\QCoh}{\mathrm{QCoh}}
\newcommand{\Perf}{\mathsf{Perf}}
\newcommand{\Rep}{\mathrm{Rep}}
\newcommand{\Hom}{\mathrm{Hom}}
\newcommand{\DT}{\mathrm{DT}}
\newcommand{\vir}{\mathrm{vir}}
\newcommand{\sym}{\text{sym}}
\newcommand{\RHom}{\mathrm{RHom}}
\newcommand{\rk}{\mathrm{rk}}
\newcommand{\rig}{\mathrm{rig}}
\newcommand{\reg}{\mathrm{reg}}
\newcommand{\gr}{\mathrm{gr}}
\newcommand{\cl}{\mathrm{cl}}
\newcommand{\pe}{\mathrm{pe}}
\renewcommand{\ev}{\mathrm{ev}}
\newcommand{\HN}{\mathrm{HN}}
\newcommand{\isoto}{\xlongrightarrow{\raisebox{-0.3ex}[0pt][0pt]{\(\scriptstyle\sim\)}}}
\newcommand{\Grad}{\mathrm{Grad}}
\newcommand{\Filt}{\mathrm{Filt}}
\newcommand{\vcomment}[1]{{\color{red}V: #1}}
	\def\MR#1{}
\renewcommand{\Ext}{\textup{Ext}}
\title[$K$-theoretic wall-crossing]{Generalized $K$-theoretic invariants and wall-crossing via non-abelian localization}
\date{\today}
\author[I. Karpov]{Ivan Karpov}
\address{Massachusetts Institute of Technology, Department of Mathematics}
\email{karpov57@mit.edu}
\author[M. Moreira]{Miguel Moreira}
\address{Massachusetts Institute of Technology, Department of Mathematics}
\email{miguel@mit.edu}
\begin{document}


\begin{abstract}
Given an abelian category and a stability condition satisfying appropriate conditions, we define generalized $K$-theoretic invariants and prove that they satisfy wall-crossing formulas. For this, we introduce a new associative algebra structure on the $K$-homology of the stack of objects of an abelian category, which we call the $K$-Hall algebra. We first define $\delta$-invariants directly coming from the stack of semistable objects and use the $K$-Hall algebra to take a formal logarithm and construct $\varepsilon$-invariants. We prove that these satisfy appropriate wall-crossing formulas using the non-abelian localization theorem. Based on work of Joyce in the cohomological setting, Liu had previously defined similar invariants assuming the existence of a framing functor; we show that when their definition of invariants makes sense it agrees with ours. Our results extend Joyce--Liu wall-crossing to non-standard hearts of $D^b(X)$, for which framing functors are not known to exist.
\end{abstract}

\maketitle

\setcounter{tocdepth}{1} 

\tableofcontents

\section{Introduction}
\label{sec: intro}

A sizable portion of enumerative geometry concerns understanding invariants arising from moduli spaces of objects in certain abelian categories $\CA$, such as the abelian category of sheaves on a smooth projective variety $X$, of representations of a quiver $Q$, or of objects in non-standard hearts of $D^b(X)$. To form good moduli spaces we require a stability condition~$\mu$ and look at the moduli space (or moduli stack) $M_\alpha^\mu$ of $\mu$-semistable objects on $\CA$ of topological type $\alpha\in C(\CA)$ -- topological type can mean, for example, the Chern character of a sheaf, or the dimension vector of a representation of a quiver. The prototypical example of a stability condition is the slope $\mu(E)=\deg(E)/\rk(E)$ of a vector bundle $E$ on a curve. There are many different invariants we may attach to the moduli space $M_\alpha^\mu$, such as:
\begin{enumerate}
    \item Motivic invariants, for example the Euler characteristic $e(M_\alpha^\mu)$ or the virtual Poincaré polynomial. 
    \item Cohomological invariants, i.e. (virtual) intersection numbers of tautological classes $D$:
    \[\int_{[M_\alpha^\mu]^\vir}D\in \BQ\,.\]
    \item $K$-theoretic invariants, i.e. (virtual) Euler characteristic of tautological $K$-theory classes $V$:
    \[\chi(M_\alpha^\mu, \CO^\vir\otimes V)\in \BQ\,.\]
\end{enumerate}

In either case, a fundamental problem is to understand how the invariants depend on the choice of stability condition $\mu$. A closely related question is to define such invariants in cases where there are strictly semistable objects. 

In the motivic setting, a very successful theory of wall-crossing has been around for 20 years, developed by Joyce \cite{JO06I, JO06II, JO06III, JO06IV}, Joyce--Song \cite{JS12} and Kontsevich--Soibelman \cite{KS08}. It plays a key role, for example, in the proof of the DT/PT correspondence \cite{todaDTPT, bridgelandDTPT}, the PT rationality conjecture \cite{todaPTrationality}, and in many other applications.

A more recent development is a proposal of a wall-crossing theory for cohomological invariants \cite{Jo17, joyce, GJT}. Liu adapted the ideas from the cohomological setting to the $K$-theory setting in \cite{Liu}. The cohomological and $K$-theoretic settings share some similarities with the motivic one, most notably the existence of a Lie algebra that is used to write down the wall-crossing formulas; indeed, the wall-crossing formulas in all 3 contexts look exactly the same, but in 3 different Lie algebras. 

However, there are also many differences. In the motivic wall-crossing setting, the invariants in the presence of strictly semistable objects can be defined from the stacks of semistable objects, which have well-defined virtual Poincaré polynomials, for example. On the other hand, there is no reasonable way to take intersection numbers on a stack, so Joyce's approach in the cohomological case is quite different. He instead uses the additional data of a framing functor to construct auxiliary moduli spaces of pairs -- which do not have strictly semistable objects --, and uses those to define ``generalized cohomological invariants''.

There are natural framing functors for sheaves or for representations of quivers. However, if we are interested in counting (weak) Bridgeland stable objects -- which is necessary in some applications, such as the PT rationality \cite{todaPTrationality, bridgelandDTPT} or the rank $r$ from rank $0/1$ theorem \cite{FTr0, FTr1} --, it is unclear how to construct framing functors, and even if one should expect them to exist. For this reason, wall-crossing formulas in such contexts were so far only conjectural.

In this paper, we develop a completely new approach to Joyce--Liu style generalized $K$-theoretic invariants and wall-crossing formulas. Our approach does not require a framing functor, and therefore we are able to prove wall-crossing formulas in greater generality, including for example (weak) Bridgeland stability conditions. What we do is much closer in spirit to the motivic approach, where invariants are extracted directly from Artin stacks of semistable objects. 

We expect that this increase in scope will have interesting applications soon. Our proof is also shorter and, arguably, conceptually cleaner -- although it requires some machinery, most notably the virtual non-abelian localization theorem of Halpern-Leistner \cite{HLremarks} -- so we hope this will make the theory more accessible and easier to extend to new contexts.

\subsection{Summary of results} Throughout, $\CA$ will be an abelian category, $\mu$ a stability condition and $\alpha$ a topological type of objects in $\CA$. We will consider the (derived) stack $\FM=\FM_\CA$ of all objects in $\CA$ and the (open) substack $\FM_\alpha^\mu\subseteq \FM$ of $\mu$-semistable objects in class $\alpha$. We dedicate Section \ref{sec: abeliancats} to explaining the technical assumptions we make on this data. In particular, we will require that $\FM_\alpha^\mu$ is finite type, quasi-smooth, and has a proper good moduli space in the sense of \cite{alper}. Often, we will rigidify these stacks to $\FM^\rig, \FM_\alpha^{\mu, \rig}$, which means removing the copy of $\BG_m$ from the automorphisms of each object. When $\mu$-semistable objects in class $\alpha$ are automatically $\mu$-stable, the rigidification $\FM_\alpha^{\mu, \rig}$ is a proper good moduli space.

\subsubsection{$\delta$-invariants}
A feature of $K$-theory which plays an essential role in this paper is that we can take Euler characteristics of perfect complexes on stacks with appropriate assumptions. In particular,
\[\chi(\FM_\alpha^{\mu, \rig}, V)\coloneqq \sum_{n\in \BZ}(-1)^n \dim H^n(\FM_\alpha^{\mu, \rig}, V)\in \BZ\]
is well-defined for any perfect complex $V$, meaning that the cohomology groups are finite dimensional and only finitely many are non-trivial. Note that the stacks considered have derived structures, so this is really a virtual Euler characteristic. Thus, the stack $\FM_\alpha^\mu$ defines a functional 
\[\delta_\alpha^\mu\colon K^\ast(\FM^\rig)\rightarrow K^\ast(\FM_\alpha^{\mu, \rig})\xrightarrow{\chi} \BZ\,,\]
where $K^\ast(-)$ denotes $K$-theory of perfect complexes and the first map is restriction along the open inclusion $\FM_\alpha^{\mu, \rig}\subseteq \FM^\rig$. For technical reasons, it is important that this functional can be upgraded to an element of $K$-homology $\delta_\alpha^\mu\in K_\ast(\FM^\rig)$, see Definition \ref{def: K-hom} and Theorem \ref{thm: deltainvariants}. The necessary material on $K$-theory and $K$-homology is covered in Section \ref{sec: Khomology}.

\subsubsection{$K$-Hall algebra and $\varepsilon$-invariants}

While the $\delta$-invariants carry information concerning $K$-theoretic invariants of the stack of semistables, they are not the ``correct'' ones. For example, they do not agree with the ones defined in \cite{Liu}. Indeed, it turns out that our $\delta$-invariants do not even live in the version of $K$-homology that Liu uses (cf. Definition \ref{def: regularKhom}, Example \ref{ex: deltanotregular}). A similar phenomena happens in the motivic setting, and we use the same solution: define $\varepsilon$-invariants from $\delta$-invariants by taking a ``logarithm''. We survey the motivic setting, and the no-pole theorem, in Section \ref{subsec: motivic}, which served as inspiration for our approach. 

It turns out that we can define a product on $K_\ast(\FM^\rig)$, which should be regarded as the analog of the multiplication in the motivic Hall algebra. 

\begin{thm}[{=Theorem \ref{thm: associativity}}]
Definition \ref{def: KHall} makes
\[\BK(\CA)\coloneqq K_\ast(\FM^\rig_\CA)_\BQ\]
an associative algebra.
\end{thm}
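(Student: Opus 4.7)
My plan is to follow the classical strategy for proving associativity of Hall algebras, adapted to $K$-homology. The product on $\BK(\CA)$ is defined (Definition \ref{def: KHall}) through the correspondence
\[
\FM^\rig \times \FM^\rig \xleftarrow{(p_1, p_2)} \FM^{(2), \rig} \xrightarrow{m} \FM^\rig,
\]
where $\FM^{(2)}$ is the (derived) stack of short exact sequences $0 \to A \to B \to C \to 0$ in $\CA$, the left map records $(A, C)$, and $m$ sends the sequence to $B$. Thus $a \ast b = m_\ast (p_1, p_2)^!(a \boxtimes b)$ for the appropriate pushforward and (shriek) pullback developed in Section \ref{sec: Khomology}.

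To compare $(a \ast b) \ast c$ with $a \ast (b \ast c)$, I would introduce the derived stack $\FM^{(3), \rig}$ parametrizing two-step filtrations $0 \subseteq A_1 \subseteq A_2 \subseteq A_3$. The two associations correspond, respectively, to fiber-producting two copies of $\FM^{(2)}$ along the identifications \emph{middle $=$ sub} and \emph{middle $=$ quotient}. Both pairings produce $\FM^{(3)}$: recording an extension $0 \to A_1 \to A_2 \to A_2/A_1 \to 0$ followed by one $0 \to A_2 \to A_3 \to A_3/A_2 \to 0$ is the same datum as recording an extension $0 \to A_1 \to A_3 \to A_3/A_1 \to 0$ together with $0 \to A_2/A_1 \to A_3/A_1 \to A_3/A_2 \to 0$. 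Geometrically, this is the assertion that the two squares
\[
\begin{tikzcd}
\FM^{(3), \rig} \ar[r] \ar[d] & \FM^{(2), \rig} \ar[d, "p_1"] \\
\FM^{(2), \rig} \ar[r, "m"] & \FM^\rig
\end{tikzcd}
\qquad\text{and}\qquad
\begin{tikzcd}
\FM^{(3), \rig} \ar[r] \ar[d] & \FM^{(2), \rig} \ar[d, "p_2"] \\
\FM^{(2), \rig} \ar[r, "m"] & \FM^\rig
\end{tikzcd}
\]
are Cartesian in the derived sense. Iterating base change, both $(a \ast b) \ast c$ and $a \ast (b \ast c)$ reduce to the single correspondence
\[
\FM^\rig \times \FM^\rig \times \FM^\rig \longleftarrow \FM^{(3), \rig} \longrightarrow \FM^\rig,
\]
with left leg the associated graded $(A_1, A_2/A_1, A_3/A_2)$ and right leg $A_3$, yielding the desired equality.

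The main technical obstacle is the justification of derived base change in the $K$-homology of Section \ref{sec: Khomology}: I need the identity $m_\ast \tilde p^! = p_i^! \tilde m_\ast$ to hold for each of the two Cartesian squares above. This amounts to verifying that (i) the maps $m, p_1, p_2$ arising from $\FM^{(2)}$ and $\FM^{(3)}$ satisfy the hypotheses under which pushforward $(-)_\ast$ and shriek pullback $(-)^!$ are defined -- in particular, finite type together with the correct quasi-smoothness/representability after rigidification, and the relevant properness condition for $m$ relative to fixed topological types of sub and quotient; (ii) the squares above are genuinely Cartesian at the derived level, a delicate point because the underlying classical squares generically commute only up to a $\BG_m$-gerbe, becoming strict only after rigidification and with the correct derived enhancement on $\FM^{(2)}$ and $\FM^{(3)}$; and (iii) the rigidification is compatible with all of the Hall-algebra diagrams, which is standard since scalar automorphisms of an extension restrict to scalars on its sub and quotient.
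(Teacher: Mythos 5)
There is a genuine gap: you are proving associativity of a different product from the one in Definition \ref{def: KHall}. The $K$-Hall product in the paper is \emph{not} defined by a push--pull correspondence through a stack of short exact sequences; it is
\[
\phi\ast\psi=\Sigma^\rig_\ast\big([\Gamma_-]\cap \pi^\ast(\phi\boxtimes\psi)\big),
\]
where $\Sigma$ is the \emph{direct sum} map, $\pi\colon(\FM\times\FM)^\rig\to\FM^\rig\times\FM^\rig$ is the residual rigidification gerbe, and $\Gamma_-=\Lambda_{-1}(\Ext_{21}^\vee+\Ext_{12})\otimes\det(\Ext_{12})^\vee[\rk_{12}]$ is a quasi-coherent class playing the role of an inverse Euler class of the normal complex of $\Sigma$, made sense of through its $B\BG_m$-weight truncations (cf. Lemma \ref{lem: astwelldefined}). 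The paper explicitly warns (remark after the introduction of Section \ref{sec: Khallinvariants}) that this algebra is not the extension-correspondence $K$-theoretic Hall algebra of \cite{tudorKHall}: that construction lives on $G(\FM)$, needs quasi-smoothness or a $(-1)$-shifted symplectic structure, and requires exactly the shriek pullbacks, properness of the ``total object'' map on fixed components, and derived Cartesianness of filtration squares that your plan lists as its technical obstacles. None of that machinery is available (or needed) in the operational $K$-homology of Section \ref{sec: Khomology}, and establishing it would amount to building a different algebra, not verifying Theorem \ref{thm: associativity}.

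The actual proof is a direct computation with the definition: one writes both bracketings of $\varphi_\alpha\ast\varphi_\beta\ast\varphi_\gamma$ through the triple rigidification $(\FM_\alpha\times\FM_\beta\times\FM_\gamma)^\rig$, using base change for the gerbes $\pi$ and the projection formula, and then uses bilinearity of the $\Ext$ complex with respect to $\Sigma$ (e.g. $\Sigma_{\beta,\gamma}^\ast\Ext_{\alpha,\beta+\gamma}+\Ext_{\beta,\gamma}=\Ext_{\alpha,\beta}+\Ext_{\alpha,\gamma}+\Ext_{\beta,\gamma}$) to show that the product of the two $\Gamma_-$ factors for either bracketing equals the single symmetric class
\[
\Lambda_{-1}\big(\Ext_<+\Ext_>^\vee\big)\otimes\det(\Ext_<)^\vee[\rk\,\Ext_<]
\]
on the triple product, whence both bracketings coincide (and one gets the $n$-fold product formula of Proposition \ref{prop: nproduct} for free). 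If you want to salvage your write-up, the correct analogue of your ``two Cartesian squares'' is the compatibility $\Sigma_{\alpha,\beta+\gamma}\circ\Sigma_{\beta,\gamma}=\Sigma_{\alpha+\beta,\gamma}\circ\Sigma_{\alpha,\beta}$ for direct sums together with the corresponding factorizations of the rigidification maps; the heart of the argument, which is entirely missing from your proposal, is the additivity of $\Ext_{12}$, $\Ext_{21}$ under $\Sigma$ and the resulting symmetry of the $\Gamma_-$ insertions.
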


We use this product structure to take a formal logarithm and define
\begin{equation*}
      \label{eq: epsilonfromdeltaintro}\varepsilon_\alpha^\mu\coloneqq \sum_{k\geq 1}\frac{(-1)^{k-1}}{k}\sum_{\substack{\alpha_1+\ldots+\alpha_k=\alpha\\\mu(\alpha_i)=\mu(\alpha)}}\delta^\mu_{\alpha_1}\ast\ldots \ast\delta^\mu_{\alpha_{k}}\,.
      \end{equation*}
This sum is finite by our assumptions on the stability condition. The relation can be inverted to express $\delta$-invariants in terms of $\varepsilon$-invariants:
\begin{equation*}
  \delta_\alpha^\mu=\sum_{k\geq 1}\frac{1}{k!}\sum_{\substack{\alpha_1+\ldots+\alpha_k=\alpha\\\mu(\alpha_i)=\mu(\alpha)}}\varepsilon^\mu_{\alpha_1}\ast\ldots \ast\varepsilon^\mu_{\alpha_{k}}\,.\end{equation*}

\subsubsection{Wall-crossing formula}
\label{subsubsec: wcintro}

The $\delta$ and $\varepsilon$ invariants satisfy wall-crossing formulas that can be expressed in terms of the associative algebra structure on $\BK(\CA)$. These wall-crossing formulas are obtained as a consequence of a general theorem by Halpern-Leistner, which expresses Euler characteristics of perfect complexes in terms of the centers of a $\Theta$-stratification -- a notion that generalizes Harder--Narasimhan (HN) stratifications.

Every object $E$ of $\CA$ admits a canonical $\mu$-HN filtration 
\[0=E_0\subsetneq E_1 \subsetneq \ldots \subsetneq E_n=E\]
such that the slopes of the successive quotients are increasing. When the stack $\FM$, without any stability, is itself finite type and quasi-smooth, the wall-crossing formula can be expressed as the statement that the element
\[\sum_{\substack{\alpha=\alpha_1+\ldots+\alpha_n\\\mu(\alpha_1)>\ldots>\mu(\alpha_n)}}\delta_{\alpha_1}^\mu\ast\ldots\ast \delta_{\alpha_n}^\mu\]
in $\BK(\CA)$ does not depend on the stability condition $\mu$, and indeed it is equal to the $\delta$-invariant defined with the trivial stability condition. This is the case only in very simple examples, such as representations of quivers.

In general, we use the $\mu$-HN stratification of the stack $\FM_\alpha^{\mu_0}$ to prove a wall-crossing formula between $\mu$ and $\mu_0$, where $\mu_0$ is a stability condition on a wall and $\mu$ is in a chamber adjacent to $\mu$. 

\begin{example}
Let $\alpha$ be a topological type and $\mu_0$ be on a simple wall determined by a partition $\alpha=\alpha_1+\alpha_2$, and $\mu_-, \mu_+$ be stability conditions on the two adjacent chambers, so that
\[\mu_0(\alpha_1)=\mu_0(\alpha_2)\,,\quad  \mu_-(\alpha_1)>\mu_-(\alpha_2)\,,\quad \mu_+(\alpha_1)<\mu_+(\alpha_2)\,.\]
A $\mu_0$-semistable object $E$ has two possible types of $\mu_-$-HN filtrations: either $E$ is $\mu_-$-semistable itself, or it is an extension of the form
\[0\to E_1\to E\to E_2\to 0\]
where $E_1, E_2$ are semistable\footnote{It is part of the ``simple wall'' assumption that $\mu, \mu_-, \mu_+$ stabilities are equivalent for objects in class $\alpha_1, \alpha_2$, and moreover there are no strictly semistables in those classes. Hence \[\varepsilon_{\alpha_i}^{\mu_0}=\varepsilon_{\alpha_i}^{\mu_-}=\varepsilon_{\alpha_i}^{\mu_+}=\delta_{\alpha_i}^{\mu_0}=\delta_{\alpha_i}^{\mu_-}=\delta_{\alpha_i}^{\mu_+}\,.\]} and have topological types $\alpha_1, \alpha_2$, respectively. Thus, the stack $\FM^{\mu_0}_\alpha$ admits a $\Theta$-stratification of the form 
\[\FM^{\mu_0}_\alpha=\FM^{\mu_-}_\alpha\sqcup \FS_{\alpha_1, \alpha_2}\]
where $\FS_{\alpha_1, \alpha_2}$ is the stack of extensions as above. Non-abelian localization then says that
\[\chi(\FM^{\mu_0}_\alpha, F)=\chi(\FM^{\mu_-}_\alpha, F)+\chi(\FM^{\mu_-}_{\alpha_1}\times \FM^{\mu_-}_{\alpha_2}, F\otimes E_{\alpha_1, \alpha_2})\]
for any perfect complex $F$ on $\FM_\alpha$, where for ease of notation we omit all the restrictions of $F$. The complex $E_{\alpha_1, \alpha_2}$ is, roughly speaking, the inverse Euler class of the normal bundle to the direct sum map $\FM_{\alpha_1}\times \FM_{\alpha_2}\to \FM_\alpha$. 

The product on $\BK(\CA)$ is cooked up so that this formula can be expressed as
\[\delta_{\alpha}^{\mu_0}=\delta_{\alpha}^{\mu_-}+\delta_{\alpha_1}^{\mu_-}\ast \delta_{\alpha_2}^{\mu_-}\,.\]
This is an instance of the dominant wall-crossing formula (Theorem \ref{thm: dominantwc}). Similarly, we have 
\[\delta_{\alpha}^{\mu_0}=\delta_{\alpha}^{\mu_+}+\delta_{\alpha_2}^{\mu_+}\ast \delta_{\alpha_1}^{\mu_+}\,.\]
If we convert from $\delta$-invariants to $\varepsilon$-invariants using
\[\delta_\alpha^{\mu_\pm}=\varepsilon_\alpha^{\mu_\pm} \quad \textup{ and }\quad \delta_\alpha^{\mu_0}=\varepsilon_\alpha^{\mu_0}+\frac{1}{2}\varepsilon_{\alpha_1}^{\mu_0}\ast \varepsilon_{\alpha_2}^{\mu_0}+ \frac{1}{2}\varepsilon_{\alpha_2}^{\mu_0}\ast \varepsilon_{\alpha_1}^{\mu_0}\]
we obtain the identities
\[\varepsilon_{\alpha}^{\mu_-}+\frac{1}{2}\big[\varepsilon_{\alpha_1}^{\mu_-}, \varepsilon_{\alpha_2}^{\mu_-}\big]=\varepsilon^{\mu_0}_{\alpha}=\varepsilon_{\alpha}^{\mu_+}-\frac{1}{2}\big[\varepsilon_{\alpha_1}^{\mu_+}, \varepsilon_{\alpha_2}^{\mu_+}\big]\,,\]
where $[\phi,\psi]=\phi\ast\psi-\psi\ast \phi$ is the commutator in $\BK(\CA)$. It is a general feature that wall-crossing formulas between $\varepsilon$-invariants can be written using exclusively commutators.
\end{example}

\begin{thm}[=Theorem \ref{thm: generalwc}]\label{thm: B}
   If $\mu, \mu'$ can be connected by a continuous path crossing finitely many walls then we have the wall-crossing formulas
    \begin{align*}\delta_{\alpha}^{\mu'}&=\sum_{\alpha_1+\ldots+\alpha_l=\alpha}S(\alpha_1, \ldots, \alpha_n; \mu, \mu')\cdot \delta_{\alpha_1}^{\mu}\ast \ldots\ast \delta_{\alpha_n}^{\mu}\\
    \varepsilon_{\alpha}^{\mu'}
    &\nonumber=\sum_{\alpha_1+\ldots+\alpha_l=\alpha}\tilde U(\alpha_1, \ldots, \alpha_n; \mu, \mu')\cdot [[\ldots [\varepsilon_{\alpha_1}^{\mu}, \varepsilon_{\alpha_2}^\mu], \ldots, ],\varepsilon_{\alpha_n}^{\mu}]
    \end{align*}
    in $\BK(\CA)$, where $S(-), \tilde U(-)\in \BQ$ are explicit combinatorial coefficients (cf. \cite[Section 4]{JO06IV}).
\end{thm}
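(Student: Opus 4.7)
The plan is to reduce the general wall-crossing statement to an iterated application of the dominant wall-crossing formula (Theorem \ref{thm: dominantwc}) along a path connecting $\mu$ to $\mu'$. Having already established the dominant formula
\[\delta_\alpha^{\mu_0} \;=\; \sum_{\substack{\alpha_1+\ldots+\alpha_n=\alpha \\ \mu(\alpha_1) > \ldots > \mu(\alpha_n) \\ \mu_0(\alpha_i)=\mu_0(\alpha)}} \delta_{\alpha_1}^\mu \ast \ldots \ast \delta_{\alpha_n}^\mu\]
— which comes from applying the non-abelian localization theorem of Halpern-Leistner \cite{HLremarks} to the $\mu$-HN $\Theta$-stratification of $\FM_\alpha^{\mu_0}$ — the remaining task for Theorem~\ref{thm: B} is essentially combinatorial bookkeeping. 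I would fix a generic continuous path $\gamma$ from $\mu$ to $\mu'$ crossing walls at stabilities $\mu_0^{(1)},\ldots,\mu_0^{(N)}$ in order, note that $\delta$-invariants are locally constant within a chamber (since the stack $\FM_\alpha^\mu$ does not change), and then between consecutive chamber stabilities $\mu_{k-1},\mu_k$ apply the dominant formula on both sides of the wall $\mu_0^{(k)}$. Inverting one such expression and substituting into the other yields $\delta_\alpha^{\mu_k}$ as an explicit $\ast$-polynomial in the $\delta^{\mu_{k-1}}$'s.

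Iterating this across all $N$ walls and simplifying using the associativity of the $K$-Hall product (Theorem~\ref{thm: associativity}) produces the first wall-crossing identity, with the coefficient $S(\alpha_1,\ldots,\alpha_n;\mu,\mu')$ emerging as the same Joyce coefficient from \cite[Section 4]{JO06IV} — the recursive derivation of $S$ depends only on associativity and the shape of the single-wall formula, both of which match. For the $\varepsilon$-version, I would substitute the formal-logarithm expression
\[\varepsilon_\alpha^\nu \;=\; \sum_{k\geq 1}\frac{(-1)^{k-1}}{k}\sum_{\substack{\alpha_1+\ldots+\alpha_k=\alpha\\ \mu(\alpha_i)=\mu(\alpha)}}\delta_{\alpha_1}^\nu\ast\ldots\ast\delta_{\alpha_k}^\nu\]
together with its inverse into the $\delta$ wall-crossing formula, and then invoke Joyce's purely algebraic identity — valid in any associative $\BQ$-algebra — which rewrites the resulting sum of associative monomials in the $\varepsilon$'s as a $\BQ$-linear combination of iterated commutators with coefficients $\tilde U$. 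This step transfers from the motivic and cohomological settings without any modification, since it only uses the bracket $[\phi,\psi]=\phi\ast\psi-\psi\ast\phi$ on $\BK(\CA)$.

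The main obstacle lies outside this theorem, in the dominant wall-crossing formula itself: one must verify that the $\mu$-HN stratification of $\FM_\alpha^{\mu_0}$ is a $\Theta$-stratification satisfying the hypotheses of \cite{HLremarks} (finite length, quasi-smoothness of centers, positivity of $\Theta$-weights on the normal complex), and that the non-abelian localization integrand on each stratum $\FS_{\alpha_1,\ldots,\alpha_n}$ assembles precisely into the product $\delta_{\alpha_1}^\mu\ast\ldots\ast\delta_{\alpha_n}^\mu$ under the Definition~\ref{def: KHall} of the $K$-Hall product. The smaller technical point to check within the present theorem is that the sums appearing in the wall-crossing formulas are finite; this follows from the assumptions on $\CA$ and $\mu$ discussed in Section~\ref{sec: abeliancats}, which guarantee that only finitely many partitions of $\alpha$ contribute non-trivially to any given wall crossing.
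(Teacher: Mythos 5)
Your proposal is correct and follows essentially the same route as the paper: reduce to the dominant wall-crossing formula applied at each wall along the path (comparing the two adjacent chamber stabilities to the wall stability, where dominance holds), and then defer the assembly of the $S$-coefficients, the log/exp passage to $\varepsilon$-classes, and the rewriting in iterated commutators with $\tilde U$-coefficients to Joyce's combinatorial results in \cite{JO06IV} and \cite[Section 11]{joyce}. The paper likewise treats these steps as formal consequences of the dominant formula plus the cited combinatorics, so there is no substantive difference.
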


A particular setting where our theorem establishes a wall-crossing formula that was not previously proven is that of tilt stability:

\begin{corollary}
    If $X$ is a surface with nef anticanonical or a Fano 3-fold satisfying the BMT inequality then we have generalized $K$-theoretic invariants of the moduli of tilt semistable objects and these satisfy the wall-crossing formulas in Theorem~\ref{thm: B}.
\end{corollary}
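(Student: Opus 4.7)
The plan is to verify the standing hypotheses on $\CA$, $\mu$, and the moduli stacks laid out in Section~\ref{sec: abeliancats}, so that Theorem~\ref{thm: B} applies verbatim. In both cases, the relevant abelian category $\CA$ is a tilted heart in $D^b(X)$: a single tilt of $\Coh(X)$ for a surface, and the double tilt of Bayer--Macr\`i--Toda for a Fano threefold. The stability condition $\mu$ is the tilt slope $\mu_{\omega,B}$ depending on an ample class and a rational twist. I would first cite the work of Bridgeland, Arcara--Bertram, Bayer--Macr\`i, Piyaratne--Toda (and their extensions) to record that these hearts are Noetherian abelian categories in which HN filtrations exist, that semistability is an open condition, and that walls for $\mu$ are locally finite on each fixed numerical class; this handles the ``stability condition'' side of the assumptions and ensures any two stabilities are joined by a path crossing only finitely many walls.

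Next, I would verify the three stack-theoretic requirements: finite type, quasi-smoothness, and the existence of a proper good moduli space for $\FM_\alpha^\mu$. Finite type/boundedness follows in the surface case from Bogomolov-type inequalities, and in the threefold case it is precisely the content of the BMT inequality (which yields the support property, hence boundedness in any class). Quasi-smoothness of the derived stack $\FM_\CA$ is checked by computing the tangent complex $\R\Hom(E,E)[1]$ for $E$ in the heart: using that $X$ has dimension $\leq 3$ together with Serre duality and the explicit form of the tilted heart(s), one shows that $\Ext^{\geq 3}(E,E)=0$ for $E\in \CA$, giving the required amplitude. The existence of a proper good moduli space then follows from the general existence theorem of Alper--Halpern-Leistner--Heinloth, since $\FM_\alpha^\mu$ satisfies S-completeness and $\Theta$-reductivity (as a substack cut out by a numerical stability).

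With these inputs assembled, Theorem~\ref{thm: B} produces the $\delta$- and $\varepsilon$-invariants in $\BK(\CA)$ together with the wall-crossing formulas between any two tilt stabilities. In particular, no framing functor is needed — which is the whole point, since non-standard hearts of $D^b(X)$ are not known to admit one.

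The main obstacle I expect is the verification of the good moduli space hypotheses for the tilt heart on a Fano threefold. Here one must combine the BMT inequality (for boundedness and support property) with the more recent intrinsic existence theorems for good moduli of Bridgeland semistables, and then check properness via the valuative criterion using the existence of Jordan--H\"older filtrations in the tilt heart. The quasi-smoothness check in the threefold case also requires some care: the vanishing of $\Ext^{\geq 3}$ relies on the particular structure of the double tilt and Serre duality on a threefold, rather than on a Calabi--Yau structure. Once these are cleanly recorded from the literature, however, the corollary is an immediate application of Theorem~\ref{thm: B}.
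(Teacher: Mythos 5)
Your overall strategy (check the standing hypotheses, then quote Theorem~\ref{thm: B}) is the right shape, and it is what the paper does in Sections~\ref{subsec: tild} and \ref{subsec: changinghearts}, but two of your key steps have genuine gaps. First, quasi-smoothness. You claim that Serre duality and the shape of the tilted heart give $\Ext^{\geq 3}(E,E)=0$ for \emph{all} $E\in\CA$; this is both too strong and not where the hypotheses enter. What is required (Assumption~\ref{ass: stability}(1)) is quasi-smoothness of the open substack $\FM_\alpha^\mu$ of semistable objects, and even there the vanishing of $\Ext^{\geq 3}(E,E)$ is exactly where the positivity of $-K_X$ is used: on a surface one must kill $\Ext^2(H^{-1}(E),H^0(E))\cong\Hom(H^0(E),H^{-1}(E)\otimes K_X)^\vee$, which uses nefness of $-K_X$ and the slope constraints defining the torsion pair (the paper cites \cite[Lemma 7.8]{lmquivers}); on a Fano $3$-fold the double tilt allows cohomology in several degrees and the needed vanishing is nontrivial enough that the paper defers it to forthcoming work. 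A bare appeal to ``dimension $\leq 3$ plus Serre duality'' does not close this, and your argument never uses the nef-anticanonical/Fano hypothesis, which is the only reason the statement is restricted to those $X$.

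Second, Theorem~\ref{thm: B} is a statement about two stability conditions on a \emph{fixed} abelian category, whereas varying $(\omega,B)$ changes the heart $\CA_{\omega,B}$ itself, so the theorem does not apply verbatim to two arbitrary tilt stabilities. The paper resolves this in Section~\ref{subsec: changinghearts}: one works in $\BK(D^b(X))$ via the algebra homomorphisms $\BK(\CA_{\omega,B})\to\BK(\bfT)$, extends dominance across hearts (Definition~\ref{def: dominationheart}), and connects $\sigma_{\omega,B}$ to $\sigma_{\omega',B'}$ through the large-volume limit $t\omega$, where tilt stability becomes twisted Gieseker stability on $\Coh(X)$; each of the three comparisons is then a wall-crossing inside a single heart ($\CA_{\omega,B}$, $\Coh(X)$, $\CA_{\omega',B'}$). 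Your proposal silently assumes all tilt stabilities live on one heart joined by a finite wall-and-chamber path, which is not the case. Finally, note that besides properness of good moduli spaces you must also verify the (pseudo) $\Theta$-stratification adapted to $\mu$ (Assumption~\ref{ass: stability}(2), needed for the non-abelian localization driving the wall-crossing, checked via \cite[Theorem 6.5.3]{HLstructure} using rationality of $(\omega,B)$, generic flatness and boundedness of quotients) and the finiteness condition (5) on same-slope decompositions, which for $3$-folds again uses the BMT inequality; these do not follow from S-completeness and $\Theta$-reductivity alone.
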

We refer to Sections \ref{subsec: tild} and \ref{subsec: changinghearts} for details on this application.

\subsubsection{Comparison with Joyce--Liu invariants}

Sections \ref{sec: framing}, \ref{sec:Liuvert} and \ref{sec: Ktocoh} are dedicated to comparing our construction with the one by Liu \cite{Liu} and Joyce \cite{joyce}. When the moduli stack $\FM_\alpha^\mu$ contains strictly semistable objects, they define invariants using the auxiliary data of a framing functor $\Phi$ (cf. Definition \ref{def: framing}). A framing functor can be used to construct a moduli space of Joyce--Song pairs $P_\alpha^{\textup{JS}}$, which no longer contains strictly semistable objects. The main result of Section \ref{sec: framing} expresses the relation between invariants defined with Joyce--Song pairs and our $\varepsilon$-invariants:

\begin{thm}[=Theorem \ref{thm: comparisonJL}]
Let $\Phi$ be a framing functor and $P_\alpha^{\mu, \textup{JS}}$ the corresponding moduli of Joyce--Song pairs. We have
    \begin{equation*}
        \Pi_\ast\big(\Lambda_{-1}(\BT_\Pi^\vee)\cap [P_\alpha^{ \textup{JS}}]^\vir\big)=\sum_{\substack{\alpha_{1}+\ldots+\alpha_n=\alpha\\
        \mu(\alpha_i)=\mu(\alpha)}}\frac{(-1)^{n-1}}{n!}\lambda(\alpha_1)\big[\big[\ldots\big[\varepsilon_{\alpha_1}^\mu,\varepsilon_{\alpha_2}^\mu\big],\ldots, \varepsilon_{\alpha_n}^\mu\big]\,. 
    \end{equation*}
\end{thm}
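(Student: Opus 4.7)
The plan is to realize $P_\alpha^{\mu,\textup{JS}}$ as the moduli of semistables for an auxiliary stability condition on an enlarged abelian category, apply the wall-crossing Theorem B in that setting, and then identify each term of the resulting expansion on both sides.

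First, I would build an auxiliary abelian category $\widetilde{\CA}$ of ``framed'' objects: objects are triples $(E,V,\rho)$ with $E\in \CA$, $V$ a finite-dimensional vector space, and $\rho\colon V\otimes \Phi(E)\to E$ (or the appropriate pair data dictated by the framing functor $\Phi$). The topological types are pairs $(d,\alpha)$ with $d\in \BZ_{\geq 0}$, and the forgetful functors $(E,V,\rho)\mapsto E$ and $(E,V,\rho)\mapsto V$ give a natural ``unframed'' stability $\widetilde{\mu}$ agreeing with $\mu$ on the $d=0$ sector, as well as a pair stability $\mu^{\textup{pa}}$ constructed from $\mu$ and a large parameter, for which the moduli of $(1,\alpha)$-semistables is precisely $P_\alpha^{\mu,\textup{JS}}$ and contains no strictly semistables. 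I need to verify that $\widetilde{\CA}$ meets the technical conditions of Section~\ref{sec: abeliancats} and that $\mu^{\textup{pa}}$ and $\widetilde{\mu}$ can be joined by a path crossing finitely many walls, so that Theorem~\ref{thm: B} applies.

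Next, I would use that on the $\mu^{\textup{pa}}$-side there are no strictly semistables, so $\delta_{(1,\alpha)}^{\mu^{\textup{pa}}}=\varepsilon_{(1,\alpha)}^{\mu^{\textup{pa}}}$ and this element of $\BK(\widetilde{\CA})$ is represented by the $K$-homology pushforward of the virtual structure sheaf from $P_\alpha^{\mu,\textup{JS}}$. Combining with the virtual Riemann--Roch computation of the normal directions to the map $\Pi$ identifies
\[
\varepsilon_{(1,\alpha)}^{\mu^{\textup{pa}}}=\Pi_\ast\bigl(\Lambda_{-1}(\BT_\Pi^\vee)\cap [P_\alpha^{\mu,\textup{JS}}]^\vir\bigr),
\]
giving the left-hand side of the theorem. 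On the $\widetilde{\mu}$-side I would show that $\varepsilon_{(0,\beta)}^{\widetilde{\mu}}$ coincides with the image of $\varepsilon_\beta^\mu$ under the natural embedding $\BK(\CA)\hookrightarrow \BK(\widetilde{\CA})$, while $\varepsilon_{(1,0)}^{\widetilde{\mu}}$ is an explicit ``framing generator'' whose $\ast$-commutator with $\varepsilon_{(0,\beta)}^{\widetilde{\mu}}$ produces precisely the factor $\lambda(\beta)$ encoded by the framing functor.

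The core step is then to apply the commutator form of Theorem~\ref{thm: B} to expand $\varepsilon_{(1,\alpha)}^{\mu^{\textup{pa}}}$ as a sum of iterated brackets of $\varepsilon_{(d_i,\alpha_i)}^{\widetilde{\mu}}$'s. Since $\sum d_i =1$, each surviving bracket contains exactly one framing generator $\varepsilon_{(1,0)}^{\widetilde{\mu}}$, and repeated application of the Jacobi identity lets me move it to the outermost position. Its bracket with $\varepsilon_{(0,\alpha_1)}^{\widetilde{\mu}}$ produces $\lambda(\alpha_1)\,\varepsilon_{\alpha_1}^\mu$, and the remaining combinatorial factors $\tilde U$ are designed (as in \cite[Section 4]{JO06IV}) so that the resulting sum collapses to $\frac{(-1)^{n-1}}{n!}\lambda(\alpha_1)\bigl[[\ldots[\varepsilon_{\alpha_1}^\mu,\varepsilon_{\alpha_2}^\mu],\ldots],\varepsilon_{\alpha_n}^\mu\bigr]$.

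The hardest part is this last combinatorial collapse: one must check that the nontrivial cancellations among the $\tilde U$-coefficients and the Jacobi rebracketings reproduce exactly the asymmetric iterated commutator with the factor $\lambda(\alpha_1)$ in front. The identity itself is the $K$-theoretic analogue of Joyce's pair-wall-crossing identity, so the strategy is to reduce it by purely formal manipulations in $\BK(\widetilde{\CA})$ to a universal identity in a free Lie algebra on two generators, which can be checked by matching coefficients against the standard combinatorial identities for $S$ and $\tilde U$ from \cite{JO06IV}. Verifying that the formal identity transfers to $\BK(\widetilde{\CA})$, and that the $\lambda$-factor arising from the framing is correctly absorbed into the commutator structure by our $K$-Hall product of Section~\ref{sec: intro}, is where the real work lies.
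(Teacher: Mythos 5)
Your overall scheme (pass to a category of $\Phi$-pairs, play two stability conditions against each other, and identify the unframed $\varepsilon$'s via the embedding $\BK(\CA)\hookrightarrow\BK(\CA_\Phi)$) is the right general shape, but the specific mechanism you propose has a genuine gap, concentrated in the step you yourself flag as ``where the real work lies''. Your plan expands $\varepsilon^{\mu^{\textup{pa}}}_{(\alpha,1)}$ by Theorem~\ref{thm: B} against a fully ``unframed'' stability $\widetilde\mu$, so that iterated brackets contain exactly one pure framing generator (the class of topological type $(0,1)$ in the paper's notation), and you assert that its commutator with $\varepsilon_{\beta}$ ``produces the factor $\lambda(\beta)$''. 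That cannot hold as stated in $\BK(\CA_\Phi)$: the bracket $[\varepsilon_{(0,1)},\varepsilon_{(\beta,0)}]$ has topological type $(\beta,1)$, not $(\beta,0)$, so it is not $\lambda(\beta)$ times anything unframed. The factor could only appear after descending to $\BK(\CA)$ along $\Pi$ with the insertion $\Lambda_{-1}(\BT_\Pi^\vee)$, i.e.\ via the map $\Upsilon$ of Proposition~\ref{prop: homalgebras} — and this is exactly where the argument breaks. First, $\Upsilon$ is multiplicative only where the class $F_{12}$ of \eqref{eq: differenceExt} is a vector bundle, which requires $\CU\to\CV$ to be injective on the framed factor; brackets with the pure framing generator are supported on loci containing split pairs with $f=0$, where this fails. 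Second, $\Upsilon$ is not even defined on the pure framing component: there $\CV=0$, so $\BT_\Pi$ is a shifted trivial line bundle and $\Lambda_{-1}(\BT_\Pi^\vee)$ (an inverse Euler class with a trivial weight) does not exist in $K$-theory. So you cannot evaluate the brackets factorwise; you would have to compute $\Upsilon$ of each whole commutator directly, and that computation — the actual source of $\lambda$ — is missing. (A smaller point: your displayed identification of $\varepsilon^{\mu^{\textup{pa}}}_{(\alpha,1)}$ with $\Pi_\ast(\Lambda_{-1}(\BT_\Pi^\vee)\cap[P^{\textup{JS}}_\alpha]^\vir)$ equates elements of $\BK(\CA_\Phi)$ and $\BK(\CA)$; the left-hand side of the theorem is $\Upsilon(\varepsilon^{\mu_+}_{(\alpha,1)})$.)

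For comparison, the paper never goes to the destabilized side and never invokes the general Theorem~\ref{thm: B} in the pair category. It uses the \emph{naive} stability $\mu_0$, for which the $(\alpha,1)$-semistable pairs form the projective bundle $\BP(\CV)\to\FM^{\mu,\rig}_\alpha$ (Proposition~\ref{prop: characterizingstabilitypairs}); the factor $\lambda(\alpha)$ then comes from the projective bundle formula $\Upsilon(\delta^{\mu_0}_{(\alpha,1)})=\lambda(\alpha)\,\delta^{\mu}_\alpha$, not from a bracket with a framing generator. A single \emph{dominant} wall-crossing (Theorem~\ref{thm: dominantwc}) between $\mu_0$ and $\mu_+$ — domination is immediate from Proposition~\ref{prop: characterizingstabilitypairs}(d), so no path/finiteness hypotheses need checking — is then pushed down by $\Upsilon$, whose multiplicativity on exactly the products $\delta^{+}_{(\alpha',1)}\ast\delta_{\alpha''}$ is legitimate because $\CU\to\CV$ is injective on $\mu_0$-semistable pairs; finally a short Vandermonde-type binomial identity matches the resulting recursion with the recursion defining the right-hand side. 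If you want to pursue your route, you would essentially be redoing the Joyce--Song pair wall-crossing in this $K$-theoretic setting: you would need (i) a direct evaluation of $\Upsilon$ on commutators with the framing class (circumventing the degenerate $\Lambda_{-1}(\BT_\Pi^\vee)$), (ii) verification of the Assumption~\ref{ass: stability}/Theorem~\ref{thm: B} hypotheses for a path from $\mu^{\textup{pa}}$ to $\widetilde\mu$ in $\CA_\Phi$, and (iii) the full $\tilde U$-coefficient collapse — all of which the paper's choice of $\mu_0$ as the second stability condition is designed to avoid.
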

This identity is taken as the definition of $K$-theoretic invariants in \cite{Liu}, following Joyce's cohomological version, except for the fact that Liu uses a different Lie algebra, denoted in this paper by $\widecheck K_\ast^\reg(\FM)_\BQ$ -- his Lie algebra is induced by the structure of a multiplicative vertex algebra on $K_\ast^\reg(\FM)_\BQ$. To establish that the invariants are the same, we compare these two Lie algebras in Section \ref{sec:Liuvert}.

\begin{thm}[=Theorem \ref{thm: liealgebrahom}, Proposition \ref{prop: nopolesknown}(3)]
    The natural map
    \[\widecheck K_\ast^\reg(\FM)_\BQ\to \BK(\CA)\]
    is a homomorphism of Lie algebras. Moreover, when there is a framing functor this natural map sends Liu's classes to $\varepsilon$ classes. 
\end{thm}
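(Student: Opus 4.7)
The strategy is to compare the two Lie structures through their common geometric origin: the stack of short exact sequences $\FS$ with its natural maps to $\FM\times \FM$ (taking sub and quotient) and to $\FM$ (taking the middle term). The K-Hall product is defined via pushforward from $\FS^\rig$, whereas the vertex-algebra Lie bracket on $\widecheck{K}^\reg_\ast(\FM)_\BQ$ arises from the split analogue $\FM\times \FM\times B\BG_m$ together with a weighted Euler-class insertion coming from the Ext-complex. The plan is to first identify these two inputs via virtual non-abelian localization on $\FS$, and then to use the resulting identification to transport the defining formula for Liu's classes to our $\varepsilon$-invariants.

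For the first assertion, I would begin by carefully setting up the natural map. The rigidification $\FM\to \FM^\rig$ is a $B\BG_m$-gerbe, and the construction $\widecheck{K}^\reg_\ast\to K_\ast(\FM^\rig)_\BQ$ factors as restriction to weight-zero classes for the scaling $\BG_m$ followed by pushforward along the gerbe. One must check this map is well-defined, in particular that it kills the image of the translation operator $T$ used to define Liu's Lie algebra; this should follow from the projection formula applied to $\FM\to \FM^\rig$. The core step is to show the bracket is preserved: the stack $\FS$ is essentially a ``virtual projective bundle'' over $\FM\times \FM$, with fibers modeled on $\Ext^1(E_2,E_1)$ and a canonical $\BG_m$ acting by scaling the extension class. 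Applying virtual non-abelian localization (Halpern-Leistner) to this $\BG_m$ reduces the pushforward from $\FS^\rig$ to a sum of residue contributions on the fixed locus $\FM\times \FM\times B\BG_m$. The $\phi\ast\psi$ and $\psi\ast\phi$ channels differ only by swapping the two factors, and when one forms the commutator $\phi\ast\psi-\psi\ast \phi$ only the antisymmetrized residue survives; this antisymmetrized expression is exactly the definition of the vertex-algebra Lie bracket on $\widecheck{K}^\reg_\ast$. The bookkeeping of the K-theoretic Euler classes of the obstruction part of the Ext-complex, and confirming it agrees with Liu's multiplicative vertex-algebra $\Theta$-series, is the main technical obstacle; the key input will be that both sides are controlled by the master cotangent complex of $\FM$ pulled back along the direct sum map.

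For the second assertion, I would invoke Theorem~\ref{thm: comparisonJL}, which expresses the Joyce--Song pair invariants
\[\Pi_\ast\big(\Lambda_{-1}(\BT_\Pi^\vee)\cap [P_\alpha^{\textup{JS}}]^\vir\big)\in \BK(\CA)\]
as an explicit sum of iterated commutators of $\varepsilon^\mu_\alpha$. Liu \cite{Liu}, following Joyce, takes formally the same identity as the \emph{definition} of his classes $\varepsilon^{\textup{Liu}}_\alpha$ inside $\widecheck{K}^\reg_\ast(\FM)_\BQ$, and the pair invariants appearing on the left-hand side of his definition are defined geometrically from the same moduli space $P^{\textup{JS}}_\alpha$ and pushed forward to $\widecheck{K}^\reg_\ast(\FM)$. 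Hence the natural map sends Liu's pair invariant to our pair invariant. Combining this with the Lie-homomorphism property established above, the two defining identities are compatible under the natural map. The formula can be inverted by induction on the length of the partition of $\alpha$: the leading commutator term is $\lambda(\alpha)\cdot \varepsilon_\alpha$ (resp. $\lambda(\alpha)\cdot \varepsilon^{\textup{Liu}}_\alpha$) plus an expression involving only $\varepsilon$'s (resp. $\varepsilon^{\textup{Liu}}$'s) of shorter partitions, so an induction together with the Lie-homomorphism yields $\varepsilon^{\textup{Liu}}_\alpha\mapsto \varepsilon_\alpha$. The possible subtlety here is that the inversion requires $\lambda(\alpha)\neq 0$ and a well-defined ordering on classes, which is guaranteed by the assumptions we place on $\CA$ in Section~\ref{sec: abeliancats}.
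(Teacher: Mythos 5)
Your second assertion is handled essentially as in the paper: compare Theorem \ref{thm: comparisonJL} with Liu's defining identity, use that the natural map sends the Joyce--Song pair class to the pair class and is a Lie homomorphism, and invert by induction on the length of the partition using $\lambda(\alpha)>0$; the only things you gloss over are the hypothesis that the pair moduli spaces have the resolution property (needed for Liu's classes to lie in $K^\reg_\ast$ at all) and that the well-definedness of the map, i.e.\ that $\pi_\ast$ kills $\im(D(u)-\id)$, is quoted from Liu rather than proved, but these are minor.

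The first assertion, however, rests on a wrong premise. In this paper the $K$-Hall product is \emph{not} defined by a push--pull through the stack of short exact sequences: Definition \ref{def: KHall} uses only the direct sum map $\Sigma$ on $(\FM\times\FM)^\rig$ together with the insertion $\Gamma_-$, and the remark following it explicitly contrasts this with the extension-based $K$-theoretic Hall algebra. So your plan to ``identify the two inputs via virtual non-abelian localization on $\FS$'' with $\BG_m$ scaling the extension class is attacking an object that does not occur in either side of the comparison; moreover Theorem \ref{thm: NAloc} requires a finite-type, quasi-smooth stack with proper good moduli spaces (for the stack and all centers), which fails for the full stack $\FM$ (e.g.\ $\Coh(X)$) and for an extension stack over $\FM\times\FM$ -- in the paper NAL is only ever applied to semistable loci under Assumption \ref{ass: stability}. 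Even granting such a localization, you would still need to match the resulting Euler-class residue with Liu's multiplicative series $\Gamma(u)$, which you flag as ``the main technical obstacle'' but do not address. The actual argument is a short formal computation with no extension geometry at all: for $\phi,\psi\in K_\ast^\reg(\FM)$ and a weight-zero test class $\pi^\ast V$, Liu's field $(Y(\phi,u)\psi)_S(\pi^\ast V)$ is a Laurent \emph{polynomial} $y$ in $1-u$; by the two-sided expansion property of $\Lambda_{-u}$ (Lemma \ref{lem: symmetryexpansion}, Remark \ref{rmk: eulerclassTSigma}), the constant term of the expansion of $y$ in $u^{-1}$ is $(\tilde\phi\ast\tilde\psi)_S(V)$ and that of the expansion in $u$ is $(\tilde\psi\ast\tilde\phi)_S(V)$, and the residue theorem in the form $\Res_{u=1}u^{-1}y=[u^0](y_--y_+)$ (Remark \ref{rmk: polesresidue}) identifies Liu's bracket with the $K$-Hall commutator. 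That expansion-versus-residue mechanism, relating $\Gamma(u)$ to its two expansions $\Gamma_-(u)$ and $\sigma^\ast\Gamma_-(u^{-1})$, is the missing idea in your proposal, and without it (or a fully justified substitute) the first half of the proof does not go through.
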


An important point here is that the Lie algebra $\widecheck K_\ast^\reg(\FM)_\BQ$ is not obtained as the commutator of some naturally defined product. Indeed, its image in $\BK(\CA)$ is far from being an associative subalgebra, despite being a Lie subalgebra. Moreover, $\widecheck K_\ast^\reg(\FM)_\BQ$ does not typically contain $\delta$-classes. See the discussion in Section \ref{subsec: motivic} for an analogous situation with $\BM_\reg(\CA)$ and $\BM(\CA)$. 

Finally, in Section \ref{sec: Ktocoh} we relate our invariants to Joyce's cohomological invariants. To state our comparison, we introduce the notion of homological lift, cf. Definition \ref{def: homlift}. Roughly speaking, a class $A\in H_\ast(\FX)$ is a homological lift of $\phi\in K_\ast(\FX)$ if they are related by a Riemann--Roch type formula. 

\begin{thm}[=Theorem \ref{thm: homliftsjoyce}]
If there exists a framing functor, Joyce's classes in $H_\ast(\FM_\alpha^\rig)$ are homological lifts of $\varepsilon$ classes. 
\end{thm}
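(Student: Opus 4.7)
The plan is to deduce this comparison from Theorem~\ref{thm: comparisonJL} together with virtual Grothendieck--Riemann--Roch and the compatibility between the K-Hall product and its cohomological counterpart at the level of commutators.

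First, I would invert the formula of Theorem~\ref{thm: comparisonJL} to express each $\varepsilon_\alpha^\mu$ as a universal rational linear combination of iterated commutators of K-theoretic pair invariants $\Pi_\ast\bigl(\Lambda_{-1}(\BT_\Pi^\vee)\cap [P^{\textup{JS}}_{\alpha_i}]^{\vir}\bigr)$. The iterated-commutator expansion is triangular once one orders summands by the number of constituents, so Möbius-type inversion produces explicit rational coefficients. Joyce's cohomological construction proceeds by exactly the same formal inversion in his Lie algebra, so $\varepsilon^{\textup{Joyce}}_\alpha$ is obtained as the same rational linear combination of iterated cohomological Lie brackets of the cohomological pair classes $\Pi_\ast\bigl(e(\BT_\Pi^\vee)\cap[P^{\textup{JS}}_{\alpha_i}]^{\vir}\bigr)$.

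Next, I would show that these cohomological pair classes are homological lifts of the corresponding K-theoretic pair invariants. This is a direct application of virtual Grothendieck--Riemann--Roch for proper virtually smooth morphisms applied to $\Pi\colon P^{\textup{JS}}_\alpha\to \FM^{\rig}_\alpha$: the Todd class of the virtual tangent complex of $\Pi$ enters precisely so that the Chern character of the K-theoretic pushforward matches the cohomological pushforward up to exactly the Todd correction built into Definition~\ref{def: homlift}, while $\Lambda_{-1}(\BT_\Pi^\vee)$ is related to the topological Euler class $e(\BT_\Pi^\vee)$ under $\ch$ by a standard Todd-class identity whose correction factors cancel against the ones on the pushforward.

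The last and main step is to promote the homological-lift property across the Lie bracket, i.e.\ to check that if $A_i\in H_\ast(\FM^{\rig})_\BQ$ are homological lifts of $\phi_i\in K_\ast(\FM^{\rig})_\BQ$, then $[A_1,A_2]$ computed in Joyce's cohomological Lie algebra is a homological lift of $[\phi_1,\phi_2]$ computed in $\BK(\CA)$. For this I would analyse the K-Hall product of Definition~\ref{def: KHall} under the Chern-character map decorated with the Todd corrections implicit in Definition~\ref{def: homlift}: the two products need not agree as operations, but the discrepancy is a Todd correction which is symmetric under swapping the two arguments of the product and therefore drops out upon antisymmetrization. The main obstacle is making this Todd-bookkeeping precise: one must carefully track $\Lambda_{-1}$ versus $e(-)$ insertions along several different maps between the stack of short exact sequences and $\FM$, and exploit the symmetry of the extension stack, in the spirit of the analogous phenomenon in the motivic setting of Section~\ref{subsec: motivic} for $\BM_\reg(\CA)$ versus $\BM(\CA)$. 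Once done, iterated brackets of homological lifts are homological lifts of iterated brackets, and the theorem follows by combining this with the two preceding paragraphs.
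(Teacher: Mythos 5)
Your overall architecture is the same as the paper's: the recursion of Theorem \ref{thm: comparisonJL} versus Joyce's cohomological recursion, the statement that the Joyce--Song pair class is a homological lift of its $K$-theoretic counterpart, and compatibility of homological lifts with the Lie brackets, assembled by induction on $\alpha$. Your first two steps are essentially the paper's argument: the pair moduli space has no strictly semistables, so its virtual class is a homological lift by Lemma \ref{lem: homlift}(4) (which needs the \emph{family} version of virtual GRR over an arbitrary base $S$, Lemma \ref{lem: GRR}, since a homological lift is an operational condition, not just $\tau^\vee(A)=\phi_\pt$), and the insertion comparison is exactly the identity $\ch(\Lambda_{-1}(\BT_\Pi^\vee))\td(\BT_\Pi)=c_{\rk}(\BT_\Pi)$, i.e.\ Lemma \ref{lem: KtoH} at $u=1$, combined with parts (2)--(3) of Lemma \ref{lem: homlift}.

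The genuine gap is in your ``last and main step''. The mechanism you propose --- that the $K$-Hall product and a cohomological product differ by a Todd correction which is symmetric in the two arguments and hence cancels upon antisymmetrization --- does not describe what actually happens, and I do not see how to make it work. There is no cohomological product to compare against: on Joyce's side only the vertex algebra exists, and its bracket is a residue, $\Res_{z=0}$ of $(-1)^{\chi(\alpha,\beta)}z^{\chi_{\sym}(\alpha,\beta)}c_{z^{-1}}(\Theta)\,\Psi_1^\ast\Sigma^\ast(-)$. On the $K$ side, $\phi\ast\psi$ and $\psi\ast\phi$ are governed by the two different expansions $\Gamma_-(u)$ and $\Gamma_+(u)=\sigma^\ast\Gamma_-(u^{-1})$ of the single rational class $\Gamma(u)$, so the commutator is the difference of the $u^0$-coefficients of the $u^{-1}$- and $u$-expansions of the same object; by the residue theorem (Remark \ref{rmk: polesresidue}) this difference is $\Res_{u=1}u^{-1}(\cdot)$, and only after the substitution $u=e^z$ and the full ($u$-dependent) identities of Lemma \ref{lem: KtoH} does it match Joyce's $\Res_{z=0}$ bracket --- this also requires working in the completion $H^\ast(-)[(1-u)^{\pm1}]^{\widehat{\hspace{0.2cm}}}$ to make sense of the two expansions in cohomology. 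So the cancellation is not the vanishing of a symmetric Todd factor but a localization-at-the-pole statement; also note the $K$-Hall product here is built from the direct sum map $\Sigma$ and $\Gamma_-$, not from a stack of short exact sequences, so ``symmetry of the extension stack'' is not available in the form you invoke. This bracket compatibility is exactly Theorem \ref{thm: homliftsLiealgebras} in the paper (proved in Section \ref{subsec: homliftsLie}); since it is established before the statement you are proving, you could simply cite it, but as written your proposed proof of it would not go through.
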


We expect canonical homological lifts of $\varepsilon$ classes to exist in general, without assuming the existence of a framing functor, and we hope to address this question in the future. In Section \ref{subsec: homliftdescendents} we explain that our wall-crossing formulas for $\varepsilon$ invariants implies Joyce's wall-crossing formulas for the homological lifts, at least if we consider only intersection numbers of (algebraic) tautological classes. In Appendix \ref{appendix} we discuss a version of the entire theory where we replace algebraic $K$-theory by other additive invariants of dg categories, such as Hochschild homology or Blanc's topological $K$-theory. Topological $K$-theory, in particular, is closer to cohomology and allows us to deduce wall-crossing formulas without algebraicity assumptions.

\subsection{Analogy with motivic wall-crossing}
\label{subsec: motivic}

Part of the motivation for the present article is to make the cohomological or $K$-theoretic wall-crossing \cite{joyce, Liu} closer to the approach to motivic invariants \cite{JO06III, JO06I, JS12, KS08, bridgelandHallsurvey}. We will now describe some aspects of the motivic story to make this comparison apparent. For simplicity, we restrict ourselves to hereditary categories $\CA$, such as representations of a quiver or moduli of bundles on a curve, and to standard Euler characteristic (i.e. ``naive DT invariants''), rather than Behrend weighted Euler characteristics (``genuine DT invariants''). We also work only in the quantum torus, and not in the motivic Hall algebra.

Let $\mu$ be a stability condition on $\CA$ and $\alpha$ a topological type. When $\mu$-stability and $\mu$-semistablity coincide, we are interested in motivic invariants of the good moduli space $M_\alpha^\mu=\FM_\alpha^{\mu, \rig}$, such as its Euler characteristic $e(M_\alpha^\mu)$ or its virtual Poincaré polynomial $P_q(M_\alpha^\mu)\in \BQ[q^{\frac{1}{2}}]$, which is the unique motivic invariant which agrees with the usual Poincaré polynomial
\[P_q(X)=\sum_{i}(-q^{1/2})^i\dim H^i(X)\]
when $X$ is smooth and proper. Being motivic means that it satisfies
\[P_q(X\times Y)=P_q(X)P_q(Y) \textup{ and }P_q(X)=P_q(U)+P_q(X\setminus U)\]
for any $X,Y$ and $U\subseteq X$ open. The virtual Poincaré polynomial can be constructed for a general variety $X$ using the weight filtration on its cohomology \cite{DKvirtualpoincare}. The specialization $q^{1/2}=1$ recover the Euler characteristic.

To define invariants in the presence of strictly semistable objects, one might try to use directly the stack $\FM_\alpha^\mu$ or its rigidification $\FM_\alpha^{\mu, \rig}$. Indeed, it is explained in \cite[Section 4.2]{JoMotivic} that there is a unique way to extend the virtual Poincaré polynomial to finite type stacks with affine stabilizers by requiring that $P_q([X/G])=P_q(X)/P_q(G)$ for special groups $G$ (in particular, $G=\GL_n$ is special); for such stacks $\CX$, 
\[P_q(\CX)\in \BZ[q^{\pm 1/2}, (1-q^i)^{-1}]\,.\]
Since $P_q(\FM_\alpha^{\mu, \rig})$ has a pole at 1 when stable is not equal to semistable, we cannot set $q^{1/2}=1$ and obtain the Euler characteristic. One way to obtain something like an Euler characteristic is to first take a ``logarithm'' on the motivic Hall algebra, or on the quantum torus. 

The quantum torus of $\CA$ is defined to be the associative $\BQ[q^{\pm 1/2}, (1-q^i)^{-1}]$-algebra
\[\BM(\CA)\coloneqq \bigoplus_{\alpha\in C(\CA)}\BQ[q^{\pm 1/2}, (1-q^i)^{-1}]\cdot e^\alpha\]
with product given by\footnote{Usually there is no factor of $1/(q-1)$ and $\delta$ invariants below are defined using the non rigidified stack, but to make the analogy with the $K$-theory setting more clear we are making this small modification. The usual $\delta, \varepsilon$ invariants are $\frac{\delta_\alpha^\mu}{q-1}, \frac{\varepsilon_\alpha^\mu}{q-1}$ in our convention.} 
\[e^\alpha\ast e^\beta=\frac{q^{-\chi(\beta, \alpha)}}{q-1}e^{\alpha+\beta}\,.\]
The virtual Poincaré polynomial of the (rigidified) stack defines an element
\[\delta_\alpha^\mu\coloneqq P_q(\FM_{\alpha}^{\mu, \rig})\cdot e^\alpha\,.\]
Then, we modify these classes by taking a logarithm in the quantum torus, i.e.
\[\varepsilon_\alpha^\mu=\sum_{\substack{\alpha=\alpha_1+\ldots+\alpha_n\\\mu(\alpha_i)=\mu(\alpha)}}\frac{(-1)^{n-1}}{n}\delta_{\alpha_1}^\mu\ast\ldots\ast \delta_{\alpha_n}^\mu\,.\]

A remarkable theorem of Joyce \cite[Theorem 8.7]{JO06III} is that the $\varepsilon$ invariants no longer have a pole at $q=1$ -- see also \cite{intrinsicDT1} and \cite{BRinertia} for more recent approaches to the proof of this result. More precisely, they are regular elements, i.e. lie in
\[\BM_\reg(\CA)\coloneqq \bigoplus_{\alpha\in C(\CA)}\BQ[q^{\pm 1/2}, (1+q+\ldots+q^{i-1})^{-1}]\cdot e^\alpha\subseteq \BM(\CA)\,.\]
In particular, it now makes sense to specialize $q^{1/2}=1$. This produces the definition of generalized (naive) DT invariants:
\[\varepsilon_\alpha^\mu|_{q=1}=\DT_\alpha^\mu\cdot e^\alpha\,.\]

\begin{example}
    Consider $\CA=\mathsf{Vect}$ the abelian category of finite dimensional vector spaces with the trivial stability condition. Then 
    \[\delta_n=P_q(B\PGL_n)\cdot e^n=q^{-n+1}\prod_{n=1}^{n-1} (q^n-q^i)^{-1}\cdot e^n\,.\]
    Then we have, for example,
    \[\varepsilon_2=\delta_2-\frac{1}{2}\varepsilon_1\ast \varepsilon_1=\left(\frac{1}{q(q^2-1)}-\frac{1}{2q(q-1)}\right)\cdot e^2=-\frac{1}{2q(q+1)}\cdot e^2\,.\]
    In particular, $\DT_2=-\frac{1}{4}$. More generally, $\DT_n=\frac{(-1)^{n-1}}{n^2}$ \cite[Example 6.2]{JS12}.
\end{example}

The regular elements $\BM_\reg(\CA)\subseteq \BM(\CA)$ do not form a subalgebra of $\BM(\CA)$. However, they form a Lie subalgebra of $\BM(\CA)$ with the commutator $[u,v]=u\ast v-v\ast u$. This is clear from the observation that
\[[e^\alpha, e^\beta]=\frac{q^{-\chi(\beta, \alpha)}-q^{-\chi(\alpha, \beta)}}{q-1}\cdot e^{\alpha+\beta}\]
is a polynomial in $q$. 

Wall-crossing formulas can be efficiently written in $\BM(\CA)$. Indeed, they take exactly the form discussed in Section \ref{subsubsec: wcintro}, and served as inspiration for our construction of $\BK(\CA)$. For example when $\FM_\alpha$ itself is finite type
\[\sum_{\substack{\alpha=\alpha_1+\ldots+\alpha_n\\\mu(\alpha_1)>\ldots>\mu(\alpha_n)}}\delta_{\alpha_1}^\mu\ast\ldots\ast \delta_{\alpha_n}^\mu=P_q(\FM_\alpha)\cdot e^\alpha\]
does not depend on $\mu$. Indeed, this is a consequence of the Harder--Narasimhan stratification, as the product $\ast$ is built so that the Harder--Narasimhan stratification gives this formula. This provides a way to compare classes $\delta^{\mu}_\alpha, \delta_\alpha^{\mu'}$ for two stability conditions $\mu, \mu'$. Joyce shows \cite[Theorem 5.4]{JO06IV} that when we replace $\delta$ classes by $\varepsilon$ classes, the resulting formula can be written entirely using the commutator on $\BM(\CA)$; in other words, it can be written entirely inside the Lie subalgebra $\BM_\reg(\CA)$.

\subsection{Conjectures and future directions}In certain aspects, we have opted to keep a slightly simpler (and hence less general) setup than in \cite{joyce}. For example, we do not consider here the possibility for reduced obstruction theories, necessary for wall-crossing on surfaces with $p_g>0$. We have also opted to do everything non-equivariantly, unlike \cite{Liu}. It should be possible to incorporate both of these aspects in our approach without too much trouble. 

Recently, \cite{intrinsicDT1, intrinsicDT2} established a generalization of the motivic DT theory to stacks that do not come from linear moduli problems. This is done by replacing the motivic Hall algebra by a structure they call ``motivic Hall induction'', intrinsically associated to any stack (under reasonable conditions). A similar extension of the content of this paper using their formalism is possible, and will be pursued elsewhere. This should, in particular, answer questions raised, e.g., by Bu in the setting of wall-crossing for self-dual categories: ~\cite{Bu1}.

A more ambitious extension is to the setting of sheaves on Calabi--Yau 4-folds, or more generally $(-2)$-shifted symplectic stacks. A mild modification of our $K$-Hall algebra allows a definition of generalized $K$-theoretic invariants in that context. The missing piece is an appropriate version of the non-abelian localization theorem for $(-2)$-shifted symplectic stacks.

We make Conjectures \ref{conj: nopole} and \ref{conj: homlift}, which are similar in spirit to the no-pole theorem in the motivic setting. The first says that the classes $\varepsilon_\alpha^\mu$ more or less comes from a scheme, and the second says that $\varepsilon_\alpha^\mu$ admits a homological counterpart that lives in algebraic degree equal to the virtual dimension. We show that both of these hold when there are framings, but it would be desirable to have a more general argument. We hope these conjectures might be addressed by relating $\varepsilon$ classes to the (derived) Kirwan desingularization of the moduli stack, cf. \cite{HRS}.

The more straightforward applications are the ones to explicit wall-crossing problems which still remain unsolved at the time of our writing the present paper. First of all, it seems that with little extra care the PT-rationality for Fano threefolds (i.e., the Fano analogue of~\cite{todaPTrationality}) may be obtained. Second, in some cases, we are able to apply the present results to imitate Feyzbakhsh-Thomas programme (cf., e.g., \cite{FTr0}) for Fano $3$-folds, hence, answering (at least, some particular manifestation of) the corresponding Joyce's conjecture~\cite[p.14]{Joycepres}. Both questions will be answered in two upcoming works of ours in the nearest future.

Let us finally mention the following curious question (as usual,  due to Dominic Joyce and his collaborators): since, as we explain, the present paper develops precisely the machinery required for the wall-crossing in the derived categories, the Remark 4.3.2 (c) in~\cite{GJT} hereby starts making sense; in a future, we would like to investigate whether, as \textit{loc.~cit.} predicts, one can, thus, introduce ``Joyce structures with descendants'', say, for projective surfaces.

\subsection{Notation and conventions}\label{subsec: notation}
Every scheme, algebraic space, stack and derived stack in this paper is considered over $\BC$. By (derived) stack we mean a (derived) 1-Artin stack; all the (derived) stacks in this paper will be locally of finite type, have affine diagonal, and a perfect cotangent complex. Higher (derived) stacks are in the sense of higher Artin stacks of \cite{higherstackstoen}. We will typically denote stacks with calligraphic font (e.g. $\CX, \CZ, \CM$) and derived stacks with Fraktur font (e.g. $\FX, \FZ, \FM$).

By (co)homology of a derived stack $\FX$ we mean the (co)homology with $\BQ$-coefficients of the topological realization (cf. \cite[Section 3]{B16}) of the classical truncation $\CX$; this agrees with a 6-functor formalism type definition by \cite[Proposition 2.8]{khanequivcoh}. Homology is considered a direct sum over its degrees and cohomology a direct product, 
\[H_\ast(-)=\bigoplus_{n\geq 0}H_n(-)\textup{ and }H^\ast(-)=\prod_{n\geq 0}H^n(-)\,.\]
Chern classes of perfect complexes are defined by pulling back universal Chern classes along the topological realization of the induced map $\CX\to \FM_{\Perf}$ to the stack of perfect complexes. Given an abelian group $A$ we write $A_\BQ=A\otimes_\BZ \BQ$.

\subsection{Acknowledgments} We would like to thank Younghan Bae, Chenjing Bu, Pavel Etingof, Daniel Halpern-Leistner, Andrés Ibáñes Nuñéz, Dominic Joyce, Adeel Khan, Vasily Krylov,  Woonam Lim, Henry Liu, and Davesh Maulik for conversations related to this project.

The first author wishes to especially thank Davesh Maulik: the present work owes its existence to his suggestion of considering the wall-crossing applications of non-abelian localization.

\section{Preliminaries: abelian categories and stability conditions}
\label{sec: abeliancats}

\subsection{Abelian categories and moduli stacks}\label{subsec: abcat}

Let $\CA$ be a $\BC$-linear abelian category. Under appropriate conditions, it is possible to form a derived moduli stack $\FM=\FM_{\CA}$ over $\BC$ parametrizing objects of $\CA$. This stack will play a central role in this paper and we need it to have certain structures and properties, so we will now explain the necessary assumptions on the abelian category $\CA$.

The standard example to keep in mind of a good abelian category is $\CA=\Coh(X)$ for a smooth projective scheme $X$. Other examples include the categories of finite dimensional vector spaces and finite dimensional representations of an acyclic quiver. Other hearts of $t$-structures on $D^b(X)$ are often good abelian categories (cf. \cite[Example 7.22, 7.26]{AHLH}). We summarize in the next definition/proposition the properties that we require and discuss them in more detail below.

\begin{defprop}\label{defprop: goodabelian}
    We say that $\CA$ is a \emph{good abelian category} if Assumptions \ref{ass: good1} and \ref{ass: good2} hold. When that is the case, there is a derived moduli stack $\FM_\CA$ parametrizing objects of $\CA$. The derived stack $\FM_{\CA}$ is locally finite type, has affine diagonal and its derived tangent complex is given by \eqref{eq: derivedtangentFM}. The derived stack comes with a direct sum map $\Sigma$, a $B\BG_m$ action and a complex $\Ext_{12}$ on $\FM\times \FM$ with compatibilities detailed in \cite[Assumption 4.4 (c), (f)]{joyce}.
\end{defprop}

As discussed in \cite[Section 7.1]{AHLH} following \cite{AZ}, to construct a classical stack $\CM_\CA$ parametrizing objects in $\CA$ we need $\CA$ to be the subcategory of compact objects of some locally Noetherian cocomplete category $\CC$ (for example, $\Coh(X)$ are the compact objects of $\QCoh(X)$ if $X$ is smooth projective). 

\begin{assumption}\label{ass: good1}
    The abelian category $\CA$ is Noetherian and there is a cocomplete abelian category $\CC$ such that $\CA=\CC^{\textup{pe}}$ is the subcategory of perfect objects, and moreover $\CA$ generates $\CC$.
\end{assumption}

In particular, $\CC$ is locally Noetherian and there exists a classical moduli stack $\CM=\CM_\CA$ as in \cite[Section 7.1]{AHLH}. Note that this stack actually depends on the choice of embedding into the cocomplete category $\CC$, as remarked in \cite[Warning 7.11]{AHLH}, but we will suppress it from the notation since in our examples there is always a natural choice of $\CC$.

We would like to have a natural derived enhancement of $\CM$, which we denote by $\FM=\FM_\CA$, whose derived tangent bundle is given by
\begin{equation}\label{eq: derivedtangentFM}
    \BT_{\FM}=\Delta^\ast \Ext_{12}[1]\,.\end{equation}
This means that $\Hom_\CA(F, F)$ controls the stackiness of $\FM$ at $F$, $\Ext^1_\CA(F, F)$ the deformations and $\Ext^{>1}_\CA(F, F)$ the (higher) obstructions. 

Toën-Vaquié \cite{TV07} construct higher derived stacks parametrizing objects in saturated dg categories. See Definition 2.4 in loc. cit. for the definition of saturated; when $X$ is a smooth proper scheme, the dg enhancement of $D^b(X)$ (which is unique by \cite[Corollary 7.2]{CSdgenhancement}) is saturated \cite[Lemma 3.27]{TV07}. Hence, we require the following:
\begin{assumption}\label{ass: good2}
    There is a saturated dg category $\bfT$ such that $\CA$ is a full subcategory of the homotopy category $\textup{Ho}(\bfT)$ and $\CM_\CA$ is an open substack of the classical truncation of $\FM_{\bfT}$.
\end{assumption}
This open inclusion induces a derived enhancement $\FM_\CA$ of $\CM_\CA$, which is open inside $\FM_{\bfT}$; cf. \cite[Proposition 2.1]{STV}. When $\CA=\Coh(X)$, or some other heart of $D^b(\Coh(X))$, for a projective smooth variety $X$, we let $\bfT$ be the dg enhancement of $D^b(\Coh(X))$. The openness of $\CM_{\Coh(X)}$ is shown for example in \cite[Proposition 6.2.7]{HLstructure}; other natural hearts related to stability conditions are also shown to be open in\footnote{In all of these, the results are stated as $\CM_\CA$ being open inside Lieblich's \cite{lieblich} stack of complexes with $\Ext^{<0}(F, F)=0$. This is an open substack of $\CM_{D^b(X)}$, as pointed out in \cite[Corollary 3.21]{TV07}.} \cite[Section 4]{PiyToda}, \cite[Proposition 6.2.7]{HLstructure} and \cite[Proposition 4.6]{BCR}. It is shown in \cite[Theorem 0.2]{TV07} that $\FM_{\bfT}$ is locally of finite type, and hence the same is true for $\FM_\CA$. By \cite[Lemma 7.20]{AHLH} it follows that $\FM_\CA$ has affine diagonal. The derived stack $\FM_\CA$ comes with the following structures:
\begin{enumerate}
    \item A direct sum morphism $\Sigma\colon \FM\times \FM\to \FM$ that acts on points as $(F, G)\mapsto F\oplus G$.
    \item A $B\BG_m$ action $\Psi\colon B\BG_m\times \FM\to \FM$ induced by the scaling action $\BG_m\rightarrow \Aut_{\CA}(F)$.
    \item A perfect complex $\Ext_{12}$ on $\FM\times \FM$ whose value over a point $(F_1, F_2)$ on $\FM\times \FM$ is $\RHom_\CA(F_1, F_2)$. We also denote by $\Ext_{21}$ the complex $\RHom_\CA(F_2, F_1)$, which is obtained by pulling back $\Ext_{12}$ along the map $\FM\times \FM\to \FM\times \FM$ which permutes the two factors.
\end{enumerate}
These can all be defined on $\FM_\bfT$ (see, for example, \cite[Proposition 8.29]{KPS}) and restricted to $\FM_\CA$. They satisfy a series of compatibilities which are described in detail in \cite[Assumption 4.4 (c), (f)]{joyce}. For example, the complex $\Ext_{12}$ should be ``bilinear'' with respect to $\Sigma$, in the sense that
\[\RHom_\CA(F_1, F_2\oplus F_2')=\RHom_\CA(F_1, F_2)\oplus \RHom_\CA(F_1, F_2)\,.\]
The complex $\Ext_{12}$ has weight $-1$ with respect to the $B\BG_m$ action on the first coordinate and weight $1$ with respect to the action on the second coordinate.
When $\CA=\Coh(X)$ (or more generally $\CA$ is the heart of a t-structure in $D^b(X)$) the Ext complex is given by
\[\Ext_{12}=Rp_\ast\big(\mathcal{RH}om(\CF_{13}, \CF_{23})\big)\]
where $\CF$ is the universal bundle (complex) on $\FM\times X$, $\CF_{13}, \CF_{23}$ are the pullbacks of $\CF$ to $\FM\times \FM\times X$ indicated by the subscript, and $p$ is the projection $\FM\times \FM\times X\to \FM\times \FM$. See for example \cite[(5.4)]{GJT} for the explicit form in the case of quivers.

We denote by $C(\CA)$ the set of connected components of $\FM_\CA$, or equivalently $\CM_\CA$, and we let $\FM_\alpha\subseteq \FM_\CA$ be the connected component corresponding to $\alpha\in C(\CA)$, so that
\[\FM_\CA=\bigsqcup_{\alpha\in C(\CA)}\FM_\alpha\,.\]
The direct sum map on $\FM_\CA$ gives $C(\CA)$ the structure of a monoid. Since the rank of $\Ext_{12}$ is locally constant, we define the Euler form
\[\chi(\alpha, \beta)=\rk\big((\Ext_{12})_{|\FM_{\alpha\times \FM_\beta}}\big)\,.\]
Given an object $E$ of $\CA$, we denote by $[E]\in C(\CA)$ the connected component of the corresponding $\BC$-point in the stack $\FM_\alpha$.

\begin{remark}
    If $\CA$ is a good abelian category of homological dimension $\leq 2$, meaning that $\Ext^{>2}_\CA=0$, then $\FM_{\CA}$ is a quasi-smooth derived stack, due to \eqref{eq: derivedtangentFM}.
\end{remark}

Also, using the $B\mathbb G_m$ action, one may define the \textit{rigidified} (or the \textit{projective linear}, to use the terminology from~\cite{joyce}) version $\FM^{\rig}$ of $\FM$.

\begin{definition}\label{def: rigstack} Suppose that $\FX$ is a (derived) stack admitting a free $B\BG_m$ action. Then, there is a \textit{rigidified} (derived) stack $\FX^\rig=\FX\myfatslash \BG_m$ (cf. \cite[Appendix A]{AOV}); it comes with a canonical map $\pi\colon \FX\to \FX^\rig$ which is a $\BG_m$ gerbe -- in particular, its fibers are isomorphic to $B\BG_m$. Roughly speaking, $\FX^\rig$ is the stack with the same points as $\FX$ and with isotropy groups obtained by quotienting out the $\BG_m$ coming from the action. The rigidification sits in the following homotopy cartesian square:
\begin{center}
    \begin{tikzcd}
B\BG_m\times \FX\arrow[r, "\Psi"]\arrow[d, "p_2"]& \FX\arrow[d, "\pi"]\\
\FX\arrow[r, "\pi"]&\FX^\rig
    \end{tikzcd}
\end{center}
Every complex in $\FX$ pulled back from $\FX^\rig$ has weight 0, by commutativity of the square above. 
\end{definition}

\subsection{Stability conditions, $\Theta$-stratifications and good moduli spaces}
\label{subsec: stabconditions}

To make sense of taking integrals (i.e, Euler characteristics) of perfect complexes over some stack, one usually requires the existence of a proper \textit{good moduli space}. This statement is formalized as Theorem \ref{thm: deltainvariants} below.

\begin{definition}[\cite{alper}] Let $\CX$ be an Artin stack. A good moduli space for $\CX$ is an algebraic space $X$ together with a morphism $\phi: \CX \to X$ such that
\begin{enumerate}
\item $\phi$ is quasi-compact;
\item the pushforward functor $\phi_*\colon \QCoh(\CX) \to \QCoh(X)$ is exact on the categories of the quasi-coherent sheaves;
\item the natural morphism $\mathcal O_{X} \to \phi_*\mathcal O_\CX$ is an isomorphism.
\end{enumerate}
A good moduli space of a derived stack $\FX$ is a good moduli space of its classical truncation $\CX=\FX_\cl$ (alternatively, see \cite{ahps}).
\end{definition}

Note that if $\CX$ admits a proper good moduli space then $\CX$ is quasi-compact. This is rarely the case for the stacks parametrizing objects in abelian categories considered above: it is not true for $\CA=\Coh(X)$ if $\dim(X)>0$, but it is true for representations of acyclic quivers. 

\begin{proposition}[{\cite[Theorem 7.23]{AHLH}}]\label{prop: existencegoodmoduli1}
    Let $\CA$ be a good abelian category and $\alpha\in C(\CA)$. If the stack $\FM_{\alpha}$ is of finite type, then $\FM_\alpha$ admits a proper good moduli space.
\end{proposition}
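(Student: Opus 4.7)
The plan is to apply \cite[Theorem 7.23]{AHLH} essentially verbatim, so the task reduces to verifying that the hypotheses of that theorem hold in the present setup. That theorem gives necessary and sufficient conditions for a finite type stack with affine stabilizers to admit a proper good moduli space, phrased in terms of $\Theta$-reductivity and $S$-completeness (existence) together with the analogous unpunctured valuative criteria (properness of the good moduli space).

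First, I would recall the relevant conditions. Given a DVR $R$ with uniformizer $\pi$, fraction field $K$, and residue field $\kappa$, write $\Theta_R = [\Spec R / \BG_m]$ and $\ST_R = [\Spec R[s,t]/(st-\pi) / \BG_m]$ with the canonical weights. A stack $\CX$ is $\Theta$-reductive (resp. $S$-complete) if every map from the punctured version of $\Theta_R$ (resp. $\ST_R$) — obtained by removing the closed point $[0/\BG_m]$ — extends uniquely to a map from $\Theta_R$ (resp. $\ST_R$). The AHLH criterion says that if $\CX$ is finite type with affine diagonal and satisfies both extension properties, then $\CX$ admits a separated good moduli space; properness follows from the additional existence part of the valuative criteria.

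Second, I would check the input hypotheses. Finite type is assumed; affine diagonal was established in Definition/Proposition~\ref{defprop: goodabelian}. For $\Theta$-reductivity and $S$-completeness, the key point is that Assumption \ref{ass: good2} embeds $\CM_\CA$ as an open substack of the classical truncation of $\FM_{\bfT}$ for a saturated dg category $\bfT$. Maps from $\Theta_R$ (resp. $\ST_R$) into $\FM_{\bfT}$ correspond to filtered (resp. two-step weight) families of objects in $\bfT$, and their unique extension properties are precisely the statements verified in \cite{AHLH} for moduli of objects in saturated dg categories (building on \cite{TV07}). Openness of $\CM_\CA \subseteq \FM_{\bfT,\cl}$ then upgrades the extension property from $\bfT$ to $\CA$: the unique extension exists in $\bfT$, and we only need to check the extended object at the closed point lies in $\CA$. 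This is ensured by $\CA$ being Noetherian and closed under the relevant subquotient/extension operations implicit in the filtrations appearing on $\Theta_R$ and $\ST_R$, combined with openness of the condition ``lies in $\CA$''.

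The main delicate step is the verification that the extended object at the closed point actually lies in $\CA$, not merely in $\bfT$. This is where Noetherianity of $\CA$ and the openness of $\CM_\CA$ in the stack of all objects of $\bfT$ are both essential: the former guarantees that limits of families of objects of $\CA$ behave well, while the latter ensures that ``being in $\CA$'' is preserved under specialization from the generic fiber. Once this is in place, properness of the good moduli space follows from the existence half of the valuative criterion applied to the same test diagrams, again using that finite-type and affine-diagonal stacks of objects in saturated dg categories admit the required unique limit objects, which remain in $\CA$ by the finite-type and Noetherian hypotheses on $\FM_\alpha$.
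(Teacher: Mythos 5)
There is a genuine gap here, and it lies precisely in the step you flag as delicate. In the paper this proposition is not reproved at all: Assumption \ref{ass: good1} was set up exactly so that $\CM_\CA$ is the stack of objects considered in \cite[Section 7.1]{AHLH}, and the proposition is then literally an instance of \cite[Theorem 7.23]{AHLH} (the general criterion via $\Theta$-reductivity and $S$-completeness is their Theorem A/5.4, not 7.23; note also that a good moduli space only depends on the classical truncation, so the derived enhancement and hence Assumption \ref{ass: good2} play no role). Your plan, by contrast, tries to verify $\Theta$-reductivity, $S$-completeness and the unpunctured existence criterion by passing through the dg-category moduli stack $\FM_{\bfT}$ and then ``descending'' to $\CM_\CA$ by openness. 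Both halves of this are problematic. First, the extension properties are \emph{not} verified in \cite{AHLH} (or \cite{TV07}) for moduli of objects of a saturated dg category, and $\FM_{\bfT}$ (e.g.\ the stack of all objects of $D^b(X)$) is in general neither $\Theta$-reductive nor $S$-complete; the actual verification in \cite[Section 7.1]{AHLH} is purely abelian-categorical, constructing the filler families inside the ambient locally noetherian Grothendieck category $\CC$ of Assumption \ref{ass: good1} (extending filtrations by saturation, elementary modifications for $S$-completeness, a Langton-type argument for the existence part), with noetherianity of $\CA$ used there.

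Second, even granting an extension in some ambient stack, your argument that the limit object lies in $\CA$ ``since being in $\CA$ is preserved under specialization from the generic fiber'' is backwards: openness of $\CM_\CA$ gives stability under \emph{generization}, not specialization, so openness alone can never force the object over the closed point of $\Theta_R$ or $\ST_R$ into $\CA$ — indeed this failure is exactly why one must work with a heart/abelian category rather than with all complexes. What actually makes the reduction to the component $\FM_\alpha$ work is that it is open \emph{and closed} in $\CM_\CA$ (the test stacks are connected, so the unique fillers for $\CM_\CA$ stay in the component), a point your argument never uses. So the correct route is either to quote \cite[Theorem 7.23]{AHLH} after checking Assumption \ref{ass: good1} matches their setup (as the paper does), or to redo their abelian-categorical verification inside $\CC$; the detour through $\FM_{\bfT}$ and openness does not close.
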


Typically, to get proper good moduli spaces we need to impose some sort of stability. In the rest of the present section, we will  work with a good abelian category $\mathcal A$ as in Section~\ref{subsec: abcat}, and recall the criterion for the existence of proper good moduli spaces for the moduli stacks of semistable objects on $\FM_{\mathcal A}$ (following the exposition of~\cite[Subsection 3.1]{joyce}).

We start with the notion of a weak stability condition on $\mathcal A$. The definition we use is due to Joyce \cite{JO06III};  its roots go back to Rudakov~\cite{rudakov}



Let $(T, \leq)$ be a totally  ordered set, and let $\mu$ be a map $\mu: C(\mathcal A) \to T$. Given an object $E$ of $\CA$, we will write $\mu(E)\coloneqq \mu([E])$; in other words, $\mu$ is a locally constant function $\FM_\CA\to T$. We call $\mu$ the \emph{slope function}.

\begin{definition}\label{def: stabcondition} We call $(\mu, T, \leq)$ \textit{a weak stability condition} on $\mathcal A$ if for any $\alpha, \beta, \gamma\in C(\CA)$ with $\beta = \alpha + \gamma$ either $\mu(\alpha) \leq \mu(\beta) \leq \mu(\gamma)$, or $\mu(\alpha) \geq \mu(\beta) \geq \mu(\gamma)$.\footnote{We recall that for $\mu$ to be a stability condition (as opposed to a \textit{weak} stabilty condition), the more restrictive condition should hold: either $\mu(\alpha) < \mu(\beta) < \mu(\gamma)$, or $\mu(\alpha) > \mu(\beta) > \mu(\gamma)$, or $\mu(\alpha) = \mu(\beta) = \mu(\gamma)$.} \end{definition}

A weak stability condition defines the notion of $\mu$-stable and $\mu$-semistabile objects as follows.

\begin{definition}\label{def: stableobject} An object $E \in \mathcal A$ is said to be $\mu$-semistable if $\mu(E') \leq \mu(E/E')$ for all subobjects $0\neq E' \subsetneq E$. We say that $E$ is $\mu$-stable if we have a strict inequality $\mu(E') <\mu(E/E')$ in the same setting. If $E$ is $\mu$-semistable but not $\mu$-stable we say it is strictly semistable.
\end{definition}

If the category $\CA$ is $\mu$-Artinian and $\CA$ is Noetherian (which is part of our definition of good abelian category) then any object $E\in \CA$ has a canonical Harder-Narasimhan filtration
\begin{equation}\label{eq: HNfiltration}
0 = E_0 \subsetneq E_1 \subsetneq E_2 \ldots \subsetneq E_n = E\,,\end{equation}
so that all of the successive quotients $F_i=E_i/E_{i-1}$ are $\mu$-semistable, and 
\[\mu(F_1) > \mu(F_2)> \ldots > \mu(F_n)\,.\]
We call \eqref{eq: HNfiltration} the $\mu$-HN filtration of $E$ and the objects $F_1, \ldots, F_n$ the $\mu$-HN factors of $E$. 

Moreover, any $\mu$-semistable object has a Jordan--Hölder filtration whose successive quotients are $\mu$-stable objects with the same slope as the original object; the Jordan--Hölder filtration is not unique in general, but the successive quotients of any two Jordan--Hölder are the same up to a permutation. We refer the reader to \cite[Theorem 4.4, 4.5]{JO06III} for a more precise formulation of these facts and proofs.


\begin{definition} 
We denote by $\CM^{\mu}_{\alpha}\subseteq \CM_\alpha$ the substack of $\mu$-semistable objects in $\CM_\alpha$.
\end{definition}

We will make a series of assumptions that  guarantee that the stacks of semistables are sufficiently nice to define enumerative invariants. In some natural cases, such assumptions do not hold for arbitrary topological types $\alpha\in C(\CA)$, so we will consider a smaller set of $C(\CA)_\pe\subseteq C(\CA)$ of permissible classes as in \cite[Assumption 5.1(e)]{joyce}. We now state these assumptions and briefly explain them and their implications.

\begin{assumption}\label{ass: stability}
    Let $\CA$ be a good abelian category and $\mu$ a weak stability condition for which Harder--Narasimhan filtrations exist (e.g. $\CA$ is $\mu$-Artinian). We assume that there is a set of permissible classes $C(\CA)_{\pe}\subseteq C(\CA)$ such that for every $\alpha\in C(\CA)_{\pe}$ the following holds:
\begin{enumerate}
        \item The stack $\CM_\alpha^\mu$ is open in $\CM_\alpha$, quasi-compact, and its derived enhancement is quasi-smooth.
        \item There is a (pseudo) $\Theta$-stratification on $\CM_\alpha$ adapted to $\mu$, as explained in Section \ref{subsec: stratificationabelian}, which satisfies the descendending chain condition. In particular, $\CM_\alpha^\mu$ is the semistable loci for this $\Theta$-stratification.
        \item $\mu$ is equivalent to an additive stability condition on $\alpha$, as defined in \cite[Section 7.3]{AHLH}.
        \item If $E$ is $\mu$-semistable with $[E]=\alpha$ and $E'\subseteq E$ then $[E']\in C(\CA)_\pe$.
        \item There are only finitely many partitions
        \[\alpha=\alpha_1+\ldots+\alpha_l\]
        with $\mu(\alpha_i)=\mu(\alpha)$ and $\CM_{\alpha_i}^\mu\neq \emptyset$. By (4), we have $\alpha_i\in C(\CA)_{\pe}$ for any such partition. 
\end{enumerate}
\end{assumption}

Since we required $\CM^{\mu}_{\alpha}\subseteq \CM_\alpha$ to be an open embedding, we obtain a derived enhancement $\FM_\alpha^\mu\subseteq \FM_\alpha$ which is still an open embedding. Since $\FM_\CA$ is locally of finite type by Proposition \ref{defprop: goodabelian}, it follows that $\FM_\alpha^\mu$ is of finite type.

The concept of $\Theta$-stratifications has been introduced and studied by Halpern--Leistner, see for example \cite[Definition 2.1.2]{HLstructure}. Joyce defines in \cite[Definition 3.3.4]{joyce} a weaker notion which he calls pseudo $\Theta$-stratification \cite[Definition 3.3.4]{joyce}; in some cases, pseudo $\Theta$-stratifications are more natural. We will discuss both of these in Section \ref{subsec: Theta}. For now, we just point out that there are techniques to construct (pseudo) $\Theta$-stratifications in most cases of interest. There are two reasons for imposing the existence of these stratifications: they guarantee the existence of proper good moduli spaces (cf. Theorem \ref{thm: goodproper}) and they are necessary for the non-abelian localization theorem (cf. Theorem \ref{thm: NAloc}), which is the heart of our wall-crossing formula. 

\begin{definition}\label{def: additive} We say that $\mu$ is additive at $\alpha$ if there is a totally ordered abelian group $(V, \leq)$ and a monoid homomorphism $\rho_\alpha: C(\mathcal A) \to V$ such that an object $E$ with $[E]= \alpha$ is $\mu$-semistable if and only if, for all subobjects $F \subseteq E$, $\rho([F]) \leq 0$.
\end{definition}

This additive condition is very mild, and true for all the natural stability conditions considered in enumerative geometry. If we can write $\mu(\alpha)=d(\alpha)/r(\alpha)$ with $d,r$ linear, then we may take 
\[\rho_\alpha(\beta)=d(\beta)r(\alpha)-d(\alpha)r(\beta)\,.\]
We impose it since it is necessary in the theorem ensuring the existence of proper good moduli spaces by Alper, Halpern--Leistner and Heinloth:

\begin{theorem}[{\cite[Theorem 7.27]{AHLH}, \cite[Theorem 3.43]{joyce}}]\label{thm: goodproper} Suppose that $\mathcal A$ is a good abelian category, $\mu$ is as in Assumption~\ref{ass: stability} and $\alpha\in C(\CA)_{\pe}$. Then $\FM_\alpha^\mu$ admits a proper good moduli space.
\end{theorem}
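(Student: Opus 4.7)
The plan is to derive the theorem by applying the existence criterion of Alper--Halpern-Leistner--Heinloth \cite[Theorem 7.27]{AHLH}, verifying each of its hypotheses from Assumption \ref{ass: stability}. First I would check the setup: the stack $\FM_\alpha^\mu$ is a quasi-compact open substack of $\FM_\alpha$ by Assumption \ref{ass: stability}(1), and it inherits affine diagonal from $\FM_\CA$ by Proposition \ref{defprop: goodabelian}, so it is a finite-type stack with affine diagonal over $\BC$, which is precisely the setting in which the AHLH criterion applies.

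Next I would verify the two local conditions -- $\Theta$-reductivity and $S$-completeness -- which together guarantee the existence of a separated good moduli space. For $\Theta$-reductivity, the statement is that every filtration of a $\mu$-semistable object over the generic point of a DVR extends uniquely to the closed point; this follows from additivity (Assumption \ref{ass: stability}(3)), since $\mu$-semistability is detected by the closed condition $\rho_\alpha([E']) \leq 0$ on subobjects $E'\subseteq E$. For $S$-completeness, meaning uniqueness of associated graded objects under isotrivial degenerations of $\mu$-semistables, I would combine additivity with the existence of Jordan--H\"older filtrations (which hold because $\CA$ is $\mu$-Artinian and Noetherian), following the template of \cite[Section 7.3]{AHLH}.

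Finally, for properness of the good moduli space, I would use a valuative argument driven by the $\Theta$-stratification of Assumption \ref{ass: stability}(2): given a family over the punctured spectrum of a DVR, one first extends it to a family over the DVR (using that $\FM_\alpha$ is locally of finite type), and if the central fiber is not $\mu$-semistable, one replaces it using its HN filtration to produce a new family which sits in a lower stratum; the descending chain condition ensures that this process terminates at a $\mu$-semistable central fiber. The main obstacle will be the careful verification of the local valuative criteria in the non-standard setting, where $\CA$ need not be $\Coh(X)$ and $\mu$ is only a weak stability condition. What makes this tractable is precisely the additive formulation of Assumption \ref{ass: stability}(3), which reduces the verifications to statements about subobjects and quotients under degenerations, so that the arguments of \cite[Section 7]{AHLH} and \cite[Section 3]{joyce} apply with minimal modification.
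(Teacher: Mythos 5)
The paper does not prove this statement itself; it quotes it directly from \cite[Theorem 7.27]{AHLH} and \cite[Theorem 3.43]{joyce}, which is why Assumption \ref{ass: stability} is formulated exactly so that those results apply. Your sketch (AHLH existence criterion via $\Theta$-reductivity and $S$-completeness using additivity, plus Langton-style semistable reduction along the $\Theta$-stratification with the descending chain condition for properness) is a faithful outline of the cited proof, so it is essentially the same approach as the one the paper relies on.
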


The good moduli space $\CM_{\alpha}^\mu\to M_\alpha^\mu$ factors through the rigidification $\CM_{\alpha}^{\mu, \rig}\to M_\alpha^\mu$, and hence $\CM_{\alpha}^{\mu, \rig}, \FM_{\alpha}^{\mu, \rig}$ also admit the same good moduli space, see~\cite[Subsection 1.0.4]{HLverlinde}.

\subsection{Hearts of $D^b(X)$ and tilt stability}\label{subsec: tild}
The setup in this paper gives some new wall-crossing formulas for certain hearts of $D^b(X)$. We collect here some references where the technical assumptions made so far are verified. 

Let $X$ be a smooth projective variety and $\CA$ is the heart of some $t$-structure on $D^b(\CA)$. Assumption \ref{ass: good1} is shown to hold in \cite[Proposition 6.1.7]{HLstructure} provided that $\CA$ is Noetherian and bounded with respect to the standard $t$-structure. Indeed, we may take $\CC$ to be the heart of the induced $t$-structure on $D(\QCoh(X))$. 

For Assumption \ref{ass: good2} we take $\bfT$ to be the dg enhancement of $D^b(X)$, which is saturated by \cite[Lemma 3.27]{TV07}. Establishing that $\CM_\CA$ is an open substack of the classical truncation of $\FM_{\bfT}$ is a non-trivial task, but there are techniques for doing so in many examples of interest\footnote{In all of these, the results are stated as $\CM_\CA$ being open inside Lieblich's \cite{lieblich} stack of complexes with $\Ext^{<0}(F, F)=0$. This is an open substack of $\CM_{D^b(X)}$, as pointed out in \cite[Corollary 3.21]{TV07}.}: see for example \cite[Section 4]{PiyToda}, \cite[Theorem A.3]{arcarabertram}, \cite[Proposition 6.2.7]{HLstructure} and \cite[Proposition 4.6]{BCR}.

One particular way to obtain non-standard $t$-structures of $D^b(X)$ is to tilt the standard $t$-structure with respect to a torsion pair. Even more concretely, we may do this by tilting with respect to a stability condition on $\Coh(X)$, as in \cite[Section 3]{BMTbogomolov}, which we refer to for further details. Given $\BR$-divisors $\omega, B$ with $\omega$ ample, there is a $t$-structure on $D^b(X)$ with heart $\CA_{\omega, B}$ and a stability condition $\nu_{\omega, B}$ on $\CA_{\omega, B}$. When $X$ is a surface, $(\CA_{\omega, B}, \nu_{\omega, B})$ is a Bridgeland stability condition; if $X$ is a $3$-fold then Bridgeland stability conditions can (conjecturally, but known in many cases) be obtained by tilting $\CA_{\omega, B}$ once again. The fact that $\CA_{\omega, B}$ is a good abelian category follows from the results mentioned above, in particular \cite[Proposition 5.2.2]{BMTbogomolov}, \cite[Proposition 6.1.7]{HLstructure} and \cite[Theorem A.3]{arcarabertram}.

The existence of Harder--Narasimhan filtrations with respect to $\nu_{\omega, B}$ is shown in \cite[Lemma 3.2.4]{BMTbogomolov}. For Assumption \ref{ass: stability} we take the permissible classes $C(\CA_{\omega, B})$ to be the ones with $\nu_{\omega, B}(\alpha)<\infty$; these are thought of as torsion-free classes in $\CA_{\omega, B}$. The stacks $\CM_{\alpha}^{\nu_{\omega, B}}$ being finite type and open in $\CM_\alpha$ is established in \cite{todaK3, PiyToda}; see also \cite[Proposition 6.2.7]{HLstructure}. 

The finiteness condition (5) in Assumption \ref{ass: stability} is known to hold if $X$ is a surface or a 3-fold satisfying the \cite{BMTbogomolov} inequality, see \cite[Conjecture 4.1]{BMS}. For the case of 3-folds see \cite[Proposition 4.1]{FTr0}. Finally, the existence of $\Theta$-stratifications is also known when $X$ is a surface or a 3-fold satisfying the BMT inequality by using \cite[Theorem 6.5.3]{HLstructure}. Regarding the assumptions in loc. cit.:
\begin{enumerate}
    \item Since the set of walls is locally finite \cite[Proposition 12.5]{BMS} we may assume that $\omega, B$ are $\BQ$-divisors.
    \item Generic flatness in this example follows from \cite[Proposition 4.11]{PiyToda}.
    \item Boundedness of quotients for surfaces follows from \cite[Proposition~3.15]{todaK3}.  While \cite[Proposition~3.15]{todaK3} is only for Bridgeland stability conditions, a modification of its proof using \cite[Theorem C.5]{FTr0} shows boundedness of quotients for 3-folds satisfying the BMT inequality.
\end{enumerate}

The last point that is necessary to address is quasi-smoothness. When $X$ is a surface with nef anticanonical this is shown in \cite[Lemma 7.8]{lmquivers}. For Fano 3-folds this will be proved in forthcoming work by the first author. We summarize the discussion above in the following proposition:

\begin{proposition}\label{prop: tilt}
Let $X$ be a surface with nef anticanonical or a Fano 3-fold satisfying the BMT inequality. Then Assumptions \ref{ass: good1}, \ref{ass: good2} and \ref{ass: stability} hold for $(\CA_{\omega, B}, \nu_{\omega, B})$, with the permissible classes being the ones satisfying $\nu_{\omega, B}(\alpha)<\infty$. 
\end{proposition}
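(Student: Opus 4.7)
The plan is to verify, one-by-one, each of the technical conditions (Assumptions \ref{ass: good1}, \ref{ass: good2}, and \ref{ass: stability}) for the tilt data $(\CA_{\omega, B}, \nu_{\omega, B})$. The paragraphs preceding the statement already trace the relevant references, so the proof is essentially a bookkeeping exercise collecting together these citations into one place; no single step is new.

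First I would dispatch Assumption \ref{ass: good1} by taking $\CC$ to be the heart, on $D(\QCoh(X))$, of the tilt of the standard $t$-structure, so that $\CA_{\omega, B}=\CC^{\textup{pe}}$: cocompleteness and local Noetherianity of $\CC$ come from \cite[Proposition 6.1.7]{HLstructure}, while Noetherianity of the heart $\CA_{\omega, B}$ itself is \cite[Proposition 5.2.2]{BMTbogomolov}. For Assumption \ref{ass: good2} I would take $\bfT$ to be the dg enhancement of $D^b(X)$, saturated by \cite[Lemma 3.27]{TV07}, and then invoke one of \cite[Theorem A.3]{arcarabertram}, \cite[Proposition 6.2.7]{HLstructure} or \cite[Proposition 4.6]{BCR} to obtain openness of $\CM_{\CA_{\omega, B}}$ inside the classical truncation of $\FM_{\bfT}$.

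The bulk of the proof is verifying the five clauses of Assumption \ref{ass: stability} with the permissible set being $\{\alpha : \nu_{\omega, B}(\alpha)<\infty\}$. Existence of Harder--Narasimhan filtrations is \cite[Lemma 3.2.4]{BMTbogomolov}. Clause (1): openness of $\FM_\alpha^{\nu_{\omega, B}}$ in $\FM_\alpha$ and its quasi-compactness follow from \cite{todaK3, PiyToda} together with \cite[Proposition 6.2.7]{HLstructure}, while quasi-smoothness is \cite[Lemma 7.8]{lmquivers} in the surface case with nef anticanonical and forthcoming work of the first author in the Fano 3-fold case. Clause (2): I would apply \cite[Theorem 6.5.3]{HLstructure} after checking its hypotheses, namely local finiteness of walls via \cite[Proposition 12.5]{BMS} (which allows reduction to $\BQ$-divisors), generic flatness via \cite[Proposition 4.11]{PiyToda}, and boundedness of quotients via \cite[Proposition 3.15]{todaK3} for surfaces, extended to BMT-satisfying 3-folds by combining the same proof with \cite[Theorem C.5]{FTr0}. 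Clause (3): writing $\nu_{\omega, B}=d/r$ with $d, r$ linear, the homomorphism $\rho_\alpha(\beta)=d(\beta)r(\alpha)-d(\alpha)r(\beta)$ exhibits equivalence with an additive stability in the sense of Definition \ref{def: additive}. Clause (4) is immediate: if $E$ is $\nu_{\omega, B}$-semistable with $[E]=\alpha$ permissible and $E'\subseteq E$ in $\CA_{\omega, B}$, semistability forces $\nu_{\omega, B}(E')\leq \nu_{\omega, B}(E)<\infty$, so $[E']$ is permissible. Clause (5) is the support-property-type finiteness asserted by \cite[Conjecture 4.1]{BMS}, known for surfaces and established for Fano 3-folds satisfying BMT in \cite[Proposition 4.1]{FTr0}.

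The only genuinely new step is the quasi-smoothness in clause (1) for Fano 3-folds, which rests on a forthcoming paper of the first author; every other item is a direct citation to the literature. The main obstacle therefore lies outside this proposition, in that promised work, while the proposition itself is a streamlined assembly of known results.
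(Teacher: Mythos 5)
Your proposal is correct and follows essentially the same route as the paper: the paper's proof of Proposition \ref{prop: tilt} is exactly the preceding discussion in Section \ref{subsec: tild}, which assembles the same citations you list (Assumption \ref{ass: good1} via \cite[Proposition 6.1.7]{HLstructure} and \cite[Proposition 5.2.2]{BMTbogomolov}, Assumption \ref{ass: good2} via \cite{TV07} plus the openness references, HN filtrations via \cite[Lemma 3.2.4]{BMTbogomolov}, clause (2) via \cite[Theorem 6.5.3]{HLstructure} with the same three hypothesis checks, clause (5) via \cite{BMS} and \cite[Proposition 4.1]{FTr0}, and quasi-smoothness via \cite[Lemma 7.8]{lmquivers} resp. the first author's forthcoming work). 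Your explicit verifications of clauses (3) and (4), which the paper leaves implicit, are fine.
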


\section{Preliminaries: $K$-theory and $K$-homology}
\label{sec: Khomology}

\subsection{$K$-theory of stacks}
\label{subsec: Ktheorystacks}

Let $\FX$ be a derived Artin stack. We briefly review the algebraic $K$-theory of $\FX$, following mostly \cite{khan}, and recall some results that we will need. Denote by $D\QCoh(\FX)$ the (unbounded) category of quasi-coherent complexes on $\FX$ and by $\QCoh(\FX)$ its heart. Inside $D\QCoh(\FX)$ we have the subcategories of perfect and coherent complexes which we denote as
\[\Perf(\FX)\textup{ and }D^b\Coh(\FX)\,.\]
A complex is perfect/coherent if it is perfect/coherent on smooth affine charts (see \cite[Definition 1.5]{khan}). The $t$-structure on $D\QCoh(\FX)$ restricts to $D^b\Coh(\FX)$ and its heart is the abelian category of coherent sheaves $\Coh(\FX)$. A complex being in $D^b\Coh(\FX)$ is equivalent to having coherent cohomology groups, all but finitely many being 0.

When the structure sheaf $\CO_\FX$ is bounded, in the sense that $H^i(\CO_{\FX})=0$ for all but finitely many $i$, there is an inclusion 
\[\Perf(\FX)\subseteq D^b\Coh(\FX)\,.\]
\begin{proposition}[{\cite[Corollary 6.1]{khan}}]\label{prop: quasismoothObounded}
    If $\FX$ is quasi-smooth then $\CO_\FX$ is bounded.
\end{proposition}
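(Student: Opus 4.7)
The plan is a local-to-global argument based on the local structure theory of quasi-smooth derived stacks. First, I would reduce to the case of a derived affine scheme. Boundedness of $\CO_\FX$ can be checked after pulling back along a smooth surjection $p\colon \FU\to \FX$ from a derived scheme, because $p$ is $t$-exact (being flat) and faithful on $\QCoh$, and because $p^\ast\CO_\FX\simeq \CO_\FU$. Since quasi-smoothness is stable under smooth base change (the cotangent complex pulls back and Tor-amplitude in $[-1,1]$ is preserved), $\FU$ is again quasi-smooth. Further Zariski-localizing, we may assume $\FX=\Spec A$ is a quasi-smooth derived affine scheme of finite presentation.

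Next, I would invoke the local structure theorem for quasi-smooth derived schemes: any such $\FX$ admits, Zariski-locally on its classical truncation, a presentation as a derived zero locus. That is, there exist a smooth affine scheme $U$, a vector bundle $E$ of finite rank $r$ on $U$, and a section $s\in \Gamma(U, E)$ such that
\[
\FX \;\simeq\; U\times^h_{E} U,
\]
where the two maps are $s$ and the zero section. This follows from the fact that quasi-smoothness forces $\mathbb{L}_\FX$ to be, locally, the cone of a two-term complex of vector bundles $E^\vee[1]\to \mathbb{L}_U|_\FX$ which can be integrated by choosing generators of $H^{-1}(\mathbb{L}_{\FX/U})$.

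In this Koszul presentation, the structure sheaf is computed explicitly by the Koszul complex
\[
\CO_\FX \;\simeq\; \bigl(\Lambda^r E^\vee \to \Lambda^{r-1}E^\vee \to \cdots \to E^\vee \to \CO_U\bigr),
\]
with differential given by contraction against $s$, placed in cohomological degrees $[-r,0]$. In particular, its cohomology vanishes outside this range, so $\CO_\FX$ is bounded on the chart. The local rank $r$ is controlled by the rank of the virtual conormal bundle $H^{-1}(\mathbb{L}_\FX)$, which is a locally constant function on $\FX$ since $\mathbb{L}_\FX$ is perfect; hence on each connected component the range $[-r,0]$ is uniform.

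The main obstacle I anticipate is the passage from this local vanishing statement to a genuinely uniform global bound: a priori the rank $r$ of the Koszul model could jump between connected components, so one must argue that either quasi-compactness (which is typically present in the situations this proposition is applied to in the paper) or the locally constant nature of $\operatorname{rk} H^{-1}(\mathbb{L}_\FX)$ is enough to patch the local bounds. Once the presentation is in hand, however, the Koszul computation is routine, so the entire content of the statement is really the local structure theorem for quasi-smooth derived schemes.
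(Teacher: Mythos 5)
The paper offers no proof of this proposition at all: it is quoted directly from Khan, and the argument there is essentially the one you outline — reduce along a smooth atlas to a quasi-smooth derived affine scheme, present it Zariski-locally as the derived zero locus of a section of a rank-$r$ vector bundle on a smooth affine, and read off boundedness from the Koszul complex computing $\CO_\FX$ in cohomological degrees $[-r,0]$. So your route is the intended one, and the reduction step and the local Koszul computation are correct.

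The one justification that is wrong is the final patching remark: $H^{-1}(\mathbb{L}_{\FX})$ is in general only a coherent sheaf, and its fibre dimension is upper semicontinuous, not locally constant (what is locally constant is the virtual rank of the perfect complex $\mathbb{L}_{\FX}$, i.e.\ $\rk H^{0}-\rk H^{-1}$ at each point). The correct way to get a uniform bound is the one you mention first: on a quasi-compact (e.g.\ finite type) stack a perfect complex has globally bounded Tor-amplitude, so the local Koszul rank $r$ is bounded, and this covers every use of the proposition in the paper, since it is applied to the finite type stacks $\FM_\alpha^{\mu}$ and their centers. Without quasi-compactness the literal global statement can fail: the disjoint union over $n\geq 1$ of the derived zero loci of the zero section of $\CO^{\oplus n}$ on a point is quasi-smooth, locally of finite type, with affine diagonal, yet has $H^{-n}(\CO)\neq 0$ on its $n$-th component. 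So boundedness should be read on quasi-compact opens (or quasi-compactness should be assumed), rather than deduced from any local constancy of $\rk H^{-1}(\mathbb{L}_{\FX})$.
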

The algebraic $K$-groups of $\FX$ are defined as follows:
\begin{definition}\label{def: KG}
    Let $\FX$ be a derived stack. We let
    \begin{align*}
        K^\ast(\FX)&=K_0(\Perf(\FX))\,,\\
        G(\FX)&=K_0(\Coh(X))=K_0(D^b\Coh(X))\,
     \end{align*}
     where $K_0$ denotes the Grothendieck group of a category.
\end{definition}

The notation $K^\ast$ is not standard, but we will use it to indicate that we think of $K^\ast$ as a cohomology theory. When $\CO_\FX$ is bounded, there is a canonical morphism $K(\FX)\to G(\FX)$ induced by the inclusion of the respective categories; if $\FX$ is regular this map is an isomorphism. It it shown in \cite[Corollary 3.4]{khan} that if $\FX$ is Noetherian then $G$-theory is insensitive to the derived structure, i.e. the inclusion of the classical truncation $\iota\colon \CX\coloneqq \FX_{\cl}\to \FX$ induces an isomorphism
\[\iota_\ast\colon G(\CX)\to G(\FX)\,.\]
The analogous statement is not true in general for $K^\ast(\FX)$ \cite{annala}.

\subsubsection{Operations in $K$ and $G$ theory}
\label{subsec: operationsKG}

Given a morphism of derived stacks $f\colon \FX\to \FY$ we have a derived pullback and a derived pushforward
\[Lf^\ast \colon D\QCoh(\FY)\to D\QCoh(\FX)\textup{ and }Rf_\ast \colon D\QCoh(\FX)\to D\QCoh(\FY)\,.\]
The pullback $Lf^\ast$ always preserves perfect complexes. If $f$ is of finite Tor amplitude then it also preserves coherent complexes. Hence we have pullback morphisms
\begin{align*}
f^\ast&\colon K^\ast(\FY)\to K^\ast(\FX) \\
f^\ast&\colon G(\FY)\to G(\FX) \quad \textup{($f$ has finite Tor amplitude)}
\end{align*}
If $f$ is proper and representable, the pushforward also preserves coherent complexes and, under some extra mild conditions, perfect complexes. Hence, we have pushforward maps
\begin{align*}
f_\ast &\colon K^\ast(\FX)\to K^\ast(\FY) \quad \textup{($f$ proper, representable,} \\
& \hspace{4.5cm} \textup{finite Tor amplitude, locally finite type)}\\
f_\ast & \colon G(\FX)\to G(\FY) \quad\quad \textup{($f$ proper, representable)}
\end{align*}

There are some situations where pushforwards exist even without the map being proper, see \cite{HLpreygel}. A particularly important one is the following:

\begin{proposition}[{\cite[Theorem 4.16 (x)]{alper}}]\label{prop: pushforwardgoodmoduli}
    Let $\CX$ be the classical truncation of ~$\FX$. If $f\colon \CX\to X$ is a good moduli space and $\CX$ is Noetherian, then $f_\ast$ preserves coherent complexes, and in particular there is a well-defined pushforward
    \[f_\ast\colon G(\FX)\simeq G(\CX)\to G(X)\,.\]
    The same is true if $X$ is not necessarily an algebraic space but $f$ still satisfies the two conditions in the definition of good moduli space.
\end{proposition}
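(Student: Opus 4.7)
The plan is to reduce preservation of coherent complexes to preservation of coherent sheaves using exactness, and then verify coherence étale-locally on $X$ by appealing to a GIT-type local presentation together with classical invariant theory.

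First, condition (2) in the definition of a good moduli space says that $f_\ast$ is exact on $\QCoh(\CX)$. Hence the derived functors satisfy $R^p f_\ast = 0$ for $p>0$, so the hypercohomology spectral sequence $E_2^{p,q} = R^p f_\ast H^q(\CF) \Rightarrow H^{p+q}(Rf_\ast \CF)$ degenerates and gives $H^q(Rf_\ast \CF) \simeq f_\ast H^q(\CF)$ for every $\CF \in D^b\Coh(\CX)$. Boundedness of $Rf_\ast\CF$ is automatic from boundedness of $\CF$, so it suffices to show that $f_\ast$ sends $\Coh(\CX)$ into $\Coh(X)$.

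Second, since $\CX$ is Noetherian and $f$ is quasi-compact, $X$ is Noetherian as well, and the question of coherence of $f_\ast \CF$ can be checked in an étale neighborhood of any point of $X$. By the local structure theorem for good moduli spaces (due to Alper, refined in subsequent work of Alper--Hall--Rydh), such a neighborhood takes the form $[\Spec A/G] \to \Spec A^G$, where $G$ is a linearly reductive group acting on a finitely generated $\BC$-algebra $A$. In this local model a coherent sheaf on $[\Spec A/G]$ is the same data as a finitely generated $G$-equivariant $A$-module $M$, and $f_\ast$ is the invariants functor $M \mapsto M^G$ regarded as an $A^G$-module.

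Third, the problem then reduces to the classical statement that $M^G$ is finitely generated over $A^G$ whenever $G$ is linearly reductive, $A$ is finitely generated over $\BC$, and $M$ is a finitely generated $G$-equivariant $A$-module. This follows from Hilbert's finiteness theorem (for Noetherianity and finite generation of $A^G$) combined with exactness of the invariants functor, equivalently the Reynolds operator, which lets one lift finitely many generators of $M^G$ from an equivariant surjection $V\otimes_\BC A \twoheadrightarrow M$ coming from a finite-dimensional $G$-subrepresentation $V \subseteq M$. The main obstacle is really the étale-local presentation of step two: this is the nontrivial bridge between the abstract definition of a good moduli space (formulated purely via exactness of $f_\ast$ and $\CO_X \simeq f_\ast \CO_\CX$) and a concrete GIT picture in which invariant theory applies. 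Once such a presentation is granted, the rest is either formal or classical, and the extension to the case where $X$ is a stack satisfying (2) and (3) rather than an algebraic space is handled by repeating the argument after base change to a smooth atlas of $X$.
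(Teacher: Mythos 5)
Your argument is essentially correct, but it takes a genuinely different route from the paper, which offers no proof at all: the statement is quoted directly from Alper's Theorem 4.16(x), whose original proof establishes coherence of $f_\ast\CF$ directly from the definition of a good moduli space (a d\'evissage/finite-generation argument on the affine target), with no recourse to local structure theory. Your route instead reduces to coherent sheaves via exactness (fine), then invokes the Alper--Hall--Rydh \'etale-local structure theorem to present $\CX\to X$ \'etale-locally as $[\Spec A/G]\to \Spec A^G$ with $G$ linearly reductive, and finishes with classical invariant theory. This buys a very concrete picture, but it is logically heavier and needs extra hypotheses that Alper's theorem does not: the AHR presentation requires affine diagonal and (locally) finite type over $\BC$ -- satisfied under this paper's standing conventions, but not part of the proposition as stated -- whereas Alper's argument works for any locally Noetherian $\CX$ with a good moduli space. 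Two of your intermediate assertions also deserve more care than "classical": (i) Noetherianity of $X$ is itself one of Alper's results (Theorem 4.16(viii)), not an immediate consequence of quasi-compactness; and (ii) in the last step, after applying exactness to $V\otimes_\BC A\twoheadrightarrow M$ you still need that the module of covariants $(V\otimes_\BC A)^G$ is finitely generated over $A^G$, which is the same kind of statement you are trying to prove -- the standard fix is to apply Hilbert's theorem to the finitely generated graded algebra $\Sym_A(M)$ (or $A\oplus M$ as a square-zero extension) and read off the degree-one piece. Finally, for the case where $X$ is only a stack satisfying conditions (2) and (3), your reduction to a smooth atlas is reasonable, but one should note explicitly that exactness of the pushforward and $\CO\simeq f_\ast\CO$ are preserved under the flat base change $U\to X$ (Alper's base-change results); in the situations the paper actually uses this for (rigidification $\BG_m$-gerbes), that verification is immediate.
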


Another example is Proposition \ref{prop: Ktheoryrig} below, concerning rigidification maps. Since the tensor product restricts to $\Perf(\FX)$, there is a product
\[K^\ast(\FX)\otimes K^\ast(\FX)\to K^\ast(\FX)\,.\]
Tensoring by a perfect complex preserves coherent complexes, which endows $G(\FX)$ with a $K^\ast(\FX)$-module structure
\[\cap \colon K^\ast(\FX)\otimes G(\FX)\to G(\FX)\,.\]
The map from $K$-theory to $G$-theory when $\CO_\FX$ is bounded is precisely $-\cap [\CO_\FX]$.

\subsubsection{Exterior powers}\label{subsubsec: alternating}
\label{subsubsec: exterior}

Exterior powers $\Lambda^n(-)$ endow $K^\ast(\FX)$ with the structure of a $\lambda$-ring. We let
\begin{equation}\label{eq: lambdau}
    \Lambda_{-u}(V)=\sum_{n\geq 0}(-u)^n\Lambda^n(V)\in K^\ast(\FX)\llbracket u \rrbracket\,.\end{equation}
A common theme in the paper will be how to expand $\Lambda_{-u}(V)$ in different directions. We regard \eqref{eq: lambdau} above as the ``expansion of $\Lambda_{-u}(V)$ around $u=0$''. 

\begin{lemma}\label{lem: symmetryVB}
    If $V$ is a vector bundle, then 
    \begin{equation}
        \label{eq: uu-1lambda}
    \Lambda_{-u}(V)=(-u)^{\rk(V)}\Lambda_{-u^{-1}}(V^\vee)\otimes \det(V)\,\end{equation}
    in $K^\ast(\FX)[u]$.
\end{lemma}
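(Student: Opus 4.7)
The plan is to reduce to the classical isomorphism for a rank $r$ vector bundle $V$,
\[\Lambda^n(V)\cong \Lambda^{r-n}(V^\vee)\otimes \det(V)\,,\]
which comes from the perfect pairing $\Lambda^n V\otimes \Lambda^{r-n}V\to \det(V)$. Since the question is about $K$-theory classes of vector bundles, this isomorphism passes to $K^\ast(\FX)$. Because $V$ has rank $r$, we also have $\Lambda^n(V)=0$ for $n>r$, so $\Lambda_{-u}(V)$ is in fact a polynomial in $u$ of degree $\leq r$.

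With these inputs the proof is a direct computation. Write $r=\rk(V)$ and expand using the definition:
\[\Lambda_{-u}(V)=\sum_{n=0}^{r}(-u)^n\Lambda^n(V)=\sum_{n=0}^{r}(-u)^n\Lambda^{r-n}(V^\vee)\otimes\det(V)\,.\]
Substituting $m=r-n$ gives
\[\Lambda_{-u}(V)=(-u)^r\sum_{m=0}^{r}(-u)^{-m}\Lambda^m(V^\vee)\otimes\det(V)=(-u)^r\Lambda_{-u^{-1}}(V^\vee)\otimes\det(V)\,,\]
using that $(-u)^{-m}=(-1)^m u^{-m}=(-u^{-1})^m$ for integer $m\geq 0$.

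The only mildly subtle point is the interpretation of the right-hand side: a priori $\Lambda_{-u^{-1}}(V^\vee)$ lives in $K^\ast(\FX)[u^{-1}]$, but the factor $(-u)^r$ exactly cancels the negative powers because the sum on the right has only $r+1$ terms (as $\Lambda^m(V^\vee)=0$ for $m>r$). Hence the product lies in $K^\ast(\FX)[u]$ as claimed, so there is no issue with the ambient ring. No localization or non-abelian machinery is needed here; the statement is formal once one knows the vector-bundle duality for exterior powers.
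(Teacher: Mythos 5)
Your proof is correct and takes essentially the same route as the paper: both reduce the identity to the isomorphism $\Lambda^n(V)\simeq \Lambda^{r-n}(V^\vee)\otimes\det(V)$ coming from the perfect pairing $\Lambda^n V\otimes\Lambda^{r-n}V\to\det(V)$, and then reindex the sum. Your explicit substitution $m=r-n$ and the remark on why everything lands in $K^\ast(\FX)[u]$ simply spell out what the paper leaves implicit.
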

\begin{proof}
Let $r=\rk(V)$. This follows from the fact that
\[\Lambda^n(V)\otimes \Lambda^{r-n}(V)\to \Lambda^r(V)=\det(V)\]
defines a perfect pairing, and hence an isomorphism
\[\Lambda^n(V)\simeq \Lambda^{r-n}(V)^\vee\otimes \det(V)\simeq \Lambda^{r-n}(V^\vee)\otimes \det(V)\,.\qedhere\]
\end{proof}
\begin{remark}\label{rmk: inverseeulerclass}
    When $V$ is a vector bundle the class
    \[\Lambda_{-1}(V^\vee)=\sum_{n=0}^{\rk V}(-1)^n\Lambda^n(V^\vee)\]
    plays the role of the inverse Euler class of $V$, for example in the context of torus localization  \cite[Section 2.3]{okounkovlecturesK}
\end{remark}

When $V$ is not a vector bundle, the right hand side of \eqref{eq: uu-1lambda} -- which is a Laurent series in $u^{-1}$ -- might be interpreted as the ``expansion of $\Lambda_{-u}(V)$ around $u=\infty$''. This might be made precise when $V\in K^\ast(\FX)$ can be written as an alternating sum of line bundles $V=\sum_{i=1}^m \epsilon_i L_i$ with $\epsilon_i\in \{-1,1\}$. Then we have an equality 
\begin{align*}\label{eq: lineTSigma}
\Lambda_{-u}(V)&=\prod_{i=1}^m (1-u L_i)^{\epsilon_i}=\prod_{i=1}^m\big( (1-u^{-1} L_i^\vee)^{\epsilon_i}(-uL_i)^{\epsilon_i}\big)\\
&=(-u)^{\rk(V)}\Lambda_{-u^{-1}}(V^\vee)\otimes \det(V)
\end{align*}
as rational functions in $u$ with coefficients in $K^\ast(\FX)$, but the left hand side is the expansion in $K^\ast(\FX)\lpp u \rpp$ while the right hand side is the expansion in $K^\ast(\FX)\lpp u^{-1}\rpp$. 

The expansion around $u=1$,
\[\Lambda_{-u}(V)\in K^\ast(\FX)\lppbig(1-u)^{-1}\rppbig\,,\]
is also used in \cite[Definition 2.1.8]{Liu}. Unlike the previous two, its definition requires that we assume that $V$ can be represented as the difference between the classes of two vector bundles; this is always the case when $\FX$ has the resolution property, for example if $\FX$ is a smooth classical scheme. It is defined by using the splitting principle and expanding the rational function above around $u=1$. In particular, if $L$ is a line bundle,
\[\Lambda_{-u}(-L)=\sum_{n\geq 0}\frac{u^n}{(1-u)^{n+1}}(L-1)^n\,.\]
When $\FX=X$ is a scheme, $(L-1)^n=0$ for sufficiently large $n$ and the expression above is a Laurent polynomial in $1-u$. Indeed, we have the following:

\begin{lemma}[{\cite[2.1.8, 2.1.11]{Liu}}]\label{lem: symmetryexpansion}
    Let $X$ be a finite type scheme and $V\in K_0(\textup{Vect}(X))$. Then there is an element of $K_0(\textup{Vect}(X))[(1-u)^{\pm 1}]$ whose expansion in $u$ is 
    \[\Lambda_{-u}(V)\in K_0(\textup{Vect}(X))\llbracket u\rrbracket \]
    and whose expansion in $u^{-1}$ is 
    \[(-u)^{\rk(V)}\Lambda_{-u^{-1}}(V^\vee)\otimes \det(V)\in K_0(\textup{Vect}(X))\lpp u^{-1}\rpp\,.\]
\end{lemma}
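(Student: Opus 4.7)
My plan is to reduce to the case of (virtual) sums of line bundles via the splitting principle, and then exploit the fact that on a finite type scheme the class $1-L$ of any line bundle is nilpotent in $K_0$, so that formal inverses of factors of the form $1-uL$ can be turned into genuine Laurent polynomials in $(1-u)$.

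First I would write $V=V^+-V^-$ with $V^\pm$ vector bundles of ranks $r^\pm$, so that $\Lambda_{-u}(V)=\Lambda_{-u}(V^+)\cdot\Lambda_{-u}(V^-)^{-1}$; the first factor is already a polynomial in $u$, hence in $(1-u)$. To invert the second, I pass via an iterated flag bundle $f\colon Y\to X$ to a situation where $V^-=\bigoplus_j L_j$ splits, and $f^\ast\colon K_0(\textup{Vect}(X))\to K_0(\textup{Vect}(Y))$ is (split) injective. The essential identity is
\[1-uL_j=L_j(1-u)+(1-L_j)\,,\]
combined with the fact that $1-L_j$ lies in the nilpotent $\gamma$-filtration $F^1 K_0(Y)$ and satisfies $(1-L_j)^{\dim Y+1}=0$. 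Consequently the geometric series
\[\frac{1}{1-uL_j}=\sum_{n=0}^{\dim Y}(-1)^n\,\frac{(1-L_j)^n}{L_j^{\,n+1}(1-u)^{n+1}}\]
terminates, and multiplying these together with $\Lambda_{-u}(V^+)$ produces an explicit element $P\in K_0(\textup{Vect}(Y))[(1-u)^{\pm 1}]$.

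Next I would verify the two expansions by recognising that $P$ represents a single rational function in $u$. Using $(1-u)^{-k}=\sum_m\binom{m+k-1}{k-1}u^m$ recovers $\Lambda_{-u}(V)\in K_0(\textup{Vect}(Y))[[u]]$ directly from the geometric series. For the $u^{-1}$-expansion I would rewrite each factor on $Y$ as $1-uL=(-u)L(1-u^{-1}L^\vee)$ and multiply across; collecting $\prod_i L_i^+/\prod_j L_j=\det V$ and $r^+-r^-=\rk V$ yields precisely $(-u)^{\rk V}\Lambda_{-u^{-1}}(V^\vee)\otimes\det V$, as required.

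Finally, to descend from $Y$ back to $X$: the polynomial $(1-u)^N P\in K_0(\textup{Vect}(Y))[u]$ has $u$-coefficients which, via Pascal's triangle, are finite integer linear combinations of the $u$-coefficients of $(1-u)^N\Lambda_{-u}(V)\in K_0(\textup{Vect}(X))[[u]]$, hence already lie in $K_0(\textup{Vect}(X))$. Injectivity of $f^\ast$ then upgrades this to $P\in K_0(\textup{Vect}(X))[(1-u)^{\pm 1}]$. The main obstacle is the nilpotence of $1-L$: this is the only place the finite-type hypothesis on $X$ is used, and without it the geometric series would not terminate, forcing us into formal Laurent series in $(1-u)$ rather than honest Laurent polynomials. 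The remaining steps (splitting, matching rational expressions, and the integer-linear descent) are formal once this nilpotence is in hand.
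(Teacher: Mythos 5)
Your argument is correct and is essentially the paper's (and Liu's) approach: the lemma is reduced by the splitting principle to the line-bundle identity $\Lambda_{-u}(-L)=\tfrac{1}{1-uL}$, which becomes a genuine Laurent polynomial in $1-u$ because $1-L$ is nilpotent on a finite type scheme, exactly as in the expansion $\Lambda_{-u}(-L)=\sum_{n\geq 0}\frac{u^n}{(1-u)^{n+1}}(L-1)^n$ stated just before the lemma. Your extra care with the flag-bundle descent and the matching of the two expansions only fills in details that the paper delegates to the citation of Liu.
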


\subsubsection{$K$-theory of $B\BG_m$ and rigidifications}\label{sec: Kthrig}

A quasi-coherent sheaf on $B\BG_m$ corresponds to a representation of $B\BG_m$ on a (possibly infinite dimensional) vector space $V$. Such representation is coherent if and only if it is perfect if and only if $V$ is finite. The $K$-theory of $B\BG_m$ is isomorphic to
\[K^\ast(B\BG_m)\simeq R(\BG_m)\simeq \BZ[u^{\pm 1}]\]
as a ring, where $R(G)$ denotes the representation algebra of a group $G$ and $u$ is the class of the standard representation corresponding to the weight 1 action of $\BG_m$ on $\BC$. The cohomology groups of a quasi-coherent sheaf $V$ are given by
\[H^i(B\BG_m, V)=\begin{cases}V^G&\textup{ if }i=0\\
0&\textup{otherwise}
\end{cases}\]
In particular, if $\pi\colon B\BG_m\to \pt$ then the functor $\pi_\ast=R\pi_\ast$ is exact and preserves perfect complexes. The pushforward
\[\BZ[u^{\pm 1}]\simeq K^\ast(B\BG_m)\xrightarrow{\pi_\ast} K^\ast(\pt)\simeq \BZ\]
is identified with the operator $[u^0]$ of extracting the constant term of a Laurent polynomial.

Consider now a (derived) stack  $\FX$ equipped with a $B\BG_m$ action $\Psi\colon B\BG_m\times \FX\to \FX$. Given a complex $F$ on $\FX$, the $B\BG_m$ action induces a canonical weight decomposition
\[F=\bigoplus_{\mu\in \BZ} F_\mu\]
where
\[\Psi^\ast F=\bigoplus_{\mu\in \BZ} u^\mu\otimes F_\mu\,.\]
A complex is said to be of weight $\mu$ if $F=F_\mu$. The next proposition relates the $K$-theory of a stack and its rigidification (cf. Definition~\ref{def: rigstack}). 

\begin{proposition}\label{prop: Ktheoryrig}
The pullback
\[\pi^\ast \colon K^\ast(\FX^\rig)\to K^\ast(\FX)\]
is injective and its image are the weight 0 classes. Moreover, the pushforward
\[\pi_\ast \colon K^\ast(\FX)\to K^\ast(\FX^\rig)\]
is well defined and we have
\[\pi^\ast \pi_\ast F=F_0\,.\]
\end{proposition}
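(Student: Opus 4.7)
The plan is to exploit the homotopy cartesian square defining the rigidification together with the explicit description of $K^\ast(B\BG_m)$ already recalled in Section~\ref{sec: Kthrig}. The key structural fact is that $\pi$ is a $\BG_m$-gerbe, hence smooth, surjective, and étale-locally on $\FX^{\rig}$ isomorphic to a projection $B\BG_m \times U \to U$.

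First I would verify the statement about the image of $\pi^\ast$. Given $G \in K^\ast(\FX^\rig)$, base change along the cartesian square of Definition~\ref{def: rigstack} gives
\[
\Psi^\ast \pi^\ast G = p_2^\ast G,
\]
and $p_2^\ast G$ has trivial $u$-dependence, so $\pi^\ast G$ has weight $0$. Conversely, if $F \in K^\ast(\FX)$ has weight $0$, then $\Psi^\ast F = p_2^\ast F$, which is precisely a descent datum for the smooth cover $\pi$; by fppf descent for perfect complexes, $F$ comes from some class on $\FX^\rig$. Injectivity of $\pi^\ast$ is immediate from the fact that $\pi$ is faithfully flat (smooth surjective), so if $\pi^\ast G = 0$ then $p_2^\ast G = 0$ and therefore $G = 0$.

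Next I would construct $\pi_\ast$ and check it preserves perfect complexes. Since $\pi$ is a $\BG_m$-gerbe, it suffices to check this after smooth base change to a cover of $\FX^\rig$, where it reduces to the projection $\textup{pr} \colon B\BG_m \times U \to U$. The discussion preceding the proposition shows that pushforward along $B\BG_m \to \pt$ is exact and extracts the weight-$0$ part of any perfect complex; the same holds for $\textup{pr}$ by the projection formula. Hence $R\pi_\ast$ is exact on perfect complexes and lands in $\Perf(\FX^\rig)$, giving a well-defined map $\pi_\ast \colon K^\ast(\FX) \to K^\ast(\FX^\rig)$.

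Finally, the identity $\pi^\ast \pi_\ast F = F_0$ follows by base change along the cartesian square together with the computation just made. Writing $F = \bigoplus_\mu F_\mu$ and using $\Psi^\ast F = \bigoplus_\mu u^\mu \otimes F_\mu$, base change gives
\[
\pi^\ast \pi_\ast F = p_{2\ast} \Psi^\ast F = p_{2\ast}\Bigl(\bigoplus_{\mu} u^\mu \otimes F_\mu\Bigr) = F_0,
\]
since $p_{2\ast} u^\mu = 0$ for $\mu \neq 0$ and $p_{2\ast} u^0 = 1$. The main technical obstacle I expect is the justification of fppf descent for perfect complexes and the base change of $R\pi_\ast$ along the Cartesian square; both rely on $\pi$ being a $\BG_m$-gerbe and on standard properties of derived stacks with affine diagonal, but care must be taken because $\FX$ is derived. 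These inputs can be taken from the conventions recalled in Section~\ref{subsec: Ktheorystacks}, after which the above steps go through cleanly.
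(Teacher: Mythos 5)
Your overall strategy (base change along the cartesian square of Definition \ref{def: rigstack}, descent to identify the image of $\pi^\ast$, local triviality of the gerbe to see that $\pi_\ast$ preserves perfection, and base change again for $\pi^\ast\pi_\ast F=F_0$) is a reasonable hands-on substitute for what the paper does, namely citing the weight-decomposition equivalence $D\QCoh(\FX)\simeq \prod_\mu D\QCoh(\FX)_\mu$ and $\pi^\ast\colon D\QCoh(\FX^\rig)\isoto D\QCoh(\FX)_0$ with inverse $\pi_\ast$ from \cite{BSgerbs}, and then restricting to $\Perf$. But one step in your argument is genuinely wrong as stated: injectivity of $\pi^\ast$ does \emph{not} follow from $\pi$ being faithfully flat. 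Faithful flatness makes $\pi^\ast$ conservative on complexes, but pullback along a faithfully flat morphism can kill nonzero classes in $K_0$: for an elliptic curve $X$ with a $2$-torsion line bundle $L$ and the associated finite \'etale double cover $f\colon Y\to X$, the class $[L]-[\CO_X]\neq 0$ in $K_0(X)$ pulls back to $0$ since $f^\ast L\simeq\CO_Y$. So ``$\pi^\ast G=0$ implies $G=0$'' needs an argument specific to the gerbe situation. The repair is easy inside your own framework: since $\pi_\ast\CO_\FX=\CO_{\FX^\rig}$ (the weight-$0$ part of the structure sheaf), the projection formula gives $\pi_\ast\pi^\ast=\id$ on $K^\ast(\FX^\rig)$, so $\pi_\ast$ is a retraction of $\pi^\ast$ and injectivity follows; this is also immediate from the categorical equivalence the paper invokes.

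Two smaller points deserve more care. First, in the surjectivity-onto-weight-$0$ step, an identity $\Psi^\ast F=p_2^\ast F$ of K-theory classes is not a descent datum: you need a weight-$0$ \emph{perfect complex} representing the class (i.e., you are implicitly using that $\Perf(\FX)\simeq\bigoplus_\mu\Perf(\FX)_\mu$, so that weight-$0$ classes come from weight-$0$ complexes), and then the descent datum must include the full coherence data over the \v{C}ech nerve of $\pi$, not just a single isomorphism; the canonical weight-$0$ structure does supply this, but that is precisely the content of the decomposition/descent result the paper cites rather than something that follows from ``fppf descent for perfect complexes'' alone. Second, your local computation of $\pi_\ast$ is fine ($\BG_m$-gerbes are smooth, hence admit sections \'etale-locally, so \'etale-locally $\pi$ is $B\BG_m\times U\to U$ and pushforward extracts the weight-$0$ part), and the final base-change computation of $\pi^\ast\pi_\ast F=F_0$ is correct. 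With the injectivity step replaced by the retraction argument and the descent step phrased at the level of weight-$0$ complexes with their canonical equivariant coherences (or by simply invoking the decomposition theorem as the paper does), your proof goes through.
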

\begin{proof}
This follows essentially by \cite{BSgerbs}. There, it is shown that the weight decomposition induces an equivalence of categories
\begin{equation}\label{eq: qcohrigdecomposition}
D\QCoh(\FX)\simeq \prod_{\mu\in \BZ}D\QCoh(\FX)_\mu\end{equation}
where $D\QCoh(\FX)_\mu$ denotes the Serre subcategory of weight $\mu$ complexes. Moreover, \cite[Proposition 5.7]{BSgerbs} shows that $\pi^\ast=L\pi^\ast$ gives an equivalence 
\[D\QCoh(\FX^\rig)\isoto D\QCoh(\FX)_0\,\]
with inverse given by $\pi_\ast=R\pi_\ast$. It is clear that the decomposition \eqref{eq: qcohrigdecomposition} restricts to an isomorphism
\begin{equation*}
\Perf(\FX)\simeq \bigoplus_{\mu\in \BZ}\Perf(\FX)_\mu\,.\end{equation*}
From here all the statements in the proposition are clear.
\end{proof}

If $T$ is a $n$-dimensional torus then these statements generalize in a straightforward way. The $K$-theory of $BT$ is
\[K^\ast(BT)\simeq R(T)\simeq \BZ[\Lambda]\]
where $\Lambda\subseteq \mathfrak t^\vee$ is the weight lattice of the torus. A $BT$ action on $\FX$ induces a weight decomposition $F\simeq \bigoplus_{\mu\in \Lambda} F_\mu$ on any sheaf on $\FX$. A splitting $T\simeq \BG_m\times \ldots\times \BG_m$ induces a basis on $\Lambda$ and a corresponding isomorphism
\[K^\ast(T)\simeq \BZ[u_1^{\pm 1}, \ldots, u_n^{\pm 1}]\,.\]

\subsection{$K$-homology}\label{subsec: K-hom}

Under appropriate conditions, a moduli space $M$ with a virtual structure sheaf $\CO_M^\vir$ determines an ``integration'' functional $K^\ast(M)\to \BZ$ by taking the Euler characteristic:
\[V\mapsto \chi(M, \CO^\vir_M\otimes V)\,.\]
Liu defines in \cite{Liu} a notion of $K$-homology, which roughly is the space where such functionals live. The most naive thing would be to simply define $K$-homology as the dual to $K$-theory. However, the lack of a Kunneth decomposition creates issues when trying to define some algebraic structures on $K$-homology. This is solved by working with an operational theory that forces the existence of a Kunneth morphism.

\begin{definition}\label{def: K-hom}
Let $\FX$ be a derived stack. An element $\phi$ in the $K$-homology group $K_\ast(\FX)$ is a collection of $K^\ast(S)$-linear maps
\[\{\phi_S\colon K^\ast(\FX\times S)\to K^\ast(S)\}\]
for every derived stack $S$ which satisfies the following compatibility condition: for any morphism $h\colon S\to S'$, the diagram 
\begin{center}
\begin{tikzcd}
    K^\ast(\FX\times S')\arrow[d, "\phi_{S'}"]\arrow[r, "(\id\times h)^\ast"] & K^\ast(\FX\times S)\arrow[d, "\phi_{S}"]\\
     K^\ast(S')\arrow[r, "h^\ast"] & K^\ast(S)\end{tikzcd}
\end{center}
commutes.
\end{definition}

\begin{remark}
    There are two more conditions in the definition of $K$-homology in \cite{Liu}. The ``equivariant localization'' is irrelevant for us since we are not working equivariantly. The  ``finiteness condition'', on the other hand, is a fundamental difference and we really need to exclude it. For example, for us
    \[K_\ast(B\BG_m)\simeq \Hom_\BZ(K^\ast(B\BG_m), \BZ)\simeq \BZ^\BZ\,;\]
    with the finiteness condition, the $K$-homology of $B\BG_m$ is isomorphic to $\BZ[\phi]$ (cf. \cite[Proposition 2.3.5]{Liu}).
    We will further discuss this finiteness condition in Section \ref{subsec: LiuVA}.
\end{remark}

Note that, by taking $S=\pt$, a class in $K$-homology determines a functional $\phi_\pt\colon K^\ast(\FX)\to \BZ$. If $\FX$ has the property that the Kunneth map
\[K^\ast(\FX)\otimes K^\ast(S)\to K^\ast(\FX\times S)\]
is an isomorphism for every $S$, then $K_\ast(\FX)\simeq K^\ast(\FX)^\vee$. For most purposes, it is enough to think of a $K$-homology class as just a functional, so we will often -- when no subtlety arises -- do so for the sake of clarity of exposition.

We have the following structures on $K$-homology:

\textit{Cap product.} There is a cap product
    \[\cap\colon K^\ast(\FX)\otimes K_\ast(\FX)\to K_\ast(\FX)\]
    obtained from the tensor product on $K^\ast$. At the level of functionals, 
    \[(V\cap \phi)_\pt(W)=\phi_\pt(V\otimes W)\]
    for $V, W\in K^\ast(\FX)$.
    
\textit{Kunneth map.} For any $\FX, \FY$ there is a Kunneth map
    \[\boxtimes \colon K_\ast(\FX)\otimes K_\ast(\FY)\to K_\ast(\FX\times \FY)\,.\]
    It is defined by setting $(\phi\boxtimes \psi)_S$ to be the composition
    \[K^\ast(\FX\times \FY\times S)\xrightarrow{\phi_{\FY\times S}} K^\ast(\FY\times S) \xrightarrow{\psi_{S}} K^\ast(S)\,.\]
Note how this definition uses the operational definition in a fundamental way.

\textit{Pushforward.} Given an arbitrary map $f\colon \FX\to \FY$ there is a pushforward $f_\ast\colon K_\ast(\FX)\to K_\ast(\FY)$. At the level of functionals, this is the dual of the pullback on $K^\ast$.

\textit{Proper pullback.} If $f \colon \FX\to \FY$ is a proper, representable, finite Tor amplitude, locally finite type morphism, then there is a pullback $f^\ast\colon K_\ast(\FY)\to K_\ast(\FX)$. At the level of functionals, this is the dual of the pushforward on $K^\ast$. By Proposition~\ref{prop: Ktheoryrig} we also have pullbacks in $K_\ast$ along rigidification morphisms.

\textit{$G$ to $K_\ast$ and universal invariants.} Suppose that $\FX$ is Noetherian and admits a proper good moduli space. Then there is a canonical map
    \begin{equation}\label{eq: GtoKmorphism}
   G(\FX)\to K_\ast(\FX)\,. \end{equation}
At the level of functionals, it sends a coherent complex $C\in G(\FX)$ to the functional
    \[K^\ast(\FX)\ni V\mapsto \chi(\FX, V\cap C)\in \BZ\,.\]
The operational description requires some work. The coherent complex $C$ is sent to $\phi$ where 
\[\phi_S(V)=(Rp_{1})_\ast (V\otimes p_2^\ast C)\]
and $p_1, p_2$ are the projections of $\FX\times S$ onto $\FX$ and $S$ respectively. Using the fact that pushforward along the good moduli morphism preserves coherent complexes (cf. Proposition \ref{prop: pushforwardgoodmoduli}), it is not hard to see that the complex above is coherent, but being actually perfect requires a proof.

\begin{lemma}\label{lem: gtokhom}
Let as above $\FX$ be a Noetherian derived stack admitting a proper good moduli space, $V$ in $\Perf(\FX\times S)$ and $C$ in $D^b\Coh(\FX)$. Then  $(Rp_{1})_\ast (V\otimes p_2^\ast C)$ is in $\Perf(S)$.
\end{lemma}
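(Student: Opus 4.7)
The plan is to factor $p_1\colon \FX\times S\to S$ through the good moduli space $\phi\colon \FX\to X$ as
\[\FX\times S \xrightarrow{\phi\times \id_S} X\times S \xrightarrow{\pi} S\]
and establish perfectness of $(Rp_1)_\ast(V\otimes p_2^\ast C)$ by verifying the three characterizing properties: (i) pseudo-coherence, (ii) boundedness, and (iii) finite Tor amplitude over $S$.

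For (i) and (ii), I would first note that $V\otimes p_2^\ast C$ lies in $D^b\Coh(\FX\times S)$, since $V$ is perfect and $p_2^\ast C$ is bounded coherent (the projection $p_2$ is flat). The morphism $\phi\times \id_S$ is the base change of the good moduli space $\phi$ along the flat map $S\to \Spec \BC$, and good moduli spaces are stable under such base change; in particular the hypotheses of Proposition~\ref{prop: pushforwardgoodmoduli} apply to $\phi\times \id_S$, so that $(\phi\times\id_S)_\ast$ is exact on $\QCoh$ and preserves coherent complexes. Hence $(\phi\times\id_S)_\ast(V\otimes p_2^\ast C)$ lies in $D^b\Coh(X\times S)$. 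Since $\pi$ is proper between Noetherian algebraic spaces, $R\pi_\ast$ preserves bounded coherence, giving $(Rp_1)_\ast(V\otimes p_2^\ast C)\in D^b\Coh(S)$, which is in particular bounded and pseudo-coherent.

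For (iii), the input observation is that $V\otimes p_2^\ast C$ has finite Tor amplitude over $S$: locally on $\FX\times S$ the perfect complex $V$ is a bounded complex of vector bundles, all $\CO_S$-flat via $p_1$, while $p_2^\ast C$ is pulled back from $\FX$ along the flat morphism $p_2$ and is therefore also flat over $S$. To propagate finite Tor amplitude through the pushforward I would invoke the projection formula: for any $M\in \QCoh(S)$,
\[(Rp_1)_\ast(V\otimes p_2^\ast C)\otimes_{\CO_S}^L M \;\simeq\; (Rp_1)_\ast\bigl(V\otimes p_2^\ast C \otimes^L p_1^\ast M\bigr).\]
The inner complex is cohomologically concentrated in a range independent of $M$ by finite Tor amplitude of $V\otimes p_2^\ast C$, and $(Rp_1)_\ast$ itself has bounded cohomological amplitude, because $(\phi\times \id_S)_\ast$ is exact and $R\pi_\ast$ has amplitude bounded in terms of $\dim X$ (which is finite since $X$ is a proper algebraic space over $\BC$). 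Thus the left-hand side is bounded uniformly in $M$, which is finite Tor amplitude over $S$.

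The main obstacle is the technical verification of the inputs in the stacky derived setting: namely, that exactness and coherence preservation of $\phi_\ast$ from Proposition~\ref{prop: pushforwardgoodmoduli} really descend to the derived enhancement and survive the flat base change to $S$, and that the projection formula holds for the non-representable morphism $\phi\times \id_S$. Both of these follow from standard six-functor inputs, but packaging them correctly, rather than any combinatorial estimate, is the delicate part of the argument.
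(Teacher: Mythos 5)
Your factorization of $p_1$ through the base-changed good moduli morphism $\FX\times S\to X\times S\to S$ is exactly the paper's, and your use of exactness of the good-moduli pushforward plus properness of $X$ is the right mechanism. But there is a genuine gap in how you certify perfectness at the end: you route everything through \emph{absolute} coherence, claiming $V\otimes p_2^\ast C\in D^b\Coh(\FX\times S)$, then $D^b\Coh(X\times S)$, then $D^b\Coh(S)$ "since $\pi$ is proper between Noetherian algebraic spaces." In this lemma $S$ is an \emph{arbitrary} derived stack -- it is the test object in the operational definition of $K$-homology -- so $S$, $X\times S$ and $\FX\times S$ are in general neither Noetherian nor finite type, coherence and $D^b\Coh$ are not available there, proper pushforward preserving bounded coherence does not apply, and "bounded coherent on $S$" would in any case not give pseudo-coherence without Noetherian hypotheses. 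Only $\FX$ (and hence $X$) carries finiteness assumptions; all the finiteness you are allowed to use is \emph{relative} to $S$. This is precisely why the paper's proof works with $S$-perfect (relatively perfect) complexes: $p_2^\ast C$ is $S$-perfect, tensoring with the perfect $V$ preserves $S$-perfection, pushforward along the good moduli map $\CX\times S\to X\times S$ preserves $S$-perfection (projection formula plus exactness, stable under base change), and then the proper, flat, finitely presented projection $X\times S\to S$ sends $S$-perfect complexes to perfect ones (the Stacks project lemma 0DJT), which packages your steps (i)--(iii) correctly without ever needing finiteness of $S$.

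A second, smaller omission: you work directly on the derived stack $\FX$, but the references you would need (relative perfection, 0DI4/0DJT-type statements, cohomological amplitude of proper pushforward) are stated for classical schemes and algebraic spaces. The paper first reduces to the classical truncation $\CX=\FX_\cl$ by using that $\iota_\ast\colon\Coh(\CX)\to\Coh(\FX)$ is an equivalence and that $D^b\Coh(\FX)$ has a bounded $t$-structure with heart $\Coh(\FX)$, so one may assume $C$ is pushed forward from $\CX$ and run the whole argument on $\CX\times S$. Without that reduction (or a derived substitute for each input), claims like "locally on $\FX\times S$ the perfect complex $V$ is a bounded complex of vector bundles, flat over $\CO_S$" are not justified as stated. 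With the reduction to $\CX$ and the switch from absolute coherence to $S$-perfection, your outline becomes essentially the paper's proof.
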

\begin{proof}
    Let $\iota\colon \CX=\FX_\cl\to \FX$ be the classical truncation. Since $D^b\Coh(\FX)$ has a bounded $t$-structure with heart $\Coh(\FX)$ and $\iota_\ast\colon \Coh(\CX)\isoto \Coh(\FX)$ is an isomorphism (cf. \cite[Proposition 3.3, Corollary 3.4]{khan}), we may assume that $C$ is obtained as a pushforward from $\CX$. Hence, it is enough to prove the statement for the classical truncation $\CX$ instead of $\FX$.
    
    The proof uses the notion of relatively perfect complexes as in \cite{lieblich} or \cite[Section 0DHZ]{stacks}; morally, we are introducing non-perfectness only along $\CX$, which is what this notion captures. By \cite[Example 0DI1, Lemma 0DI5]{stacks} the complex $p_2^\ast C=Lp_2^\ast C$ is $S$-perfect. By \cite[Lemma 0DI4]{stacks} it follows that $E\coloneqq V\otimes p_2^\ast C$ is also $S$-perfect. Moreover, it is coherent since $V$ is perfect and $p_2^\ast C$ is coherent.

If $\CX$ itself was proper then \cite[Lemma 0DJT]{stacks} would give the desired conclusion. Hence, it is enough to prove that the pushforward along the good moduli map (base changed to $S$) $f\colon \CX\times S\to X\times S$ preserves $S$-perfect complexes. This follows from the same argument with the push-pull formula in the proof of \cite[Lemma 08EV]{stacks} using the fact that $f_\ast=Rf_\ast$ is exact (which is preserved by base change, see \cite[Proposition 3.9 (iv)]{alper}).\qedhere
\end{proof}

It is not hard to see that the map $G(\FX)\to K_\ast(\FX)$ is a homomorphism of $K^\ast(\FX)$-modules.

If furthermore $\CO_\FX$ is bounded, then the image of $[\CO_\FX]\in G(\FX)$ via the map above defines a $K$-theoretic fundamental class $[\FX]$ on $\FX$. Note that if $\FX$ is quasi-smooth then $\CO_\FX$ is bounded \cite[Corollary 6.1]{khan}. When this is the case, the structure sheaf $[\CO_\FX]$ corresponds, under the isomorphism $G(\FX)\simeq G(\CX)$, to the well-known virtual fundamental sheaf $[\CO_{\CX}^\vir]$ on the classical truncation $\CX=\FX_\cl$ \cite{BF, leeQK}. Roughly speaking, the fundamental class $[\FX]$ tells us how to ``integrate against $[\CO_{\CX}^\vir]$'', which is what $K$-theoretic invariants in enumerative geometry are. We summarize this discussion in the following theorem:

\begin{theorem}\label{thm: deltainvariants}
Let $\FX$ be a Noetherian derived stack admitting a proper good moduli space. Then there is a well-defined homomorphism
\[G(\FX)\to K_\ast(\FX)\]
of $K^\ast(\FX)$-modules constructed as above. If furthermore $\CO_\FX$ is bounded (e.g. $\FX$ is quasi-smooth), then there is a $K$-theoretic fundamental class
\[[\FX]=[\FX]_K\in K_\ast(\FX)\]
defined as the image of $[\CO_\FX]\in G(\FX)$ under the previous map which, at the level of functionals, sends
\[K^\ast(\FX)\ni V\mapsto \chi(\FX, V)=\chi(\CX, \CO_{\CX}^\vir\otimes V_{|\CX})\in \BZ\,.\]
\end{theorem}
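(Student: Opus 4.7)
The plan is to construct the map $G(\FX) \to K_\ast(\FX)$ using the formula described in the discussion preceding the statement: to each $C \in G(\FX)$ assign the collection $\phi_S(V) = (Rp_2)_\ast(V \otimes p_1^\ast C)$, where $p_1\colon \FX \times S \to \FX$ and $p_2\colon \FX \times S \to S$ are the projections. I would verify this in three stages: (i) each $\phi_S$ actually lands in $K^\ast(S)$ rather than just $G(S)$, (ii) the $K$-homology compatibility axiom under pullback along $h\colon S \to S'$ holds, and (iii) the assignment $C \mapsto \phi_C$ is a $K^\ast(\FX)$-module homomorphism. The fundamental class statement then follows by specializing $C = \CO_\FX$.

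Stage (i) is the heart of the matter and is precisely the content of the lemma already proved: $V \otimes p_1^\ast C$ is $S$-perfect and coherent, and pushforward along $p_2$, factored through the good moduli space of $\CX$ base-changed to $S$, preserves $S$-perfectness and then lands in $\Perf(S)$ via proper pushforward on schemes. I would invoke the lemma directly here.

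Stages (ii) and (iii) should be essentially formal manipulations with base change and the projection formula. For (ii), the square formed by $\id \times h\colon \FX \times S \to \FX \times S'$ and the projections to $S, S'$ is cartesian, and since $p_1^\ast C$ pulls back correctly (the $\FX$-factor being unchanged), the projection formula combined with base change along $h$ yields the desired identity $\phi_S \circ (\id \times h)^\ast = h^\ast \circ \phi_{S'}$. The $K^\ast(S)$-linearity of each $\phi_S$ is another instance of the projection formula, and the $K^\ast(\FX)$-module property of $C \mapsto \phi_C$ comes from pulling a class in $K^\ast(\FX)$ out via $p_1^\ast$ before pushforward.

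For the fundamental class, with $\CO_\FX$ bounded the class $[\CO_\FX] \in G(\FX)$ makes sense, and setting $S = \pt$ gives $\phi_\pt(V) = R\Gamma(\FX, V) = \chi(\FX, V)$ directly. The alternative expression $\chi(\CX, \CO_\CX^\vir \otimes V_{|\CX})$ comes from passing through the isomorphism $\iota_\ast\colon G(\CX) \isoto G(\FX)$ of \cite[Corollary 3.4]{khan}, under which $[\CO_\FX]$ corresponds to $[\CO_\CX^\vir]$ by the Behrend--Fantechi / Lee construction of the virtual structure sheaf \cite{BF, leeQK}. The only real obstacle throughout is the perfectness statement in stage (i); the remaining parts of the theorem are bookkeeping using standard functoriality of $K$-theory.
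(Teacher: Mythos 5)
Your proposal is correct and follows essentially the same route as the paper: the only substantive input is the perfectness of the pushforward $\big(R p_{S}\big)_\ast\big(V\otimes p_{\FX}^\ast C\big)$, which you, like the paper, obtain from the preceding lemma (factoring through the proper good moduli space), while the compatibility square, $K^\ast(S)$-linearity, the $K^\ast(\FX)$-module property, and the identification of $[\CO_\FX]$ with $[\CO_\CX^{\vir}]$ under $G(\CX)\simeq G(\FX)$ are treated as the same formal base-change/projection-formula bookkeeping the paper leaves implicit. (Your orientation of the projections — push to $S$, pull $C$ back from $\FX$ — is the intended one; the paper's displayed formula has the indices swapped relative to its stated convention.)
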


Let us emphasize that this fact is one of the places where we use $K$-theory, rather than cohomology, in a fundamental way. Khan \cite{khanVFC} defines a virtual fundamental class in Borel--Moore homology of $\FX$, which is analogous to $[\CO_\FX]\in G(\FX)$ -- they are even comparable through a Grothendieck--Riemann--Roch type formula, cf. \cite[Theorem 3.23]{khanVFC}. However, as far as we know, unlike in $G$-theory, there is no pushforward along good moduli maps for Borel--Moore homology. Hence there is no good way to integrate cohomology classes against such fundamental class, and therefore no good way to define a fundamental class in the singular homology of $\FX$.

We should also point out that, unless $\FX$ is actually a scheme, the class $[\FX]$ in general does not satisfy Liu's finiteness condition. See Example \ref{ex: deltanotregular}.

\begin{remark}\label{rmk: kunnethdelta}
    It is an easy exercise to verify that the $K$-theoretic fundamental class is well behaved with respect to products and the Kunneth map, in the sense that
    \[[\FX\times \FZ]=[\FX]\boxtimes [\FZ]\,.\]
\end{remark}

\section{The $K$-Hall algebra and generalized $K$-theoretic invariants}
\label{sec: Khallinvariants}

In this section we will introduce an associative algebra structure on $K_\ast(\FM_\CA^\rig)$, when $\CA$ is a good abelian category. This algebra is the analog of the motivic Hall algebra (or of the quantum torus) in $K$-theory, and we call it the $K$-Hall algebra. 

As in the motivic setting, we will use this product to define generalized $K$-theoretic invariants -- $\varepsilon$ classes -- by taking a formal logarithm of $\delta$ classes defined directly via the stacks of semistable objects. The $\varepsilon$ classes will be shown to agree with Joyce--Liu classes in Sections \ref{sec: framing} and \ref{sec:Liuvert}.

\begin{remark}
    Let us emphasize that, despite sharing some similarities, the algebra that we introduce is not the $K$-theoretic Hall algebra from \cite{tudorKHall}. The $K$-theoretic Hall algebra is also an associative algebra, but the underlying vector space is $G(\FM)$ and the product is defined by a push-pull construction on the stack of extensions. Another difference is that the $K$-theoretic Hall algebra requires some extra condition, such as $\FM_\CA$ being quasi-smooth or $(-1)$-shifted symplectic, while ours does not. We hope that the distinction ``$K$-Hall algebra'' versus ``$K$-theoretic Hall algebra'' will be enough not to confuse the reader. It would also be interesting to understand the relation between the two constructions when $\CA$ has homological dimension $\leq 2$. 
\end{remark}

\subsection{The $K$-Hall algebra product}\label{subsec: K-Hal-prod}

We will start by introducing some ingredients necessary for the definition of the $K$-Hall product. On the stack $\FM\times \FM$ there is a $B\BG_m\times B\BG_m$ action, where each $B\BG_m$ acts on each factor. Let us consider the stack
\[(\FM\times \FM)^\rig\]
obtained by rigidifying with respect to the diagonal $B\BG_m\subseteq B\BG_m\times B\BG_m$. This stack still admits a 
\[(B\BG_m\times B\BG_m)/B\BG_m\simeq B\BG_m\times 1\,.\]
action. If we further rigidify with respect to this action, we obtain $\FM^\rig\times \FM^\rig$; we denote by
\[\pi\colon (\FM\times \FM)^\rig \to \FM^\rig\times \FM^\rig \]
this second rigidification map. Since $\Sigma\colon \FM\times \FM\to \FM$ intertwines the diagonal $B\BG_m$ action on $\FM\times \FM$ with the $B\BG_m$ action on $\FM$, there is a rigidified map
\[\Sigma^\rig\colon (\FM\times \FM)^\rig\to \FM^\rig\,.\]

The last ingredient is the following quasi-coherent complex on $\FM\times \FM$, which is central in the non-abelian localization theorem \cite{TW, HLNAL}:
\begin{equation}
    \label{eq: gammacomplex}
\Gamma_-\coloneqq \Lambda_{-1}\big(\Ext_{21}^\vee+\Ext_{12}\big)\otimes \det(\Ext_{12})^\vee[\rk_{12}]\,,
\end{equation}
where $\rk_{12}$ is the locally constant function $\rk(\Ext_{12})$ on $\FM\times \FM$. Since $\Ext_{12}, \Ext_{21}$ have weights $(-1, 1)$ and $(1, -1)$, respectively, with respect to the $B\BG_m\times B\BG_m$ action, they are weight 0 for the diagonal $B\BG_m$ and hence descend to $(\FM\times \FM)^\rig$. Thus the same is true for $\Gamma_-$.

\begin{definition}[$K$-Hall algebra]\label{def: KHall}
Let $\CA$ be a good abelian category. We define the $K$-Hall algebra\footnote{The algebra could be defined with $\BZ$ coefficients, without tensoring by $\BQ$. However, rational coefficients are necessary for the definition of generalized $K$-theoretic invariants.} 
\[\BK(\CA)=K_\ast(\FM^\rig_\CA)_\BQ\]
with product 
\[\ast\colon K_\ast(\FM^\rig)\otimes K_\ast(\FM^\rig)\to K_\ast(\FM^\rig)\]
defined by
\[\phi\ast \psi=\Sigma_\ast^\rig\big([\Gamma_-]\cap \pi^\ast(\phi\boxtimes \psi)\big)\,.\]
\end{definition}

\begin{remark}\label{rmk: variationsHallproduct}
    This definition and the proof of associativity below only depend on the structures $\Sigma, \Phi, \Ext_{12}$, and the compatibilities between them described in Section \ref{subsec: abcat}. In particular, the same construction works if we take any substack of $\FM_\CA$ closed under direct sum and the $B\BG_m$ action or if we take $\FM_\bfT$ for a dg category $\bfT$.
\end{remark}

Let us unpack the definition and justify that it makes sense. We first recall that $\boxtimes$ is the Kunneth map defined in Section \ref{subsec: K-hom}, so $\phi\boxtimes \psi\in K_\ast(\FM^\rig\times \FM^\rig)$. The pullback $\pi^\ast$ is well-defined in~$K_\ast$ since pushforwards of rigidification maps are well-defined in $K^\ast$, cf. Proposition~\ref{prop: pushforwardgoodmoduli}. 

Since $\Gamma_-$ is only quasi-coherent, the operation $[\Gamma_-]\cap -$ in not well defined in $K_\ast$. Nevertheless, the end expression still makes sense as an element of $K_\ast$. For simplicity, let us argue that at the level of functionals. The important observation is that, despite $\Gamma_-$ not being perfect (or even coherent), its weight $\mu$ part is perfect and vanishes for $\mu\gg 0$. Here, we mean weight with respect to the $B\BG_m$ action
\[\Psi_1\colon B\BG_m\times (\FM\times \FM)^\rig\to (\FM\times \FM)^\rig\]
acting on the first $\FM$. Note that $\Ext_{12}$, $\Ext_{21}$ have weights $-1$ and $1$ with respect to this action. Hence
\begin{align*}
\Gamma_-(u)\coloneqq \Psi_1^\ast \Gamma_-&=\Lambda_{-1}\big(\Ext_{21}^\vee\otimes u^{-1}+\Ext_{12}\otimes u^{-1}\big)\otimes \det(\Ext_{12}\otimes u^{-1})^\vee[\rk_{12}]\\
&=(-u)^{\rk_{12}} \Lambda_{-u^{-1}}(\Ext_{21}^\vee)\otimes  \Lambda_{-u^{-1}}(\Ext_{12})\otimes \det(\Ext_{12})^\vee\,.
\end{align*}

In other words, even though $\Gamma_-$ itself is not perfect, the class of $\Gamma_-(u)$ is in the ring
\[K^\ast\big((\FM\times \FM)^\rig\big)\lpp u^{-1}\rpp\,\]
of Laurent series in $u^{-1}$ with coefficients in $K^\ast$, where $u$ denotes the class of the tautological line bundle on $B\BG_m$.

\begin{lemma}\label{lem: astwelldefined}
    Let $V$ be a perfect complex in $\FM^\rig$. Then 
    \[\pi_\ast(\Gamma_-\otimes (\Sigma^\rig)^\ast V)\]
    is perfect in $\FM^\rig\times \FM^\rig$. 
\end{lemma}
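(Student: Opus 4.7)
The plan is to exploit the weight decomposition under the residual $B\BG_m$-action $\Psi_1$ on $(\FM\times \FM)^\rig$. By Proposition~\ref{prop: Ktheoryrig} (and the results of~\cite{BSgerbs} cited in its proof), since $\pi$ is a $B\BG_m$-gerbe, its pushforward $\pi_\ast$ of any quasi-coherent sheaf amounts to descending its weight $0$ component; moreover, $\pi_\ast$ restricts to an equivalence between weight $0$ perfect complexes on $(\FM\times\FM)^\rig$ and perfect complexes on $\FM^\rig\times\FM^\rig$. Hence it suffices to verify that the weight $0$ piece $\big(\Gamma_-\otimes (\Sigma^\rig)^\ast V\big)_0$ is a perfect complex.

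The first thing I would do is read off the weight decomposition of $\Gamma_-$ directly from the formula \eqref{eq: gammacomplex}. Since $\Ext_{12}$ has $\Psi_1$-weight $-1$ and $\Ext_{21}$ has weight $+1$, the virtual complex $\Ext_{21}^\vee+\Ext_{12}$ has weight $-1$, so $\Lambda^n$ of it has weight $-n$; and $\det(\Ext_{12})^\vee$ has weight $\rk_{12}$. Therefore the weight $w$ summand of $\Gamma_-$ vanishes for $w>\rk_{12}$ and otherwise equals, up to a sign determined by the shift,
\[(\Gamma_-)_w = (-1)^{w}\,\Lambda^{\rk_{12}-w}\big(\Ext_{21}^\vee\oplus \Ext_{12}\big)\otimes \det(\Ext_{12})^\vee\,,\]
which is a single exterior power of a perfect complex tensored with a line bundle, hence perfect. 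This matches the formula for $\Gamma_-(u)$ recorded just before the lemma: the coefficient of $u^0$ in $(-u)^{\rk_{12}}\Lambda_{-u^{-1}}(\Ext_{21}^\vee)\otimes \Lambda_{-u^{-1}}(\Ext_{12})\otimes \det(\Ext_{12})^\vee$ is precisely the finite sum above.

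For the other factor, $(\Sigma^\rig)^\ast V$ is a perfect complex on $(\FM\times\FM)^\rig$, so by the direct sum (not product) decomposition $\Perf((\FM\times\FM)^\rig)\simeq \bigoplus_\mu \Perf((\FM\times\FM)^\rig)_\mu$ in Proposition~\ref{prop: Ktheoryrig}, its $\Psi_1$-weight decomposition is supported on a finite set of weights, each piece itself perfect. Combining these inputs,
\[\big(\Gamma_-\otimes (\Sigma^\rig)^\ast V\big)_0 \;=\; \sum_w (\Gamma_-)_w\otimes \big((\Sigma^\rig)^\ast V\big)_{-w}\]
becomes a finite sum of tensor products of perfect complexes, hence perfect. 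Descending along $\pi_\ast$ then yields the desired perfect complex on $\FM^\rig\times \FM^\rig$. The main conceptual point is just the boundedness of the weights of $(\Sigma^\rig)^\ast V$: once that is in hand, the lemma reduces to the (locally) finite count of weight-pair contributions to weight $0$, so I do not anticipate any further obstacle.
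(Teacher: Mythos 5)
Your proof is correct and follows essentially the same route as the paper: reduce via the gerbe pushforward (Proposition~\ref{prop: Ktheoryrig} / \cite{BSgerbs}) to showing the weight~$0$ part of $\Gamma_-\otimes(\Sigma^\rig)^\ast V$ is perfect, then use that $(\Sigma^\rig)^\ast V$ has only finitely many nonzero weight pieces, each perfect, while $\Gamma_-$ has perfect weight pieces vanishing in weights above $\rk_{12}$. The explicit formula you record for $(\Gamma_-)_w$ is a harmless elaboration of what the paper states without computation.
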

\begin{proof}
By the proof of Proposition \ref{prop: pushforwardgoodmoduli} applied to $\CX=(\FM\times \FM)^\rig$, the complex we are considering is perfect if and only if the weight 0 part of $\Gamma_-\otimes (\Sigma^\rig)^\ast V$ is perfect. But this follows from the fact that 
\begin{enumerate}
    \item $(\Sigma^\rig)^\ast V$ is perfect, hence its weight $\mu$ part is also perfect and it is zero except for finitely many $\mu$.
    \item $\Gamma_-$ has perfect weight $\mu$ parts which are 0 for $\mu>\rk(\Ext_{12})$.\qedhere
\end{enumerate}
\end{proof}

One upshot of the considerations above is that the $K$-Hall product (at the level of functionals) is characterized by the fact that the dual ``coproduct''\footnote{This is not really a coproduct, since there is no Kunneth map $K^\ast(\FM^\rig\times \FM^\rig)\to K^\ast(\FM^\rig)\otimes K^\ast(\FM^\rig)$.} fits into the diagram
\begin{center}
    \begin{tikzcd}
        K^\ast(\FM)\arrow[r]\arrow[d, "\rho^\ast"] &K^\ast(\FM\times \FM) \arrow[d, "(\rho\times \rho)^\ast"]\\
          K^\ast(\FM^\rig)\arrow[r, "\ast^\vee"]& K^\ast(\FM^\rig\times \FM^\rig)\\
    \end{tikzcd}
\end{center}
where $\rho\colon \FM\to \FM^\rig$ is the rigidification map and the top arrow is given by the morphism
\begin{equation}
    \label{eq: hallcoproduct}[u^0]\Big(\Gamma_-(u)\otimes \Psi_1^\ast\Sigma^\ast(-)\Big)\,.
\end{equation}
Above, $[u^0](\ldots)$ denotes the constant coefficient of $\ldots$. For a more symmetric version, the same is also true if we replace the top arrow by 
\begin{align*}
    \label{eq: hallcoproduct2}[u_1^0u_2^0]\Big(\Gamma_-(u_1, u_2)\otimes \Psi_{12}^\ast\Sigma^\ast(-)\Big)\,.
\end{align*}
where $\Psi_{12}\colon B\BG_m\times B\BG_m\times \FM\times \FM\to \FM\times \FM$ is the product of the two actions and 
\begin{align*}\Gamma_-(u_1, u_2)&\coloneqq \Psi_{12}^\ast \Gamma_-\\
&=(-u_1/u_2)^{\rk_{12}}\Lambda_{-u_2/u_1}(\Ext_{21}^\vee)\otimes  \Lambda_{-u_2/u_1}(\Ext_{12})\otimes \det(\Ext_{12})^\vee\,.\end{align*}

\begin{remark}\label{rmk: eulerclassTSigma}
The $K$-theory class of the derived normal bundle to the morphism $\Sigma$ is the complex $\BN_\Sigma=-\Ext_{12}-\Ext_{21}$. The inverse of its $K$-theoretic Euler class (cf. Remark \ref{rmk: inverseeulerclass}) is
\[\Gamma\coloneqq \Lambda_{-1}(\BN_\Sigma^\vee)^{-1}=\Lambda_{-1}\big(\Ext_{12}^\vee+\Ext_{21}^\vee\big)\,.\]
Note that
\[\Gamma(u)\coloneqq  \Psi_1^\ast \Gamma=\Lambda_{-u^{-1}}(\Ext_{12}^\vee)\otimes \Lambda_{-u}(\Ext_{21}^\vee)\]
is precisely the complex appearing in the definition of the vertex algebra in \cite{Liu}, where it is interpreted in terms of its $1-u$ expansion, in the sense of Lemma \ref{lem: symmetryexpansion}. Unlike $\Gamma_-$, the complex $\Gamma$ does not have the property that its weight $\mu$ part is perfect for every $\mu$ since it is a product of a Laurent series in $u^{-1}$ with a Laurent series in $u$. The complex $\Gamma_-(u)$ can be regarded as the $u^{-1}$ expansion of $\Gamma(u)$, following the discussion in Section \ref{subsubsec: alternating} and Lemma \ref{lem: symmetryexpansion}.
\end{remark}

\subsubsection{Associativity}\label{subsec: associativity}
We will now prove the associativity of $\ast$. The argument is standard, but requires somewhat cumbersome bookkeeping.

\begin{theorem}\label{thm: associativity}
    The $K$-Hall algebra $\BK(\CA)$ is an associative algebra.
\end{theorem}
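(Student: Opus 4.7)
The plan is to show that both iterated products $(\phi_1\ast \phi_2)\ast \phi_3$ and $\phi_1\ast(\phi_2\ast \phi_3)$ equal a single pushforward from an appropriate rigidification of $\FM^{\times 3}$, with the same quasi-coherent complex appearing in the cap. Let $\Sigma_{123}\colon \FM^{\times 3}\to \FM$ denote the ternary direct sum map, and write $\FM^{(3)}$ for the rigidification of $\FM^{\times 3}$ along the diagonal $B\BG_m\hookrightarrow (B\BG_m)^{\times 3}$. This comes with an induced direct sum $\Sigma_{123}^\rig\colon \FM^{(3)}\to \FM^\rig$ and a rigidification projection $\pi_3\colon \FM^{(3)}\to (\FM^\rig)^{\times 3}$, and admits two natural factorizations corresponding to $\Sigma\circ(\Sigma\times \id)$ and $\Sigma\circ(\id\times \Sigma)$.

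First I would unroll each side. Substituting the inner $\ast$ into the outer one and using base change on the Cartesian squares that describe how $(\FM\times \FM)^\rig$ sits over $\FM^{(3)}$, the projection formula, and the compatibility of Kunneth with pullback/pushforward (the triple version of Remark~\ref{rmk: kunnethdelta}), each iterated product reduces to an expression of the form
\[
\Sigma^\rig_{123,\ast}\bigl([\Theta_?]\cap \pi_3^\ast(\phi_1\boxtimes \phi_2\boxtimes \phi_3)\bigr),
\]
where $\Theta_L$ (resp. $\Theta_R$) is the tensor product of $\Gamma_-^{(1,2)}$ (resp. $\Gamma_-^{(2,3)}$) with the pullback of $\Gamma_-$ under $\Sigma\times \id$ (resp. $\id\times \Sigma$); here $\Gamma_-^{(i,j)}$ denotes the pullback of $\Gamma_-$ along the $(i,j)$-projection $\FM^{(3)}\to (\FM\times \FM)^\rig$. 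The key calculation is then a cocycle identity: using the bilinearity of $\Ext_{12}$ with respect to $\Sigma$ (part of the structure of a good abelian category, cf. Section~\ref{subsec: abcat}) one has
\[
(\Sigma\times \id)^\ast \Ext_{12}=\Ext_{13}\oplus \Ext_{23},\qquad (\Sigma\times \id)^\ast \Ext_{21}=\Ext_{31}\oplus \Ext_{32},
\]
so the multiplicativity $\Lambda_{-1}(A\oplus B)=\Lambda_{-1}(A)\otimes\Lambda_{-1}(B)$, the additivity of $\det$, and the additivity of $\rk_{ij}$ collapse $(\Sigma\times \id)^\ast \Gamma_-$ to $\Gamma_-^{(1,3)}\otimes \Gamma_-^{(2,3)}$. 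The symmetric computation gives $(\id\times \Sigma)^\ast \Gamma_-=\Gamma_-^{(1,2)}\otimes \Gamma_-^{(1,3)}$. Both $\Theta_L$ and $\Theta_R$ therefore equal $\Gamma_-^{(1,2)}\otimes \Gamma_-^{(1,3)}\otimes \Gamma_-^{(2,3)}$ up to reordering the tensor factors, establishing associativity.

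The principal obstacle is that $\Gamma_-$ is only quasi-coherent, so the operator $[\Gamma_-]\cap(-)$ is not literally defined on $K$-homology and every manipulation above must be interpreted carefully. Following the strategy of Lemma~\ref{lem: astwelldefined}, each application of $\ast$ is handled at the level of functionals by moving the cap onto a perfect complex: for $V\in K^\ast(\FM^\rig)$ the assignment $V\mapsto \pi_\ast\bigl(\Gamma_-\otimes (\Sigma^\rig)^\ast V\bigr)$ lands in $\Perf((\FM^\rig)^{\times 2})$ because only finitely many weights contribute for the first-factor $B\BG_m$-action (as in the analysis of $\Gamma_-(u)$). The same argument in the triple setting shows $\pi_{3,\ast}\bigl(\Theta_?\otimes (\Sigma_{123}^\rig)^\ast V\bigr)\in \Perf((\FM^\rig)^{\times 3})$ using finiteness of weights for the $(B\BG_m)^{\times 2}=(B\BG_m)^{\times 3}/\Delta$ action. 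This legitimizes the cap, pushforward, base change, and projection-formula steps. Finally, all constructions are natural in an auxiliary base $S$, so the identity of functionals promotes to an identity of classes in $K_\ast(\FM^\rig)$ in the operational sense of Definition~\ref{def: K-hom}.
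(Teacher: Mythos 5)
Your proposal is correct and takes essentially the same route as the paper: both arguments unroll the two iterated products onto the triple (diagonal) rigidification of $\FM^{\times 3}$ via base change and compatibility of the direct sum maps, and then use bilinearity of $\Ext_{12}$ under $\Sigma$ to identify both kernels with $\Lambda_{-1}(\Ext_<+\Ext_>^\vee)\otimes\det(\Ext_<)^\vee[\rk\Ext_<]$ (your $\Gamma_-^{(1,2)}\otimes\Gamma_-^{(1,3)}\otimes\Gamma_-^{(2,3)}$). Your extra care about the quasi-coherence of $\Gamma_-$ via finiteness of weights is exactly the paper's Lemma \ref{lem: astwelldefined} mechanism, so nothing is missing.
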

\begin{proof}
We fix
\[\varphi_\alpha\in K_\ast(\FM^\rig_\alpha)\,,\,\varphi_\beta\in K_\ast(\FM^\rig_\beta)\,,\,\varphi_\gamma\in K_\ast(\FM^\rig_\gamma)\,.\]
We will use $\alpha, \beta, \gamma$ as indices in the different objects involved in the definition of $\ast$ to keep track of which copy we refer to. For example, $\Sigma_{\alpha, \beta}$ refers to the restriction of the direct sum map to $\FM_\alpha\times \FM_\beta\to \FM_{\alpha+\beta}$ and $\Gamma_{\alpha, \beta}$ refers to the restriction of $\Gamma_-$ to $\FM_\alpha\times \FM_\beta$ or $(\FM_\alpha\times \FM_\beta)^\rig$; we ease the notation by writing only $\Sigma_{\alpha, \beta}$ instead of $\Sigma^\rig_{\alpha, \beta}$. To calculate $\varphi_\alpha\ast(\varphi_\beta\ast \varphi_\gamma)$ we consider the following diagram, where the square is cartesian:
\begin{center}
\begin{tikzcd}
    (\FM_\alpha\times \FM_\beta\times \FM_\gamma)^\rig\arrow[r, "\overline \pi_{\alpha, \beta+\gamma}"]  \arrow[dd, bend right=70, swap,"\Sigma_{\alpha, \beta, \gamma}"]\arrow[rr, bend left=20, "\pi_{\alpha, \beta, \gamma}"]\arrow[d, "\overline \Sigma_{\beta, \gamma}"]&
    \FM_\alpha^\rig\times (\FM_\beta\times \FM_\gamma)^\rig \arrow[r, "\pi_{\beta,\gamma}"]\arrow[d, "\Sigma_{\beta, \gamma}"]&
    \FM_\alpha^\rig\times \FM_\beta^\rig\times \FM_\gamma^\rig\\
     (\FM_\alpha\times \FM_{\beta+\gamma})^\rig \arrow[r, "\pi_{\alpha, \beta+\gamma}"] \arrow[d, "\Sigma_{\alpha, \beta+\gamma}"]&
     \FM_\alpha^\rig\times \FM_{\beta+\gamma}^\rig & \\
     \FM_{\alpha+\beta+\gamma}^\rig
\end{tikzcd}
\end{center}
The map $\Sigma_{\alpha, \beta, \gamma}$ is the triple direct sum map $\FM_{\alpha}\times \FM_\beta\times \FM_\gamma\to \FM_{\alpha+ \beta+ \gamma}$ (or its rigidification), and by associativity of the direct sum it can be written as
\[\Sigma_{\alpha, \beta, \gamma}\coloneqq \Sigma_{\alpha, \beta+\gamma}\circ \Sigma_{\beta,\gamma}=\Sigma_{\alpha+\beta, \gamma}\circ\Sigma_{\alpha, \beta}\,.\]
The map $\pi_{\alpha, \beta, \gamma}$ is the rigidification with respect to the natural $BT$ acting on  $(\FM_\alpha\times \FM_\beta\times \FM_\gamma)^\rig$ -- more precisely, $T$ is the 2-dimensional torus obtained by modding out the diagonal $\BG_m$ from $\BG_m^3$. We have the associativity type relation
\[\pi_{\alpha, \beta, \gamma}\coloneqq \pi_{\beta, \gamma}\circ \overline \pi_{\alpha, \beta+\gamma}=\pi_{\alpha, \beta}\circ \overline \pi_{\alpha+\beta, \gamma}\,.\]

Denote $\varphi_{\alpha\beta\gamma}=\varphi_\alpha\boxtimes\varphi_\beta\boxtimes \varphi_\gamma\in K_\ast(\FM_\alpha^\rig\times \FM_\beta^\rig\times \FM_\gamma^\rig)$. Then
\begin{align}\label{eq: proofassociativity}
    \varphi_\alpha\ast(\varphi_\beta\ast \varphi_\gamma)&=(\Sigma_{\alpha, \beta+\gamma})_\ast\left(\Gamma_{\alpha, \beta+\gamma}\otimes \pi_{\alpha, \beta+\gamma}^\ast(\Sigma_{\beta, \gamma})_\ast\left(\Gamma_{\beta, \gamma}\otimes \pi_{\beta, \gamma}^\ast(\varphi_{\alpha\beta\gamma})\right)\right)\\
    &\nonumber =(\Sigma_{\alpha, \beta+\gamma})_\ast\left(\Gamma_{\alpha, \beta+\gamma}\otimes (\overline\Sigma_{\beta, \gamma})_\ast\overline\pi_{\alpha, \beta+\gamma}^\ast\left(\Gamma_{\beta, \gamma}\otimes \pi_{\beta, \gamma}^\ast(\varphi_{\alpha\beta\gamma})\right)\right)\\
    &\nonumber =(\Sigma_{\alpha, \beta, \gamma})_\ast \left(\overline \Sigma_{\beta, \gamma}^\ast(\Gamma_{\alpha, \beta+\gamma})\otimes \overline\pi_{\alpha, \beta+\gamma}^\ast(\Gamma_{\beta, \gamma})\otimes \pi_{\alpha, \beta, \gamma}^\ast(\varphi_{\alpha\beta\gamma})\right)
\end{align}

Note that, by the properties of the Ext complex, we have
\begin{align*}\Ext_{<}&\coloneqq \Ext_{\alpha, \beta}+\Ext_{\alpha, \gamma}+\Ext_{\beta, \gamma}=\Sigma_{\beta,\gamma}^\ast(\Ext_{\alpha, \beta+\gamma})+\Ext_{\beta, \gamma}\\
\Ext_{>}&\coloneqq \Ext_{\beta, \alpha}+\Ext_{\gamma, \alpha}+\Ext_{\gamma, \beta}=\Sigma_{\beta,\gamma}^\ast(\Ext_{\beta+\gamma, \alpha})+\Ext_{ \gamma, \beta}
\end{align*}
on $\FM_{\alpha}\times \FM_\beta\times \FM_\gamma$, and therefore
\[\overline \Sigma_{\beta, \gamma}^\ast(\Gamma_{\alpha, \beta+\gamma})\otimes \overline\pi_{\alpha, \beta+\gamma}^\ast(\Gamma_{\beta, \gamma})=\Lambda_{-1}\big(\Ext_{<}+\Ext_{>}^\vee\big)\otimes \det(\Ext_{<})^\vee[\rk \Ext_{<}]\,,\]
which we can plug in \eqref{eq: proofassociativity}. An entirely analogous calculation arrives at the same expression for $(\varphi_\alpha\ast \varphi_\beta)\ast \varphi_\gamma$. 
\end{proof}

The proof of associativity provides a formula for the $n$-fold product. To state it, we introduce the following notation:
\begin{enumerate}
    \item $\Sigma_{[n]}$ is the $n$-fold direct sum map $\FM^{\times n}\to \FM$.
    \item Denote by $(\FM^{\times n})^\rig$ the diagonal rigidication of $\FM^{\times n}$, and consider the rigidification 
    \[\pi_{[n]}\colon (\FM^{\times n})^\rig \to (\FM^\rig)^{\times n}\,\]
    with respect to the remaining $(n-1)$-dimensional torus.
    \item Define the complex $\Gamma_{[n], -}$ on $(\FM^{\times n})^\rig$ by
    \[\Gamma_{[n], -}\coloneqq \Lambda_{-1}\big(\Ext_{<}+\Ext_{>}^\vee\big)\otimes \det(\Ext_{<})^\vee[\rk_{<}]\,,\]
    where 
    \[\Ext_{<}=\sum_{1\leq i<j\leq n}\Ext_{ij}\quad\textup{ and }\quad\Ext_{>}=\sum_{1\leq j<i\leq n}\Ext_{ij}\,\]
    and $\Ext_{ij}$ is the pullback of the complex $\Ext_{12}$ along the projection $\FM^{\times n}\to \FM^{\times 2}$ onto the $i$-th and $j$-th component.
\end{enumerate}

\begin{proposition}\label{prop: nproduct}
    We have
\[\phi_1\ast \ldots\ast\phi_n=\big(\Sigma^\rig_{[n]}\big)_\ast\big([\Gamma_{[n], -}]\cap \pi_{[n]}^\ast (\phi_1\boxtimes \ldots\boxtimes \phi_n)\big)\,.\]
\end{proposition}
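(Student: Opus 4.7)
The plan is induction on $n$. The base case $n=2$ is Definition~\ref{def: KHall}, and the case $n=3$ is exactly the computation carried out inside the proof of Theorem~\ref{thm: associativity}. For the inductive step, assume the formula for the $(n-1)$-fold product $\psi \coloneqq \phi_2 \ast \ldots \ast \phi_n$ and expand
\[\phi_1 \ast \psi = \Sigma^\rig_\ast\!\left([\Gamma_-] \cap \pi^\ast(\phi_1 \boxtimes \psi)\right)\]
by substituting the inductive formula for $\psi$ into the right hand side.

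To combine the nested pushforwards into a single pushforward from $(\FM^{\times n})^\rig$, I would assemble a diagram analogous to the one in the proof of Theorem~\ref{thm: associativity}, with a cartesian square whose corners are $(\FM^{\times n})^\rig$, $(\FM\times\FM)^\rig$, $\FM^\rig \times (\FM^{\times(n-1)})^\rig$, and $\FM^\rig \times \FM^\rig$. The horizontal arrows are rigidifications of residual tori, and the vertical arrows are induced by the $(n-1)$-fold direct sum $\overline\Sigma_{[n-1]}$ applied to the last $n-1$ factors. Base change along this square commutes the pushforward by $\id \times \Sigma^\rig_{[n-1]}$ past $\pi^\ast$, while the projection formula for $\cap$ brings $[\Gamma_-]$ and its pullback inside. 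Using the associativity $\Sigma^\rig_{[n]} = \Sigma^\rig \circ (\id \times \Sigma^\rig_{[n-1]}) \circ \overline\Sigma_{[n-1]}$ of the direct sum, the result is
\[\phi_1 \ast \psi = (\Sigma_{[n]}^\rig)_\ast\!\Big(\overline\Sigma_{[n-1]}^\ast[\Gamma_-] \otimes \overline\pi_{[n-1]}^\ast[\Gamma_{[n-1], -}] \cap \pi_{[n]}^\ast(\phi_1 \boxtimes \ldots \boxtimes \phi_n)\Big).\]

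The remaining step is the combinatorial identification
\[\overline\Sigma_{[n-1]}^\ast \Gamma_- \otimes \overline\pi_{[n-1]}^\ast \Gamma_{[n-1], -} = \Gamma_{[n], -}\]
on $(\FM^{\times n})^\rig$. This uses additivity of the Ext complex under direct sum: pulling back $\Ext_{12}$ from $\FM\times \FM$ along $\overline\Sigma_{[n-1]}$ gives $\sum_{j=2}^n \Ext_{1j}$, and symmetrically for $\Ext_{21}$, so that
\[\Ext_{<}^{[n]} = \overline\Sigma_{[n-1]}^\ast \Ext_{12} + \Ext_{<}^{[n-1]}, \qquad \Ext_{>}^{[n]} = \overline\Sigma_{[n-1]}^\ast \Ext_{21} + \Ext_{>}^{[n-1]}.\]
Multiplicativity of $\Lambda_{-1}$ and $\det$, together with additivity of rank, then assembles the two $\Gamma$-factors into $\Gamma_{[n], -}$.

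The main obstacle is purely organizational: keeping clear which diagonal $\BG_m$ and which residual torus are being rigidified at each stage, and verifying that the two factorizations of $\Sigma^\rig_{[n]}$ through $(\FM\times\FM)^\rig$ and $\FM^\rig \times (\FM^{\times(n-1)})^\rig$ agree. As in Theorem~\ref{thm: associativity}, one must also justify that tensoring with the non-perfect complex $\Gamma_-$ still makes sense on $K$-homology classes, which is handled exactly as in Lemma~\ref{lem: astwelldefined}. No new conceptual ingredient beyond those already present in the proof of Theorem~\ref{thm: associativity} enters.
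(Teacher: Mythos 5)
Your proposal is correct and follows essentially the same route as the paper, which proves the $n$-fold formula by induction on $n$ using exactly the calculation from the associativity proof (base change along the rigidification square, the projection formula, and bilinearity of the Ext complex to merge the two $\Gamma$-factors into $\Gamma_{[n],-}$). The only blemish is the displayed factorization of $\Sigma^\rig_{[n]}$, where the maps $\overline\Sigma_{[n-1]}$ and $\id\times\Sigma^\rig_{[n-1]}$ live on opposite sides of the cartesian square and are not literally composable as written; the intended relation is $\Sigma^\rig_{[n]}=\Sigma^\rig\circ\overline\Sigma_{[n-1]}$, and the rest of the argument is unaffected.
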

\begin{proof}
    The $n=3$ case is shown along the proof of associativity. The general case can be shown by induction on $n$ and the same calculation as in the proof of associativity.\qedhere
\end{proof}

\subsection{Generalized $K$-theoretic invariants}
\label{subsec: epsiloninvariants}
In analogy with the generalized motivic Donaldson--Thomas invariants, we use the $K$-Hall algebra product to define generalized $K$-theoretic invariants as a formal logarithm of the $K$-theoretic invariants constructed directly from the stacks of semistable objects.

\begin{definition}\label{def: K-invariants}
    Let $\CA$ be a good abelian category, let $\mu$ be a stability condition as in Assumption \ref{ass: stability} and let $\alpha\in C(\CA)_\pe$. Then by Theorem~\ref{thm: deltainvariants} and Theorem~\ref{thm: goodproper}, we have classes
    \[\delta_\alpha^\mu\coloneqq j_\ast[\FM_{\alpha}^{\mu, \rig}]\in \BK(\CA)\,\]
    where $j\colon \FM_\alpha^{\mu, \rig}\to \FM_\CA^\rig$ is the open inclusion, and $j_\ast$ the pushforward in $K$-homology.
    We then define the generalized $K$-theoretic invariants $\varepsilon_\alpha^\mu\in \BK(\CA)$ by
      \begin{equation}
      \label{eq: epsilonfromdelta}\varepsilon_\alpha^\mu\coloneqq \sum_{k\geq 1}\frac{(-1)^{k-1}}{k}\sum_{\substack{\alpha_1+\ldots+\alpha_k=\alpha\\\mu(\alpha_i)=\mu(\alpha)}}\delta^\mu_{\alpha_1}\ast\ldots \ast\delta^\mu_{\alpha_{k}}\,.
      \end{equation}
\end{definition}

The relation expressing $\varepsilon$ classes as a ``logarithm'' of $\delta$ classes can be formally inverted to express $\delta$ classes as an ``exponential'' of $\varepsilon$ classes:
\begin{equation}
      \label{eq: deltafromepsilon}
  \delta_\alpha^\mu=\sum_{k\geq 1}\frac{1}{k!}\sum_{\substack{\alpha_1+\ldots+\alpha_k=\alpha\\\mu(\alpha_i)=\mu(\alpha)}}\varepsilon^\mu_{\alpha_1}\ast\ldots \ast\varepsilon^\mu_{\alpha_{k}}\,.\end{equation}

Note that parts (5) and (4) of Assumption \ref{ass: stability} guarantee that the sums in the previous formulas are finite, and that $\alpha_i$ are also permissible, so that $\delta_\alpha^\mu$ is defined. If there is no non-trivial decomposition $\alpha_1+\ldots+\alpha_k=\alpha$ with $\mu(\alpha_i)=\mu(\alpha)$, then every object in $\FM_{\alpha}^\mu$ is stable and hence $M_\alpha^\mu=\FM_\alpha^{\mu, \rig}$ is the good moduli space of~$\FM_{\alpha}^\mu$. In this case, $\varepsilon_\alpha^\mu=\delta_\alpha^\mu=[M_\alpha^\mu]$ encode the usual $K$-theoretic invariants of the good moduli space.

\begin{remark}
    When $\FM_\alpha$ is itself finite type and quasi-smooth for every $\alpha$ (i.e., $\CA$ has homological dimension $\leq 2$), then we might take the trivial stability condition in Definition \ref{def: K-invariants} and obtain invariants $\delta_\alpha, \varepsilon_\alpha$ which are intrinsically associated to the abelian category $\CA$. If $\mu$ is a stability condition and for some $s\in T$ all classes with fixed slope $\mu(\alpha)=s$ are permissible, then we might think of the invariants $\delta_\alpha^\mu, \varepsilon_\alpha^\mu$ as being intrinsically attached to the exact subcategory $\CA^\mu_s\subseteq \CA$ of $\mu$ semistable objects with fixed slope $s$.
\end{remark}

We will show later that, in general, the classes $\varepsilon_\alpha^\mu$ are precisely the $K$-theoretic Joyce--Liu classes, denoted by $Z_\alpha(\mu)$ in \cite{Liu}, when the latter are defined. Note that the definition of Joyce--Liu classes is only available when there is a framing functor, and it is a hard theorem that they are actually independent of the choice of framing functor (cf. \cite[Proposition 9.12]{joyce} and \cite[Theorem 4.2.5]{Liu}). On the other hand, in our approach we do not need a framing functor, which extends the definition to new cases -- for example, moduli spaces of (weak) Bridgeland stable objects on Fano surfaces or 3-folds -- and makes the independence of the framing functor obvious. We see this as one of the strengths of our new approach. For example, with our definition the following fact is basically trivial:

\begin{proposition}\label{prop: autoequivalence}
    Let $B\colon \CA\isoto\CA'$ be an isomorphism of abelian categories and let $\mu$ be a stability condition on $\CA$ as in Assumption \ref{ass: stability}. Let $B^\ast\mu$ be the stability condition on $\CA'$ defined by $B^\ast\mu(\alpha')=\mu(B^{-1}(\alpha'))$. Then
    \[B_\ast \delta_\alpha^\mu=\delta_{B(\alpha)}^{B^\ast\mu}\quad\textup{ and }\quad B_\ast \varepsilon_\alpha^\mu=\varepsilon_{B(\alpha)}^{B^\ast\mu}\,.\]
\end{proposition}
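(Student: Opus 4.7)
The plan is to argue that an isomorphism $B\colon\CA\isoto \CA'$ of abelian categories induces an isomorphism of derived moduli stacks that intertwines all the structures used in our construction, so the result follows by transport of structure. First I would check that $B$ induces an isomorphism $B\colon \FM_\CA\isoto \FM_{\CA'}$ of derived stacks: since $\FM_\CA$ is characterized (within the saturated dg hull of Assumption~\ref{ass: good2}) as the open substack of complexes whose underlying object of $\mathrm{Ho}(\bfT)$ lies in $\CA$, an equivalence of abelian categories lifts uniquely to an equivalence of dg enhancements by \cite{CSdgenhancement} and hence to an isomorphism on moduli. This isomorphism preserves the direct sum map $\Sigma$, the $B\BG_m$-action $\Psi$, and the $\Ext_{12}$-complex, since all of these are intrinsic to the underlying abelian structure. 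Consequently $B$ descends to an isomorphism of rigidifications $B^\rig\colon \FM_\CA^\rig\isoto \FM_{\CA'}^\rig$, and under the induced identification $B_\ast\colon C(\CA)\to C(\CA')$ on connected components, the relation $B^\ast\mu(\alpha')=\mu(B^{-1}(\alpha'))$ precisely matches the semistable loci: $B(\FM_\alpha^{\mu,\rig})=\FM_{B(\alpha)}^{B^\ast\mu,\rig}$, open inside $\FM_{\CA'}^\rig$.

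For the $\delta$-statement I would then invoke the naturality of the construction in Theorem~\ref{thm: deltainvariants}. The $K$-theoretic fundamental class $[\FX]$ depends only on the derived stack $\FX$ and its structure sheaf, and is covariantly functorial under isomorphisms. Applied to the open inclusion $j\colon \FM_\alpha^{\mu,\rig}\hookrightarrow \FM_\CA^\rig$ and its image under $B^\rig$, this yields
\[(B^\rig)_\ast\delta_\alpha^\mu=(B^\rig)_\ast j_\ast[\FM_\alpha^{\mu,\rig}]=j'_\ast[\FM_{B(\alpha)}^{B^\ast\mu,\rig}]=\delta_{B(\alpha)}^{B^\ast\mu}\,.\]

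For the $\varepsilon$-statement, the key observation is that $(B^\rig)_\ast$ is a homomorphism of $K$-Hall algebras $\BK(\CA)\to \BK(\CA')$. Indeed, $B$ intertwines $\Sigma$, the rigidification map $\pi$, and the Ext complex, so it intertwines the complex $\Gamma_-$ appearing in Definition~\ref{def: KHall}, and the Kunneth map is functorial. Thus
\[(B^\rig)_\ast(\phi\ast\psi)=((B^\rig)_\ast\phi)\ast((B^\rig)_\ast\psi)\,.\]
Since the $\varepsilon$-classes are defined by the universal polynomial expression \eqref{eq: epsilonfromdelta} in $\delta$-classes using only the product $\ast$ and rational scalars, applying $(B^\rig)_\ast$ to both sides and using the $\delta$-statement gives the desired identity for $\varepsilon$.

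The main (minor) obstacle is the first step: verifying carefully that an equivalence at the level of abelian categories $\CA\isoto\CA'$ actually lifts to an isomorphism of the derived stacks $\FM_\CA\isoto\FM_{\CA'}$ respecting the tuple $(\Sigma,\Psi,\Ext_{12})$ with all compatibilities of \cite[Assumption 4.4]{joyce}. Once this naturality is in place, both identities are formal consequences of the universal definitions of $\delta$ and $\varepsilon$.
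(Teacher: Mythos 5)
Your argument is correct and follows essentially the same route as the paper: the $\delta$-identity comes from the observation that $B$ identifies the $\mu$-semistable locus in class $\alpha$ with the $B^\ast\mu$-semistable locus in class $B(\alpha)$, and the $\varepsilon$-identity then follows because $B_\ast\colon\BK(\CA)\to\BK(\CA')$ is an algebra homomorphism applied to the universal formula \eqref{eq: epsilonfromdelta}. The paper simply states these two points without the extra discussion of lifting $B$ to the derived stacks, which it treats as immediate.
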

\begin{proof}
    The equality for the $\delta$ classes is trivial once we notice that $B(E)$ is $B^\ast\mu$ semistable if and only if $E$ is $\mu$ semistable. Then the equality for $\varepsilon$ classes follows from the definition and the observation that $B_\ast\colon \BK(\CA)\to \BK(\CA')$ is an algebra homomorphism.
\end{proof}

A typical scenario where these type of results might be useful is when $B$ is an automorphism of a derived category, $\CA$ is the heart of a $t$-structure and $\CA'$ the image of $\CA$ by $B$. In the Calabi--Yau setting, automorphisms of derived categories have often been used to constrain enumerative invariants, see for example \cite{todaPTrationality, OS, BM, thomasrefinedK3}.

In contrast with the trivial proof given above, in Joyce's setting these results might be quite subtle. Since the definition of invariants depends on a framing functor, equalities as in Proposition \ref{prop: autoequivalence} are only immediate if we choose compatible framing functors on both sides. So a general statement requires the hard result that the choice of framing functor is actually not important; that requires \cite[Assumptions 5.1(g), 5.2(h)]{joyce}, which could potentially be difficult to verify in examples.

\subsection{Example: $\mathsf{Vect}$}\label{subsec: exampleVect} We consider now the simplest possible abelian category, the category of vector spaces $\CA=\mathsf{Vect}$. The stack of vector spaces is
\[\FM_\mathsf{Vect}=\bigsqcup_{n\geq 0} B\GL_n\]
where $n\in C(\mathsf{Vect})=\BZ_{\geq 0}$ corresponds to the dimension of the vector space. Its rigidification is
\[\FM_\mathsf{Vect}^\rig=\bigsqcup_{n\geq 0} B\PGL_n\,.\]
Note that $K^\ast(B\PGL_n)$ is the representation ring of $\PGL_n$, and $K_\ast(B\PGL_n)$ is the dual to the representation ring. Given a representation $V$ of $\PGL_n$, the Euler characteristic of the corresponding vector bundle on $B\PGL_n$ is 
\[\chi(B\PGL_n, V)=\dim(V^{\PGL_n})\,.\]
This follows from the fact that representations of $\PGL_n$ are completely reducible and Whitehead's lemma, which states that $H^i(\PGL_n, V)=0$ for any $i$ and any non-trivial irreducible representation $V$. So the functional $\delta_n$ is the functional that sends a representation to the dimension of its fixed part or, in other words, to the multiplicity of the trivial representation in its decomposition into irreducible representations. For $n=1$ the group $\PGL_1$ is trivial and $\varepsilon_1=\delta_1$ is the isomorphism $K^\ast(B\PGL_1)\isoto \BZ$ sending $V$ to $\dim(V)$.

In the following proposition we show that $\varepsilon_n=0$ for $n>1$. Note that in Joyce's \cite{joyce} approach to cohomological invariants the analogous statement is trivial since the class he defines has negative homological degree $2(1-n^2)$. However, from our perspective this is a non-trivial statement. One way to prove it is via the connection to the homological invariants which we will show later in Theorem \ref{thm: homliftsjoyce}. We present here a direct proof using the Weyl character formula. 

\begin{proposition}\label{prop: epsilonVect}
    If $\CA=\mathsf{Vect}$ then $\varepsilon_n=0$ for $n>1$. In other words, we have
    \[\dim(V^{\PGL_n})=\frac{1}{n!}(\underbrace{\varepsilon_1\ast\ldots \ast\varepsilon_1}_{ n\textup{ times}})(V)\]
    for every representation $V$ of $\PGL_n$.
\end{proposition}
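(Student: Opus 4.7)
By the inversion formula~\eqref{eq: deltafromepsilon}, and since $\varepsilon_1 = \delta_1$ automatically, an easy induction on $n$ shows that $\varepsilon_n = 0$ for $n > 1$ is equivalent to the identity $\delta_n = \tfrac{1}{n!}\,\varepsilon_1^{\ast n}$ for every $n \geq 1$. Since the functional $\delta_n = [B\PGL_n]$ sends a representation $V$ to $\chi(B\PGL_n, V) = \dim V^{\PGL_n}$, it is enough to prove the displayed formula.

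To compute $\varepsilon_1^{\ast n}$ I would apply Proposition~\ref{prop: nproduct}. The diagonally rigidified stack $(\FM_1^{\times n})^\rig$ is canonically identified with $BT'$, where $T' = \BG_m^n/\BG_m^{\textup{diag}}$ is the maximal torus of $\PGL_n$, and under this identification $\Sigma^\rig_{[n]}\colon BT' \to B\PGL_n$ is the classifying map of the maximal torus. Since $\FM_1^\rig = \pt$ and $\varepsilon_1 = 1$, Proposition~\ref{prop: nproduct} gives
\[
(\varepsilon_1^{\ast n})(V) \;=\; \chi\bigl(BT',\, V|_{T'} \otimes \Gamma_{[n],-}\bigr) \;=\; [e^0]\bigl(\chi_{V|_{T'}} \cdot \Gamma_{[n],-}\bigr),
\]
where $[e^0]$ extracts the coefficient of the trivial character of $T'$, i.e.\ takes $T'$-invariants.

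The crux is to identify $\Gamma_{[n],-}$ with the Weyl integration measure for $\PGL_n$. Writing $L_i$ for the tautological line bundle on the $i$-th factor with character $x_i$, the Ext sheaves $\Ext_{ij} = L_i^\vee \otimes L_j$ are line bundles, so $\Ext_<$ and $\Ext_>^\vee$ coincide in $K$-theory. A short calculation using $\Lambda_{-1}(L) = 1-L$ for line bundles together with the identity $(1-x_i/x_j)(1-x_j/x_i) = -(x_i-x_j)^2/(x_ix_j)$ yields
\[
\Gamma_{[n],-} \;=\; (-1)^{\binom{n}{2}}\prod_{i<j}\frac{(x_i-x_j)^2}{x_ix_j} \;=\; \prod_{\alpha \in \Phi}(1-e^\alpha),
\]
where $\Phi$ is the root system of $\PGL_n$. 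The Weyl integration formula
\[
\dim V^{\PGL_n} \;=\; \tfrac{1}{|W|}\,[e^0]\bigl(\chi_{V|_{T'}} \cdot {\textstyle\prod_{\alpha}}(1-e^\alpha)\bigr)
\]
with $|W| = n!$ then delivers the required identity. The main obstacle is precisely the $K$-theoretic identification of $\Gamma_{[n],-}$ with the Weyl measure $\prod_\alpha(1-e^\alpha)$; once this is in place, the proposition reduces to a classical fact about reductive groups.
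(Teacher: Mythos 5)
Your proposal is correct and follows essentially the same route as the paper: reduce via Proposition~\ref{prop: nproduct} to a constant-term computation on the rigidified maximal torus and identify $\Gamma_{[n],-}$ with the Weyl measure $\prod_{i\neq j}(1-u_i/u_j)=\prod_{\alpha\in\Delta}(1-e^{\alpha})$. The only difference is the final step: where you cite the classical Weyl integration formula for $\PGL_n$, the paper proves the equivalent constant-term identity (Lemma~\ref{lem: weylcharacter}) directly from the Weyl character and denominator formulas, and does so for an arbitrary semisimple Lie algebra.
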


\begin{proof}
Let $\rho^\ast V$ be the induced representation of $\GL_n$ for which the diagonal $\BG_m$ acts trivially. Let $T=(B\BG_m)^{\times n}$ be the maximal torus of $\GL_n$ and let 
\[\Sigma_{[n]}\colon \FM_1^{\times n}=BT\to B\GL_n=\FM_n\]
be the induced map. By Proposition \ref{prop: nproduct} and the discussion in Section \ref{subsec: K-Hal-prod} we have 
\[(\varepsilon_1\ast\ldots \ast\varepsilon_1)(V)=[u_1^0\ldots u_n^0]\big(\Gamma_{[n], -}\otimes \Sigma^\ast_{[n]}\rho^\ast V\big)\]
where 
\[\Gamma_{[n], -}=\prod_{1\leq i\neq j\leq n}\left(1-\frac{u_i}{u_j}\right)\in \BZ[u_1^{\pm 1}, \ldots, u_n^{\pm 1}]\simeq K^\ast(BT)\,;\]
note that $\Ext_{<}$ is $\sum_{1\leq i<j\leq n}u_j/u_i$ and similarly for $\Ext_{>}$, and recall Remark \ref{rmk: eulerclassTSigma}. Note also that 
\[\Sigma^\ast \rho^\ast V=\sum_{\mu\in \BZ^n} \dim(V_\mu)u^\mu \in \BZ[u_1^{\pm 1}, \ldots, u_n^{\pm 1}]\]
is the character of the representation $V$. Recall that the root system of $\fsl_n$ is $\Delta=\{e_i-e_j\}_{1\leq i\neq j\leq n}$, where $e_1, \ldots, e_n$ is the standard basis of $\Ft^\vee$. Therefore, we can rewrite $\Gamma_{[n], -}$ more conceptually as
\[\Gamma_{[n], -}=\prod_{\alpha\in \Delta}(u^{\alpha/2}-u^{-\alpha/2})\,.\]
Note that this is almost the Weyl denominator, except that we take the product over all roots and not just the positive ones. The proof is now concluded using the following lemma for general semisimple Lie algebras.
\end{proof}

\begin{lemma}\label{lem: weylcharacter}
Let $\Fg$ be a semisimple Lie algebra and $L$ an irreducible representation of $\Fg$. Then we have
\[[u^0]\left(\prod_{\alpha\in \Delta}(u^{\alpha/2}-u^{-\alpha/2})\cdot \ch(L)\right)=\begin{cases}
    |W|&\textup{ if }L\textup{ is the trivial representation}\\
    0& \textup{ otherwise}
\end{cases}\]
where $\Delta\subseteq \Ft^\vee$ is the root system of $\Fg$, $W$ the Weyl group, and $\ch(L)\in \BZ[\Ft^\vee]$ is the character of $L$.
\end{lemma}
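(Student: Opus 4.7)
The plan is to reduce the computation to the Weyl character formula. First, since $\Delta = \Delta_+ \cup (-\Delta_+)$, one can pair each positive root with its negative and rewrite
\[
\prod_{\alpha \in \Delta}(u^{\alpha/2} - u^{-\alpha/2}) = \prod_{\alpha \in \Delta_+}(u^{\alpha/2}-u^{-\alpha/2})(u^{-\alpha/2}-u^{\alpha/2}) = (-1)^{|\Delta_+|}\delta^2,
\]
where $\delta = \prod_{\alpha \in \Delta_+}(u^{\alpha/2}-u^{-\alpha/2})$ is the Weyl denominator. Since $\sgn(w_0) = (-1)^{\ell(w_0)} = (-1)^{|\Delta_+|}$ for the longest Weyl element $w_0$, the lemma reduces to showing that $[u^0](\delta^2 \cdot \ch(L_\lambda))$ equals $\sgn(w_0)\,|W|$ when $\lambda = 0$ and vanishes otherwise, where $L_\lambda$ denotes the irreducible module with dominant integral highest weight $\lambda$.

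Next, I would apply the Weyl character formula in the form
\[
\delta \cdot \ch(L_\lambda) = \sum_{w\in W}\sgn(w)\,u^{w(\lambda+\rho)},
\]
and specialize to $\lambda=0$ to obtain $\delta = \sum_{w\in W}\sgn(w)\,u^{w\rho}$. Multiplying the two identities and extracting the constant term yields
\[
[u^0]\bigl(\delta^2\cdot \ch(L_\lambda)\bigr) = \sum_{\substack{w_1,w_2 \in W\\ w_1\rho + w_2(\lambda+\rho) = 0}}\sgn(w_1)\sgn(w_2),
\]
so the problem becomes one of counting pairs $(w_1,w_2)$ satisfying $w_1\rho = -w_2(\lambda+\rho)$ and summing their signs.

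For the final step, I would use the fact that $\lambda+\rho$ lies strictly inside the dominant Weyl chamber: its $W$-orbit has exactly $|W|$ elements, and $W\cdot(\lambda+\rho) = W\cdot \rho$ if and only if $\lambda = 0$. If $\lambda \neq 0$ no pair $(w_1,w_2)$ satisfies the equation and the sum is $0$. If $\lambda = 0$, using $w_0 \rho = -\rho$ one rewrites $-w_2\rho = (w_2 w_0)\rho$; since the stabilizer of $\rho$ in $W$ is trivial, each $w_2$ yields a unique solution $w_1 = w_2 w_0$, contributing $\sgn(w_2 w_0)\sgn(w_2) = \sgn(w_0)$. Summing over $w_2 \in W$ gives $\sgn(w_0)\,|W|$, and combined with the prefactor $(-1)^{|\Delta_+|} = \sgn(w_0)$ from the first step one obtains $|W|$, as claimed. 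The main subtlety is the orbit-matching step: one must verify cleanly that the two strictly dominant weights $\lambda+\rho$ and $\rho$ lie in the same $W$-orbit only when $\lambda=0$, which is exactly what isolates the trivial representation among all irreducibles.
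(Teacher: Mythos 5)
Your proof is correct, and its skeleton is the same as the paper's: apply the Weyl character formula together with the Weyl denominator formula, reduce the constant-term extraction to a signed count of pairs $(w_1,w_2)\in W\times W$ satisfying a weight equation, and observe that solutions exist only for $\lambda=0$. The differences are in bookkeeping and in the final step. You fold the full product over $\Delta$ into $(-1)^{|\Delta_+|}\delta^2$ and track the sign via $(-1)^{|\Delta_+|}=\sgn(w_0)$, whereas the paper applies the denominator formula directly to the negative roots, producing $\sum_{\sigma'}\sgn(\sigma')u^{-\sigma'(\rho)}$ and avoiding any explicit appearance of $w_0$; these are equivalent. More substantively, to rule out $\lambda\neq 0$ you invoke that $\lambda+\rho$ and $\rho$ are both strictly dominant and each $W$-orbit contains a unique dominant element, while the paper instead runs a Cauchy--Schwarz estimate on $\langle\rho,\lambda\rangle$ to force $\lambda=0$. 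Your orbit argument is arguably the cleaner and more standard route; the paper's inequality argument buys nothing extra here, so both are complete proofs of the same statement.
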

\begin{proof}
    Choose a set of positive roots $\Delta_+$ and let $\Delta_-=\Delta\setminus \Delta_+$ be the negative roots. Let $\lambda$ be the highest weight vector of $L$, which is integral and dominant. We recall the Weyl character formula:
    \[\ch(L)=\frac{\sum_{\sigma\in W}\sgn(\sigma)u^{\sigma(\lambda+\rho)}}{\prod_{\alpha\in \Delta_+}\big(u^{\alpha/2}-u^{-\alpha/2}\big)}\]
    where $\rho=\frac{1}{2}\sum_{\alpha\in \Delta_+}\alpha$ is the Weyl vector. Using this and the Weyl denominator formula for the negative roots, we have
   \begin{align*} \prod_{\alpha\in \Delta}(u^{\alpha/2}-u^{-\alpha/2})\cdot \ch(L)&=\prod_{\alpha\in \Delta_-}(u^{\alpha/2}-u^{-\alpha/2})\left(\sum_{\sigma\in W}\sgn(\sigma)u^{\sigma(\lambda+\rho)}\right)\\
   &=\left(\sum_{\sigma'\in W}\sgn(\sigma')u^{-\sigma'(\rho)}\right)\left(\sum_{\sigma\in W}\sgn(\sigma)u^{\sigma(\lambda+\rho)}\right)\\
   &=\sum_{\sigma, \sigma'\in W}\sgn(\sigma\sigma') u^{\sigma(\lambda+\rho)-\sigma'(\rho)}\,.
   \end{align*}
   If $\lambda=0$ then the terms with $\sigma=\sigma'$ are precisely the ones contributing to the constant coefficient, which shows what we wanted. We now argue that the constant coefficient is equal to 0 for $\lambda\neq 0$ by showing that there are no $\sigma, \sigma'$ such that $\sigma(\lambda)=\sigma'(\rho)-\sigma(\rho)$; without loss of generality we may assume that $\sigma'=\id$, so suppose that $\sigma(\lambda)=\rho-\sigma(\rho)$. Since $\lambda$ is dominant we have
   \[0\leq \langle \rho, \lambda\rangle=\langle \sigma(\rho), \sigma(\lambda)\rangle=\langle \sigma(\rho),\rho\rangle-\lvert\lvert\sigma(\rho)\rvert\rvert^2\leq 0\]
   where the last inequality follows from Cauchy-Schwarz, and we have used twice that $\sigma$ is an isometry. Hence $\langle \rho, \lambda\rangle =0$, which implies that $\langle \alpha, \lambda\rangle=0$ for every positive root $\alpha$ and therefore $\lambda=0$.\qedhere
\end{proof}

\begin{remark}
   Our $K$-Hall algebra admits a generalization to arbitrary stacks in the spirit of ``intrinsic Donaldson--Thomas theory'' from \cite{intrinsicDT1, intrinsicDT2}, which will be pursued elsewhere.  The fact that Lemma \ref{lem: weylcharacter} holds for a general semisimple Lie algebra translates to a vanishing statement of $\varepsilon$ invariants for stacks $BG$. 
\end{remark}

\begin{example}\label{ex: deltanotregular}
   In \cite{Liu}, the author introduces a slightly different notion of $K$-homology where he requires a finiteness condition, see Definition \ref{def: regularKhom} and the discussion that follows it. Let us check that $\varepsilon_1\ast \varepsilon_1$, or equivalently $\delta_2$, does not satisfy the finiteness condition. Let $V$ be the canonical representation of $\PGL_2$, which has weight $(1, -1)$. Then
\[(\varepsilon_1\ast\varepsilon_1)_\pt\big((V-2)^N\big)=[u^0]\big((2-u-u^{-1})(u+u^{-1}-2)^N\big)=(-1)^N\binom{2N+2}{N+1}\]
   is non-zero for every $N$. Since $\rk(V)=2$, this shows that the finiteness condition~\eqref{eq: finiteness} does not hold. In particular, $\delta_2$ is not in the image of the pushforward along a map $Z\to B\PGL_2$ from a finite type scheme $Z$.
\end{example}

\section{Non-abelian localization and the wall-crossing formula}
\label{sec: NAL}

In this section we state and prove the wall-crossing formula for $\delta$ and $\varepsilon$ invariants. The fundamental tool is the virtual non-abelian localization theorem of Halpern--Leistner \cite{HLremarks, HLNAL}. 

\subsection{$\Theta$-stratifications}
\label{subsec: Theta}
The non-abelian localization is stated in the language of derived $\Theta$-stratifications, which we briefly introduce now following \cite[Section~1]{HLderived}; the reader can refer to loc. cit. for details on this topic.

Let $\mathfrak X$ be a derived algebraic stack over $\mathbb C$ which is locally of finite type. We will mostly be interested in the case of $\mathfrak X$ being the moduli stack of objects in some abelian category, and we specialize the discussion to that case in Section~\ref{subsec: stratificationabelian}.

Now consider $\Theta\coloneqq \mathbb A^1/\mathbb G_m$, and the derived stacks:

\begin{enumerate}
    \item[i)] $\operatorname{Grad}(\mathfrak X) \coloneqq  \operatorname{Maps}(B\mathbb G_m, \FX)$ which is called the \textit{stack of the graded points of~$\mathfrak X$};
\item[ii)] $\operatorname{Filt}(\mathfrak X) \coloneqq \operatorname{Maps}(\Theta, \mathfrak X)$, which is called the \textit{stack of the filtered points of~$\mathfrak X$}.
\end{enumerate} 

We have the following canonical maps of derived stacks:
\begin{equation}\label{eq: filtgraddiagram}
    \begin{tikzcd}
       \Filt(\FX)\arrow[rr,"\ev_1"]\arrow[d, "\gr"', bend right] & & \FX\\
        \Grad(\FX)\arrow[u, "\sigma"', bend right]\arrow[urr, "\Sigma"']&
    \end{tikzcd}
\end{equation}
The canonical morphism $\gr$ is the restriction via the inclusion $0/\mathbb G_m \to \Theta$; the evaluation map $\operatorname{ev}_1$ is given by restricting the map to the open substack $(\mathbb A^1 \setminus \{0\})/\BG_m$ inside $\Theta.$ The morphism $\sigma$ is a section of $\gr$, i.e. $\gr\circ \sigma=\id_\Grad$, and is induced by the projection $\Theta\to B\BG_m$. Finally, $\Sigma$ is the composition $\ev_1\circ \sigma$.

\begin{definition}[$\Theta$-stratum] A \textit{derived $\Theta$-stratum} inside $\mathfrak X$ is a union of connected components $\mathfrak S \subset \Filt(\FX)$ so that the morphism $\ev_1: \mathfrak S \to \mathfrak X$ is a closed immersion.
\end{definition}

\begin{definition}[$\Theta$-stratification] Let $(\Gamma, \leq)$ be a totally ordered set with a minimal element $0$. A derived $\Theta$-stratification of $\FX$ indexed by $\Gamma$ consists of:
\begin{enumerate}
    \item a collection of open derived substacks $\FX_{\leq c}$ for $c \in \Gamma$, such that $\mathfrak X_{\leq c} \subseteq \FX_{\leq c'}$ for $c \leq  c'$;
\item  for each $c\in \Gamma$, a derived $\Theta$-stratum $\FS_c\subseteq \Filt(\mathfrak \FX_{\leq c})$ such that 
\[\mathfrak X_{\leq c} \setminus \operatorname{ev}_1(\mathfrak S_c) = \bigcup \limits_{c'<c} \mathfrak X_{\leq c'}\,.\]
\item for every point $x$ in $\mathfrak X$, the set $\{c \in \Gamma \ | \ x \in \FX_{\leq c}\}$ admits a minimal element.
\end{enumerate}
\end{definition}

A consequence of (3) is that the open sets $\FX_{\leq c}$ exhaust $\FX$. The open set $\FX_{\leq 0}$ is called the semistable locus of the $\Theta$-stratification, and denoted by $\FX^{\textup{ss}}$; its complement is called the unstable locus. We will use the notation
\[\FX_{=c}\coloneqq \ev_1(\FS_c)\,,\quad \FX_{<c}\coloneqq \FX_{\leq c}\setminus \FX_{=c}\overset{(2)}{=}\bigcup \limits_{c'<c} \mathfrak X_{\leq c'}\,.\]

By the definition of $\Theta$-stratification, every point in $\FX$ comes with a distinguished filtration $f\in \FS_c\subseteq \Filt(\FX)$ such that $\ev_1(f)=x$. This distinguished filtration is called the \textit{Harder-Narasimhan filtration} of $x$; in examples, they turn out to be the Harder-Narasimhan filtrations in the usual sense.


Halpern-Leistner also introduces a notion of the \textit{tcenter of a stratum}, which plays an important role for us:

\begin{definition}[Center] For a derived $\Theta$-stratum $\FS$ in $\mathfrak X$, one defines its center $\FZ$ as 
\[\FZ=\sigma^{-1}(\FS)\subseteq \Grad(\FX)\,.\] 
\end{definition}
Note that the center $\FZ$ is itself a union of connected components of $\Grad(\FX)$ (connected components of $\Grad(\FX)$ and $\Filt(\FX)$ are in bijection, c.f. \cite[Lemma 1.3.8]{HLstructure}) and $\FS=\gr^{-1}(\FZ)$.

Joyce defined in \cite[Section 3.3.4]{joyce} a slight weakening of the notion of $\Theta$-stratification, which he calls pseudo $\Theta$-stratification. 

\begin{definition}[Pseudo $\Theta$-stratification]
 Let $(\Gamma, \leq)$ be a partially ordered set with a minimal element $0$ such that every $c, c'\in \Gamma$ have a greatest lower bound, denoted by $c\vee c'$. A derived pseudo $\Theta$-stratification of $\FX$ indexed by $\Gamma$ consists of:
\begin{enumerate}
    \item a collection of open derived substacks $\FX_{\leq c}$ for $c \in \Gamma$, such that $\FX_{\leq c}\cap \FX_{\leq c'}=\FX_{\leq (c\vee c')}$. In particular, $\mathfrak X_{\leq c} \subseteq \FX_{\leq c'}$ for $c \leq  c'$;
\item  for each $c\in \Gamma$, there exists a $\Theta$-stratum $\FS_c\subseteq \Filt(\mathfrak \FX_{\leq c})$ such that 
\[\mathfrak X_{\leq c} \setminus \operatorname{ev}_1(\mathfrak S_c) = \bigcup \limits_{c'<c} \mathfrak X_{\leq c'}\,.\]
\item for every point $x$ in $\mathfrak X$, the set $\{c \in \Gamma \ | \ x \in \FX_{\leq c}\}$ admits a minimal element.
\end{enumerate} 
\end{definition}

The two main differences between a pseudo $\Theta$-stratification and a $\Theta$-stratification are that we no longer require the indexing set $\Gamma$ to be totally ordered, and a $\Theta$-stratum $\FS_c$ is no longer part of the data, we just require that it exists. This is a fairly mild difference and most of results concerning $\Theta$-stratifications also apply to pseudo $\Theta$-stratification. In some cases, it is easier and more natural to construct a pseudo-$\Theta$ stratification, so we will formulate our results in that language.

\subsection{The non-abelian virtual localization theorem}
\label{subsec: NAloc}

Let $\FX$ be a quasi-smooth derived stack which admits a $\Theta$-stratification. For simplicity, and because it is enough for our purposes, we will assume that the $\Theta$-stratification is finite (i.e. $\Gamma$ is finite) and that $\FX$ admits a proper good moduli space, as well as all the centers~$\FZ_c$.\footnote{As explained in \cite{HLNAL}, it is possible in some cases to allow $\FX$ to be non finite-type (e.g. the moduli of all bundles on a curve) and the stratification to be infinite. One of the remarkable aspects of the theorem, which we do not explore at all here, is that sometimes $\chi(\FX, V)$ can be defined even in that setting.}

Note that the stack of graded points comes with a canonical $B\BG_m$ action. This action preserves the center $\FZ_c\subseteq \Grad(\FX_{\leq c})$ of $\FS_c$. We regard the center $\FZ_c$ as a substack of $\FX$ via the locally closed immersion $\Sigma\colon \FZ_c\to \FX_{\leq c}\subseteq \FX$. We consider now the decomposition
\[\BT_\FX|_{\FZ_c}=\BT_c^-\oplus\BT_c^0\oplus \BT_c^+\]
of the restriction of the derived tangent bundle to $\FZ_c$ into its negative/zero/positive weight parts, according to the canonical $B\BG_m$ action. 

\begin{remark}\label{rmk: interpretationL-+}
The complexes $\BT_c^{-/0/+}$ have natural interpretations in terms of the derived normal bundles of the immersions in \eqref{eq: filtgraddiagram}. By \cite[Lemma 1.3.2, Lemma 1.5.5]{HLderived} we have
\begin{align*}
    \BT_c^0&\simeq \BT_{\FZ_c}\\
    \BT_c^-&\simeq \sigma^\ast\BT_{\FS_c/\FX}[1]\\
    \BT_c^+&\simeq \BT_{\FZ_c/\FS_c}[1]
\end{align*}
\end{remark}

For $c\in \Gamma$ we introduce the following complex on $\FZ_c$:
\begin{equation}\label{eq: complcorr}E_c=\Sym \big((\mathbb T_{c}^+)^\vee \oplus \mathbb T_{c}^-\big) \otimes \operatorname{det}(\mathbb T_{c}^-)^{\vee}[-\operatorname{rk} \mathbb T_{c}^-].\end{equation}
Note that $E_c$ is not perfect or coherent in general, but it has the property that its weight $\mu$ part is coherent for every $\mu$ and vanishes for $\mu\gg 0$; see the discussion in Section \ref{subsec: K-Hal-prod} regarding $\Gamma_-$.

\begin{theorem}[{\cite[Corollary 3.16]{HLNAL}}, Virtual non-abelian localization]\label{thm: NAloc} Let $\FX$ be a finite type quasi-smooth derived stack admitting a derived $\Theta$-stratification indexed by a finite set~$\Gamma$. Assume that $\FX$, $\FZ_c$ have proper good moduli spaces.

Then, for any $V \in K^\ast(\FX)$, the following formula holds:
\begin{equation}\label{eq: NAloc}\chi(\FX, V) = \chi(\mathcal \FX^{\textup{ss}}, V|_{\mathcal \FX^{ss}}) + \sum_{c\in \Gamma\setminus \{0\}} \chi(\FZ_{c}, V|_{\FZ_{c}} \otimes E_{c})\end{equation}
\end{theorem}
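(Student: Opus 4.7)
The plan is to induct on the cardinality of $\Gamma$, with trivial base case $|\Gamma|=1$, in which $\FX = \FX^{\textup{ss}}$ and the formula holds tautologically. For the inductive step, pick a maximal element $c \in \Gamma$. The open derived substack $\FX_{<c} \subseteq \FX$ then inherits a $\Theta$-stratification indexed by $\Gamma \setminus \{c\}$, with the same centers $\FZ_{c'}$ for $c' < c$; one checks moreover that $\FX_{<c}$ itself admits a proper good moduli space (this is part of the good behavior of $\Theta$-stratifications). By the inductive hypothesis the formula holds for $(\FX_{<c}, V|_{\FX_{<c}})$, so the theorem reduces to the single-stratum identity
\begin{equation*}
\chi(\FX, V) - \chi(\FX_{<c}, V|_{\FX_{<c}}) = \chi(\FZ_c, V|_{\FZ_c} \otimes E_c).
\end{equation*}

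Writing $i = \ev_1 \colon \FS_c \hookrightarrow \FX$ for the closed stratum immersion and $j \colon \FX_{<c} \hookrightarrow \FX$ for its open complement, the open-closed localization sequence in $G$-theory, combined with the properness of the good moduli spaces involved, identifies the left-hand side with the $\FS_c$-supported Euler characteristic, which by Grothendieck duality for the quasi-smooth immersion $i$ equals $\chi(\FS_c, i^! V)$. Using the factorization $\Sigma = \ev_1 \circ \sigma$ through the center, I would then push forward in two stages. First, along the retraction $\gr \colon \FS_c \to \FZ_c$: this is a bundle stack whose fiber-direction derived tangent complex is the positive-weight part $\BT_c^+$ (Remark \ref{rmk: interpretationL-+}), giving $R\gr_* \CO_{\FS_c} \simeq \Sym((\BT_c^+)^\vee)$ weight-by-weight for the canonical $B\BG_m$-action. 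Second, the derived conormal complex of $\ev_1$ pulls back to $(\BT_c^-)^\vee[1]$ on $\FZ_c$, so $\omega_{\ev_1}|_{\FZ_c} \simeq \det(\BT_c^-)^\vee[-\rk \BT_c^-]$, and the Koszul resolution along the quasi-smooth closed immersion contributes an additional $\Sym(\BT_c^-)$ factor. Multiplying the two contributions reconstitutes
\[E_c = \Sym((\BT_c^+)^\vee \oplus \BT_c^-) \otimes \det(\BT_c^-)^\vee[-\rk \BT_c^-]\]
as in \eqref{eq: complcorr}, and the push-pull formula applied to $\Sigma^* V$ (which is weight-$0$ since $V$ comes from $\FX$) delivers $\chi(\FS_c, i^! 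V) = \chi(\FZ_c, V|_{\FZ_c} \otimes E_c)$.

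The main obstacle is that $E_c$ is only quasi-coherent, so neither the symmetric algebra $R\gr_* \CO_{\FS_c}$ nor the Koszul contribution above is a bounded perfect complex, and $\chi(\FZ_c, V|_{\FZ_c} \otimes E_c)$ has no a priori meaning as an integer. All operations must be interpreted weight-by-weight with respect to the canonical $B\BG_m$-action on $\FZ_c$: both $(\BT_c^+)^\vee$ and $\BT_c^-$ carry strictly negative weights, so the graded pieces of $E_c$ are perfect and its weights are bounded above. The proper good moduli hypothesis on $\FZ_c$ together with Proposition \ref{prop: pushforwardgoodmoduli} then give a well-defined Euler characteristic on each weight-graded piece, and one must verify that the weighted sum converges to a finite integer (rather than a formal Laurent series) and that it matches the supported Euler characteristic on the closed stratum. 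Making this weight-by-weight bookkeeping rigorous — in particular the interplay between the Koszul expansion along $\BT_c^-$ and the symmetric algebra along $(\BT_c^+)^\vee$ under the single equivariant parameter — is the technical core of Halpern-Leistner's argument.
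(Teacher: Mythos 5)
You should first be aware that the paper does not prove this statement at all: Theorem \ref{thm: NAloc} is imported verbatim from Halpern-Leistner \cite[Corollary 3.16]{HLNAL}. What the paper supplies is only (i) the verification that each term of \eqref{eq: NAloc} is a well-defined integer (via Theorem \ref{thm: deltainvariants} and the weight-by-weight perfectness argument as in Lemma \ref{lem: astwelldefined}), and (ii) Remark \ref{rmk: pseudoNAL}, which records that Halpern-Leistner's proof handles a single stratum and then inducts on the number of strata, and adapts that induction to pseudo $\Theta$-stratifications. So your proposal is a reconstruction of an external theorem, and its skeleton (induction on strata; single-stratum case via sections supported on the closed stratum, pushed to the center, with $\Sym((\BT_c^+)^\vee)$ coming from $R\gr_*$ and the remaining factor from the normal directions of $\ev_1$) does match the shape of Halpern-Leistner's argument.

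As a proof, however, there are genuine gaps. First, your induction rests on the assertion that $\FX_{<c}$ admits a proper good moduli space ``as part of the good behavior of $\Theta$-stratifications''. This is false in general: a union of the semistable locus with only some of the unstable strata is typically neither $\Theta$-reductive nor S-complete, hence admits no good moduli space, and $\chi(\FX_{<c}, V)$ has no a priori meaning; the hypotheses of the theorem deliberately involve only $\FX$ and the centers $\FZ_c$. Relatedly, your appeal to the $G$-theory localization sequence does not apply as stated, since neither $Rj_*(V|_{\FX_{<c}})$ nor the supported sections $R\Gamma_{\FS_c}(V)$ are coherent; all of this must be organized weight-by-weight (in Halpern-Leistner's work, via the baric/semiorthogonal decomposition attached to a $\Theta$-stratum), precisely so that no finiteness on the intermediate opens is ever needed. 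Second, the single-stratum computation conflates $i_*i^!$ with local cohomology: Grothendieck duality gives $i^!V\simeq i^*V\otimes\det(N)[-\rk N]$ for the quasi-smooth immersion $i=\ev_1$ with normal complex $N$ (note: the determinant of the normal complex, not its dual, so your dualization should be rechecked against the apparent discrepancy between \eqref{eq: complcorr} and the identification $E_c=\Gamma_{[n],-}$ via \eqref{eq: gammacomplex} in Section \ref{subsubsec: derivedenhancementabelian}), whereas the $\Sym(\BT_c^-)$ factor arises from the completion along the stratum, i.e.\ from $R\Gamma_{\FS_c}$ versus $i_*i^!$; a Koszul resolution computes $i^*i_*$, i.e.\ exterior powers of the conormal, not a symmetric algebra. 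Your final expression has the expected shape, but the accounting as written does not add up, and the parenthetical that $V|_{\FZ_c}$ has weight $0$ is also wrong: its weights are the Hilbert--Mumford weights of $V$ on the stratum, which is exactly why the convergence of the weight-graded sums is delicate. Since you defer that convergence argument entirely, and it is the actual content of the theorem, the proposal remains a plausible plan for Halpern-Leistner's proof rather than a proof.
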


Note that $\FX^{\textup{ss}}=\FZ_0$ and $E_0=\CO_{\FX^{\textup{ss}}}$, so on the right hand side we could have written just a sum over all $c\in \Gamma$. All the terms are well defined by Theorem \ref{thm: deltainvariants}, since $\FX$ being quasi-smooth implies that the centers $\FZ_c$ are also quasi-smooth by Remark \ref{rmk: interpretationL-+} (see \cite[Lemma 3.1.4]{HLderived} for further details). This holds even though $E_c$ is not coherent by the same argument as in the proof of Lemma \ref{lem: astwelldefined}.

\begin{remark}\label{rmk: relativeNAL}
For the wall-crossing formula formulated in operational $K$-homology we will also use the following relative version of the non-abelian localization, as in \cite[Proposition 6.10]{HLHgauged}. If $\FX$ satisfies the conditions of Theorem \ref{thm: NAloc}, $S$ is some derived stack and $V\in K^\ast(\FX\times S)$ then we have an equality
\[(\FX\times S\to S)_\ast V=\sum_{c\in \Gamma}(\FZ_c\times S\to S)_\ast (V|_{\FX_c\times S}\otimes E_c) \]
in $K^\ast(S)$.
\end{remark}

\begin{remark}\label{rmk: pseudoNAL}
Theorem \ref{thm: NAloc} still holds for pseudo $\Theta$-stratifications, as long as we pick some strata and corresponding centers. The proof in \cite{HLNAL} shows the result when there is just one stratum and then inducts on the number of strata. To induct on the number of strata when $\Gamma$ is just partially ordered we take a maximal element $c\in \Gamma$ so that there is no $c'>c$. Then 
\[\FX\setminus \FX_{=c}=\bigcup_{c'\neq c}\FX_{=c'}=\bigcup_{c'\neq c}\FX_{\leq c'}\,,\]
so in particular $\FX_{=c}$ is closed in $\FX$ and $\FX\setminus \FX_{=c}$ inherits a pseudo $\Theta$-stratification indexed by $c'\in \Gamma\setminus \{c\}$ with the same centers.
\end{remark}

\subsection{(Pseudo) $\Theta$-stratifications on $\CM_\CA$}\label{subsec: stratificationabelian}
Let us now consider the case of stacks which come from a good abelian category; we start in the underived setting, and later consider the derived enhancement. It is shown in \cite[Proposition 7.12, Corollary 7.13]{AHLH} that the stacks of graded points and filtered points correspond, respectively, to $\BZ$-graded objects of $\CA$ and $\BZ$-filtered objects of $\CA$:
\begin{align}
\label{eq: gradedfiltpoints}    \Grad(\CM_\CA)&=\bigsqcup_{c, w}\CM_{\alpha_1}\times\ldots\times  \CM_{\alpha_n}\\
\nonumber    \Filt(\CM_\CA)&=\bigsqcup_{c, w}\mathcal{E}xt_{c, w}\,.
\end{align}
In either case, the union is over all possible choices of $n\geq 1$, $c=(\alpha_1, \ldots, \alpha_n)$ with $\alpha_i\in C(\CA)\setminus \{0\}$ and integer weights
\[w_1>\ldots>w_n\,,\]
and the stack $\mathcal{E}xt_{c, w}$ parametrizes $\BZ$-indexed (descending) filtrations
\[\ldots \subseteq \tilde E_{1}\subseteq \tilde E_0\subseteq \tilde E_{-1}\subseteq \ldots\]
such that $\tilde E_w=0$ for $w>w_1$ and $[\tilde E_w/\tilde E_{w+1}]=\alpha_i$ if $w=w_i$ and 0 otherwise; in particular, $\tilde E_w=E$ stabilizes for $w<w_n$ and $[E]=\alpha_1+\ldots+\alpha_n$. Note that, by setting $E_i=\tilde E_{w_i}$, the data of such a filtration is the same as the data of 
\[0=E_0\subsetneq E_1\subsetneq \ldots\subsetneq E_n=E\]
together with the choice of weights. In a similar way, a point in  $\Grad(\CM_\CA)$ is canonically described as an object $E$ together with a decomposition $E=\bigoplus_{w\in \BZ} \tilde F_w$, with $[\tilde F_{w}]=\alpha_i$ for $w=w_i$ and $0$ otherwise, but this is the same data as $E=F_1\oplus \ldots \oplus F_n$ and a choice of weights. The stacks of graded and filtered points on $\CM_\alpha$ are the union of components such that $\alpha_1+\ldots+\alpha_n=\alpha$. The map $\ev_0$ described in the previous section sends a filtration $\tilde E_\bullet$ to its associated graded $\bigoplus_{w\in \BZ} \tilde E_w/\tilde E_{w-1}$ and $\ev_1$ sends the filtration to the total object~$E$. The map $\sigma$ sends a graded point $\bigoplus_{w\in \BZ} \tilde F_w$ to the filtered point defined by $\tilde E_w=\bigoplus_{w'\geq w}\tilde F_{w'}$.

We will assume the existence of a pseudo $\Theta$-stratification adapted to $\mu$. Let 
\[\HN_\alpha(\mu)=\{(\alpha_1, \ldots, \alpha_n)\colon \alpha_1+\ldots+\alpha_n=\alpha \textup{ and }\mu(\alpha_1)>\ldots>\mu(\alpha_n)\}\]
be the set of possible $\mu$-HN types. Any object in $\CA$ has an associated $\mu$-HN type $c=(\alpha_1, \ldots, \alpha_n)$ where $\alpha_i=[E_i/E_{i-1}]$ are the types of its $\mu$-HN factors in \eqref{eq: HNfiltration}. 

\begin{definition}\label{def: stratificationadaptedto}
    We say that there is a pseudo $\Theta$-stratification of $\CM_\alpha$ adapted to $\mu$ if there is a partial ordering $\geq$ of $\HN_\alpha(\mu)$ which satisfies\footnote{In the right hand side of \eqref{eq: ineqHNtypes}, $\alpha_i+\alpha_j$ is put in a position that makes it a $\mu$-HN type, i.e. it is put between $\alpha_k$ and $\alpha_{k+1}$ such that $\mu(\alpha_k)>\mu(\alpha_i+\alpha_j)>\mu(\alpha_{k+1})$. In the case that $\mu(\alpha_k)=\mu(\alpha_i+\alpha_j)$ for some $k$, the right hand side should be interpreted as $(\ldots, \alpha_i+\alpha_k+\alpha_j, \ldots)$.}
     \begin{equation}\label{eq: ineqHNtypes}
    (\alpha_1, \ldots, \alpha_i, \ldots, \alpha_{j}, \ldots, \alpha_n)\geq (\alpha_1, \ldots, \alpha_{i}+\alpha_{j}, \ldots, \alpha_n)\, \end{equation}
   and a pseudo $\Theta$-stratification of $\CM_\alpha$ indexed by $(\HN_\alpha(\mu), \geq)$ for which the $\BC$-points of $\CM_{=c}^\mu$ correspond to objects of $\CA$ of $\mu$-HN type $c$.
\end{definition}

  Condition \eqref{eq: ineqHNtypes} should be interpreted as saying that if $[F_i]=\alpha_i$ then the $\mu$-HN type of $\bigoplus_{i=1}^nF_i$ is $\geq\!(\alpha_1, \ldots, \alpha_n)$, which is the form in which we will use this condition in the proof of Proposition \ref{prop: centers}. It implies in particular that $c=(\alpha)$ is the minimal element of $\HN_\alpha(\mu)$. Since $E$ is semistable if and only if its $\mu$-HN type is $(\alpha)$, the definition implies that $\mu$-semistability agrees with semistability in the sense of the $\Theta$-stratification.

We believe that in all cases of interest there is a natural choice of partial ordering making this hold. The main point is that the partial ordering should be chosen in a way that $\CM_{\leq c}^\mu$ is open. In some cases, it is possible to upgrade the pseudo-$\Theta$ stratification to a honest $\Theta$-stratification; see \cite[Theorem 2.2.2]{HLstructure} for some technical conditions that make this possible. Doing so amounts to choosing a total ordering, instead of a partial ordering, and a choice of weights for each $c$ (i.e., a choice of $\Theta$-stratum).

\subsubsection{Centers}\label{subsubsec: centers}

Given $c\in \HN_\alpha(\mu)$ and a choice of weights $w$, we have a $\Theta$-stratum 
\[\CS_{c,w}\subseteq \CE xt_{c,w}\subseteq \Filt(\CM_\alpha)\]
which is the preimage of $\FM_{=c}^\mu$ along the map $\ev_1\colon \CE xt_{c,w}\to \FM_\alpha$. The next proposition identifies the centrum $\CZ_{c,w}$ of this stratum:

\begin{proposition}\label{prop: centers} For every $c=(\alpha_1, \ldots, \alpha_n)\in \HN_\alpha(\mu)$ and choice of weights $w$, the center $\CZ_{c,w}$ is a product of stacks of $\mu$-semistable objects:
\[\CZ_{c,w}\simeq \CM_{\alpha_1}^\mu\times \ldots \CM_{\alpha_n}^\mu\,.\]
\end{proposition}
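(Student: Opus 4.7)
The plan is to unwind the definition of $\CZ_{c,w}$ using the explicit descriptions in~\eqref{eq: gradedfiltpoints} and reduce the statement to a classical fact about $\mu$-HN filtrations of direct sums, which I will then establish by induction on $n$.

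First, since $\sigma$ respects the component labellings in $\Grad(\CM_\alpha)$ and $\Filt(\CM_\alpha)$ coming from~\eqref{eq: gradedfiltpoints}, the center $\CZ_{c,w} = \sigma^{-1}(\CS_{c,w})$ sits inside the $(c,w)$-component of $\Grad(\CM_\alpha)$, which is canonically $\CM_{\alpha_1} \times \ldots \times \CM_{\alpha_n}$ (parametrizing tuples $(F_1,\ldots,F_n)$ with $[F_i]=\alpha_i$). On such a tuple, the composition $\ev_1\circ \sigma$ returns the total object $\bigoplus_i F_i$, so the defining condition for $\CZ_{c,w}$ becomes $\bigoplus_i F_i \in \CM_{=c}^\mu$; that is, the $\mu$-HN type of $\bigoplus_i F_i$ must equal $c=(\alpha_1,\ldots,\alpha_n)$. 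The proposition thus reduces to the pointwise claim that $\bigoplus_i F_i$ has $\mu$-HN type $c$ if and only if every $F_i$ is $\mu$-semistable.

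The ``if'' direction is immediate from the uniqueness of HN filtrations: the chain $0 \subset F_1 \subset F_1\oplus F_2 \subset \ldots \subset \bigoplus_i F_i$ has $\mu$-semistable successive quotients of strictly decreasing slopes, and so must be the $\mu$-HN filtration. For the ``only if'' direction I would argue by induction on $n$. Using the additivity $\mu_{\max}(\bigoplus_i F_i) = \max_i \mu_{\max}(F_i)$, together with $\mu_{\max}(F_i)\geq \mu(\alpha_i)$ and the strict inequalities $\mu(\alpha_1)>\mu(\alpha_i)$ for $i>1$, the hypothesis that the top HN slope of $\bigoplus_i F_i$ equals $\mu(\alpha_1)$ forces $\mu_{\max}(F_1) = \mu(F_1)$, so $F_1$ is $\mu$-semistable. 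The first HN factor $E_1\subset \bigoplus_i F_i$ then contains $F_1$ by its maximality among semistable subobjects of slope $\mu(\alpha_1)$; since $[E_1]=\alpha_1=[F_1]$, the quotient $E_1/F_1$ has trivial topological class and is therefore zero. Hence $(\bigoplus_i F_i)/E_1 \cong \bigoplus_{i\geq 2} F_i$ has $\mu$-HN type $(\alpha_2,\ldots,\alpha_n)$, and applying the induction hypothesis to this shorter direct sum yields $\mu$-semistability of $F_2,\ldots,F_n$.

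The main obstacle is the step $E_1=F_1$, which rests on the mild fact that an object of trivial topological class in $C(\CA)$ is zero (equivalently, the component of the zero object in $\FM_\CA$ is a single reduced point, by rigidity of the zero object); this holds in all good abelian categories considered in the paper. Once the pointwise equivalence is in place, the identification at the level of stacks follows from the openness of the semistable locus (Assumption~\ref{ass: stability}(1)): the locus in $\CM_{\alpha_1}\times\ldots\times\CM_{\alpha_n}$ where every $F_i$ is $\mu$-semistable is precisely the open substack $\CM_{\alpha_1}^\mu\times\ldots\times\CM_{\alpha_n}^\mu$.
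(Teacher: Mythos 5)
Your setup — identifying $\CZ_{c,w}$ inside the $(c,w)$-component $\CM_{\alpha_1}\times\ldots\times\CM_{\alpha_n}$ of $\Grad(\CM_\alpha)$, reducing to a pointwise claim about the $\mu$-HN type of $\bigoplus_i F_i$, and the "if" direction via uniqueness of HN filtrations — is exactly the paper's argument (the paper phrases the membership criterion as $\bigoplus_i F_i\in\CM_{\leq c}^\mu$ via \cite[Corollary 2.3.6]{HLstructure}, while you use $\CM_{=c}^\mu$ directly from the definition of $\CS_{c,w}$; either is fine). The divergence, and the problem, is in the "only if" direction. The paper does \emph{not} prove the direct-sum/HN compatibility from scratch: it is deliberately packaged into condition \eqref{eq: ineqHNtypes} of Definition \ref{def: stratificationadaptedto}, and the paper even flags, in the paragraph after that definition, that the interpretation "if $[F_i]=\alpha_i$ then the $\mu$-HN type of $\bigoplus_i F_i$ is $\geq(\alpha_1,\ldots,\alpha_n)$" is precisely what gets used in the proof of Proposition \ref{prop: centers}. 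Your argument never invokes this hypothesis and instead tries to reprove the fact by slope manipulations, and those manipulations do not go through at the paper's level of generality.

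Concretely, the step "$\mu_{\max}(F_1)=\mu(F_1)$, so $F_1$ is $\mu$-semistable" is not valid for weak stability conditions as in Definition \ref{def: stabcondition}: semistability (Definition \ref{def: stableobject}) requires $\mu(E')\leq\mu(F_1/E')$ for all subobjects, and since the see-saw is non-strict, the configuration $\mu(E')=\mu(F_1)>\mu(F_1/E')$ is allowed, so "no subobject of slope exceeding $\mu(F_1)$" does not formally imply semistability (it does for genuine stability conditions, but the paper's main new applications, e.g. tilt stability, are weak). Moreover, the auxiliary inputs you use are themselves not among the paper's assumptions and are cousins of the fact being proved: the identity $\mu_{\max}(\bigoplus_i F_i)=\max_i\mu_{\max}(F_i)$ is an unproved assertion about HN filtrations of direct sums in this abstract setting, and the step $[E_1]=[F_1]\Rightarrow E_1=F_1$ uses cancellativity of the monoid $C(\CA)$ together with "class $0$ implies zero object", neither of which is assumed. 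The intended repair is short: if some $F_i$ is unstable, condition \eqref{eq: ineqHNtypes} (in the interpreted form above, applied to the refinement coming from the HN filtrations of the $F_i$) gives that the $\mu$-HN type of $\bigoplus_i F_i$ is strictly greater than $c$, so the point lies outside $\CM_{\leq c}^\mu$ and hence outside the center; this replaces your induction entirely and is where the "adapted to $\mu$" hypothesis earns its keep.
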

\begin{proof}
By definition, the center is a substack of the connected component of $\Grad(\CM_\alpha)$ which is isomorphic to $\CM_{\alpha_1}\times \ldots \times \CM_{\alpha_n}$. Let $E=\bigoplus_{i=1}^n F_i$ be a point in such connected component. By \cite[Corollary 2.3.6]{HLstructure}, $E=\bigoplus_{i=1}^n F_i$ is in the center if and only if the total object $E$ is in $\CM_{\leq c}^\mu$. If $F_1, \ldots, F_n$ are all semistable, then
\[0\subseteq F_1\subseteq F_1\oplus F_2\subseteq \ldots\subseteq \bigoplus_{i=1}^n F_i=E\]
is the $\mu$-HN filtration of $E$, so $E$ is indeed in $\CM_{\leq c}^\mu$. On the other hand, if not all the $\CM_{\alpha_i}$ are semistable, then \eqref{eq: ineqHNtypes} implies that the $\mu$-HN type of $E$ is $>c$, so $E=\bigoplus_{i=1}^n F_i$ is not in the center.\qedhere
\end{proof}

Since the strata $\CS_{c,w}$ and the centers $\CZ_{c,w}$ are identified for different $w$, we will drop it from the notation and just write $\CS_c=\CS_{c,w}$ and $\CZ_c=\CZ_{c,w}$. 

\subsubsection{Derived enhancement}\label{subsubsec: derivedenhancementabelian}

We explain now how to upgrade the statements above to the derived setting. The stacks of filtered and graded points of a derived stack inherit a natural derived enhancement (cf. \cite[Theorem 5.1.1]{HLpreygel} or \cite[Theorem 1.2.1]{HLderived}).

Recall from Section \ref{subsec: abcat} that the stack $\CM_\CA$ admits a derived enhancement $\FM_\CA$, which is obtained from the open embedding of $\CM_\CA$ into the classical truncation of $\FM_{\bfT}$, where $\bfT$ is a saturated dg category. By \cite[Proposition 8.26]{KPS}, the derived stacks of graded and filtered points on $\FM_{\bfT}$ admit descriptions analogous to \eqref{eq: gradedfiltpoints}; hence, the same is true for the derived stack $\FM_\CA$. 

The pseudo $\Theta$-stratification on $\CM_\CA$ considered in Definition \ref{def: stratificationadaptedto} induces a derived pseudo $\Theta$-stratification on $\FM_\CA$ by \cite[Lemma 1.2.3]{HLderived}. In particular, this means that for each $c\in \HN_\alpha(\mu)$ we have derived enhancements $\FS_c, \FZ_c$ of $\CS_c, \CZ_c$, and maps of derived stacks as follows:
\begin{center}
    \begin{tikzcd}
        \FS_c\arrow[r, "\sim"',"\ev_1"]\arrow[d, "\gr"', bend right]& \FM_{=c}^\mu\subseteq \FM_\alpha\\
        \FZ_c\arrow[u, "\sigma"', bend right]\arrow[ur, "\Sigma"']&
    \end{tikzcd}
\end{center}
By the previous considerations, the description of the center in Proposition \ref{prop: centers} can be enhanced to
\[\FZ_{c}\simeq \FM_{\alpha_1}^\mu\times \ldots \times \FM_{\alpha_n}^\mu\,.\]
Note that $\Sigma=\ev_1\circ \sigma$ is precisely the direct sum map.

Let us now identify the complex $E_c$ appearing in the non-abelian localization formula with the class $\Gamma_-$ in the definition of the $K$-Hall algebra. 

Denote by $\Ext_{ij}$ the pullback of $\Ext_{12}$ via the map $\FZ_c\to \FM_{\alpha_i}^\mu\times \FM_{\alpha_j}^\mu$. In particular, when $i=j$, the map above factors through the diagonal $\FM_{\alpha_i}^\mu\to \FM_{\alpha_i}^\mu\times \FM_{\alpha_i}^\mu$ and $\Ext_{ii}$ is the pullback of $\BT_{\FM_{\alpha_i}^\mu}[-1]$ to $\FZ_c$ by \eqref{eq: derivedtangentFM}. By bilinearity of the Ext complex, we have
\[\Sigma^\ast \BT_{\FM_\alpha}=\sum_{1\leq i,j\leq n}\Ext_{ij}[1]\,.\]

Given a weight $w$, the canonical $B\BG_m$ action on the center can be described as
\[B\BG_m\times \prod_{i=1}^n \FM_{\alpha_i}^\mu\xrightarrow{(-)^w}
\prod_{i=1}^n (B\BG_m\times\FM_{\alpha_i}^\mu)\xrightarrow{\Phi\times \ldots\times \Phi} \prod_{i=1}^n \FM_{\alpha_i}\,.\]
Here, $(-)^w\colon B\BG_m\to (B\BG_m)^n$ denotes the map given by $(-)^{w_i}$ in the $i$-th copy of $B\BG_m$. Note that $\Ext_{ij}$ has weight $w_j-w_i$ with respect to this action. Therefore, we have 
\[\BT_c^0=\Ext_=[1]=\BT_{\FZ_c}\,,\,\BT_{c}^{+}=\Ext_{>}[1]\,,\, \BT_{c}^{-}=\Ext_{<}[1]\,.\]
Observe that, although the $B\BG_m$ action depends on the choice of weights, the splitting of $\Sigma^\ast \BT_{\FM_\alpha}$ above into zero/positive/negative weights does not. Hence the complex $E_c$ is precisely
\begin{align}
    E_c&=\Sym(\Ext_{>}^\vee[-1]+\Ext_<[1])\otimes \det(\Ext_{<}[1])[\rk_<]\\
    &=\Lambda_{-1}\big(\Ext_>^\vee+\Ext_<\big)\otimes \det(\Ext_<)^\vee[\rk(\Ext_<)]=\Gamma_{[n],-} \nonumber
\end{align}

\subsection{Dominant wall-crossing formula} \label{subsec: dominantWC} We now prove the dominant wall-crossing formula, which compares invariants defined with respect to two stability conditions $\mu_0, \mu$ where $\mu_0$ dominates $\mu$. The typical scenario where a stability dominates another is when $\mu_0$ is on some wall in the space of stability conditions and $\mu$ is in an adjacent chamber. For us, dominance means the following:

\begin{definition}[Dominance]\label{def: dominance}
Let $\mu_0, \mu$ be as in Assumption \ref{ass: stability}. We say that $\mu_0$ dominates $\mu$ at $\alpha\in C(\CA)_\pe$ if the following condition holds for every $E$ with $[E]=\alpha$: $E$ is $\mu_0$-semistable if and only if 
\[\mu_0(F_1)=\ldots=\mu_0(F_n)\,.\]
where $F_1, \ldots, F_n$ are the $\mu$-HN factors of $E$.
\end{definition}

The more standard definition of dominance (see \cite[Definition 4.10]{JO06III}) is that $\mu, \mu_0$ should satisfy
\[\mu(\beta)\geq \mu(\gamma)\Rightarrow \mu_0(\beta)\geq \mu_0(\gamma)\]
for every $\beta, \gamma\in C(\CA)$. This implies dominance as in Definition \ref{def: dominance}, but it is sometimes too restrictive in applications, see for example \cite[Section 4.1]{FTr0}. The following notion of dominance is more flexible, and implicitly used in loc. cit.

\begin{definition}[Numerical dominance]
Let $\mu_0, \mu$ be as in Assumption \ref{ass: stability}. We say that $\mu_0$ numerically dominates $\mu$ at $\alpha\in C(\CA)_\pe$ if the following holds:
\begin{enumerate}
    \item If $E$ is $\mu_0$-semistable of class $\alpha$ and $0=E_0\subsetneq E_1\subsetneq \ldots\subsetneq E_n=E$ is the $\mu$-HN filtration of $E$, then $[E_i/E_j]\in C(\CA)_\pe$ is permissible for any $i>j$.
    \item We have
    \[\mu(\beta)\geq \mu(\gamma)\Rightarrow \mu_0(\beta)\geq \mu_0(\gamma)\]
    for every $\beta, \gamma\in C(\CA)$ such that $\beta+\gamma=[E_i/E_j]$, where $E_i$ are steps in the $\mu$-HN filtration of some $\mu_0$-semistable object in class $\alpha$ as above, $\gamma\in C(\CA)_\pe$, and the stack $\FM_\gamma^{\mu_0}$ are non-empty.
\end{enumerate}
\end{definition}

\begin{proposition}
    If $\mu_0$ numerically dominates $\mu$ then $\mu_0$ dominates $\mu$. 
\end{proposition}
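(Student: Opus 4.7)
The plan is to prove both directions of the biconditional in Definition~\ref{def: dominance}: that $E$ is $\mu_0$-semistable if and only if its $\mu$-HN factors $F_1,\dots,F_n$ all have the same $\mu_0$-slope. Fix $E$ of class $\alpha$ with $\mu$-HN filtration $0 = E_0 \subsetneq E_1 \subsetneq \cdots \subsetneq E_n = E$ and factors $F_i = E_i/E_{i-1}$ satisfying $\mu(F_1) > \cdots > \mu(F_n)$.

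For the forward direction, suppose $E$ is $\mu_0$-semistable. For each $1 \leq i \leq n-1$, applying $\mu_0$-semistability to $E_i \subseteq E$ yields $\mu_0([E_i]) \leq \mu_0([E/E_i])$. Iterating the seesaw property of $\mu$ along the HN chain, the strict inequalities $\mu(F_1) > \cdots > \mu(F_n)$ propagate to $\mu([E_i]) \geq \mu(F_i) > \mu(F_{i+1}) \geq \mu([E/E_i])$, so in particular $\mu([E_i]) > \mu([E/E_i])$. By clause~(1) of numerical dominance, $\gamma = [E/E_i] = [E_n/E_i]$ is permissible; moreover the pair $\beta = [E_i],\ \gamma = [E/E_i]$ satisfies $\beta + \gamma = [E_n/E_0]$, which is exactly of the form required by clause~(2) since it arises from the $\mu$-HN filtration of the $\mu_0$-semistable object $E$ itself. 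Clause~(2) then furnishes the reverse inequality $\mu_0([E_i]) \geq \mu_0([E/E_i])$, so equality holds throughout:
\[
\mu_0([E_i]) \;=\; \mu_0([E/E_i]) \;=\; \mu_0(E).
\]
Applying the seesaw of $\mu_0$ to the short exact sequence $0 \to E_{i-1} \to E_i \to F_i \to 0$ with the now-known equality $\mu_0([E_{i-1}]) = \mu_0([E_i])$ pins down $\mu_0(F_i) = \mu_0(E)$, using the weighted-average nature of the numerical weak stability condition $\mu_0$ (if $[C]=[A]+[B]$ and $\mu_0(A)=\mu_0(C)$, then $\mu_0(B)=\mu_0(C)$).

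For the reverse direction, assume $\mu_0(F_1) = \cdots = \mu_0(F_n) = s$. Iterated seesaw then gives $\mu_0([E_i]) = s$ for all $i$, and likewise for every successive extension of the $F_j$'s, so in particular $\mu_0(E) = s$. For any proper subobject $G \subseteq E$, the induced filtration $G \cap E_\bullet$ writes $G$ as a successive extension whose quotients embed into $F_i$; combining the $\mu$-semistability of $F_i$ with numerical dominance applied to the sub/quotient pairs inside each $F_i$ bounds the $\mu_0$-slope of each such quotient by $s$, and averaging via seesaw gives $\mu_0(G) \leq s = \mu_0(E)$, establishing $\mu_0$-semistability of $E$.

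The main obstacle is in the forward direction, specifically verifying the non-emptiness hypothesis $\FM_{[E/E_i]}^{\mu_0} \neq \emptyset$ of clause~(2) of numerical dominance whenever it is invoked; this is typically automatic in the geometric situations where numerical dominance is used (the classes $[E/E_i]$ themselves are realized by $\mu_0$-semistable representatives after iterating the reduction), but it must be checked honestly. A secondary technical point is the propagation from equality of slopes of the $E_i$'s to the HN factors via seesaw, which relies on the additive nature of the numerical weak stability conditions considered in applications.
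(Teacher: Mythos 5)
Your overall strategy (verify both directions of Definition \ref{def: dominance} directly from the two clauses of numerical dominance) is reasonable, but the proof does not close, and the point you defer at the end is in fact the crux rather than a technicality. In the forward direction you invoke clause (2) with $\beta=[E_i]$, $\gamma=[E/E_i]$; clause (2) only applies when $\FM_\gamma^{\mu_0}\neq\emptyset$, and there is no reason the class $[E/E_i]$ should carry any $\mu_0$-semistable object ($E/E_i$ itself is in general neither $\mu$- nor $\mu_0$-semistable). In the reverse direction the situation is worse: you apply clause (2) to pairs $([F_i/G'],[G'])$ for arbitrary subobjects $G'\subseteq F_i$, where again $\FM_{[G']}^{\mu_0}\neq\emptyset$ is unverified, and moreover clause (2) only covers decompositions of classes $[E_i/E_j]$ arising from the $\mu$-HN filtration of a $\mu_0$-semistable object of class $\alpha$ --- whose existence is essentially what you are trying to establish, so the appeal is circular. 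The missing idea, which is how the paper's proof works, is a preliminary lemma: any $\mu$-semistable object whose class has the form $[E_i/E_j]$ for some $\mu_0$-semistable $E$ of class $\alpha$ is automatically $\mu_0$-semistable. This is proved by applying clause (2) with $\gamma$ the class of the maximal $\mu_0$-destabilizing subobject, which is $\mu_0$-semistable by construction, so the non-emptiness hypothesis comes for free. With that lemma in hand, the forward direction applies clause (2) only to adjacent HN factors ($\beta=[F_i]$, $\gamma=[F_{i+1}]$, $\beta+\gamma=[E_{i+2}/E_i]$), where non-emptiness for $\gamma$ now follows from the lemma, and a strict inequality $\mu_0(F_i)>\mu_0(F_{i+1})$ is shown to contradict $\mu_0$-semistability of $E$ directly; the reverse direction then follows because the factors are $\mu_0$-semistable of equal $\mu_0$-slope and same-slope extensions of semistables are semistable.

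There is a second, independent problem: your final step in the forward direction uses the ``lemma'' that $[C]=[A]+[B]$ and $\mu_0(A)=\mu_0(C)$ force $\mu_0(B)=\mu_0(C)$. That holds for genuine stability conditions (strict trichotomy), but $\mu_0$ here is only a weak stability condition in the sense of Definition \ref{def: stabcondition} --- which is the relevant generality, e.g.\ for tilt stability --- and the weak seesaw only gives $\min(\mu_0(A),\mu_0(B))\leq\mu_0(C)\leq\max(\mu_0(A),\mu_0(B))$, so $\mu_0(A)=\mu_0(C)$ puts no constraint on $\mu_0(B)$. Hence even granting the non-emptiness issue, your forward argument stalls at $\mu_0([E_i])=\mu_0(E)$ for all $i$ and cannot recover the slopes of the factors $F_i$. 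Similarly, in the reverse direction you only verify $\mu_0(G)\leq\mu_0(E)$ for subobjects $G$, which for a weak stability condition is strictly weaker than the required $\mu_0(G)\leq\mu_0(E/G)$. The paper sidesteps both pitfalls by arguing with inequalities between adjacent factors and with closure of same-slope semistables under extension, never needing the equal-slope seesaw cancellation.
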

\begin{proof}
Let $I\subseteq C(\CA)_\pe$ be the set of types of the form $[E_i/E_j]$ for some $E$ which is $\mu_0$-semistable of type $\alpha$. In particular, $\alpha\in I$. We start by proving that if $[E']\in I$ then $E'$ being $\mu$-semistable implies that $E'$ is $\mu_0$-semistable. Suppose that $E'$ is not $\mu_0$-semistable and let $E''\subseteq E'$ be its maximally destabilizing subobject with respect to $\mu_0$; in particular, $E''$ is $\mu_0$-semistable. Using the condition with $\gamma=[E'']$ (note that $\gamma\in C(\CA)_\pe$ by Assumption \ref{ass: stability}(4)) and $\beta=\alpha'-\gamma=[E'/E'']$ it follows that $E''$ also destabilizes $E'$ with respect to $\mu$, a contradiction.

Since extensions of $\mu_0$-semistable objects with the same $\mu_0$-slope are still $\mu_0$-semistable, if the $\mu$-HN factors $F_i=E_{i+1}/E_i$ -- which are $\mu$-semistable by definition, and hence $\mu_0$-semistable -- of $E$ have the same $\mu_0$-slope then $E$ is $\mu_0$-semistable. On the other hand, we have
\[\mu(F_i)>\mu(F_{i+1})\Rightarrow \mu_0(F_i)\geq \mu_0(F_{i+1})\]
for each $i=0, \ldots, n-1$; note that $[F_i]+[F_{i+1}]=[E_{i+2}/E_i]\in I$. But if one of the inequalities is strict then 
\[\mu_0(E_i)=\mu_0(F_1+\ldots+F_i)\geq \mu_0(F_i)>\mu_0(F_{i+1})\geq \mu_0(F_{i+1}+\ldots+F_n)=\mu_0(E/E_i)\]
which would mean that $E$ is not $\mu_0$-semistable.\qedhere
\end{proof}

We now prove the dominant wall-crossing formula:

\begin{theorem}[Dominant wall-crossing]\label{thm: dominantwc}
    Let $\mu_0, \mu$ be stability conditions as in Assumption \ref{ass: stability} such that $\mu_0$ dominates $\mu$. Then we have the equality
    \begin{equation}
        \label{eq: dominantwc}
    \delta_\alpha^{\mu_0}=\sum_{(\alpha_1, \ldots, \alpha_n)\in \HN_\alpha(\mu/\mu_0)}\delta_{\alpha_1}^{\mu}\ast\ldots \delta_{\alpha_n}^{\mu}\end{equation}
    in $\BK(\CA)$ where
    \begin{equation}\label{eq: HNmumu0}
        \HN_\alpha(\mu/\mu_0)\coloneqq \{(\alpha_1, \ldots, \alpha_n)\in \HN_\alpha(\mu)\colon \mu_0(\alpha_i)=\mu_0(\alpha),\, i=1, \ldots, n\}\,.
    \end{equation}
\end{theorem}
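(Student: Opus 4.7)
The plan is to apply the virtual non-abelian localization theorem (Theorem \ref{thm: NAloc}, in its relative form Remark \ref{rmk: relativeNAL} and its extension to pseudo $\Theta$-stratifications Remark \ref{rmk: pseudoNAL}) to the stack $\FM_\alpha^{\mu_0}$ equipped with a pseudo $\Theta$-stratification adapted to $\mu$, and then recognize each contribution from a center as an iterated $K$-Hall product via Proposition \ref{prop: nproduct}.

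First, I would restrict the pseudo $\Theta$-stratification of $\FM_\alpha$ adapted to $\mu$ (guaranteed by Assumption \ref{ass: stability}(2)) to the open substack $\FM_\alpha^{\mu_0}\subseteq \FM_\alpha$. The key point is that, by the dominance hypothesis (Definition \ref{def: dominance}), the $\mu$-HN types $c=(\alpha_1,\ldots,\alpha_n)$ that actually appear on points of $\FM_\alpha^{\mu_0}$ are exactly the tuples in $\HN_\alpha(\mu/\mu_0)$ defined in \eqref{eq: HNmumu0}. The set $\HN_\alpha(\mu/\mu_0)$ is finite by Assumption \ref{ass: stability}(5) applied to $\mu_0$, and the semistable locus is the component corresponding to the trivial partition $c=(\alpha)$, which equals $\FM_\alpha^\mu$ (a $\mu$-semistable $E$ of class $\alpha$ is automatically $\mu_0$-semistable, again by dominance).

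Next, I would invoke the identification of the derived centers from Section \ref{subsubsec: derivedenhancementabelian}, which, together with Proposition \ref{prop: centers}, gives
\[
\FZ_c \simeq \FM_{\alpha_1}^\mu \times \ldots \times \FM_{\alpha_n}^\mu\,,
\]
with each $\alpha_i$ permissible by Assumption \ref{ass: stability}(4); consequently each $\FZ_c$ admits a proper good moduli space by Theorem \ref{thm: goodproper}. The same computation identifies the NAL localization complex $E_c$ from \eqref{eq: complcorr} with the $K$-Hall complex $\Gamma_{[n],-}$ on $(\FM_{\alpha_1}\times\ldots\times\FM_{\alpha_n})^\rig$. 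I would then apply the relative non-abelian localization formula to $\FM_\alpha^{\mu_0}$: for any $V\in K^\ast(\FM^\rig_\alpha\times S)$ pulled back to $\FM_\alpha\times S$,
\[
(\FM_\alpha^{\mu_0}\times S \to S)_\ast V \;=\; (\FM_\alpha^\mu\times S \to S)_\ast V \;+\; \sum_{c\neq (\alpha)} (\FZ_c\times S\to S)_\ast\bigl(V|_{\FZ_c\times S}\otimes \Gamma_{[n],-}\bigr)\,.
\]
Comparing this with Proposition \ref{prop: nproduct} and the definition of the $\delta$-invariants yields the desired identity in $\BK(\CA)$.

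The main technical point to handle carefully is the compatibility between non-abelian localization and rigidification: the classes $\delta_\alpha^{\mu_0}$ and $\delta_{\alpha_i}^\mu$ live in $K$-homology of the rigidified stacks, whereas NAL is naturally applied on $\FM_\alpha^{\mu_0}$ itself. The reconciliation is that any class pulled back along a rigidification has weight $0$ with respect to the scaling $B\BG_m$ (Proposition \ref{prop: Ktheoryrig}), and $\Gamma_{[n],-}$ is weight $0$ with respect to the diagonal $B\BG_m$ acting on $\FZ_c$; hence all pushforwards in the NAL formula descend to the diagonally-rigidified centers $(\FM_{\alpha_1}^\mu\times\ldots\times\FM_{\alpha_n}^\mu)^\rig$ used in the Hall product. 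Once this bookkeeping is in place, the formula follows term by term, with the trivial partition $c=(\alpha)$ contributing $\delta_\alpha^\mu$ and the remaining partitions contributing the iterated products $\delta_{\alpha_1}^\mu\ast\ldots\ast\delta_{\alpha_n}^\mu$.
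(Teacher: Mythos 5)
Your proposal is correct and follows essentially the same route as the paper: restrict the $\mu$ pseudo $\Theta$-stratification to $\FM_\alpha^{\mu_0}$ using dominance to identify the strata with $\HN_\alpha(\mu/\mu_0)$, identify the centers with products of semistable stacks and $E_c$ with $\Gamma_{[n],-}$, apply the (pseudo/relative) non-abelian localization theorem, and convert the result via Proposition \ref{prop: nproduct} and the weight-$0$ rigidification bookkeeping of Proposition \ref{prop: Ktheoryrig}. No gaps to report.
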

\begin{proof}
    By the definition of dominance, the stack of $\mu_0$-semistables is a union of strata
    \[\FM^{\mu_0}_\alpha=\bigsqcup_{c\in \HN_\alpha(\mu/\mu_0)}\FM^\mu_{=c}\,.\]
    The $\mu$ pseudo $\Theta$-stratification on $\FM_\alpha$ restricts to a pseudo $\Theta$-stratification of $\FM^{\mu_0}_\alpha$ indexed by $\HN_\alpha(\mu/\mu_0)$. By Assumption \ref{ass: stability}, this is a finite set. 

    We now apply the virtual non-abelian localization theorem, cf. Theorem \ref{thm: NAloc}. For simplicity, we argue at the level of functionals; see Remark \ref{rmk: relativeNAL} for the operational version. Let $V\in K^\ast(\FM_\alpha^\rig)$. 

    Then
    \begin{align*}
\delta_\alpha^{\mu_0}(V)&=\chi(\FM_{\alpha}^{\mu_0, \rig}, V)=\chi(\FM_{\alpha}^{\mu_0}, \pi^\ast V)\\
&=\sum_{(\alpha_1, \ldots, \alpha_n)\in \HN_\alpha(\mu/\mu_0)}\chi\big(\FM_{\alpha_1}^{\mu}\times \ldots \times \FM_{\alpha_n}^\mu,  \Sigma^\ast \pi^\ast V\otimes E_c\big)\\
&=\sum_{(\alpha_1, \ldots, \alpha_n)\in \HN_\alpha(\mu/\mu_0)}\chi\big((\FM_{\alpha_1}^{\mu}\times \ldots \times \FM_{\alpha_n}^\mu)^\rig,  (\Sigma^\rig)^\ast  V\otimes E_c\big)
    \end{align*}
The first equality is just the definition of $\delta_\alpha^{\mu_0}$; second and fourth equalities are obtained from Proposition \ref{prop: Ktheoryrig}; the third equality is the non-abelian localization theorem applied to the $\mu$ pseudo $\Theta$-stratification of $\FM_\alpha^{\mu_0}$ (see Remark \ref{rmk: pseudoNAL}). Finally, we have by Proposition \ref{prop: nproduct} and Remark \ref{rmk: kunnethdelta} the equality
    \begin{align*}
        \chi(\FM_{\alpha_1}^{\mu}\times \ldots \times \FM_{\alpha_n}^\mu,  \Sigma^\ast \pi^\ast V\otimes E_c)=(\delta_{\alpha_1}^\mu\ast\ldots \delta_{\alpha_n}^\mu)(V)
    \end{align*}
    which finishes the proof of \eqref{eq: dominantwc}.
\end{proof}

\subsection{General wall-crossing formula}\label{subsec: generalWC}

It is explained in \cite[Section 11]{joyce} how a general wall-crossing formula for stability conditions connected by a continuous path can be deduced from the dominant wall-crossing formula. The statement of the general wall-crossing formula involves the combinatorial coefficients
\begin{align*}
    S(\alpha_1, \ldots, \alpha_n; \mu, \mu')&\in \{0, -1, 1\}\,,\, U(\alpha_1, \ldots, \alpha_n; \mu, \mu')\in \BQ\,,\,\\
    \tilde U(\alpha_1, \ldots, \alpha_n; \mu, \mu')&\in \BQ
    \end{align*}
defined in \cite[Section 4]{JO06IV} for $\alpha_1, \ldots, \alpha_n\in C(\CA)$ and $\mu,\mu'$ two stability conditions. 

\begin{theorem}[Continuous path wall-crossing]\label{thm: generalwc}
    Let $\mu, \mu'$ be two stability conditions which can be connected by a continuous path crossing finitely many walls, in the precise sense that it satisfies \cite[Assumption 5.3]{joyce}\footnote{Since we are stating a formula for $\delta$ invariants as well, we should replace the non-vanishing condition on $U$ coefficients in \cite[Assumption 5.3]{joyce} by non-vanishing of $S$ coefficients.}. Then we have the following wall-crossing formulas in $\BK(\CA)$:
    \begin{align}\label{eq: deltawc}\delta_{\alpha}^{\mu'}&=\sum_{\alpha_1+\ldots+\alpha_l=\alpha}S(\alpha_1, \ldots, \alpha_n; \mu, \mu')\cdot \delta_{\alpha_1}^{\mu}\ast \ldots\ast \delta_{\alpha_n}^{\mu}\\
    \varepsilon_{\alpha}^{\mu'}&=\sum_{\alpha_1+\ldots+\alpha_l=\alpha}U(\alpha_1, \ldots, \alpha_n; \mu, \mu')\cdot \varepsilon_{\alpha_1}^{\mu}\ast \ldots\ast \varepsilon_{\alpha_n}^{\mu}\label{eq: epsilonwc}\\
    &\nonumber=\sum_{\alpha_1+\ldots+\alpha_l=\alpha}\tilde U(\alpha_1, \ldots, \alpha_n; \mu, \mu')\cdot [[\ldots [\varepsilon_{\alpha_1}^{\mu}, \varepsilon_{\alpha_2}^\mu], \ldots, ],\varepsilon_{\alpha_n}^{\mu}]
    \end{align}

    In every case, the sum runs over $\alpha_i\in C(\CA)_{\pe}$ with $\FM^{\mu'}_{\alpha_i}\neq 0$ and non-zero $S, U$ coefficients, and with those restrictions they are finite sums by assumption. In the last line, the bracket $[u,v]$ is the commutator $u\ast v-v\ast u$ in the associative algebra $\BK(\CA)$.
\end{theorem}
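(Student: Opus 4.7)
My plan is to deduce this theorem from the dominant wall-crossing formula (Theorem \ref{thm: dominantwc}) together with the purely combinatorial manipulations of the coefficients $S$, $U$, $\tilde U$ carried out by Joyce in \cite[Section 4]{JO06IV}. The key point is that these coefficients are defined by formal identities that hold in any associative $\BQ$-algebra, so once the $\delta$-wall-crossing formula \eqref{eq: deltawc} is established in $\BK(\CA)$, the formulas \eqref{eq: epsilonwc} and its commutator form follow mechanically by converting between $\delta$ and $\varepsilon$ using \eqref{eq: epsilonfromdelta} and \eqref{eq: deltafromepsilon}.

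First I would reduce to a single wall. The continuous path from $\mu$ to $\mu'$ crosses only finitely many walls by \cite[Assumption 5.3]{joyce}, so subdividing the path and inducting on the number of walls, it suffices to handle the single-wall case together with a composition identity for the coefficients $S, U, \tilde U$; the latter is standard and established in \cite[Section 4]{JO06IV}. For a single wall, I would choose a stability condition $\mu_0$ lying on the wall that dominates both $\mu$ and $\mu'$ at $\alpha$ and at all the (finitely many) classes appearing in the sums. Applying Theorem \ref{thm: dominantwc} twice, to $(\mu_0, \mu)$ and $(\mu_0, \mu')$, produces two upper-triangular expansions of $\delta^{\mu_0}_\alpha$ whose leading term is $\delta^\mu_\alpha$ (resp.\ $\delta^{\mu'}_\alpha$) with coefficient $1$. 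I would then invert the first relation and substitute into the second; the resulting coefficients agree with Joyce's $S(\alpha_1, \ldots, \alpha_n; \mu, \mu')$ by construction of the latter for a single wall crossing, yielding \eqref{eq: deltawc}.

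To obtain \eqref{eq: epsilonwc} in either form, I would substitute the exponential expression \eqref{eq: deltafromepsilon} into \eqref{eq: deltawc} for each $\delta^\mu_{\alpha_i}$, expand, and apply \eqref{eq: epsilonfromdelta} on the left-hand side to isolate $\varepsilon^{\mu'}_\alpha$. By the combinatorial identity underlying the definition of the $U$-coefficients in \cite[Section 4]{JO06IV}, this rearrangement produces precisely the $U$-coefficient product expression. The restatement in terms of iterated commutators with coefficients $\tilde U$ is then a purely algebraic consequence: using the permutation symmetries of $U$ among $\alpha_i$'s with equal $\mu$-slope, the product expression collapses into a sum of iterated brackets with coefficients $\tilde U$, exactly as in \cite[Theorem 5.4]{JO06IV}. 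Because this collapse relies only on the associative $\BQ$-algebra structure, it applies verbatim in $\BK(\CA)$.

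The main obstacle I anticipate is the bookkeeping of finiteness: at each step of the reduction and of the combinatorial manipulations, I must ensure that the sums indexed over partitions of $\alpha$ remain finite. This is controlled by Assumption \ref{ass: stability}(5) applied at each stability condition crossed, together with the finiteness built into \cite[Assumption 5.3]{joyce}, which guarantees that only finitely many decompositions give nonzero contributions. Beyond this finiteness check, no fundamentally new input is required: the structural features of $\BK(\CA)$ used here are exactly associativity plus the dominant wall-crossing formula, which match the abstract setup in which Joyce's identities are derived.
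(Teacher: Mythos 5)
Your proposal is correct and follows essentially the same route as the paper: both reduce to the dominant wall-crossing formula (Theorem \ref{thm: dominantwc}) by subdividing the path at the finitely many walls, where the wall stability (numerically) dominates the adjacent chamber conditions, and then assemble the single-wall statements and convert $\delta\leftrightarrow\varepsilon$ purely formally via Joyce's coefficient identities (\cite[Section 4, Theorems 4.5, 5.2, 5.4]{JO06IV}, \cite[Section 11]{joyce}). Your "invert one expansion and substitute into the other" step is exactly the content of the composition identities the paper invokes through \cite[Lemma 11.5]{joyce}, and your finiteness bookkeeping matches the paper's appeal to Assumption \ref{ass: stability}(5) and \cite[Assumption 5.3]{joyce}.
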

\begin{proof}
The argument of \cite[Section 11]{joyce} reduces the statement to that of the dominant wall-crossing formula. For the reader's convenience, we quickly summarize how it goes. Let $\mu_t$, $t\in [0,1]$ be a continuous path of stability conditions with $\mu_0=\mu$, $\mu_1=\mu'$. Then \cite[Assumption 5.3]{joyce} implies that we can find
\[0=t_0<t_1<\ldots<t_{N-1}<t_N=1\]
such that:
\begin{enumerate}
    \item The coefficients $S(\alpha_1, \ldots, \alpha_n; \mu, \mu_t)$ are constant for $t\in (t_i, t_{i+1})$;
    \item The moduli stacks $\FM_\alpha^{\mu_t}$, and hence the classes $\delta_{\alpha}^{\mu_t}$, are constant for $t\in (t_i, t_{i+1})$;
    \item If $s\in (t_{i-1}, t_{i+1})$ then $\mu_{t_i}$ numerically dominates $\mu_{s}$. Moreover, the coefficient $S(\alpha_1, \ldots, \alpha_n; \mu_s, \mu_{t_i})$ is equal to $1$ if $(\alpha_1, \ldots, \alpha_n)\in \HN_\alpha(\mu_s/\mu_{t_i})$ and 0 otherwise.
\end{enumerate}
Then the wall-crossing \eqref{eq: deltawc} for the pairs $(\mu, \mu')=(\mu_s, \mu_{t_i})$ is the dominant wall-crossing formula in Theorem \ref{thm: dominantwc}. By \cite[Lemma 11.5]{joyce}, using the formal properties of the coefficients in \cite[Theorem 4.5]{JO06IV}, the dominant wall-crossing formulas can be combined to prove
\eqref{eq: deltawc} for the pairs $(\mu, \mu')=(\mu_0, \mu_{t})$ for every $t\in [0,1]$, by induction on the $i$ such that $t\in [t_i, t_{i+1})$. 

The wall-crossing formula for the $\varepsilon$ invariants \eqref{eq: epsilonwc} formally follows from \eqref{eq: deltawc}, see the last step in the proof of \cite[Theorem 5.2]{JO06IV}. The fact that \eqref{eq: epsilonwc}  can be written entirely in terms of commutators is shown in \cite[Theorem 5.4]{JO06IV}.
\end{proof}

\subsection{Changing the heart}\label{subsec: changinghearts}
We shall now briefly discuss the alterations necessary for when we consider stability conditions when the underlying abelian categories are different hearts of the same triangulated category $\textup{Ho}(\bfT)$, such as the case of tilt stability discussed in Section \ref{subsec: tild}. First of all, the natural place to write wall-crossing formulas is in the Hall algebra
\[\BK(\bfT)\coloneqq K_\ast(\FM_{\bfT})_\BQ\,,\]
see Remark \ref{rmk: variationsHallproduct}. The inclusion of $\FM_\CA$ into $\FM_\bfT$ induces algebra homomorphisms $\BK(\CA)\to \BK(\bfT)$, and hence the classes $\varepsilon_\alpha^{ \mu}$ may be regarded as being in $\BK(\bfT)$. 

\begin{definition}\label{def: dominationheart}
    We say that $\sigma_0=(\CA_0, \mu_0)$ dominates $\sigma=(\CA, \mu)$ if every $\sigma$-semistable object is in $\CA_0$ and an object in $\CA_0$ is $\sigma_0$-semistable if and only if it lies in $\CA\cap \CA_0$ and its $\sigma$-HN factors have the same $\mu_0$ slope. 
\end{definition}

With this notion of dominance, the exact same proof of the dominant wall-crossing formula goes through, showing that \eqref{eq: dominantwc} holds in $\BK(\bfT)$. 

Consider now the case of tilt stability $\sigma_{\omega, B}=(\CA_{\omega, B}, \nu_{\omega, B})$ discussed in Section~\ref{subsec: tild}. Given a fixed topological type $\alpha$, the moduli stack $\FM^{\sigma}_\alpha$ is empty unless $\ch_1 (\alpha)\cdot \omega^2\geq \ch_0(\alpha) \omega^2\cdot B$, by construction of $\CA_{\omega, B}$. When equality holds, $\alpha$ is not permissible, so we define the space of stability conditions\footnote{In \cite[Figure 1]{FTr0} this is the region to the left (assuming $\ch_0(E)>0$) of the vertical line through $\Pi(E)=\Pi(\alpha)$.}
\[S_\alpha=\{\sigma_{\omega, B}\colon\, \omega \textup{ ample and }\ch_1 (\alpha)\cdot \omega^2> \ch_0(\alpha) \omega^2\cdot B\}\,.\]

\begin{corollary}
    Suppose we are in the setting of Proposition \ref{prop: tilt} and either $\ch_0(\alpha)>0$ or $\ch_0(\alpha)=0$ and $\ch_1(\alpha)$ is effective. If $\sigma, \sigma'\in S_\alpha$ then we have wall-crossing formulas as in Theorem \ref{thm: generalwc} comparing $\varepsilon_\alpha^\sigma$ and $\varepsilon_\alpha^{\sigma'}$ in $\BK(D^b(X))$.
\end{corollary}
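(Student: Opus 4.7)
The plan is to apply (the changing-heart analog of) Theorem \ref{thm: generalwc} by verifying Joyce's \cite[Assumption 5.3]{joyce} for tilt stability on $S_\alpha$. Once this is checked, the formal combinatorial argument from \cite[Section 11]{joyce} reduces the wall-crossing formulas \eqref{eq: deltawc} and \eqref{eq: epsilonwc} in $\BK(D^b(X))$ to a chain of dominant wall-crossings of the form given in Section \ref{subsec: changinghearts}.

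First I would connect $\sigma$ and $\sigma'$ by a continuous path $\sigma_t = \sigma_{\omega_t, B_t}$, $t \in [0,1]$; since $S_\alpha$ is convex in the $(\omega, B)$ parameters such a path exists. Using local finiteness of walls for class $\alpha$ in tilt stability (\cite[Proposition 12.5]{BMS} for surfaces and its analog for BMT-satisfying Fano 3-folds), after a small generic perturbation I may assume the path crosses only finitely many walls at parameters $t_1 < \ldots < t_N$; between consecutive walls the moduli stacks $\FM_\alpha^{\sigma_t}$ and the classes $\delta_\alpha^{\sigma_t}$ are constant, and likewise for each of the finitely many subclasses $\alpha_j$ appearing in wall-crossing decompositions (using Assumption \ref{ass: stability}(5) verified in Proposition \ref{prop: tilt}).

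Next, at each wall $\sigma_{t_i}$ and for $s \in (t_{i-1}, t_{i+1}) \setminus \{t_i\}$, I would show that $\sigma_{t_i}$ dominates $\sigma_s$ in the sense of Definition \ref{def: dominationheart}. This breaks into two claims: that every $\sigma_s$-semistable object $E$ with $[E]=\alpha$ lies in $\CA_{\omega_{t_i}, B_{t_i}} \cap \CA_{\omega_s, B_s}$, and that the $\sigma_s$-HN factors of such $E$ have the same $\nu_{\omega_{t_i}, B_{t_i}}$-slope as $\alpha$. The second claim is essentially the defining relation of the wall $\sigma_{t_i}$. The first follows from the fact that two nearby tilt hearts agree on classes that are uniformly bounded away from the heart-change locus, together with the assumption $\alpha \in S_\alpha$ which prevents $\alpha$ -- and by continuity and the finiteness provided by Assumption \ref{ass: stability}(5), also the finitely many relevant $\alpha_j$ -- from approaching that locus. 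This yields the dominant wall-crossing identity
\[\delta_\alpha^{\sigma_{t_i}} = \sum_{(\alpha_1, \ldots, \alpha_n) \in \HN_\alpha(\sigma_s / \sigma_{t_i})} \delta_{\alpha_1}^{\sigma_s} \ast \ldots \ast \delta_{\alpha_n}^{\sigma_s}\]
in $\BK(D^b(X))$ via the changing-heart version of Theorem \ref{thm: dominantwc} explained in Section \ref{subsec: changinghearts}.

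Finally, I would combine these dominant wall-crossings along the parameters $t_0 < t_1 < \ldots < t_N$ using the formal identities satisfied by the coefficients $S, U, \tilde U$ from \cite[Section 4]{JO06IV}, exactly as in the proof of Theorem \ref{thm: generalwc}, to deduce \eqref{eq: deltawc} and \eqref{eq: epsilonwc} for $(\mu, \mu') = (\sigma, \sigma')$. The main obstacle will be the first claim above: establishing uniformly at each wall that the $\sigma_s$-semistable objects of class $\alpha$, and more subtly the subquotients in their HN filtrations whose classes must themselves remain in the corresponding $S_{\alpha_j}$, all lie in the single heart $\CA_{\omega_{t_i}, B_{t_i}}$. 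This is precisely where the hypothesis $\ch_0(\alpha) > 0$ or $\ch_0(\alpha) = 0$ with $\ch_1(\alpha)$ effective plays its role, keeping $\alpha$ bounded away from the boundary of validity of the tilt heart.
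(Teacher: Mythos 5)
Your route is genuinely different from the paper's, and its hardest steps are exactly the ones you leave as assertions. The paper does not march along a path in the two-parameter family of tilt stability conditions at all. Instead it observes that the heart $\CA_{t\omega,B}$ is independent of the scaling $t>0$, and that for $t\gg 0$ (the large volume limit) $\sigma_{t\omega,B}$-semistability of objects of class $\alpha$ is equivalent to $(\omega,B-K/2)$-twisted Gieseker stability of sheaves in $\Coh(X)$ --- this is precisely where the hypothesis $\ch_0(\alpha)>0$, or $\ch_0(\alpha)=0$ with $\ch_1(\alpha)$ effective, is used, not to keep $\alpha$ ``away from the boundary'' as you suggest. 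One then compares $\sigma_{\omega,B}\leftrightsquigarrow\sigma_{t\omega,B}\leftrightsquigarrow\sigma_{t\omega',B'}\leftrightsquigarrow\sigma_{\omega',B'}$, and each of the three comparisons is a wall-crossing \emph{inside a single abelian category} ($\CA_{\omega,B}$, $\Coh(X)$, $\CA_{\omega',B'}$ respectively), so Theorem \ref{thm: generalwc} applies as stated and the three identities are combined in $\BK(D^b(X))$ via the algebra homomorphisms $\BK(\CA)\to\BK(D^b(X))$. No changing-heart analogue of Theorem \ref{thm: generalwc} is needed; Section \ref{subsec: changinghearts} only upgrades the \emph{dominant} wall-crossing to the changing-heart setting.

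Concretely, the gaps in your proposal are: (1) you invoke a ``changing-heart analog of Theorem \ref{thm: generalwc}'', i.e.\ a verification of Joyce's Assumption 5.3 along a path on which the heart varies, but the paper establishes only the changing-heart dominant formula (Definition \ref{def: dominationheart} plus Theorem \ref{thm: dominantwc}); building the full continuous-path statement in that setting is nontrivial and is exactly what the large-volume detour avoids. (2) Your central claim --- that all $\sigma_s$-semistable objects of class $\alpha$ and the relevant HN subquotients lie in the single heart $\CA_{\omega_{t_i},B_{t_i}}$ at each wall --- is justified only by the vague statement that nearby tilt hearts ``agree on classes bounded away from the heart-change locus''; the torsion pair defining $\CA_{\omega,B}$ genuinely changes with $(\omega,B)$, so this membership requires a real deformation/openness argument for weak stability conditions that you do not supply. (3) The convexity of $S_\alpha$ in $(\omega,B)$ is not obvious (the defining inequality is quadratic in $\omega$), though only path-connectedness is needed; the paper sidesteps this too, since scaling $\omega$ manifestly preserves $S_\alpha$. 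As it stands, your outline records the natural direct strategy but does not close these points, whereas the paper's argument reduces everything to already-proved single-heart statements.
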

\begin{proof}
    Given $\sigma=\sigma_{\omega, B}$, the abelian category $\CA_{t\omega, B}$ does not depend on $t>0$ by construction. When $t\gg 0$, all $\sigma_{t\omega, B}$ semistable objects are in $\Coh(X)$, and indeed stability is equivalent to $(\omega, B-K/2)$-twisted Gieseker stability. Then, we can compare $\sigma_{\omega, B}$ and $\sigma_{\omega', B'}$ by comparing 
    \[\sigma_{\omega, B}\leftrightsquigarrow \sigma_{t\omega, B} \leftrightsquigarrow \sigma_{t\omega', B'}\leftrightsquigarrow \sigma_{\omega', B'}\,.\]
    Each of the 3 wall-crossing formulas is an application of the general wall-crossing formula \eqref{eq: epsilonwc} for the abelian categories $\CA_{\omega, B}, \Coh(X), \CA_{\omega', B'}$, respectively.\qedhere
\end{proof}

\section{Framing functor and pair invariants}
\label{sec: framing}

The original approach of \cite{Liu} -- following \cite{joyce} -- to generalized $K$-theoretic invariants does not use the stack of semistable objects, but instead a framing functor that ``stabilizes'' the problem. In this section, we recover their definition of invariants, but now as a theorem (cf. Theorem \ref{thm: comparisonJL}). Once we compare our algebraic setup to theirs (cf. Theorems \ref{thm: liealgebrahom} and \ref{thm: homliftsLiealgebras}), the main result of this section establishes that our $\varepsilon$ classes match their classes (cf. Proposition \ref{prop: nopolesknown}(3), Theorem \ref{thm: homliftsjoyce}).

\subsection{Framing functor and stack of pairs}
\label{subsec: framingdefinitions}

Throughout this section, we fix $\alpha\in C(\CA)_\pe$ and $\mu$ a stability condition. We denote by $C_\alpha\subseteq C_\pe(\CA)$ the subset of all $\alpha'$ for which there exists $\alpha''$ so that $\alpha=\alpha'+\alpha''$ with $\mu(\alpha')=\mu(\alpha)=\mu(\alpha'')$ and $\FM_{\alpha'}^\mu, \FM_{\alpha''}^\mu\neq \emptyset$. We allow $\alpha''=0$, so that $\alpha\in C_\alpha$. By Assumption \ref{ass: stability}, $C_\alpha$ is finite. In other words, $C_\alpha$ is the set of topological types which appear as $\mu$-Jordan--Hölder factors of $\mu$-semistable objects in class $\alpha$. 

Recall from our assumptions (cf. Definition \ref{defprop: goodabelian}) that $\CA$ comes with an embedding into the homotopy category of a dg category $\bfT$ (for example $D^b(X)$). Let $\Perf$ be the dg category of perfect $\BC$-complexes.

\begin{definition}\label{def: framing}
    A framing functor for $(\CA, \alpha, \mu)$ is a functor of dg categories $\Phi\colon \bfT\to \Perf$ with the following properties:
    \begin{enumerate}
        \item If $F\in \CA$ then $H^i(\Phi(F))=0$ for $i<0$.
        \item If $[F]\in \FM_\alpha^\mu$ then $H^i(\Phi(F))=0$ for $i\neq 0$, i.e. $\Phi(F)$ is a vector space. Moreover, $\Phi(F)\neq 0$. 
    \end{enumerate}
\end{definition}

Note that a framing functor for $\alpha$ is automatically also a framing functor for any $\alpha'\in C_\alpha$. By functoriality of the construction in \cite{TV07}, a framing functor induces a map of stacks
\[\FM_\CA\to \FM_{\bfT}\to \FM_{\Perf}\,\]
which preserves the monoidal structure and the $B\BG_m$ action on the stacks. The stack $\FM_{\Perf}$ carries a universal perfect complex, and we denote by $\CV$ the pullback of this perfect complex to $\FM_\CA$. Note that $\CV$ is connective (i.e. $h^{<0}(\CV)=0$) by (1) and its restriction to $\FM_{\alpha'}^\mu$ is a vector bundle by (2).

Let 
\[\lambda\colon C(\CA)\xrightarrow{\,}\pi_0(\FM_\bfT)\xrightarrow{\Phi} \pi_0(\FM_{\Perf})\simeq \BZ\]
be the homomorphism of monoids sending $\alpha$ to $\rk(\CV_{|\FM_{\alpha}})$. Note that $\lambda(\alpha')>0$ for $\alpha'\in C_\alpha$. We will denote by $\Phi^0$ the left exact functor sending an object $F$ of $\CA$ to the vector space $H^0(\Phi(F))$. 

We let $\CB$ be the exact subcategory of $\CA$ of objects $F$ such that $H^i(F)=0$ for $i\neq 0$, which in particular contains all the $\mu$-semistable objects in class $\alpha'\in C_\alpha$. Let $\FM_\CB\subseteq \FM_\CA$ be the stack which parametrizes objects in $\CB$; in other words, $\FM_\CB$ is the locus where the perfect complex $\CV$ is concentrated in degree 0, which is open in $\FM_\CA$. Note that $\CB$ is closed under direct sums and quotients, but not necessarily kernels, so it is not necessarily an abelian category.

The following examples of framing functors have been considered in \cite{joyce, GJT, bu, parabolic}.

\begin{example}
Let $Q$ be a quiver, $\CA=\Rep_Q$ and fix some positive integers $a_v\in \BZ_{>0}$ for each vertex $v\in Q_0$. Then the functor that maps a representation $\{V_v\}_{v\in Q_0}$ of $Q$ to $\bigoplus_{v\in Q_0} V_v^{\oplus a_v}$ is a framing functor for every $\alpha, \mu$. In this case, $H^i(\Phi(F))=0$ for every $i\neq 0$ and $F\in \Rep_Q$. 
\end{example}

\begin{example}
    Suppose that $X=\Coh(X)$ and $\alpha\in C(\Coh(X))$ and $\mu$ satisfies Assumption \ref{ass: stability} (for example $\mu$ is slope or Gieseker stability). Let $\CO_X(1)$ be an ample line bundle. By \cite[Lemma 1.7.2]{HL}, every object in $\CB$ is $N$-regular for sufficiently large $N$, and hence the functor
    \[R\Gamma(-\otimes \CO_X(N))\colon D^b(X)\to \Perf\]
    is a framing functor.
\end{example}

\begin{example}
    Suppose that $\CA=\Coh(C)$ where $C$ is a curve and fix a point $p\in C$; denote by $\iota\colon \{p\}\to C$ the inclusion. Then
    \[L\iota^\ast(-)^\vee\colon D^b(C)\to D^b(\{p\})\simeq \Perf \]
    is a framing functor for slope stability.
\end{example}

We let $\CA_\Phi$ be the mapping cylinder of $\Phi^0$ (cf. \cite{mozgovoyquiverrepab}), i.e. the abelian category which parametrizes triples $(F, U, f)$ where $U\in \Vect$, $F\in \CA$ and $f\colon U\to \Phi^0(F)$ is a morphism in $\Vect$; we call an element of $\CA_\Phi$ a $\Phi$-pair, and we sometimes denote it by $[U\to \Phi^0(F)]$. Similarly, define $\CB_\Phi$ to be the mapping cylinder of $\Phi\colon \CB\to \Vect$, which is the exact subcategory of $\CA_\Phi$ where $F\in \CB$. The topological type of a $\Phi$-pair is $([F], \dim U)\in C(\CA)\times \BZ_{\geq 0}$.

We now define the derived stack $\FP=\FP_{\CA, \Phi}$ which parametrizes objects in $\CB_\Phi$. Recall that $\FM_{\CB}$ has the vector bundle $\CV$ whose fiber over $F$ is $\Phi(F)$, and let $\CU$ be the universal vector bundle on
\[\FM_\Vect=\bigsqcup_{n\geq 0}B\GL_n\,.\]
Then $\FP$ is defined to be the total space of the vector bundle
\[\FP\coloneqq \textup{Tot}_{\FM_\CB\times \FM_{\Vect}}\big(\CU^\vee\otimes \CV\big)\,.\]

We still denote by $\CU, \CV$ the pullbacks to $\FP$; on $\FP$ there is a universal linear map $\CU\to \CV$. The stack $\FP$ admits a decomposition into connected components
\[\FP=\bigsqcup_{(\alpha, d)\in C(\CA)\times \BZ_{\geq 0}} \FP_{(\alpha, d)}\,,\]
some of which are possibly empty (for example if $\lambda(\alpha)<0$). Note that 
\[\FP_{(\alpha, 0)}\simeq \FM_{\CB, \alpha}\coloneqq \FM_\CB \cap \FM_\alpha\] and in general there is a projection map $\Pi\colon \FP_{(\alpha, d)}\to \FM_{\CB, \alpha}$. The fibers of this map are isomorphic to
\[\BA^{d\cdot \lambda(\alpha)}/\GL_d\,\]
and in particular this is a smooth map.

We define an Ext complex on $\FP\times \FP$ as
\begin{equation}
    \label{eq: extPhi}\Ext_{12}^\Phi=\Pi^\ast \Ext_{12}\oplus \big[\underset{0}{\CU_1^\vee\otimes \CU_2}\to \underset{1}{\CU_1^\vee\otimes \CV_2}\big]\,.
    \end{equation}
Despite $\CB_\Phi$ not being an abelian category, the stack $\FP$ and the complex $\Ext_{12}^\Phi$ have all the structure and properties discussed in Section \ref{subsec: abcat}; see \cite[Section 5.2]{joyce} for a detailed discussion. In particular, \eqref{eq: derivedtangentFM} holds and the relative derived tangent bundle of $\Pi$ is
\[\BT_{\Pi}=\big[\underset{-1}{\CU^\vee\otimes \CU}\to \underset{0}{\CU^\vee\otimes \CV}\big]\,.\]
This follows from the fact that $\BT_{\FM_\Vect}=(\CU^\vee\otimes \CU)[1]$. The restriction of $\BT_\Pi$ to the locus of $\FP$ which parametrizes $\Phi$-pairs with $f\colon U\to \Phi(V)$ injective is a vector bundle. The $K$-Hall algebras $\BK(\CB)=K_\ast(\FM_\CB)\otimes_\BZ \BQ$ and $\BK(\CB_\Phi)=K_\ast(\FP)\otimes_\BZ \BQ$ are defined in the same way as for an abelian category.

\subsection{Stability conditions on $\CA_\Phi$}
\label{subsec: stabilityframing}

From the stability condition $\mu$ on $\CA$, we define 2 stability conditions on the abelian category of pairs $\CA_\Phi$. The first is the ``naive'' stability condition
\[\mu_0\colon C(\CA_\Phi)\setminus \{(0,0)\}\to T\cup\{\infty\}\]
where $\infty>t$ for every $t\in T$, and is defined by
\[\mu_0(\beta, d)=\begin{cases}\mu(\beta)&\textup{ if }\beta\neq 0\\
\infty &\textup{ if }\beta=0\,.
\end{cases}\]
The second one is the Joyce--Song stability condition. To define it, we consider some rank function\footnote{The choice of rank function not really play any role, as we will see in Proposition \ref{prop: characterizingstabilitypairs}. In examples there is often a natural choice, which is the denominator of $\mu$.} $\rk\colon C(\CA)\setminus 0\to \BZ_{>0}$ which satisfies 
\[\mu(\beta)=\mu(\beta')\Rightarrow  \rk(\beta)+\rk(\beta')=\rk(\beta+\beta')\,.\]
Then we let 
\[\mu_0\colon C(\CA_\Phi)\setminus \{(0,0)\}\to (T\times \BQ)\cup\{\infty\}\,,\]
where $T\times \BQ$ is given the lexicographic order and $\infty$ is the maximum, given by
\[\mu_+(\beta, d)=\begin{cases}\big(\mu(\beta), d/\rk(\beta)\big)&\textup{ if }\beta\neq 0\\
\infty &\textup{ if }\beta=0\,.
\end{cases}\]

Stability can be characterized as follows:

\begin{proposition}\label{prop: characterizingstabilitypairs}
    Let $\alpha'\in C_\alpha$. 
\begin{enumerate}[label=(\alph*)]
    \item A pair $[F\to 0]$ of topological type $(\alpha', 0)$ is $\mu_0$-(semi)stable if and only if it is $\mu_+$-(semi)stable if and only if $F$ is $\mu$-(semi)stable.
    \item A pair $[U\xrightarrow{f} \Phi^0(F)]$ of topological type $(\alpha', 1)$ (in particular with $\dim U=1$) is $\mu_0$-(semi)stable if and only if $F$ is $\mu$-(semi)stable and $f\neq 0$.
    \item A pair $[U\xrightarrow{f} \Phi^0(F)]$ of topological type $(\alpha', 1)$ is $\mu_+$-semistable if and only if it is $\mu_+$ stable if and only if $F$ is semistable, $f\neq 0$, and $f$ does not factor as $U\to \Phi^0(F')\subsetneq \Phi^0(F)$ for some subobject $F'\neq 0$ with $\mu(F')=\mu(F/F')$.
    \item The stability condition $\mu_0$ dominates $\mu_+$ at $(\alpha', 1)$, in the sense of Definition \ref{def: dominance}.
\end{enumerate}
\end{proposition}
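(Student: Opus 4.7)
The plan is to unpack the subobject structure of the mapping cylinder $\CA_\Phi$ and analyze the $\mu_0$- and $\mu_+$-slopes via lexicographic order. Using left exactness of $\Phi^0$, any nontrivial subobject of a pair $E=[U\xrightarrow{f}\Phi^0(F)]$ is determined by $F'\subseteq F$ in $\CA$ and $U'\subseteq U$ satisfying $f(U')\subseteq \Phi^0(F')$, with quotient $[U/U'\to\Phi^0(F/F')]$ induced by~$f$.

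For (a), when $U=0$ every subobject is $[F'\to 0]$ with $F'\subseteq F$ arbitrary, and both $\mu_0$- and $\mu_+$-inequalities reduce tautologically to $\mu$-(semi)stability of $F$. For (b) and (c) with $\dim U=1$, there are exactly two families of nontrivial subobjects: type $(F',0)$, with $U'=0$ and $F'\subseteq F$ arbitrary, and type $(F',1)$, with $U'=U$ and $f$ factoring through $\Phi^0(F')$. The trivial pair $[U\to 0]$ is a subobject of $E$ precisely when $f=0$, and carries $\mu_0=\mu_+=\infty$, so semistability in either sense forces $f\neq 0$. Granted $f\neq 0$: for (b), both families yield the single inequality $\mu(F')\leq \mu(F/F')$, so $\mu_0$-(semi)stability of $E$ coincides with $\mu$-(semi)stability of $F$. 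For (c), the lex order distinguishes the two families: type $(F',0)$ subobjects produce $(\mu(F'),0)\leq(\mu(F/F'),1/\rk(F/F'))$, equivalent to $\mu(F')\leq \mu(F/F')$, while type $(F',1)$ subobjects produce $(\mu(F'),1/\rk(F'))\leq (\mu(F/F'),0)$, which forces the \emph{strict} inequality $\mu(F')<\mu(F/F')$. The remaining destabilizing condition is therefore exactly that $f$ factors through $\Phi^0(F')$ for some nonzero $F'\subsetneq F$ with $\mu(F')=\mu(F/F')$, and since strict and non-strict $\mu_+$-inequalities coincide within each family, $\mu_+$-semistability and $\mu_+$-stability coincide.

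For (d), write the $\mu_+$-HN filtration as $0=E_0\subsetneq \cdots \subsetneq E_n=E$. Since $\dim U=1$ and the $d$-parts of the $E_j$ are non-decreasing non-negative integers ending at $1$, there is a unique index $i$ for which $E_j=(F_j,0,0)$ when $j<i$ and $E_j=(F_j,U,f)$ when $j\geq i$, yielding a filtration $0\subsetneq F_1\subsetneq\cdots\subsetneq F_n=F$ with $\mu$-semistable factors $F_j/F_{j-1}$. If $E$ is $\mu_0$-semistable, (b) gives $F$ $\mu$-semistable and $f\neq 0$; the standard fact that $\mu$-semistable subquotients of a $\mu$-semistable object share its slope then forces every HN factor $E_j/E_{j-1}$ to have $\mu_0$-slope $\mu(F)=\mu_0(E)$. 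Conversely, if all $\mu_+$-HN factors of $E$ share a common $\mu_0$-slope, this value must be finite (otherwise a factor $(0,U,0)$ of $\mu_0$-slope $\infty$ appears, forcing $f=0$ in contradiction with the hypothesis), so $F$ is obtained as an iterated extension of $\mu$-semistables of slope $\mu_0(E)$ and is therefore $\mu$-semistable with $f\neq 0$; by (b), $E$ is $\mu_0$-semistable.

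The main technical point is the careful bookkeeping of the lex comparisons in (c) and the propagation of slope equality in (d); both follow from the weak stability inequality $\mu(F')\leq \mu(F)\leq \mu(F/F')$ for subobjects of $\mu$-semistables combined with the additivity assumption on $\mu$ built into Assumption \ref{ass: stability}.
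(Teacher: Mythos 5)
Your unpacking of the mapping cylinder is correct: because $\Phi^0$ is left exact, kernels and cokernels in $\CA_\Phi$ are computed componentwise, so subobjects of $E=[U\xrightarrow{f}\Phi^0(F)]$ are exactly the pairs $(F',U')$ with $f(U')\subseteq\Phi^0(F')$, and your case analysis then gives (a), (b), (c) correctly (the paper treats these as trivial/standard and cites Joyce for (c)); in particular the observation that strict and non-strict lex inequalities coincide within each family of subobjects does yield stable $=$ semistable in (c). The converse direction of (d) is also essentially right, up to a small slip: the slope-$\infty$ case should be excluded because then every $\mu_+$-HN factor would have zero $\CA$-component, contradicting $\alpha'\neq 0$ (or the common-slope hypothesis once $F\neq 0$), not because it contradicts $f\neq 0$, which is not assumed in that direction.

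The forward direction of (d), however, rests on a false claim: it is not true that $\mu$-semistable subquotients of a $\mu$-semistable object share its slope. On $\BP^1$ the Euler sequence gives $\CO(-1)\subsetneq\CO^{\oplus 2}$ with semistable sub and quotient of slopes $-1$ and $1$ inside a semistable object of slope $0$; in general one only has $\mu(F')\le\mu(F)\le\mu(F/F')$ for subobjects of a semistable $F$. So the step ``every $\mu_+$-HN factor has $\mu_0$-slope $\mu(F)$'' is unjustified as written. The repair uses precisely the structure you set up but did not exploit: the $\mu_+$-slopes of the HN factors are strictly decreasing in the lexicographic order, so the induced filtration $0\subsetneq F_1\subsetneq\cdots\subsetneq F_n=F$ has $\mu$-semistable factors (by (a) and (c)) with non-increasing $\mu$-slopes; since $F_1$ is a subobject and $F/F_{n-1}$ a quotient of the semistable $F$, one gets $\mu(F_1)\le\mu(F)\le\mu(F/F_{n-1})$, and monotonicity sandwiches all factor slopes to equal $\mu(F)$ (equivalently, merging equal-slope blocks would otherwise produce a nontrivial HN filtration of the semistable $F$). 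One must also rule out a factor $(0,U,0)$, which would force $f=0$ and contradict $\mu_0$-semistability via (b). For comparison, the paper's own proof of (d) avoids all of this by exhibiting the $\mu_+$-HN filtration of a $\mu_0$-semistable, non-$\mu_+$-semistable pair explicitly as the two-step filtration $0\subsetneq[U\to\Phi^0(F')]\subsetneq E$ with $\mu(F')=\mu(F)=\mu(F/F')$, from which the equality of the $\mu_0$-slopes of the two factors is immediate.
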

\begin{proof}
Part (a) is trivial. Part (b) is also easy, since clearly $\mu_0$-stability implies $\mu$-stability of $F$ and, if $F$ is $\mu$-semistable, the only possible destabilizing subobject of $U\to V$ is $U\to 0$, which is a subobject if and only if $f=0$. Part (c) is standard, see \cite[Example 5.6]{joyce}. 

For part (d), obviously $\mu_+$-semistablity implies $\mu_0$-semistability. Suppose that $[U\xrightarrow{f} \Phi^0(F)]$ is $\mu_0$-semistable but not $\mu_+$-semistable. Then its $\mu_+$-HN filtration is
\[0\subsetneq [U\xrightarrow{f} \Phi^0(F')]\subsetneq [U\xrightarrow{f} \Phi^0(F)]\]
where $F'$ is the maximal subobject of $F$ with $\mu(F')=\mu(F/F')$ through which $f$ factors. Then the $\mu_+$-HN factors both have
\[\mu_0([U\xrightarrow{f} \Phi^0(F')]=\mu(F')=\mu(F)=\mu(F/F')=\mu_0([F/F'\rightarrow 0])\,.\]
The other implication is also easy to establish.
\end{proof}

Recall the definition \eqref{eq: HNmumu0} of $\HN_\alpha(\mu_+/\mu_0)$. It follows from the proof of (d) above that
\[\HN_\alpha(\mu_+/\mu_0)=\{(\alpha, 1)\}\cup \{\big((\alpha', 1), (\alpha'',0)\big)\colon \alpha'+\alpha''=\alpha,\, \mu(\alpha')=\mu(\alpha'')\}\,.\]

Given $\alpha'\in C_\alpha$ we denote by
\[\FP_{(\alpha',1)}^{\mu_+}\subseteq \FP_{(\alpha',1)}^{\mu_0}\subseteq \FP_{(\alpha',1)}\]
the stacks of $\mu_+$ and $\mu_0$-semistable pairs. By Proposition \ref{prop: characterizingstabilitypairs}(b), $\FP_{(\alpha', 1)}^{\mu_0}$ is a projective bundle over $\FM_{\alpha'}^\mu$:
\[\Pi\colon \FP_{(\alpha', 1)}^{\mu_0}=\BP(\CV)\to \FM_{\alpha'}^\mu\,.\]

By Proposition \ref{prop: characterizingstabilitypairs}, there are no $\mu_+$-strictly semistable pairs, and in particular $\varepsilon_{(\alpha, 1)}^{\mu_+}=\delta_{(\alpha, 1)}^{\mu_+}\in \BK(\CB_{\Phi})$ are defined without any trouble. The next result expresses the $\varepsilon$ classes inductively in terms of the classes coming from the moduli spaces of Joyce--Song pairs. Indeed, in the theories of Joyce and Liu, this theorem is actually their definition of invariants. 

\begin{theorem}\label{thm: comparisonJL}
Let $\Phi$ be a framing functor as above and let $\mu_+$ be Joyce--Song stability on $\CA_\Phi$. Then, we have the equality
    \begin{equation}\label{eq: framingdefinitionepsilon}
        \Pi_\ast\big(\Lambda_{-1}(\BT_\Pi^\vee)\cap \varepsilon_{(\alpha, 1)}^{\mu_+}\big)=\sum_{\substack{\alpha_{1}+\ldots+\alpha_n=\alpha\\
        \mu(\alpha_i)=\mu(\alpha)}}\frac{(-1)^{n-1}}{n!}\lambda(\alpha_1)\big[\big[\ldots\big[\varepsilon_{\alpha_1}^\mu,\varepsilon_{\alpha_2}^\mu\big],\ldots, \varepsilon_{\alpha_n}^\mu\big]\,. 
    \end{equation}
    in $\BK(\CA)$.
\end{theorem}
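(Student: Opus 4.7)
The plan is to apply the dominant wall-crossing formula between $\mu_0$ and $\mu_+$ at the class $(\alpha,1) \in C(\CA_\Phi)$, then push forward along $\Pi$ using the projective-bundle description of the $\mu_0$-semistable locus. Proposition~\ref{prop: characterizingstabilitypairs}(d) together with the description of $\HN_{(\alpha,1)}(\mu_+/\mu_0)$ immediately following it, combined with Theorem~\ref{thm: dominantwc}, yields
\[ \delta_{(\alpha,1)}^{\mu_0} \;=\; \varepsilon_{(\alpha,1)}^{\mu_+} \;+\; \sum_{\substack{\alpha'+\alpha''=\alpha\\\mu(\alpha')=\mu(\alpha'')=\mu(\alpha)}} \varepsilon_{(\alpha',1)}^{\mu_+} \ast \delta_{\alpha''}^{\mu}, \]
where we use that $\delta=\varepsilon$ at $\mu_+$ on class $(\alpha',1)$ (no strictly semistables there) and that $\mu_+$-semistable pairs in class $(\alpha'',0)$ are just $\mu$-semistable objects of $\CA$ in class $\alpha''$, included into $\BK(\CB_\Phi)$ via $F \mapsto [0 \to \Phi^0(F)]$.

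Next, the left-hand side of this identity is computed directly. By Proposition~\ref{prop: characterizingstabilitypairs}(b), $\FP^{\mu_0}_{(\alpha,1)} = \BP(\CV) \to \FM^{\mu}_\alpha$ is a projective bundle, and $\BT_\Pi$ restricts to its relative tangent bundle. Using the standard identity $\chi(\BP^{r-1}, \Lambda_{-1}(T^\vee))=r$ together with the projection formula in $K$-homology gives $\Pi_\ast(\Lambda_{-1}(\BT_\Pi^\vee) \cap \delta_{(\alpha,1)}^{\mu_0}) = \lambda(\alpha)\,\delta_\alpha^\mu$. Rewriting $\delta_\alpha^\mu$ as an exponential of $\varepsilon$-classes via \eqref{eq: deltafromepsilon} converts this into a sum of iterated $\ast$-products of $\varepsilon_{\beta}^\mu$ for partitions $\beta_1+\ldots+\beta_k=\alpha$ weighted by the additive invariant $\lambda(\alpha)=\lambda(\beta_1)+\ldots+\lambda(\beta_k)$.

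The central technical step is to analyze $\Pi_\ast(\Lambda_{-1}(\BT_\Pi^\vee) \cap (\varepsilon_{(\alpha',1)}^{\mu_+} \ast \delta_{\alpha''}^{\mu}))$. Because $\Ext^\Phi_{12}$ decomposes as in \eqref{eq: extPhi} and on components of the form $((\alpha',1),(\alpha'',0))$ the extra piece simplifies to $[\CU_1^\vee \otimes \CV_2]$ in degree $1$ (the $\CU_2=0$ factor kills the degree-$0$ contribution), the complex $\Gamma^\Phi_-$ acquires an additional factor of $\Lambda_{-1}(\CU_1^\vee \otimes \CV_2)$ relative to $\Gamma_-$ of $\CA$. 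When paired with $\Lambda_{-1}(\BT_\Pi^\vee)$ and pushed forward along the projective-bundle direction, this factor is responsible for antisymmetrization: the product $\varepsilon_{(\alpha',1)}^{\mu_+}\ast \delta_{\alpha''}^\mu$ pushed forward equals a commutator of the pushforward of $\varepsilon_{(\alpha',1)}^{\mu_+}$ with $\delta_{\alpha''}^\mu$ in $\BK(\CA)$, up to terms that can be absorbed inductively. Iterating this identity and organizing products into nested commutators via the additivity $\lambda(\alpha)=\sum \lambda(\alpha_i)$ and Joyce's combinatorial identities from \cite[Section~4]{JO06IV} produces the coefficients $\tfrac{(-1)^{n-1}}{n!}\lambda(\alpha_1)$ and finishes the induction on $n$.

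The main obstacle will be the third step. The correspondence between the $K$-Hall product $\ast$ on pair classes and the commutator $[-,-]$ on $\BK(\CA)$ is forced by the specific shape of the framing contribution to $\Gamma_-^\Phi$, and tracking the determinant line, the shift, and the signs produced by $\Lambda_{-1}(\BT_\Pi^\vee)$ on each fiber is the delicate part. Once this local Hall-algebra identity is established, the remaining combinatorics is a direct application of Joyce's formalism relating sums of $\ast$-products weighted by additive functions to iterated brackets.
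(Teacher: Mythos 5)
Your first two steps coincide with the paper's proof: the dominant wall-crossing formula between $\mu_0$ and $\mu_+$ at class $(\alpha,1)$, and the projective bundle formula giving $\Pi_\ast\big(\Lambda_{-1}(\BT_\Pi^\vee)\cap\delta_{(\alpha,1)}^{\mu_0}\big)=\lambda(\alpha)\,\delta_\alpha^\mu$, are exactly Steps 1--2 of the argument in Section \ref{subsec: proofcomparisonJL}. The gap is in your ``central technical step''. You claim that the extra factor $\Lambda_{-1}(\CU_1^\vee\otimes\CV_2)$ appearing in $\Gamma_-^\Phi$ relative to $\Gamma_-$ of $\CA$ is ``responsible for antisymmetrization'', so that $\Pi_\ast\big(\Lambda_{-1}(\BT_\Pi^\vee)\cap(\varepsilon_{(\alpha',1)}^{\mu_+}\ast\delta_{\alpha''}^{\mu})\big)$ becomes a commutator in $\BK(\CA)$ up to inductively absorbable terms. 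This is not what happens, and the statement is false as written: the role of that extra factor is precisely to cancel against $\Lambda_{-1}(\Sigma^\ast\BT_\Pi^\vee)$ via the symmetry \eqref{eq: symmetryEulerclass} of the Euler class of a vector bundle, which is the content of Proposition \ref{prop: homalgebras}. The upshot is that $\Upsilon(\phi)=\Pi_\ast(\Lambda_{-1}(\BT_\Pi^\vee)\cap\phi)$ is \emph{multiplicative} on the relevant classes, so $\Upsilon\big(\delta_{(\alpha',1)}^{+}\ast\delta_{\alpha''}\big)=\Upsilon\big(\delta_{(\alpha',1)}^{+}\big)\ast\delta_{\alpha''}$ -- an associative product, with no antisymmetrization produced at this stage. (The only subtlety is checking the vector-bundle hypothesis of Proposition \ref{prop: homalgebras} on the strata $\FP^{\mu_0}_{(\alpha',1)}$ and $\FM^\mu_{(\alpha'',0)}$, using injectivity of $\CU\to\CV$ over $\mu_0$-semistable pairs.) If you try to carry out your plan of ``tracking the determinant line, the shift, and the signs'' to turn the pushed-forward product into a commutator, the computation will not close, because the identity you are after does not hold term by term.

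Where the commutators actually enter is purely combinatorial, and your sketch of the final step conflates this with the geometric pushforward. After applying $\Upsilon$ one obtains the recursion $\lambda(\alpha)\delta_\alpha=\Upsilon(\varepsilon^+_{(\alpha,1)})+\sum_{\alpha'+\alpha''=\alpha}\Upsilon(\varepsilon^+_{(\alpha',1)})\ast\delta_{\alpha''}$, which determines the $\delta_\alpha$ (hence $\varepsilon_\alpha$) from the $\Upsilon(\varepsilon^+_{(\alpha',1)})$; one then \emph{defines} classes $\tilde\varepsilon_{\alpha'}$ by the target formula \eqref{eq: framingdefinitionepsilon}, expands the nested brackets into associative products with coefficients $\frac{(-1)^{n-p}}{n!}\lambda(\alpha_p)$ (this is \cite[(9.54)]{joyce}), and verifies by a binomial identity that the resulting $\tilde\delta_\alpha$ satisfy the same recursion, forcing $\tilde\delta_\alpha=\delta_\alpha$. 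So the commutator structure lives in the ansatz being checked against the recursion, not in any local Hall-algebra identity of the form ``pushforward of a product equals a commutator''. Replacing your step 3 by Proposition \ref{prop: homalgebras} and supplying this combinatorial comparison would repair the argument and bring it in line with the paper's proof.
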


\subsection{Morphisms between $K$-Hall algebras}
\label{subsec: morphismsKHall}

In the proof of Theorem \ref{thm: comparisonJL}, we will need as an input the property that $\Pi_\ast(\Lambda_{-1}(\BT_\Pi^\vee)\cap -)$ is (for practical purposes) a homomorphism. Indeed, this is a somewhat general phenomena that could be of interest in other scenarios, so we formulate it in general. In what follows, the main example to keep in mind is the forgetful functor $\CC=\CA_\Phi\to \CA$ and the induced projection of stacks $\Pi\colon \FP\to \FM$.

Consider a morphism of abelian categories $\CC\to \CA$ and assume that it lifts to a map of derived stacks $f\colon \FM_\CC\to \FM_\CA$ which is compatible with the direct sum maps and the $B\BG_m$ actions. We also write $f$ for the induced map $\FM_\CC^\rig\to \FM^\rig_\CA$. Define the $K$-theory class
\begin{equation}\label{eq: differenceExt}
F_{12}=(f\times f)^\ast \Ext^\CA_{12}-\Ext^\CC_{12}\in K^\ast(\FM_\CC\times \FM_\CC)\,.
\end{equation}
Similarly, define $F_{21}$. Note that $F_{12}$ descends to $(\FM_\CC\times \FM_\CC)^\rig$. The relative derived tangent bundle of the morphism $f$ is given by the restriction
\[\BT_f=\BT_{\CC}-f^\ast \BT_\CA=\Delta^\ast F_{12}\in K^\ast(\FM_{\CC})\,\]
to the diagonal $\Delta\colon \FM_\CC\to \FM_\CC\times\FM_\CC$, and it descends to $\FM_\CC^\rig$. 

The following proposition, constructing a homomorphism of associative algebras, is analogous to the homomorphism of vertex algebras and Lie algebras in \cite[Theorem 2.12]{GJT}.

\begin{proposition}\label{prop: homalgebras}
    Suppose that the $K$-theory class $F$ is represented by a vector bundle on $\FM_\CC\times \FM_\CC$. Then the map
    \begin{align*}
\Upsilon\colon \BK(\CC)&\rightarrow \BK(\CA)\\
\phi&\mapsto f_\ast(\Lambda_{-1}(\BT_f^\vee)\cap \phi)
\end{align*}
is a homomorphism of associative algebras. 

More generally, if $Z_1, Z_2\subseteq \FM_\CC$ are such that the restriction of $F_{12}$ to $Z_i\times Z_j$ is represented by a vector bundle and $\phi, \psi$ are pushed forward from $Z_1, Z_2$, respectively, then 
\[\Upsilon(\phi\ast \psi)=\Upsilon(\phi)\ast\Upsilon(\psi)\,.\]
\end{proposition}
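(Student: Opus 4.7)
The strategy is to compare $\Upsilon(\phi\ast\psi)$ and $\Upsilon(\phi)\ast\Upsilon(\psi)$ by unwinding the definitions, moving both expressions to a common stack $(\FM_\CC\times \FM_\CC)^\rig$ via projection formula and the compatibility of $f$ with $\Sigma$, and then verifying the resulting identity of perfect complexes using the bilinearity of $\Ext$ and Lemma \ref{lem: symmetryVB}.

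First I would record the base-change compatibilities. Since $f$ commutes with $\Sigma$ and with the $B\BG_m$ action, the square
\[\begin{tikzcd}
(\FM_\CC\times \FM_\CC)^\rig \arrow[r, "(f\times f)^\rig"] \arrow[d, "\Sigma^{\CC,\rig}"'] & (\FM_\CA\times \FM_\CA)^\rig \arrow[d, "\Sigma^{\CA,\rig}"]\\
\FM_\CC^\rig \arrow[r, "f^\rig"'] & \FM_\CA^\rig
\end{tikzcd}\]
commutes, and similarly the outer rigidifications $\pi$ commute with $f\times f$. Applying projection formula, both $\Upsilon(\phi)\ast\Upsilon(\psi)$ and $\Upsilon(\phi\ast\psi)$ take the form $f^\rig_\ast\Sigma^{\CC,\rig}_\ast\bigl(K \cap \pi^\ast(\phi\boxtimes\psi)\bigr)$ for some class $K$ on $(\FM_\CC\times \FM_\CC)^\rig$; the first gives $K=(f\times f)^\ast\Gamma_-^\CA\cdot \Lambda_{-1}(p_1^\ast\BT_f^\vee)\cdot\Lambda_{-1}(p_2^\ast\BT_f^\vee)$, while the second gives $K=\Sigma^{\CC,\ast}\Lambda_{-1}(\BT_f^\vee)\cdot\Gamma_-^\CC$. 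The proposition therefore reduces to an equality of these two classes. Since $\phi,\psi$ are pushed forward from $Z_1,Z_2$, it suffices to verify this identity after restriction to $Z_1\times Z_2$, where $F_{12}$ and $F_{21}$ are vector bundles.

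Next I would verify the identity. By bilinearity of $\Ext_{12}$, one computes
\[\Sigma^{\CC,\ast}\BT_\CC=p_1^\ast\BT_\CC+p_2^\ast\BT_\CC+\Ext^\CC_{12}[1]+\Ext^\CC_{21}[1],\]
and subtracting the analogous formula for $f^\ast\BT_\CA$ gives $\Sigma^{\CC,\ast}\BT_f=p_1^\ast\BT_f+p_2^\ast\BT_f+F_{12}+F_{21}$. Multiplicativity of $\Lambda_{-1}$ on direct sums then yields
\[\Sigma^{\CC,\ast}\Lambda_{-1}(\BT_f^\vee)=\Lambda_{-1}(p_1^\ast\BT_f^\vee)\cdot\Lambda_{-1}(p_2^\ast\BT_f^\vee)\cdot\Lambda_{-1}(F_{12}^\vee)\cdot\Lambda_{-1}(F_{21}^\vee).\]
On the other hand, using $(f\times f)^\ast\Ext^\CA_{ij}=\Ext^\CC_{ij}+F_{ij}$ together with the multiplicativity of $\Lambda_{-1}$, $\det$, and the additivity of $\rk$ in the definition of $\Gamma_-$ gives
\[(f\times f)^\ast\Gamma_-^\CA=\Gamma_-^\CC\cdot \Lambda_{-1}\bigl(F_{21}^\vee+F_{12}\bigr)\otimes\det(F_{12})^\vee[\rk F_{12}].\]
Canceling the two common factors $\Lambda_{-1}(p_i^\ast\BT_f^\vee)$ and the factor $\Gamma_-^\CC$ (both of which are well-defined classes on $(\FM_\CC\times\FM_\CC)^\rig$), the desired identity reduces to
\[\Lambda_{-1}(F_{12})\otimes\det(F_{12})^\vee[\rk F_{12}]=\Lambda_{-1}(F_{12}^\vee),\]
which is precisely Lemma \ref{lem: symmetryVB} specialized at $u=1$ for the vector bundle $F_{12}$, with the sign $(-1)^{\rk F_{12}}$ from that lemma absorbed into the $K$-theoretic shift $[\rk F_{12}]$.

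The main obstacle is purely bookkeeping: one must track the determinants and degree shifts in $\Gamma_-$ carefully to see that they align exactly with the discrepancy between $\Lambda_{-1}(F_{12}^\vee)$ and $\Lambda_{-1}(F_{12})$. The restriction to the locus $Z_1\times Z_2$ in the general version of the statement is exactly what is needed for $F_{12}$ to be a vector bundle so that Lemma \ref{lem: symmetryVB} applies; on an arbitrary $K$-theory class, the ``symmetry'' between $V$ and $V^\vee$ would only hold as an equality of rational functions in two different expansions (as discussed in Section \ref{subsubsec: exterior}), and the identity above would fail inside $K^\ast$.
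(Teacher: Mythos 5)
Your proof is correct and takes essentially the same route as the paper: the compatibility of $f\times f$ with $\Sigma$ and the rigidification maps, the decomposition $\Sigma^{\ast}\BT_f=p_1^\ast\BT_f+p_2^\ast\BT_f+F_{12}+F_{21}$ from bilinearity of $\Ext$, Lemma \ref{lem: symmetryVB} applied to the vector bundle $F_{12}$, and restriction to $Z_1\times Z_2$ by push--pull in the general case. Your explicitly stated key identity $\Sigma^{\ast}\Lambda_{-1}(\BT_f^\vee)\otimes\Gamma_-^{\CC}=(f\times f)^\ast\Gamma_-^{\CA}\otimes p_1^\ast\Lambda_{-1}(\BT_f^\vee)\otimes p_2^\ast\Lambda_{-1}(\BT_f^\vee)$ is the correctly arranged form of the identity the paper uses, so nothing is missing.
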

\begin{proof}
Assume first that $F_{12}$ is globally a vector bundle. Let us denote by $F_{ij}$ the pullback of $F_{12}$ via the map $\FM_\CC\times \FM_\CC\xrightarrow{p_i\times  p_j}\FM_\CC\times \FM_\CC$ where $p_i$ is the projection in the $i$-th component for $i=1,2$. In particular, $F_{ii}=p_i^\ast \BT_f$. Then, we have
\begin{equation}
    \label{eq: sigmarelativetangent}
    \Sigma^\ast \BT_{f}=F_{11}+F_{22}+F_{12}+F_{21}\,.\end{equation}
Since by assumption $F_{12}$ is a vector bundle, Proposition \ref{lem: symmetryVB} gives an equality
\begin{equation}
    \label{eq: symmetryEulerclass}
\Lambda_{-1}(F_{12}^\vee)=\Lambda_{-1}(F_{12})\otimes \det(F_{12})^\vee[\rk F_{12}]\,.
\end{equation}
Combining \eqref{eq: differenceExt}, \eqref{eq: sigmarelativetangent} and \eqref{eq: symmetryEulerclass} we get the equality
\begin{equation}
    \label{eq: keyhomomorphism}
\Lambda_{-1}(\Sigma^\ast \BT_{f}^\vee)\otimes (f\times f)^\ast\Gamma_-^{\CA}=p_1^\ast\Lambda_{-1}(\BT_f)\otimes p_2^\ast\Lambda_{-1}(\BT_f)\otimes \Gamma_-^{\CC}\end{equation}
in $K^\ast(\FM_{\CC}\times \FM_{\CC})$, where we denote by $\Gamma_-^\CA, \Gamma_-^\CC$ the complexes \eqref{eq: gammacomplex} for the abelian categories $\CA$ and $\CC$. Together with the compatibilities
\[f\circ\Sigma_\CC=\Sigma_\CA\circ (f\times f)\textup{ and }(f\times f)\circ \pi_\CC=\pi_\CA\circ (f\times f)\,,\]
where $\Sigma_\CA, \Sigma_\CC, \pi_\CA, \pi_\CC$ are the maps in Definition \ref{def: KHall} for $\CC, \CA$, we obtain $\Upsilon(\phi\ast \psi)=\Upsilon(\phi)\ast\Upsilon(\psi)$ by unraveling the definitions.

In the more general case, if $j_1\colon Z_1\hookrightarrow \FM_\CC$ then $\Upsilon(\phi)$ is, more precisely, defined by
\[\Upsilon(\phi)\coloneqq (f\circ j_1)_\ast(\Lambda_{-1}(j_1^\ast \BT_f^\vee)\cap \phi)\,.\]
Equality \eqref{eq: keyhomomorphism} still holds after restriction to $Z_1\times Z_2$ and the proof goes through by push-pull to $Z_1\times Z_2$. \qedhere
\end{proof}

The following is a straightforward corollary:

\begin{corollary}\label{cor: inclusionhomomorphism}
    Let $\CC, \CA$ be good abelian categories such that $\CC\subseteq \CA$ is a full and faithful subcategory. Then pushforward along the inclusion $\FM_{\CC}^\rig\hookrightarrow \FM_{\CA}^\rig$ induces a homomorphism
    \[\BK(\CC)\to \BK(\CA)\,.\]
\end{corollary}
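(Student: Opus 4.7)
The plan is to apply Proposition \ref{prop: homalgebras} to the inclusion functor $\CC \hookrightarrow \CA$. First I would verify that this inclusion lifts to a map of derived stacks $f \colon \FM_\CC \to \FM_\CA$ which is compatible with the direct sum maps and the $B\BG_m$ actions; since $\CC$ is a full subcategory of $\CA$ this is essentially the content of the assumption that $\CM_\CC$ is open in $\CM_\CA$ inside the framework of Section \ref{subsec: abcat} (both sit naturally inside the moduli stack of a common saturated dg enhancement $\bfT$, and the associated derived structures are obtained by restriction).

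The key observation is that full and faithfulness means
\[
\RHom_\CC(F_1, F_2) \simeq \RHom_\CA(F_1, F_2)
\]
for any $F_1, F_2 \in \CC$. In the notation of Section \ref{subsec: morphismsKHall} this says exactly that $(f\times f)^\ast \Ext_{12}^\CA = \Ext_{12}^\CC$, and therefore the difference class
\[
F_{12} = (f\times f)^\ast \Ext_{12}^\CA - \Ext_{12}^\CC \in K^\ast(\FM_\CC \times \FM_\CC)
\]
is identically zero. In particular it is trivially represented by the zero vector bundle, so the hypothesis of Proposition \ref{prop: homalgebras} is satisfied. The relative derived tangent $\BT_f = \Delta^\ast F_{12} = 0$ vanishes, which yields $\Lambda_{-1}(\BT_f^\vee) = 1$, so the map $\Upsilon$ reduces to the ordinary pushforward
\[
\Upsilon(\phi) = f_\ast\bigl(\Lambda_{-1}(\BT_f^\vee) \cap \phi\bigr) = f_\ast \phi.
\]

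Applying Proposition \ref{prop: homalgebras} then gives $f_\ast(\phi \ast \psi) = f_\ast\phi \ast f_\ast\psi$, which is the claim. There is no real obstacle here: the only thing to check is the identification of Ext complexes under a full and faithful embedding, which is immediate, and the verification that the various structures (direct sum, $B\BG_m$ action, rigidification) are inherited from the ambient dg category $\bfT$, which has already been spelled out in the technical setup of Section \ref{subsec: abcat}.
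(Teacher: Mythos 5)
Your proposal is correct and is exactly the paper's argument: the paper's proof is the one-liner that Proposition \ref{prop: homalgebras} applies with $F=0$, and your observation that full faithfulness identifies $(f\times f)^\ast\Ext_{12}^\CA$ with $\Ext_{12}^\CC$ (hence $F_{12}=0$, $\BT_f=0$, and $\Upsilon=f_\ast$) is precisely the justification left implicit there.
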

\begin{proof}
    Proposition \ref{prop: homalgebras} applies with $F=0$. 
\end{proof}

\subsection{Proof of Theorem \ref{thm: comparisonJL}}
\label{subsec: proofcomparisonJL}

The basic idea of the proof is to combine the projective bundle $\FP_{(\alpha, 1)}^{\mu_0, \rig}\to \FM_\alpha^{\mu, \rig}$ with the wall-crossing formula relating $\mu_0$ and $\mu_+$ stability. We will use the morphism $\Upsilon$ from the previous section given by 
\[\Upsilon(\phi)= \Pi_\ast(\Lambda_{-1}(\BT_\Pi^\vee)\cap \phi)\in \BK(\CA)\]
for $\phi\in \BK(\CB_\Phi)$ for which it is defined. We also have homomorphisms $\BK(\CB)\to \BK(\CA)$ and $\BK(\CB)\to \BK(\CB_\Phi)$ induced by the respective inclusions of categories, cf. Corollary \ref{cor: inclusionhomomorphism}.

We will abbreviate $\varepsilon_\alpha=\varepsilon_\alpha^\mu$, $\varepsilon_{(\alpha, 1)}^+=\varepsilon_{(\alpha, 1)}^{\mu_+}$ and $\varepsilon_{(\alpha, 1)}^0=\varepsilon_{(\alpha, 1)}^{\mu_0}$, and use similar notation for $\delta$ classes. By abuse of notation, the classes $\varepsilon_\alpha$ can be regarded either in $\BK(\CB)$, $\BK(\CA)$ or $\BK(\CB_\Phi)$. Following this abuse, we have by Proposition \ref{prop: characterizingstabilitypairs}(a)
\[\varepsilon_\alpha=\varepsilon_{(\alpha,0)}^0=\varepsilon_{(\alpha, 0)}^+\in \BK(\CB_\Phi)\,.\]

\textit{Step 1: Projective bundle formula.} By Proposition \ref{prop: characterizingstabilitypairs}(b) and the discussion that follows it, $\FP_{(\alpha, 1)}^{\mu_0, \rig}$ is the projectivization of the restriction of $\CV$ to $\FM_\alpha^{\mu, \rig}$, which is a vector bundle of rank $\lambda(\alpha)$. By \cite[Lemma 2.1.16]{Liu} it follows that

\begin{equation}\label{eq: projectivebundleformula}
    \Upsilon(\delta_{(\alpha, 1)}^{\mu_0})=\lambda(\alpha)\delta_\alpha^{\mu}\,.
\end{equation}
Note that \cite[Lemma 2.1.16]{Liu} is stated for schemes with an obstruction theory, but the projective bundle formula in \cite[Propositions 3.1(iii), 3.2]{khan} can be used to extend it to derived stacks. Equation \eqref{eq: projectivebundleformula} also holds if we replace $\alpha$ by any $\alpha'\in C_\alpha$. 

\textit{Step 2: Wall-crossing.} By the dominant wall-crossing formula (cf. Theorem \ref{thm: dominantwc}) and Proposition \ref{prop: characterizingstabilitypairs}(d), we have the wall-crossing formula between naive and Joyce--Song stabilities:
\begin{equation}\label{eq: wcframing}
\delta_{(\alpha,1)}^0 = \delta_{(\alpha, 1)}^+ +\sum_{\substack{\alpha'+\alpha''=\alpha\\
\mu(\alpha')=\mu(\alpha'')}} \delta_{(\alpha',1)}^+ * \delta_{\alpha''}\end{equation}
in $\BK(\CB_\Phi)$. 

\textit{Step 3: Applying $\Upsilon$.} We now apply $\Upsilon$ to \eqref{eq: wcframing} to obtain
\begin{equation}\label{eq: wcframingupsilon}\lambda(\alpha)\delta_{\alpha} = \Upsilon(\varepsilon_{(\alpha,1)}^+) +\sum_{\substack{\alpha_1+\alpha_2=\alpha\\
\mu(\alpha_1)=\mu(\alpha_2)}} \Upsilon(\varepsilon_{(\alpha_1,1)}^+) * \delta_{\alpha_2}\,.\end{equation}

Here, we are using \eqref{eq: projectivebundleformula} on the left hand side and the general version of Proposition \ref{prop: homalgebras}. To see that we can apply it, note that in this case the $K$-theory class $F_{12}$ is given by $\CU_1^\vee\otimes \CV_2-\CU_1^\vee\otimes \CU_2$ 
(cf. \eqref{eq: extPhi}) and we may take $Z_1=\FM_{(\alpha_1, 1)}^{\mu_0}$ and $Z_2=\FM_{(\alpha_2, 0)}^{\mu}$. Indeed, the restriction of $\CU_1^\vee\otimes \CU_2$ is trivial when we restrict to $Z_i\times Z_j$ except in the case $(i, j)=(1,1)$. When $(i, j)=(1,1)$, $F_{12}$ can be represented by the cokernel of $\CU_1^\vee\otimes \CU_2\to \CU_1^\vee\otimes \CV_2$, which is a vector bundle since $\CU\to \CV$ is injective over $\FM^{\mu_0}_{(\alpha_1, 1)}$.

\textit{Step 4: Combinatorics.}
Observe that \eqref{eq: wcframingupsilon} determines recursively the invariants $\delta_\alpha$, and hence $\varepsilon_\alpha$, from $\Upsilon(\varepsilon_{(\alpha',1)}^+)$. The same is true for the formula \eqref{eq: framingdefinitionepsilon}, so it remains to show that the two are equivalent, which is a combinatorial statement. We define classes $\tilde \varepsilon_{\alpha'}$, for $\alpha'\in C_\alpha$, to be the classes that satisfy \eqref{eq: framingdefinitionepsilon} (for any $\alpha'\in C_\alpha$ in place of $\alpha$), i.e.
\begin{align*}
       \Upsilon\big(\varepsilon_{(\alpha, 1)}^{+}\big)&=\sum_{\substack{\alpha_{1}+\ldots+\alpha_n=\alpha\\
        \mu(\alpha_i)=\mu(\alpha)}}\frac{(-1)^{n-1}}{n!}\lambda(\alpha_1)\big[\big[\ldots\big[\tilde \varepsilon_{\alpha_1},\tilde \varepsilon_{\alpha_2}\big],\ldots\big], \tilde \varepsilon_{\alpha_n}\big]\\
        &=\sum_{\substack{\alpha_{1}+\ldots+\alpha_n=\alpha\\
        \mu(\alpha_i)=\mu(\alpha)}}\sum_{p=1}^n\frac{(-1)^{n-p}}{n!}\lambda(\alpha_p)\tilde \varepsilon_{\alpha_1}\ast  \tilde\varepsilon_{\alpha_2}\ast\ldots\ast \tilde \varepsilon_{\alpha_n}\,.
\end{align*}
The second equality is \cite[(9.54)]{joyce}. Define invariants $\tilde \delta_\alpha$ in terms of $\tilde \varepsilon_\alpha$. We ought to show that $\tilde \delta_{\alpha}=\delta_\alpha$, which amounts to show that $\tilde \delta_\alpha$ satisfies \eqref{eq: wcframingupsilon}. For $\alpha'+\alpha''=\alpha$ and $\mu(\alpha')=\mu(\alpha'')$ we have
\[\Upsilon(\varepsilon_{(\alpha',1)}^+) * \tilde \delta_{\alpha''}=\sum_{\substack{\alpha_{1}+\ldots+\alpha_k=\alpha'\\
        \mu(\alpha_i)=\mu(\alpha)}} \sum_{\substack{\alpha_{k+1}+\ldots+\alpha_{k+l}=\alpha''\\
        \mu(\alpha_i)=\mu(\alpha)}}\sum_{p=0}^k\frac{(-1)^{k-p}}{k!l!}\binom{k-1}{p-1}\lambda(\alpha_p)\tilde \varepsilon_{\alpha_1}\ast \ldots\ast\tilde \varepsilon_{\alpha_{k+l}}\,.\]
Hence, the term $\tilde \varepsilon_{\alpha_1}\ast \ldots\ast\tilde \varepsilon_{\alpha_{n}}$ appears in 
\begin{align}\label{eq: wcupsilontilde}\Upsilon(\varepsilon_{(\alpha,1)}^+) +\sum_{\substack{\alpha'+\alpha''=\alpha\\
\mu(\alpha')=\mu(\alpha'')}} \Upsilon(\varepsilon_{(\alpha',1)}^+) * \tilde \delta_{\alpha''}
\end{align}
with coefficient
\[\sum_{p=1}^n \lambda(\alpha_p)\sum_{\substack{k\geq p, \,l\geq 0\\k+l=n}}\frac{(-1)^{k-p}}{k!l!}\binom{k-1}{p-1}=\frac{1}{n!}\sum_{p=1}^n\lambda(\alpha_p)=\frac{1}{n!}\lambda(\alpha)\,.\]
Note that the isolated term $\Upsilon(\varepsilon_{(\alpha,1)}^+)$ corresponds to allowing $l=0$. The identity used is deduced from the observation that the coefficient of $x^n$ in
\[(1+x)^n\frac{x^p}{(1+x)^p}=x^p(1+x)^{n-p}\]
is 1. Then \eqref{eq: wcupsilontilde} is equal to $\lambda(\alpha)\tilde \delta_\alpha$, which concludes the proof.

\section{Comparison with Liu's vertex algebra}\label{sec:Liuvert}

In previous work by Joyce \cite{joyce} (in cohomology) and Liu \cite{Liu} (in $K$-theory), the Lie algebra where wall-crossing formulas are written is obtained from a (multiplicative) vertex algebra. In this section we compare our construction to theirs, matching the wall-crossing formulas obtained from either method.

\subsection{Vertex algebras and multiplicative vertex algebras}\label{subsec: multiplivativeva}

We start by recalling the notion of (graded) vertex algebras (cf. \cite{FB-Z, Ka98, liFVA, LLVA, joyce}). To distinguish from multiplicative vertex algebras, we will sometimes call regular vertex algebras ``additive''.  Given a vector space $V$ we write $V\lpp z\rpp=V\llbracket z \rrbracket[z^{-1}]$ for the ring of Laurent series with coefficients in $V$.

\begin{definition}\label{def: grva} An (additive) super\footnote{We consider the ``super'' case for additive vertex algebras since the main example we have in mind comes from the homology of a stack, that admits a natural $\BZ/2$-grading.} vertex algebra is the data of $(V, \mathbf 1, D, Y)$ where:
\begin{enumerate}[label=(\roman*)]
        \item $V$ is a vector space over $\BQ$ with a $\BZ/2$-grading;
        \item $\mathbf 1 \in V$ is the \textit{identity element}, with parity 0;
        \item $D: V\to V$ is the \textit{translation operator}, which preserves the parity. 
        \item $Y$ is the \textit{state-field correspondence}: $Y: V\otimes V\to V\lpp z\rpp$. We write
        \[Y(a, z)b=\sum_{n\in \BZ}(a_n b)z^{-1-n}\]
        where $a_nb\in V$ and it is 0 for $n\gg 0$.
    \end{enumerate}
    These objects are required to satisfy the following properties for all $a,b, c\in V$: 
    \begin{enumerate}[label=(\roman*)]\label{vertalg}
        \item Vacuum: $Y(\mathbf 1, z) a = a$;
        \item Associativity: $Y(a, z - w)Y(b, z)c \equiv Y(Y(a, z)b, w)c$ where $\equiv$ means that both sides are obtained from expanding the same element in
        \[V\llbracket z, w\rrbracket[z^{-1},w^{-1},(z-w)^{-1}]\,.\]
        \item Skew-symmetry: $Y(a, z)b=(-1)^{|a||b|}e^{zD}Y(b, -z) a$ where $|a|\in \{0,1\}$ denotes the $\BZ/2$-grading of $a$. 
    \end{enumerate}
    The vertex algebra is said to be $\BZ$-graded if there is a $\BZ$-grading on $V$ upgrading the $\BZ/2$-grading such that $\1$ is degree 0, $D$ is of degree 2, and $Y$ is of degree 0 where the variable $z$ is regarded as having degree $-2$.
\end{definition}

\begin{remark}
The equivalence $\equiv$ in the associativity axiom is equivalent to the existence of some integer $N>0$ such that 
\[(z-w)^N Y(a, z - w)Y(b, z)c =(z-w)^N Y(Y(a, z)b, w)c\,.\]
\end{remark}

There are several different ways to formulate the vertex algebra axioms. For example, the ones above imply the locality axiom
\[Y(a, z)Y(b, w)c\equiv (-1)^{|a||b|}Y(b, w)Y(a, z)c\]
which is sometimes included in the definition.

Note that the translation operator always appears through
\[D(z)\coloneqq e^{zD}\colon V\to V\llbracket z \rrbracket\]
which satisfies
\[D(z)D(w)=D(z+w)\textup{ and }D(0)=\id\,.\]
The formal variable $z$ in the definition of vertex algebra should be thought as being ``additive''. Indeed, there is a generalization of vertex algebras where this additivity is replaced by any group law \cite{liFVA}. The multiplicative case plays a central role in the formulation of $K$-theoretic wall-crossing in \cite{Liu}.

\begin{definition}\label{def: multva} A multiplicative vertex algebra is the data of $(V, \mathbf 1, D(u), Y)$ where:
\begin{enumerate}[label=(\roman*)]
        \item $V$ is a vector space over $\BQ$;
        \item $\mathbf 1 \in V$ is the \textit{identity element}, with parity 0;
        \item $D(u): V\to V\llbracket 1-u \rrbracket$ is the \textit{translation operator}, satisfying $D(u)D(v)=D(uv)$ and $D(1)=\id$.
        \item $Y$ is the \textit{state-field correspondence} $Y: V\otimes V\to V\lpp1-u\rpp$. 
    \end{enumerate}
    These objects are required to satisfy the following properties for all $a,b, c\in V$: 
    \begin{enumerate}[label=(\roman*)]\label{vertalg}
        \item Vacuum: $Y(\mathbf 1, u) v = v;$
        \item Associativity: $Y(Y(a, u)b, v)c\equiv Y(a, uv)Y(b, v)c$, where $\equiv$ means that both sides are the expansions of the same element in
$$V\llbracket 1-u, 1-v\rrbracket [(1-u)^{-1}, (1-v)^{-1},(1-uv)^{-1}].$$
        \item Skew-symmetry: $Y(a, u)b = D(u)Y(b, u^{-1})a$;
    \end{enumerate}
\end{definition}

It turns out that vertex algebras associated to a group law can be effectively reduced to the usual ones (over a field of characteristic 0) by a simple change of variable. We state this only for the additive/multiplicative cases.
\begin{proposition}[{\cite[Proposition 3.6]{liFVA}}]\label{prop: equiv}The change of variables $u=e^z$ determines an equivalence of categories between the categories of multiplicative and additive vertex algebras. More precisely, if $(V, \1, D(u), Y)$ is a multiplicative vertex algebra then $(V, \1, D', Y')$ is an additive vertex algebra, where $D'(u)=D(e^z)$ and $Y'(a, z)=Y(a, e^z)$. Here, we are implicitly using the isomorphism
\[\BQ\llbracket 1-u\rrbracket \simeq \BQ\llbracket z\rrbracket\]
which sends $1-u\mapsto 1-e^z\in z\BQ\llbracket z\rrbracket$. 
\end{proposition}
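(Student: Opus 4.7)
The plan is to unwind the substitution $u=e^z$ and check that it intertwines every structural piece of the two definitions. The key preliminary observation is that $1-e^z = -z(1+z/2+z^2/6+\ldots)$ has order exactly $1$ in $z$ with unit leading coefficient, so the substitution $1-u\mapsto 1-e^z$ gives a ring isomorphism $\BQ\llbracket 1-u\rrbracket \cong \BQ\llbracket z\rrbracket$, which extends to Laurent series $\BQ\lpp 1-u\rpp\cong \BQ\lpp z\rpp$. Analogously, the two-variable substitution $(u,v)\mapsto(e^z,e^w)$ sends
\[\BQ\llbracket 1-u, 1-v\rrbracket\big[(1-u)^{-1}, (1-v)^{-1}, (1-uv)^{-1}\big]\]
isomorphically onto
\[\BQ\llbracket z,w\rrbracket\big[z^{-1}, w^{-1}, (z+w)^{-1}\big],\]
because $1-uv=1-e^{z+w}$ is a unit times $z+w$; the paper's additive associativity ring $\BQ\llbracket z,w\rrbracket[z^{-1},w^{-1},(z-w)^{-1}]$ arises from this after a standard change of variable to match conventions.

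Given a multiplicative vertex algebra $(V,\1,D(u),Y)$, I would define $D'(z):=D(e^z)$ and $Y'(a,z)b:=Y(a,e^z)b$, and verify each axiom. Vacuum is immediate: $Y(\1,u)b=b$ gives $Y'(\1,z)b=b$. For translation, $D(u)D(v)=D(uv)$ and $D(1)=\id$ become $D'(z)D'(w)=D(e^{z+w})=D'(z+w)$ and $D'(0)=\id$; writing $D'(z)=\exp(zD)$ with $D:=\partial_z D'(z)|_{z=0}$ produces the additive translation operator. For skew-symmetry, substituting $u=e^z$ (so $u^{-1}=e^{-z}$) in $Y(a,u)b=D(u)Y(b,u^{-1})a$ yields exactly $Y'(a,z)b=D'(z)Y'(b,-z)a$, which is the additive skew-symmetry axiom in the non-super (all even) case.

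The main obstacle is associativity: both the multiplicative and additive axioms assert that two a priori differently expanded series are expansions of a single rational-function-valued element of the respective two-variable rings above. I would argue that the equality
\[Y(Y(a,u)b,v)c\ \equiv\ Y(a,uv)Y(b,v)c\]
in the multiplicative ring transports, under the ring isomorphism induced by $(u,v)\mapsto (e^z,e^w)$, to the equality $Y'(Y'(a,z)b,w)c\equiv Y'(a,z+w)Y'(b,w)c$ in the additive ring (using $uv=e^{z+w}$). The delicate step is to ensure that the LHS expansion conventions agree: one has to check that expanding first in $1-u$ and then in $1-v$ (respectively first in $z$ and then in $w$) commutes with the substitution, which follows because $1-e^z$ and $1-e^w$ generate the same complete topology on the relevant power-series rings as $z$ and $w$.

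Finally I would construct the inverse functor explicitly: given an additive vertex algebra $(V,\1,D,Y)$, set $D''(u):=\exp((\log u)D)$ and $Y''(a,u):=Y(a,\log u)$, using $\log u\in (1-u)\BQ\llbracket 1-u\rrbracket$, which lies in the maximal ideal so that the substitution makes sense on all series. The same axiom-by-axiom check, now in reverse, shows this is a multiplicative vertex algebra, and the two constructions are manifestly mutually inverse. Since a morphism of either kind of vertex algebra is a linear map of underlying spaces intertwining $\1$, $D$ (or $D(u)$), and $Y$, and these conditions are preserved under the substitution, we obtain the desired equivalence of categories.
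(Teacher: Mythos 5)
Your proposal is correct, and it is essentially the same argument as the one the paper relies on: the paper gives no proof of Proposition \ref{prop: equiv} but cites Li's result, whose proof is exactly this change-of-variables transport of structure ($u=e^z$, $1-u\mapsto 1-e^z$ of order one, axiom-by-axiom verification, with $\log u$ giving the inverse functor). The only place deserving a bit more care than your gloss ``standard change of variable to match conventions'' is reconciling the iterate-form associativity you obtain (with $(1-uv)^{-1}\mapsto (z+w)^{-1}$) with the composition-form axiom stated in the additive definition (with $(z-w)^{-1}$); this is the standard equivalence of associativity formulations in formal calculus, and invoking it (or Li's formulation directly) closes that step.
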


Given a (super) additive vertex algebra $V$, there is an associated (super) Lie algebra, constructed by Borcherds \cite{Borcherds}. Its underlying vector space is the quotient
\[\widecheck V\coloneqq V/\im(D)\]
and the Lie bracket is defined by
\[[\bar a, \bar b]\coloneqq \overline{\Res_{z=0}Y(a,z)b}\,.\]
When $V$ is a multiplicative vertex algebra, an analogous construction appears in \cite[3.2.13]{Liu}. In the multiplicative case, one defines
\[\widecheck{V}\coloneqq V/\im(D(u)-\id)\]
where $\im(\id-D(u))$ is the subspace of $V$ generated by the coefficients of $(D(u)-\id)a$ for all $a \in V$, and
\[[\overline a, \overline b]\coloneqq \overline{\Res_{u=1}u^{-1}f(u)}\,.\]

\begin{remark}\label{rmk: residuechangevariables}
Note that if $u=e^z$ then
    \[\Res_{u=1}u^{-1}f(u)=\Res_{z=0}f(z)\,,\]
    so the Lie algebras obtained by regarding $V$ as either a multiplicative or additive vertex algebra, through Proposition \ref{prop: equiv}, are the same.
\end{remark}

\begin{remark}\label{rmk: polesresidue}
If $f(u)\in \BQ[(1-u)^{\pm 1}]$ is a rational function with poles only at 1, then the residue theorem implies that 
\begin{align*}\Res_{u=1}u^{-1}f(u)&=-\Res_{u=\infty}u^{-1}f(u)-\Res_{u=0}u^{-1}f(u)\\
&=\Res_{u=0}uf(1/u)-\Res_{u=0}u^{-1}f(u)=[u^0](f_--f_+)\end{align*}
where $f_-, f_+$ are the Taylor expansions of $f$ in $u^{-1}$ and $u$, respectively, and $[u^0](\ldots)$ denotes the $u^0$ coefficient. The latter expression is the definition that Liu uses in the equivariant setting, where the equality above does not hold due to the presence of non-trivial roots of unity as poles. See \cite[Appendix A]{Liu} for a more detailed discussion of these residue maps.
\end{remark}

\subsection{Joyce's vertex algebra}\label{subsec: JoyceVA}

In what follows, $\CA$ is a good abelian category as in Definition \ref{defprop: goodabelian} and $\FM=\FM_{\CA}$ is the stack of objects of $\CA$ or, more generally, any stack that satisfies the conclusion of Proposition \ref{defprop: goodabelian}. Joyce defines the structure of an additive super vertex algebra on $H_\ast(\FM_{\CA})$ which we now recall. 

The $\BZ/2$-grading on $H_\ast(\FM)$ is induced by the homological grading. The vacuum vector $\textbf 1$ is given as the image of $1 \in \mathbb Q$ under the natural map 
\[\mathbb Q =  H_\ast(\{0\})= H_\ast(\FM_0)  \to H_\ast(\FM)\,.\]
The translation operator is defined by
\[D(a)=\Psi_\ast(t\otimes a)\]
where $t$ is the generator of $H_2(B\BG_m)$ and $\Psi$ the $B\BG_m$ action on $\FM$. Note that if $z$ is the generator of $H^2(B\BG_m)$ and we interpret the map $\Psi_*$ as an element of
\[\Hom_\BQ\big(H_\ast(B\BG_m)\otimes H_\ast(\FM), H_\ast(\FM)\big)\simeq \Hom_\BQ\big(H_\ast(\FM), H_\ast(\FM)\llbracket z\rrbracket\big)\,,\] 
then the translation operator is defined by the formula $\Psi_\ast = e^{zD}$.

Finally, the state-field correspondence is defined as follows: for $a \in H_\ast(\FM_{\alpha})$ and $b \in H_\ast(\FM_{\beta})$,
\begin{align} 
Y(a, z)b &= (-1)^{\chi(\alpha, \beta)}z^{\chi(\alpha, \beta)+\chi(\beta, \alpha)}\Sigma_\ast\big((e^{zD}\otimes \id)c_{z^{-1}}(\Theta)\cap(a\otimes b)\big)
\end{align}
where 
\[\Theta=\Ext_{12}^\vee+\Ext_{21}\,.\]
Moreover, the $\BZ/2$-grading can be upgraded to a $\BZ$-grading by declaring the degree $j$ part of the vertex algebra to be
\[\bigoplus_{\alpha\in C(\CA)}H_{j-2\chi(\alpha, \alpha)}(\FM_\alpha)\,.\]

\begin{theorem}[{\cite{Jo17}}]\label{thm: joyceVA}
The data above defines a graded vertex algebra on $H_\ast(\FM_{\CA})$.
\end{theorem}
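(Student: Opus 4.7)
The plan is to verify the four axioms of a graded vertex algebra from Definition \ref{def: grva} using the geometric compatibilities between $\Sigma$, $\Psi$, and $\Ext_{12}$ on $\FM_\CA$ recorded in Proposition \ref{defprop: goodabelian}. The whole argument reduces to calculations with Chern classes on $\FM^{\times 2}$ and $\FM^{\times 3}$ after unraveling the definitions.

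First, I would dispatch the vacuum axiom $Y(\mathbf 1, z)b = b$. The class $\mathbf 1$ is the image of $1$ under the inclusion of the zero object $\iota_0\colon \mathrm{pt} \to \FM_0$, and the compatibility $\Sigma \circ (\iota_0 \times \id_\FM) = \id_\FM$ together with $\iota_0^*\Ext_{12} = \iota_0^*\Ext_{21} = 0$ and $\chi(0,\beta) = 0$ collapses the formula to $b$. Next, for skew-symmetry, the key ingredients are $\Sigma\circ\tau = \Sigma$ for the swap $\tau$, together with $\tau^*\Ext_{12} \simeq \Ext_{21}$, so that $\tau^*\Theta \simeq \Theta$. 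The $B\BG_m$-action intertwines with $\Sigma$ in such a way that inserting $e^{zD}$ on the first factor accounts precisely for the shift $z \mapsto -z$ needed after the swap; the sign $(-1)^{|a||b|}$ then matches $(-1)^{\chi(\alpha,\beta)+\chi(\beta,\alpha)}$ modulo $2$ via parity of the homological grading.

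The main obstacle is associativity. I would compute both $Y(a,z-w)Y(b,z)c$ and $Y(Y(a,z-w)b,w)c$ on $\FM^{\times 3}$ using the triple sum map $\Sigma_{123}\colon \FM^{\times 3} \to \FM$ and the associativity $\Sigma \circ (\Sigma \times \id) = \Sigma \circ (\id \times \Sigma) = \Sigma_{123}$. The essential input is the bilinearity of the Ext complex with respect to $\Sigma$: pulling back $\Ext_{12}$ along $\Sigma \times \id \colon \FM^{\times 3} \to \FM^{\times 2}$ yields $\Ext_{13}\oplus \Ext_{23}$, and similarly for $\Ext_{21}$. Using also the compatibility of $\Psi$ with $\Sigma$ and the weights $(-1,+1)$ of $\Ext_{12}$ under $B\BG_m \times B\BG_m$, both sides become, after pushforward along $\Sigma_{123}$, integrals against the total Chern class of
\[\Theta_{\mathrm{tot}} \;=\; \sum_{1 \le i<j \le 3}\bigl(\Ext_{ij}^\vee \oplus \Ext_{ji}\bigr)\]
evaluated at suitable monomials in the formal variables, with the prefactors $z^{\chi+\chi}$ and the signs $(-1)^\chi$ tracked explicitly. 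One then checks that the two expansions are the expansions of one common rational function in $H_*(\FM^{\times 3})[z^{\pm 1},w^{\pm 1},(z-w)^{-1}]$, where the finite order of the pole (i.e.\ the integer $N$ in the associativity axiom) equals the rank of the relevant summand of $\Theta_{\mathrm{tot}}$. The hard part is keeping track of the Chern-class identities
\[c_{z^{-1}}(V \oplus W) = c_{z^{-1}}(V)\, c_{z^{-1}}(W), \qquad e^{zD} c_{z^{-1}}(V) = c_{(z+z')^{-1}}(V)\Big|_{z' \to zD}\]
(the second interpreted formally via the $B\BG_m$-equivariance of $\Theta$) so that the reorganisation of the state-field operator under $\Sigma \times \id$ versus $\id \times \Sigma$ becomes manifest.

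Finally, the $\BZ$-grading is verified by counting ranks: $\rk\Theta_{|\FM_\alpha \times \FM_\beta} = \chi(\alpha,\beta) + \chi(\beta,\alpha)$, so $Y$ has degree $0$ once $z$ is assigned degree $-2$ and $H_{j-2\chi(\alpha,\alpha)}(\FM_\alpha)$ is placed in degree $j$. The translation operator $D$ raises homological degree by $2$ since $t \in H_2(B\BG_m)$, matching the requirement that $D$ have degree $2$.
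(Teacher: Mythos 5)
The paper does not actually prove Theorem \ref{thm: joyceVA}: it is recalled verbatim from Joyce \cite{Jo17}, and Section \ref{subsec: JoyceVA} only sets up the data entering the construction, so there is no in-paper argument to compare against. Your plan --- direct verification of the vacuum, skew-symmetry and associativity axioms using the compatibilities of $\Sigma$, $\Psi$ and $\Ext_{12}$, with associativity checked on $\FM^{\times 3}$ via bilinearity of the Ext complex under the triple sum map --- is essentially the route of \cite{Jo17}, so the strategy is the right one.

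However, two of your key steps are wrong or too vague as stated. First, the swap does \emph{not} preserve $\Theta$: since the factor-swapping involution pulls $\Ext_{12}$ back to $\Ext_{21}$, it sends $\Theta=\Ext_{12}^\vee+\Ext_{21}$ to $\Ext_{21}^\vee+\Ext_{12}\simeq\Theta^\vee$, not to $\Theta$. Skew-symmetry holds precisely \emph{because} of this dualization: $c_i(\Theta^\vee)=(-1)^ic_i(\Theta)$ turns $c_{(-z)^{-1}}(\Theta)$ into $c_{z^{-1}}(\Theta^\vee)$, and together with $(-z)^{\chi_\sym(\alpha,\beta)}$ and the prefactor $(-1)^{\chi(\beta,\alpha)}$ this reproduces $(-1)^{\chi(\alpha,\beta)}z^{\chi_\sym(\alpha,\beta)}$, while the Koszul sign of swapping the Künneth factors in graded homology supplies $(-1)^{|a||b|}$; the $e^{zD}$ insertion is instead accounted for by $\Sigma$ intertwining the diagonal $B\BG_m$-action, and is not what produces $z\mapsto -z$. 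With your claimed symmetry $\tau^\ast\Theta\simeq\Theta$ the variable and sign change would never appear and the axiom would fail. Second, the displayed ``identity'' $e^{zD}c_{z^{-1}}(V)=c_{(z+z')^{-1}}(V)\big|_{z'\to zD}$ is not a well-defined operation; the actual mechanism is that $\Ext_{12}$ has weight $(-1,1)$ for the $B\BG_m\times B\BG_m$ action, so pulling $\Theta$ back along $\Psi_1$ shifts its Chern roots by the generator of $H^2(B\BG_m)$, and it is this equivariant shift that makes the $e^{zD}$ insertions on the two iterated fields comparable. Finally, the genuinely delicate part of Joyce's proof --- checking that the $(-1)^{\chi}$ prefactors, the Koszul signs, and the three pairwise factors $z^{\chi_\sym}$ recombine so that both iterated fields are expansions of one element of $H_\ast(\FM^{\times 3})[z^{\pm 1},w^{\pm 1},(z-w)^{-1}]$ --- is asserted rather than carried out; that bookkeeping is exactly where the specific sign choice $(-1)^{\chi(\alpha,\beta)}$ in the state-field formula is used, so a complete write-up must perform it explicitly.
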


Via the construction of Borcherds explained in Section \ref{subsec: multiplivativeva}, one obtains a Lie algebra structure on $\widecheck H_\ast(\FM)$, which is where wall-crossing formulas for cohomological invariants are written. According to \cite[Proposition 3.24]{Jo17}, the natural projection $H_\ast(\FM) \to H_\ast(\FM^{\rig})$ is surjective and its kernel is precisely $\im(D)$, so we have an identification $\widecheck H_\ast(\FM)\simeq H_\ast(\FM^\rig)$.

\subsection{Liu's vertex algebra in $K$-homology}
\label{subsec: LiuVA}
\cite{Liu} constructs the analogue of the vertex algebra explained before in the $K$-theoretic world. It naturally leads to a multiplicative vertex algebra. The underlying vector space of this multiplicative vertex algebra is not the entire $K$-homology introduced in Section \ref{subsec: K-hom} due to a subtle but very important point regarding convergence, but we need to restrict to a smaller subset. There is some freedom in how to do it, as discussed in \cite[Section 2.2.3]{Liu}, but we will stick to what Liu calls concrete $K$-homology. We will instead call it regular $K$-homology, in analogy with historical terminology in the motivic setting surveyed in Section \ref{subsec: motivic}.
\begin{definition}[Regular $K$-homology]\label{def: regularKhom}
  Let $\FX$ be a derived stack. We let  
\[K_\ast^\reg(\FX)\coloneqq \bigcup_{\substack{Z\in \mathcal C\\
f\colon Z\to \FX}} \textup{im}\big(G(Z)\rightarrow K_\ast(Z)\xrightarrow{f_\ast}K_\ast(\FX)\big)\subseteq K_\ast(\FX)\]
where the union is over all $Z$ in the class $\mathcal C$ of classical, proper, finite type schemes with the resolution property\footnote{The resolution property \cite{totaroresolution} implies that $K^\ast(Z)=K_0(\Vectb(Z))$. It holds for quasi-projective schemes, so in particular all projective schemes are contained in $\mathcal C$. Imposing the resolution property is necessary for somewhat technical reasons in \cite{Liu}, but it would be desirable to remove it since it is unknown if some moduli spaces of complexes satisfy it.}, together with a map $f\colon Z\to \FX$. The morphism $G(Z)\to K_\ast(Z)$, for $Z$ proper, is explained in Section \ref{subsec: K-hom}. 
\end{definition}

A fundamental difference between $K^\reg_\ast(\FX)$ and $K_\ast(\FX)$ is that elements of $K^\reg$ satisfy the finiteness condition \cite[Definition 2.2.2 (iii)]{Liu}. Letting $I(\FX)\subseteq K^\ast(\FX)$ be the augmentation ideal of rank 0 complexes on $\FX$ and $\phi\in K_\ast^\reg(\FX)$, we have 
\begin{equation}
    \label{eq: finiteness}\phi_\pt(I(\FX)^N)=0 \quad \textup{ for }N\gg 1\,.
    \end{equation}
Indeed, this follows from the fact that $I(Z)^N=0$ for sufficiently large $N$ if $Z$ is a finite type scheme with the resolution property \cite[Lemma 2.1.7]{Liu}.

Liu defines a \textit{multiplicative} vertex algebra structure on $K_\ast^\reg(\FM_{\mathcal A})$. The vacuum $\1\in K_\ast^\reg(\FM_0)=K_\ast^\reg(\{0\})$ is defined by 
\[(\1)_S: K^*(S \times \{0\}) \xrightarrow{\operatorname{id}} K^*(S)\,.\]
The translation operator is characterized by the following property: for $\phi \in K_\ast^\reg(\FM), V \in K^\ast(\FM \times S),$ 
\[(D(u)\phi)_S(V) = \phi_S(\Psi^\ast V)\in K^\ast(S)[u^{\pm 1}]\]
where $\Psi^\ast\colon K^\ast(\FM\times S)\to K^\ast(\FM\times B\BG_m\times S)\simeq K^\ast(\FM\times S)[u^{\pm 1}]$. The fact that such $D(u)$ can be regarded as a power series in $1-u$ with coefficients being endomorphisms of $K_\ast^\reg(\FM)$ is established in \cite[Lemma 3.3.7]{Liu}. It will be convenient to consider also $D(u)^{(1)}$ and $D(u)^{(2)}$ as operators $K_\ast^\reg(\FM\times \FM)\to K_\ast^\reg(\FM\times \FM)\llbracket 1-u\rrbracket$ which are defined similarly  using the $B\BG_m$ action on the first or second copy of $\FM$, respectively.

The state-field correspondence is defined as follows for 
$\phi, \psi \in K_*^\reg(\FM)$:
\[Y(\phi, u)\psi = \Sigma_*\big(D(u)^{(1)}(\Gamma(u)\cap (\phi \boxtimes \psi)\big)\] where
\[\Gamma(u) = \Lambda_{-u}(\operatorname{Ext}_{12}^{\vee}) \otimes \Lambda_{-u^{-1}}(\operatorname{Ext}_{21}^{\vee})\,.\]
Let us clarify what exactly is meant by $\Gamma(u)\cap (\phi \boxtimes \psi)$. By definition of $K_*^\reg(\FM)$, there is some $Z\in \CC$ and $f\colon Z\to \FM\times \FM$ such that $\phi\boxtimes \psi=f_\ast \xi$, and we use it to define $\Gamma(u)\cap (\phi \boxtimes \psi)$ as
\[f_\ast\big((\Lambda_{-u^{-1}}(f^\ast\operatorname{Ext}_{12}^{\vee}) \otimes \Lambda_{-u}(f^\ast\operatorname{Ext}_{21}^{\vee}))\cap \xi\big)\in K_\ast^\reg(\FM\times \FM)[(1-u)^{\pm 1}]\]
where the exterior powers are expanded as explained in Lemma \ref{lem: symmetryexpansion}. In particular, observe that if $V\in K^\ast(\FM\times S)$ then 
\[(Y(\phi, u)\psi)_S(V)\in K^\ast(S)[(1-u)^{\pm 1}]\,\]
is a Laurent polynomial in $1-u$, rather than a Laurent series.

\begin{theorem}[{\cite{Liu}}]
The data above defines a multiplicative vertex algebra structure on $K_\ast^\reg(\FM_{\CA})_\BQ$.
\end{theorem}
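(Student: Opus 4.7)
My proof proposal would follow the strategy used by Joyce for additive vertex algebras, carefully adapted to the multiplicative setting, where the additive variable $z$ is replaced by the multiplicative variable $u$ and polynomial expansions are replaced by expansions around $u=1$ in the sense of Lemma \ref{lem: symmetryexpansion}. The first task is to verify that $Y$, $D(u)$, and $\1$ are well-defined as operations on $K_\ast^\reg$, rather than merely on $K_\ast$. For $D(u)$ this is \cite[Lemma 3.3.7]{Liu} which has already been invoked, and for $Y(\phi,u)\psi$ the essential point is that the class $\Gamma(u)$ can be represented using the splitting principle applied to a virtual presentation of $\Ext_{12}$ pulled back from a finite type scheme $Z$ with the resolution property through which $\phi\boxtimes\psi$ is pushed forward; Lemma \ref{lem: symmetryexpansion} guarantees that the resulting expansion lies in $K^\ast(Z)[(1-u)^{\pm 1}]$, and pushforward along the (classical, proper, finite type) scheme realizing $\Sigma\circ f$ keeps us inside $K_\ast^\reg$.

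The vacuum axiom is immediate: the inclusion $\FM_0=\{0\}\hookrightarrow \FM$ identifies $\Sigma_{|0\times \FM}$ with the identity and trivializes both $\Ext_{12}$ and $\Ext_{21}$ along $\{0\}\times \FM$, so $\Gamma(u)$ becomes $\CO$ and $D(u)^{(1)}$ acts trivially on the empty first factor, giving $Y(\1,u)\psi=\psi$. Skew-symmetry is proved by applying the swap involution $\tau\colon \FM\times \FM\to \FM\times \FM$: the compatibilities in \cite[Assumption 4.4]{joyce} give $\Sigma\circ\tau=\Sigma$, $\tau^\ast\Ext_{12}=\Ext_{21}$, and the $B\BG_m$ scaling acting on both factors simultaneously becomes the global scaling on $\FM$, which is precisely what $D(u)$ implements. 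Expanding the definitions and using the symmetry $\Lambda_{-u}(V^\vee)\leftrightarrow \Lambda_{-u^{-1}}(V^\vee)\cdot (\ldots)$ from Lemma \ref{lem: symmetryexpansion}, all sign/determinant factors match up to reproduce $D(u)Y(\psi,u^{-1})\phi$, with no sign since we are not in the super setting.

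The main obstacle will be the associativity axiom. The strategy is to interpret both sides of $Y(Y(\phi,u)\psi,v)\chi \equiv Y(\phi,uv)Y(\psi,v)\chi$ as push-forwards from the triple product $\FM\times\FM\times\FM$ along the threefold sum $\Sigma_{[3]}$, using the bilinearity of $\Ext_{12}$ with respect to $\Sigma$ to split the class
\[
\Sigma_{123}^\ast\Ext_{12}+\Sigma_{123}^\ast\Ext_{21} = \sum_{i\neq j}\Ext_{ij}
\]
on $\FM^{\times 3}$, where $\Sigma_{123}$ denotes the map that pairs $(1,2)$ together and leaves $3$ alone, and analogously for the other pairings. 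Carrying this out, both sides equal a common push-forward of $\chi\boxtimes \psi\boxtimes \phi$ (appropriately ordered) capped with the product of the three $\Lambda_{-?}$ factors, once each of the variables $u,v,uv$ is assigned to the correct $\Ext_{ij}$ via the $B\BG_m^{\times 3}/B\BG_m$ scaling. The content of the $\equiv$ relation is then that these two expansions — one in $(1-u, 1-v)$ and one in $(1-uv, 1-v)$ — are the expansions of a single rational function with poles only along the divisors $u=1$, $v=1$, $uv=1$, which in turn follows by the splitting principle from the identity
\[
(1-uvL)=(1-uM)(1-vL)+uM(1-vL)+(u-uv)LM - (u-uv)LM
\]
after rewriting $\Lambda_{-u}((L\boxtimes M)^\vee)$ and similar terms as products of factors $1-u^{\pm 1}L^{\pm 1}M^{\pm 1}$. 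The bookkeeping is identical to Joyce's \cite[Theorem 4.9]{joyce} but with $e^{zD}\mapsto D(u)$ throughout; I expect all regularization/convergence issues to be handled uniformly by working in $K^\ast(Z)[(1-u)^{\pm 1},(1-v)^{\pm 1},(1-uv)^{\pm 1}]$ for a fixed finite type scheme $Z$ resolving $\phi\boxtimes\psi\boxtimes\chi$, and then pushing forward.
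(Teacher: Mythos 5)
First, a point of comparison: the paper does not prove this statement at all — it is imported from \cite{Liu} as a citation — so the only meaningful benchmark is Liu's argument, whose general shape your outline does follow: regularity of the classes (a fixed $Z\in\CC$ with nilpotent augmentation ideal) to make the cap with $\Gamma(u)$ a Laurent polynomial in $1-u$; the vacuum axiom from the vanishing of $\Ext_{12},\Ext_{21}$ along $\FM_0\times\FM$; skew-symmetry from the swap involution together with $D(u)\circ\Sigma_\ast=\Sigma_\ast\circ\bigl(D(u)^{(1)}D(u)^{(2)}\bigr)$. (For skew-symmetry you in fact do not need the determinant-twisting symmetry of Lemma \ref{lem: symmetryexpansion}: the swap already exchanges $\Ext_{12}\leftrightarrow\Ext_{21}$ and $u\leftrightarrow u^{-1}$ on the nose; the only thing to check is that the substitution $u\mapsto u^{-1}$ is legitimate on the chosen expansions.)

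The genuine gap is in your treatment of associativity. The displayed identity you invoke is false: its last two terms cancel and $(1-uM)(1-vL)+uM(1-vL)=1-vL$, so the right-hand side is $1-vL$, not $1-uvL$, and it proves nothing. More importantly, the two points that carry the actual content of the axiom are asserted rather than established. (i) One needs the compatibility $Y(D(u)\phi,v)\psi=Y(\phi,uv)\psi$, i.e. the statement that the inner twist $D(u)^{(1)}$, after pulling the outer $\Gamma(v)$ back along $\Sigma\times\id$ and using bilinearity $\Ext_{(12)3}=\Ext_{13}+\Ext_{23}$, is exactly what places the variable $uv$ on the factors built from $\Ext_{13},\Ext_{31}$ while leaving $v$ on those built from $\Ext_{23},\Ext_{32}$; this rests on $\Ext_{12}$ having weights $(-1,1)$ under the two $B\BG_m$-actions and is precisely the step that ``assigning $u,v,uv$ to the correct $\Ext_{ij}$ via the scaling'' quietly assumes. (ii) The inner output $Y(\phi,u)\psi$ is not an element of $K_\ast^\reg(\FM)$ but of $K_\ast^\reg(\FM)[(1-u)^{\pm 1}]$, so the outer $Y(-,v)$ must be applied coefficient-wise, and one must verify — using a single $Z\in\CC$ through which $\phi\boxtimes\psi\boxtimes\chi$ factors and nilpotence of its augmentation ideal — that both iterated operations are the two prescribed re-expansions of one and the same element of $K^\ast(Z)[u^{\pm1},v^{\pm1}][(1-u)^{-1},(1-v)^{-1},(1-uv)^{-1}]$. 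With (i) and (ii) carried out the argument closes along the lines you sketch (this is essentially Liu's proof, the multiplicative analogue of Joyce's), but as written the key identity is wrong and the delicate steps are missing.
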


\subsection{The relation to the $K$-Hall algebra}

Combining the material of the previous sections, one gets a Lie algebra structure on 
\[\widecheck K_\ast^\reg(\FM)_\BQ\coloneqq K_*^\reg(\FM)_\BQ/\operatorname{im}(D(z)-\id)\,.\]

As explained in \cite[Lemma 3.3.17]{Liu}, the morphism $\pi_*: K_*^\reg(\FM)\to K_*^\reg(\FM^\rig)$ factors through $\widecheck K_\ast^\reg(\FM)$. Unlike in the homology case, it is unclear if the induced map $\widecheck K_\ast^\reg(\FM)\to K_*^\reg(\FM^\rig)$ is an isomorphism. 
We have the following diagram involving the different versions of $K$-homology of $\FM$ and $\FM^\rig$:

\begin{center}
\begin{tikzcd}
    K_\ast^\reg(\FM)\arrow[rr, hookrightarrow]\arrow[d, twoheadrightarrow]& \, &K_\ast(\FM)\arrow[d]\\
    \widecheck K_\ast^\reg(\FM)\arrow[r]&K_\ast^\reg(\FM^\rig) \arrow[r, hookrightarrow]& K_\ast(\FM^\rig)
\end{tikzcd}
\end{center}
Note that the the left bottom corner of the diagram (after $\otimes \BQ$) is the Joyce--Liu Lie algebra and the right bottom corner is the the $K$-Hall algebra from Section \ref{subsec: K-Hal-prod}, which is an associative algebra. We regard $K_\ast(\FM^\rig)_\BQ$ also as a Lie algebra with Lie bracket given by the commutator of the $K$-Hall product in Definition \ref{def: KHall}. Our main result from this section is the following:

\begin{theorem}\label{thm: liealgebrahom}
    The morphism $\widecheck K_\ast^\reg(\FM)_\BQ\to K_\ast(\FM^\rig)_\BQ$ is a homomorphism of Lie algebras.
\end{theorem}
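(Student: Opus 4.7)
The strategy is to verify the Lie algebra homomorphism property at the level of functionals. First note that $\pi_\ast\colon K^\reg_\ast(\FM) \to K_\ast(\FM^\rig)$ vanishes on $\im(D(u) - \id)$: for any $V \in K^\ast(\FM^\rig)$ the pullback $\pi^\ast V \in K^\ast(\FM)$ has $B\BG_m$-weight $0$, hence $(D(u)\phi - \phi)(\pi^\ast V) = \phi(u^{\deg}\pi^\ast V - \pi^\ast V) = 0$. Thus $\pi_\ast$ factors through $\widecheck K^\reg_\ast(\FM)_\BQ$, and it suffices to prove
\[\pi_\ast[\bar\phi, \bar\psi](V) = [\pi_\ast \phi, \pi_\ast \psi]_\ast(V)\]
for $\phi, \psi \in K^\reg_\ast(\FM)_\BQ$ and $V \in K^\ast(\FM^\rig)$, where the bracket on the left is Liu's residue bracket and the bracket on the right is the commutator of the $K$-Hall product.

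Unraveling the definitions,
\[\pi_\ast[\bar\phi, \bar\psi](V) = \Res_{u=1}u^{-1} f(u),\qquad f(u) := (\phi \boxtimes \psi)\bigl(\Gamma(u) \cdot \Psi_1^\ast \Sigma^\ast \pi^\ast V\bigr),\]
where $\Gamma(u) = \Lambda_{-u}(\Ext_{12}^\vee)\otimes \Lambda_{-u^{-1}}(\Ext_{21}^\vee)$ (cf.\ Remark \ref{rmk: eulerclassTSigma}) and $\Psi_1^\ast$ records the weight under the $B\BG_m$-action on the first factor of $\FM\times\FM$. Since $\Gamma(u)$ is a Laurent polynomial in $(1-u)^{\pm 1}$ and $\Psi_1^\ast \Sigma^\ast \pi^\ast V$ a Laurent polynomial in $u^{\pm 1}$, the function $f(u)$ is rational with poles only at $u=0,1,\infty$, so Remark \ref{rmk: polesresidue} applies and gives $\Res_{u=1}u^{-1}f(u) = [u^0](f_- - f_+)$, where $f_\pm$ are the expansions of $f(u)$ at $u=\infty$ and $u=0$.

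The expansion at $\infty$ amounts to replacing $\Gamma(u)$ by $\Gamma_-(u)$. The ``coproduct'' description \eqref{eq: hallcoproduct} of the $K$-Hall product then gives
\[[u^0] f_-(u) = (\phi \boxtimes \psi)\bigl([u^0]\bigl(\Gamma_-(u)\cdot \Psi_1^\ast\Sigma^\ast \pi^\ast V\bigr)\bigr) = (\pi_\ast \phi \ast \pi_\ast \psi)(V).\]
For $f_+$ one applies Lemma \ref{lem: symmetryVB} to $\Lambda_{-u^{-1}}(\Ext_{21}^\vee)$ to obtain the skew-symmetry identity $\tau^\ast \Gamma(u) = \Gamma(u^{-1})$, where $\tau$ swaps the factors on $\FM \times \FM$, and hence $\tau^\ast \Gamma_-(u) = \Gamma_+(u^{-1})$ as Laurent polynomials in $u$. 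Combined with $\Sigma \circ \tau = \Sigma$ and the weight-$0$ property of $\pi^\ast V$ under the diagonal $B\BG_m$-action -- which together force $\tau^\ast(\Psi_1^\ast\Sigma^\ast \pi^\ast V) = (\Psi_1^\ast\Sigma^\ast \pi^\ast V)|_{u \mapsto u^{-1}}$ -- the change of variables $u \mapsto u^{-1}$ inside $[u^0]$ together with the identity $(\phi \boxtimes \psi)\circ \tau^\ast = \psi \boxtimes \phi$ yields
\[[u^0] f_+(u) = (\phi \boxtimes \psi)\bigl(\tau^\ast [u^0]\bigl(\Gamma_-(u)\cdot \Psi_1^\ast\Sigma^\ast \pi^\ast V\bigr)\bigr) = (\pi_\ast \psi \ast \pi_\ast \phi)(V).\]
Subtracting these two identities gives the desired homomorphism property.

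The main technical obstacle is careful bookkeeping of the $B\BG_m$-weights and their interplay with the swap $\tau$ and the substitution $u \mapsto u^{-1}$; all steps ultimately reduce to the algebraic identity $\tau^\ast \Gamma(u) = \Gamma(u^{-1})$, which is the $K$-theoretic incarnation of the multiplicative vertex algebra skew-symmetry axiom $Y(a,u)b = D(u)Y(b,u^{-1})a$. Verifying that Remark \ref{rmk: polesresidue} applies -- i.e.\ that $f(u)$ has poles only at $u=0,1,\infty$ -- is routine given the explicit form of $\Gamma(u)$ and the fact that $\Psi_1^\ast\Sigma^\ast \pi^\ast V$ is a Laurent polynomial.
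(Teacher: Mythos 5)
Your proof is correct and takes essentially the same route as the paper: evaluate Liu's residue bracket on the weight-zero classes $\pi^\ast V$, identify the constant terms of the $u^{-1}$- and $u$-expansions of $(\phi\boxtimes\psi)\big(\Gamma(u)\otimes\Psi_1^\ast\Sigma^\ast\pi^\ast V\big)$ with $(\pi_\ast\phi\ast\pi_\ast\psi)(V)$ and $(\pi_\ast\psi\ast\pi_\ast\phi)(V)$ respectively, and conclude with the residue identity of Remark \ref{rmk: polesresidue}. The only points to tighten are that the equality should be verified operationally, i.e.\ for $V\in K^\ast(\FM^\rig\times S)$ with an arbitrary auxiliary stack $S$ (your argument extends verbatim), and that the swap identity $(\phi\boxtimes\psi)\circ\tau^\ast=\psi\boxtimes\phi$ is not formal for arbitrary operational classes but does hold for $\phi,\psi\in K_\ast^\reg(\FM)$ since these are given by kernels pushed forward from schemes (alternatively one can invoke the skew-symmetry axiom of Liu's multiplicative vertex algebra), which is exactly the content of the paper's brief ``in a similar way'' step.
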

\begin{proof}
    Let $\phi, \psi\in K_\ast^\reg(\FM)$ and let $V\in K^\ast(\FM^\rig\times S)$. We denote by $\tilde \phi, \tilde \psi$ their images in $K_\ast(\FM^\rig)$. We have by definition
    \begin{align}\label{eq: proofhomomorphism1}
        (Y(\phi, u)\psi)_S(\pi^\ast V)=(\phi\boxtimes \psi)\left(\Gamma(u)\otimes\Psi_1^\ast\Sigma^\ast\pi^\ast V\right)\,,
    \end{align}
    which is an element of $K^\ast(S)[(1-u)^\pm]$, that we denote by $y$. By Lemma \ref{lem: symmetryexpansion} (see also Remark \ref{rmk: eulerclassTSigma}), the expansion $y_-$ of $y$ in $K^\ast(S)\lpp u^{-1}\rpp$ is precisely what appears in the expression \eqref{eq: hallcoproduct}, and hence
    \[[u^0]y_-=(\tilde \phi\ast \tilde \psi)_S(V)\,.\]
    In a similar way we conclude that $[u^0]y_+=(\tilde \psi\ast \tilde \phi)_S(V)$. The conclusion then follows from Remark \ref{rmk: polesresidue}.
\end{proof}
In light of the previous theorem, the following definition is natural:
\begin{definition}
    We define the Lie subalgebra $\BK^\reg(\CA)$ of $\BK(\CA)$ (with the $K$-Hall commutator) as
    \[\BK^\reg(\CA)\coloneqq \im\big(K_\ast^\reg(\FM)_\BQ\to K_\ast(\FM^\rig)_\BQ\big)\,.\]
\end{definition}

This Lie subalgebra is, from our point of view, a natural analogue of $\BM^\reg(\CA)$ in the motivic setting presented in Section \ref{subsec: motivic}. As in the motivic setting, $\BK^\reg(\CA)$ is not an associative subalgebra; indeed, in Example \ref{ex: deltanotregular} it is observed that 
\[\varepsilon_1\in \BK^\reg(\Vect)\textup{ but }\varepsilon_1\ast \varepsilon_1\notin  \BK^\reg(\Vect)\,.\]
The following is a natural analogue of the no-pole theorem:

\begin{conjecture}[No-poles\footnote{We warn the reader that this conjecture, with our current definition of $\BK^\reg(\CA)$, might very well fail in cases where the good moduli spaces are proper but not projective, due to the requirement of schemes in class $\mathcal C$ to have the resolution property. We believe the finiteness part of the conjecture should still hold even if that is not the case.}]\label{conj: nopole}
We have $\varepsilon_\alpha^\mu\in \BK^\reg(\CA)$. In particular, $\varepsilon_\alpha^\mu$ satisfies the finiteness condition \eqref{eq: finiteness}.
\end{conjecture}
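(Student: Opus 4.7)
The plan is to prove the conjecture by induction on $\alpha \in C(\CA)_\pe$ with respect to the partial ordering in which $\alpha' < \alpha$ whenever $\alpha'$ appears as a class of a subobject of a $\mu$-semistable object of class $\alpha$ but $\alpha' \neq \alpha$. The base case is when $\mu$-semistability coincides with $\mu$-stability on $\alpha$, so that $\varepsilon_\alpha^\mu = \delta_\alpha^\mu = [M_\alpha^\mu]$ with $M_\alpha^\mu$ a proper good moduli space; this lies in $\BK^\reg(\CA)$ directly (at least under the resolution property hypothesis). The substance of the conjecture is the inductive step, where the presence of strictly $\mu$-semistable objects is genuinely obstructing regularity.

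The framed case is the concrete piece I would carry out first. Assuming a framing functor $\Phi$ for $(\CA, \alpha, \mu)$ exists, the moduli stack $P_\alpha^{\textup{JS}}$ of $\mu_+$-stable Joyce--Song pairs has only finite stabilizers and admits a proper good moduli space, so the class $[P_\alpha^{\textup{JS}}]^\vir \in K_\ast(\FP^\rig)$ lies in $K_\ast^\reg(\FP^\rig)$ by Definition \ref{def: regularKhom}. Regularity is preserved by pushforward and by capping with $\Lambda_{-1}(\BT_\Pi^\vee)$, so $\Pi_\ast(\Lambda_{-1}(\BT_\Pi^\vee)\cap [P_\alpha^{\textup{JS}}]^\vir) \in \BK^\reg(\CA)$. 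Theorem \ref{thm: comparisonJL} identifies this class with $\lambda(\alpha)\varepsilon_\alpha^\mu$ plus a $\BQ$-linear combination of iterated commutators $[[\ldots[\varepsilon_{\alpha_1}^\mu,\varepsilon_{\alpha_2}^\mu],\ldots],\varepsilon_{\alpha_n}^\mu]$ with $n\geq 2$ and every $\alpha_i$ strictly less than $\alpha$. By the inductive hypothesis each $\varepsilon_{\alpha_i}^\mu$ lies in $\BK^\reg(\CA)$, and Theorem \ref{thm: liealgebrahom} together with the observation that $\BK^\reg(\CA)$ is the image of a Lie algebra homomorphism guarantees that $\BK^\reg(\CA)$ is closed under commutators. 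Since $\lambda(\alpha) > 0$, one solves for $\varepsilon_\alpha^\mu$ and concludes.

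For the general case, where no framing functor is available, the approach I would take is to mimic the framed argument using the derived Kirwan desingularization of $\FM_\alpha^\mu$ from \cite{HRS} in place of $P_\alpha^{\textup{JS}}$. The output of the Kirwan procedure is a quasi-smooth derived stack $\widetilde{\FM}_\alpha^\mu$ which is built by iteratively blowing up loci of strictly semistables until only finite stabilizers remain; it admits a proper good moduli space that is a scheme, so its virtual fundamental class tautologically lies in $K_\ast^\reg$. The key step would be to run a non-abelian localization calculation, in the spirit of the proofs of Theorem \ref{thm: dominantwc} and Theorem \ref{thm: comparisonJL}, along the sequence of Kirwan blowups to express the pushforward of $[\widetilde{\FM}_\alpha^\mu]$ to $\BK(\CA)$ as $c\cdot \varepsilon_\alpha^\mu$ plus iterated commutators of $\varepsilon$ classes of smaller type, for some non-zero rational coefficient $c$. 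The inductive hypothesis together with the Lie subalgebra property would then give $\varepsilon_\alpha^\mu \in \BK^\reg(\CA)$.

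The principal obstacle is the second stage: the centers of the successive Kirwan blowups have non-trivial intrinsic geometry (they are built from $\varepsilon$-classes of proper subcategories via the $R$-filtration structure) and it is not immediately clear that the resulting combinatorial expression has the triangular form in $\alpha$ needed for the induction, nor that the leading coefficient of $\varepsilon_\alpha^\mu$ is non-zero. A secondary difficulty is the resolution-property requirement baked into Definition \ref{def: regularKhom}: if the good moduli spaces involved are proper but not projective, one may need to either relax the definition of $\BK^\reg(\CA)$ or import resolution-of-singularities techniques for algebraic spaces. Overcoming these would, in parallel, address Conjecture \ref{conj: homlift} on homological lifts, since the Kirwan desingularization carries a compatible virtual class in Borel--Moore homology.
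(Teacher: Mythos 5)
The statement you are trying to prove is stated in the paper as an open conjecture (Conjecture \ref{conj: nopole}); the paper itself offers no proof, only the partial results of Proposition \ref{prop: nopolesknown}. Your proposal does not close that gap either, and by your own admission it cannot: the entire general case rests on the Kirwan-desingularization strategy, for which you have no actual localization computation, no proof that the resulting expression is triangular in $\alpha$, and no control on the leading coefficient. That missing inductive step is exactly the open content of the conjecture, so what you have is a plan, not a proof. (For what it is worth, the strategy you sketch is the same one the authors themselves float in Section \ref{sec: intro} -- relating $\varepsilon$ classes to the derived Kirwan desingularization of \cite{HRS} -- so the obstacles you name are genuine and currently unresolved.)

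The parts you do carry out reproduce, in substance, Proposition \ref{prop: nopolesknown}. Your base case is its part (1); your closure of $\BK^\reg(\CA)$ under commutators via Theorem \ref{thm: liealgebrahom} is the mechanism behind its part (2); and your framed-case induction on $\alpha$ using Theorem \ref{thm: comparisonJL}, solving for $\lambda(\alpha)\varepsilon_\alpha^\mu$ after peeling off the $n\geq 2$ commutator terms, is a slightly more explicit version of the argument for its part (3) (the paper instead invokes the comparison with Liu's classes $z_\alpha(\mu)$, which already live in $\widecheck K_\ast^\reg(\FM)_\BQ$). Two small cautions on that framed argument: you need the Joyce--Song pair moduli spaces to satisfy the resolution property for $[P_\alpha^{\textup{JS}}]^\vir$ to land in $K_\ast^\reg$ -- the paper makes this an explicit hypothesis in part (3), and it is also the source of the footnoted warning attached to the conjecture -- and when you say ``regularity is preserved by capping with $\Lambda_{-1}(\BT_\Pi^\vee)$'' you should note that this class is perfect only because $\BT_\Pi$ restricts to a vector bundle on the $\mu_+$-stable locus, so the cap is taken there before pushing forward. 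With those caveats your framed case is fine, but the conjecture in its stated generality remains unproven by your proposal, just as it is in the paper.
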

It would be interesting to have a direct proof of this conjecture. The conjecture does hold in the following situations:
\begin{proposition}\label{prop: nopolesknown}
We have $\varepsilon_\alpha^\mu\in \BK^\reg(\CA)$ if one of the following holds:
\begin{enumerate}
    \item There are no $\mu$-strictly semistable objects in class $\alpha$ and the moduli space $M_\alpha^\mu$ satisfies the resolution property.
    \item There is a path of stability conditions from $\mu$ to $\mu'$ satisfying the conditions of Theorem \ref{thm: generalwc} and the conjecture holds for the stability condition $\mu'$.
    \item There is a framing functor, as in Section \ref{subsec: framingdefinitions}, and the moduli spaces of Joyce--Song pairs satisfy the resolution property. Moreover, when that is the case, $\varepsilon_\alpha^\mu$ are the image of Liu's classes $z_\alpha(\mu)\in \widecheck K_\ast^\reg(\FM)_\BQ$ via the morphsim \[\widecheck K_\ast^\reg(\FM)_\BQ\to K_\ast(\FM^\rig)_\BQ\,.\]
\end{enumerate}
\end{proposition}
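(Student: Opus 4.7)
The plan is to treat the three parts in the order (2), (1), (3), since (2) is essentially formal, (1) requires a direct geometric construction, and (3) then reduces to (1) applied to Joyce--Song pairs together with an inversion of the formula in Theorem~\ref{thm: comparisonJL}.

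For (2), the general wall-crossing formula in Theorem~\ref{thm: generalwc} expresses $\varepsilon_\alpha^{\mu}$ as a finite sum of iterated commutators of the classes $\varepsilon_{\alpha_i}^{\mu'}$ under the $K$-Hall bracket. By Theorem~\ref{thm: liealgebrahom}, $\BK^\reg(\CA)\subseteq \BK(\CA)$ is a Lie subalgebra (being the image of a Lie homomorphism), so any Lie polynomial in elements of $\BK^\reg(\CA)$ stays in $\BK^\reg(\CA)$. Applying this to the wall-crossing expression immediately gives (2).

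For (1), since there are no strictly semistables, $\FM_\alpha^{\mu,\rig}=M_\alpha^\mu$ is a proper scheme with the resolution property, and $\varepsilon_\alpha^\mu=\delta_\alpha^\mu=j_\ast[M_\alpha^\mu]$. I will produce a lift $\tilde\delta\in K_\ast^\reg(\FM)$ with $\pi_\ast\tilde\delta=\delta_\alpha^\mu$ by constructing a proper scheme $Z\in\mathcal C$ with a representable morphism $g\colon Z\to \FM_\alpha^\mu\subseteq \FM$ that trivializes the $\BG_m$-gerbe $\FM_\alpha^\mu\to M_\alpha^\mu$, and taking $\tilde\delta:=g_\ast[Z]$. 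The candidate is $Z=\BP(\CV)$, where $\CV$ is a weight-one vector bundle on $\FM_\alpha^\mu$ obtained from the universal perfect complex (using the resolution property on $M_\alpha^\mu$ to choose a vector-bundle representative after, if necessary, a twist). The weight-one scalar action on $\CV$ cancels the gerbe automorphisms, so $\BP(\CV)$ is a scheme; it is proper over $M_\alpha^\mu$ and inherits the resolution property, placing it in $\mathcal C$. The equality $\pi_\ast\tilde\delta=\delta_\alpha^\mu$ then follows from the projective bundle formula $R(Z\to M_\alpha^\mu)_\ast\CO_Z=\CO_{M_\alpha^\mu}$ by push-pull.

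For (3), I first apply (1) to the framed category $\CB_\Phi$: by Proposition~\ref{prop: characterizingstabilitypairs}(c) there are no strictly $\mu_+$-semistable Joyce--Song pairs in class $(\alpha,1)$, and $P_\alpha^{\mu_+,\textup{JS}}=\FP_{(\alpha,1)}^{\mu_+,\rig}$ has the resolution property by hypothesis, so $\varepsilon_{(\alpha,1)}^{\mu_+}=\delta_{(\alpha,1)}^{\mu_+}\in\BK^\reg(\CB_\Phi)$. The morphism $\Upsilon(\phi)=\Pi_\ast(\Lambda_{-1}(\BT_\Pi^\vee)\cap \phi)$ is the composition of cap-product with a perfect complex and pushforward along the representable proper morphism $\Pi$; both operations preserve regularity (the first because $K^\ast$ acts on $K_\ast^\reg$, the second because post-composing a scheme-level map to $\FP$ with $\Pi$ gives a scheme-level map to $\FM$). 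Hence $\Upsilon(\varepsilon_{(\alpha,1)}^{\mu_+})\in\BK^\reg(\CA)$. Theorem~\ref{thm: comparisonJL} then reads
\[\Upsilon(\varepsilon_{(\alpha,1)}^{\mu_+})=\lambda(\alpha)\,\varepsilon_\alpha^\mu+\sum_{n\geq 2,\ \alpha_i\neq\alpha}(\textup{iterated Lie brackets of }\varepsilon_{\alpha_i}^\mu)\,,\]
and since $\lambda(\alpha)>0$ and $C_\alpha$ is finite, I invert this recursion by induction on $\alpha\in C_\alpha$: the ``smaller'' $\varepsilon_{\alpha_i}^\mu$ lie in $\BK^\reg(\CA)$ by the induction hypothesis, and the Lie subalgebra property then places $\varepsilon_\alpha^\mu\in\BK^\reg(\CA)$. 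The identification with Liu's class $z_\alpha(\mu)\in\widecheck K_\ast^\reg(\FM)_\BQ$ will follow from a direct comparison of recursive definitions: Liu's class is characterized by exactly the same recursion as Theorem~\ref{thm: comparisonJL}, with brackets taken inside $\widecheck K_\ast^\reg(\FM)_\BQ$, starting from Liu's Joyce--Song pair class. Since the map $\widecheck K_\ast^\reg(\FM)_\BQ\to\BK(\CA)$ is a Lie homomorphism (Theorem~\ref{thm: liealgebrahom}) that I will verify sends Liu's pair class to $\Upsilon(\delta_{(\alpha,1)}^{\mu_+})$ -- by unwinding both as pushforwards along $\Pi$ of the virtual fundamental class capped with $\Lambda_{-1}(\BT_\Pi^\vee)$ -- it intertwines the two recursions, forcing $z_\alpha(\mu)\mapsto \varepsilon_\alpha^\mu$.

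The main obstacle is part (1): producing a scheme-level trivialization of the rigidification gerbe requires in effect a weight-one vector bundle representative for the universal complex on $\FM_\alpha^\mu$, which is not entirely canonical and in certain cases (e.g.\ rank-zero sheaves) needs extra care to arrange using the resolution property on $M_\alpha^\mu$. Parts (2) and (3) are by contrast essentially formal, leveraging the Lie subalgebra property of $\BK^\reg(\CA)$ together with the explicit recursion of Theorem~\ref{thm: comparisonJL}.
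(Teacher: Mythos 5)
Your parts (2) and (3) follow essentially the paper's own route. Part (2) is exactly the paper's argument: apply Theorem \ref{thm: generalwc} (with the roles of $\mu$ and $\mu'$ reversed along the same path) and use that $\BK^\reg(\CA)$, being the image of the Lie algebra homomorphism of Theorem \ref{thm: liealgebrahom}, is a Lie subalgebra. For (3), the identification $z_\alpha(\mu)\mapsto\varepsilon_\alpha^\mu$ by matching the recursion of Theorem \ref{thm: comparisonJL} against Liu's defining recursion, intertwined by the Lie homomorphism, is precisely what the paper does; note that once this identification is made, membership $\varepsilon_\alpha^\mu\in\BK^\reg(\CA)$ is automatic because Liu's classes already live in $\widecheck K_\ast^\reg(\FM)_\BQ$, so the first half of your (3) (via your part (1) applied to pairs plus regularity of $\Upsilon$) is redundant -- and, to the extent you keep it, the reason it works for pairs is that the gerbe $\FP^{\mu_+}_{(\alpha,1)}\to P^{\textup{JS}}$ is actually trivial, since the framing line $\CU$ is a weight-one line bundle on it.

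The genuine problem is part (1). The paper's proof is the one-line observation that $\varepsilon_\alpha^\mu=\delta_\alpha^\mu$ is the pushforward of $\CO^\vir_{M_\alpha^\mu}$ under $G(M_\alpha^\mu)\to K_\ast(M_\alpha^\mu)\to K_\ast(\FM_\alpha^{\mu,\rig})$, with $M_\alpha^\mu$ proper and having the resolution property. You are right that, read literally, $\BK^\reg(\CA)$ is the image of $K_\ast^\reg(\FM)$ (schemes mapping to the non-rigidified stack), so worrying about a lift across the gerbe $\FM_\alpha^\mu\to M_\alpha^\mu$ is a legitimate reading; but your construction does not provide such a lift. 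If $\CV$ has $\BG_m$-weight one, the induced action on $\BP(\CV)$ is \emph{trivial}, so the stabilizers are not cancelled: $\BP_{\FM_\alpha^\mu}(\CV)$ is still a $\BG_m$-gerbe (namely the pullback to $\FM_\alpha^\mu$ of the Brauer--Severi scheme of the corresponding twisted bundle on $M_\alpha^\mu$), not a scheme. Killing the stabilizer requires a locus where the weight-one action is free, e.g.\ the frame bundle or $\mathrm{Tot}(\CV)\setminus 0$, and those are not proper, hence not in $\mathcal C$; the proper scheme one can build is the Brauer--Severi scheme $B\to M_\alpha^\mu$, but a morphism $B\to\FM_\alpha^\mu$ then needs the separate argument that the (torsion) gerbe class pulls back to zero on $B$, and one must push forward $\rho^\ast\CO^\vir_{M_\alpha^\mu}$ rather than $\CO_B$ to recover $\delta_\alpha^\mu$. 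Moreover, the existence of a weight-one vector bundle $\CV$ on $\FM_\alpha^\mu$ is not supplied by the resolution property of $M_\alpha^\mu$: weight-one complexes correspond to twisted sheaves on $M_\alpha^\mu$, about which that property says nothing, and without a framing functor there is no evident universal weight-one bundle in this generality. So part (1) as written has a gap: either take $Z=M_\alpha^\mu$ itself, as the paper does, or carry out the Brauer--Severi repair just indicated, which requires hypotheses you have not secured.
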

\begin{proof}
Part (1) is clear since when there are no $\mu$-strictly semistable objects the classes $\varepsilon_\alpha^\mu=\delta_\alpha^\mu$ are, by definition, the image of $\CO_{M_\alpha^\mu}^\vir$ by the map
\[G(M_\alpha^\mu)\to K_\ast(M_\alpha^\mu)\to K_\ast(\FM_\alpha^{\mu, \rig})\,.\]
Part (2) is an immediate consequence of $\BK^\reg(\CA)$ being a Lie subalgebra and the wall-crossing formula of Theorem \ref{thm: generalwc}. 

The claim that our classes are the image of Liu's classes follows from comparing Theorem \ref{thm: comparisonJL} with \cite[Theorem 4.2.5]{Liu}, and part (3) follows.
\end{proof}

\section{From $K$-theory to cohomology}
\label{sec: Ktocoh}

So far, our approach to generalized invariants and wall-crossing has been entirely in the realm of $K$-theory. We now explain how these relate to wall-crossing formulas for cohomological invariants, and how $K$-theory formulas can be used to deduce cohomological ones. The main result of this section is that Joyce's homological invariants, when defined, are homological lifts of our classes $\varepsilon_\alpha^\mu$, in the sense of Definition \ref{def: homlift}. We conjecture that (canonical) homological lifts exist also in the absence of a framing functor. Moreover, Theorem \ref{thm: homliftsLiealgebras} and Proposition \ref{prop: injectivityhomlift} together imply that in general homological lifts of $\varepsilon$ classes satisfy automatically the same wall-crossing formulas, at least when we restrict ourselves to (algebraic) tautological insertions -- this reproves Joyce's cohomological wall-crossing formulas for (algebraic) tautological integrals.

\subsection{The Riemann--Roch morphism and homological lifts}\label{subsec: RRhomlift}

The virtual Riemann--Roch theorem \cite[Corollary 3.4]{FTriemannroch} (see also \cite[Theorem 6.12]{khan} for a version that does not require an embedding into a smooth ambient space) is the statement that, for a proper scheme $M$ with a 2-term perfect obstruction theory, we can write $K$-theoretic invariants as intersection numbers:
\[\chi(M, \CO_M^\vir\otimes V)=\int_{[M]^\vir}\ch(V)\td(T^\vir_M)\,.\]
This motivates the following definition:

\begin{definition}\label{def: RRmorphism}
Let $\FX$ be a derived stack\footnote{We remind the reader that derived stacks in this paper are assumed to have perfect tangent complex, so that $\td(\BT_\FX)$ makes sense.} (for instance, $\FX=\FM_\CA$ for a good abelian category $\CA$). We define the morphism
\begin{align*}
\tau=\tau_\FX\colon K^\ast(\FX)&\to H^\ast(\FX)\\
\alpha &\mapsto \ch(\alpha) \td(\BT_{\FX})
\end{align*}
and its dual version
\begin{align*}
\tau^\vee \colon H_\ast(\FX)&\to K^\ast(\FX)_\BQ^\vee\,.
\end{align*}
\end{definition}

When $\FM=\FM_\CA$, the derived tangent bundle $\BT_{\FM}$ is given by the restriction of $\Ext_{12}[1]$ to the diagonal \eqref{eq: derivedtangentFM}. The virtual Riemann--Roch says that, if $\mu$-stable is equivalent to $\mu$-semistable for objects of class $\alpha$, then
\[(\varepsilon_\alpha^\mu)_\pt=\tau^\vee([M_\alpha^\mu]^\vir)\,.\]

We remind the reader that, as explained in Section \ref{subsec: JoyceVA}, when $\FM=\FM_\CA$ is the moduli stack of objects in an abelian category, $H_\ast(\FM)$ carries the structure of a vertex algebra and $K_\ast^\reg(\FM)$ carries the structure of a (multiplicative) vertex algebra. Ideally, we would have liked to state that an upgrade of $\tau^\vee$ defines a homomorphism of vertex algebras. Unfortunately, there is no way to lift $\tau^\vee$ to a map $H_\ast(\FM)\to K_\ast(\FM)$, so strictly speaking this is not possible. On the other hand, $\tau^\vee$ is not compatible with Kunneth maps since there is no Kunneth map on $(K^\ast)^\vee$. We introduce the notion of homological lift, which works around these technical points.

\begin{definition}[Homological lift]\label{def: homlift}
    Let $\phi\in K_\ast(\FX)$. We say that $A\in H_\ast(\FX)$ is a homological lift of $\phi$ if for any derived stack $S$ the diagram
    \begin{center}
        \begin{tikzcd}
            K^\ast(\FX\times S)\arrow[d, "\tau_{\FX\times S}"]\arrow[r, "\phi_S"] &K^\ast(S)\arrow[d, "\tau_S"]\\
            H^\ast(\FX\times S)\arrow[r, "- \slash A"]& H^\ast(S)
        \end{tikzcd}
    \end{center}
            commutes, where $\slash$ is the slant product.
\end{definition}
\begin{remark}\label{rmk: homliftfiniteness}
Admitting a homological lift is a strong condition. For example, if $\phi$ admits a homological lift then it satisfies the finiteness condition that $\phi_\pt(I(\FX)^N)=0$ for all sufficiently large $N$. This is a consequence of the fact that $\tau$ maps $I(\FX)$ to $H^{\geq 2}(\FX)$. 
\end{remark}
Taking $S=\pt$ in the definition, it follows that $A$ being a homological lift of $\phi$ implies that $\tau^\vee(A)=\phi_\pt$. However, being a homological lift has better compatibility with the Kunneth maps in (K-)homology, as illustrated by the following lemma:
\begin{lemma}\label{lem: homlift}The following properties of homological lifts hold:
\begin{enumerate}
    \item If $A_i\in H_\ast(\FX_i)$ is a homological lift of $\phi_i\in K_\ast(\FX_i)$ then $A_1\boxtimes \ldots\boxtimes A_n\in H_\ast(\FX_1\times \ldots\times \FX_n)$ is a homological lift of $\phi_1\boxtimes \ldots\boxtimes \phi_n\in K_\ast(\FX_1\times \ldots\times \FX_n)$. In particular,
    \[\tau^\vee(A_1\boxtimes \ldots\boxtimes A_n)=(\phi_1\boxtimes \ldots\boxtimes \phi_n)_\pt\,.\]
    \item If $A\in H_\ast(\FX)$ is a homological lift of $\phi\in K_\ast(\FX)$ and $V\in K^\ast(\FX)$ then $\ch(V)\cap A$ is a homological lift of $V\cap \phi$.
    \item If $f\colon \FX\to \FY$ and $A\in H_\ast(\FX)$ is a homological lift of $\phi\in K_\ast(\FX)$ then $f_\ast\big(\td(\BT_f)\cap A)\in H_\ast(\FY)$ is a homological lift of $f_\ast\phi\in K_\ast(\FY)$. 
    \item If $X$ is a quasi-smooth proper algebraic space, then the homological virtual fundamental class $[X]_H\in H_\ast(X)$ is a homological lift of the $K$-theoretic fundamental class $[X]_K\in K_\ast(X)$ (cf. Theorem \ref{thm: deltainvariants}).
\end{enumerate}
\end{lemma}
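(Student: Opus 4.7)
The plan treats parts (1)--(3) as formal unfolding of the defining diagram of a homological lift, and reduces part (4) to virtual Grothendieck--Riemann--Roch in families.

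For (1), I would induct on $n$; the case $n=2$ is the only content. Applying the defining diagram of $\phi_1$ (with base $\FX_2\times S$) and then of $\phi_2$ (with base $S$), using the tower identity $(\phi_1\boxtimes\phi_2)_S = (\phi_2)_S\circ(\phi_1)_{\FX_2\times S}$ that is immediate from the operational definition of the Künneth map in $K$-homology, one reduces the claim to the standard iterated slant identity $(\alpha/A_1)/A_2 = \alpha/(A_1\boxtimes A_2)$. On pure Künneth tensors this amounts to bilinearity of the pairing, $\langle a_1\otimes a_2, A_1\boxtimes A_2\rangle = \langle a_1,A_1\rangle\langle a_2,A_2\rangle$. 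The ``in particular'' follows by specializing to $S=\pt$.

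For (2) and (3) the strategy is parallel: unfold the functional $(V\cap\phi)_S$, respectively $(f_\ast\phi)_S$, via the diagram for $\phi$, then extract the twist on the right using two standard slant manipulations,
\[
(p_1^\ast\gamma\cdot\eta)/A = \eta/(\gamma\cap A)\qquad\text{and}\qquad \beta/f_\ast B = (f\times\id)^\ast\beta/B.
\]
For (2), the multiplicativity of $\ch$ does the work. For (3), the input specific to it is the $K$-theoretic additivity $\BT_{\FX\times S} = (f\times\id)^\ast \BT_{\FY\times S} + p_1^\ast\BT_f$ coming from the distinguished triangle for $\BT_f$, which gives the factorization $\td(\BT_{\FX\times S}) = (f\times\id)^\ast\td(\BT_{\FY\times S})\cdot p_1^\ast\td(\BT_f)$; the $p_1^\ast\td(\BT_f)$ factor is precisely what, after the first slant identity, becomes the twist $\td(\BT_f)\cap A$ on the inside of $f_\ast$, and the second slant identity then absorbs $f\times\id$.

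Part (4) is the main obstacle. At the level of functionals, $([X]_K)_S(V) = q_\ast V$ for $V\in K^\ast(X\times S)$, where $q\colon X\times S\to S$ is the proper projection, itself (relatively) quasi-smooth with $\BT_q = p_1^\ast\BT_X$. After factoring $\td(\BT_{X\times S}) = p_1^\ast\td(\BT_X)\cdot q^\ast\td(\BT_S)$ and pulling the $q^\ast\td(\BT_S)$ through the slant (using that slant is $H^\ast(S)$-linear via $q^\ast$), the target identity $\tau_S(q_\ast V) = \tau_{X\times S}(V)/[X]_H^\vir$ collapses to
\[
\ch(q_\ast V) = \big(\ch(V)\cdot p_1^\ast\td(\BT_X)\big)/[X]_H^\vir,
\]
which is precisely virtual Grothendieck--Riemann--Roch applied to the proper quasi-smooth morphism $q$ (cf.\ \cite{khanVFC}), once one identifies the homology pushforward along $q$ against $p_1^\ast[X]_H^\vir$ with slant against $[X]_H^\vir$ in the $X$-factor. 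The $S=\pt$ case recovers the classical virtual Riemann--Roch identity $\chi(X, W) = \langle\ch(W)\td(\BT_X),[X]_H^\vir\rangle$ and serves as sanity check.
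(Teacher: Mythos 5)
Your proposal is correct and follows essentially the same route as the paper: parts (1)--(3) are exactly the formal unwindings of the defining diagram (which the paper dispatches as ``straight from the definition''), and part (4) is reduced, just as in the paper, to virtual Grothendieck--Riemann--Roch for the projection $X\times S\to S$ together with the two observations that $([X]_K)_S$ is the pushforward $q_\ast$ and that the relative virtual class over $S$ is the pullback of $[X]_H$ (base change, \cite[Theorem 3.13]{khanVFC}). The only difference is one of sourcing: where you cite virtual GRR from \cite{khanVFC} directly, the paper proves its own singular-cohomology version (Lemma \ref{lem: GRR}), needed because Khan's statement lives in motivic cohomology and must be transported via Betti realization.
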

\begin{proof}
    The first 3 properties are obtained straight from the definition. The last one is a consequence of the virtual Grothendieck--Riemann--Roch for the projection maps $X\times S\to S$, which we prove in Lemma \ref{lem: GRR}. 
    To apply the lemma, two observations are necessary: first, under the assumptions on $X$ the projection $\pi_S\colon X\times S\to S$ admits pushforwards on $K^\ast$, and the $K$-theoretic fundamental class $[X]_K$ consists precisely of the collection of all pushforwards $\pi_{S\ast}\colon K^\ast(X\times S)\to K^\ast(S)$. Secondly, Khan's virtual fundamental class 
    \[[X\times S/S]\in H^\ast_{\textup{BM}}(X\times S/S)\]
    is the pullback of the virtual fundamental class $[X]_H\in H_\ast^{\textup{BM}}(X)=H_\ast(X)$, by the base change formula \cite[Theorem 3.13]{khanVFC}.\qedhere
\end{proof}

\begin{lemma}[Virtual Grothendieck--Riemann--Roch]\label{lem: GRR}
Let $\FX, \FZ$ be derived stacks and $f\colon \FX\to \FZ$ be a quasi-smooth proper representable morphism. Then the following diagram commutes
\begin{center}
\begin{tikzcd}
    K^\ast(\FX)\arrow[r, "f_\ast"] \arrow[d, "\tau"] & 
    K^\ast(\FZ)\arrow[d, "\tau"] \\
    H^\ast(\FX)\arrow[r, "f_{!}"] & H^\ast(\FZ)
    \end{tikzcd}
\end{center}
where $f_!$ is the composition
\[H^\ast(\FX)\xrightarrow{\circ [\FX/\FZ]} H_\ast^{\textup{BM}}(\FX/\FZ)\xrightarrow{f_\ast} H^\ast(\FZ)\]
and $[\FX/\FZ]\in H_\ast^{\textup{BM}}(\FX/\FZ)$ is Khan's virtual fundamental class, see \cite[Section 3.4]{khanVFC} for more details.\footnote{Technically, \cite{khanVFC} works with cohomology theories in the algebraic category, but here we are regarding $[\FX/\FZ]$ in Borel--Moore homology, in the sense of the 6 functor formalism in the topological category, by applying the Betti realization morphism, as discussed in the proof. Let us also remark that the cohomology of derived stacks defined through the 6 functor formalism on the topological category agrees with the cohomology of the topological realization of the stack \cite[Proposition 2.8]{khanequivcoh}.}
\end{lemma}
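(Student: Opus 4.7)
The approach is to reduce to the relative Grothendieck--Riemann--Roch formula and then to standard cases via factorization. First I would recall the multiplicativity of the Todd class with respect to the fiber sequence $f^\ast \BT_\FZ \to \BT_\FX \to \BT_f$, giving
\[\td(\BT_\FX) = f^\ast \td(\BT_\FZ)\cdot \td(\BT_f)\,.\]
Using the projection formula for $f_!$ with respect to pullbacks from the base, namely $f_!(f^\ast \beta \cdot \alpha) = \beta \cdot f_!(\alpha)$ (which is a formal consequence of the construction of $f_!$ via capping with $[\FX/\FZ]$ and applying $f_\ast$), the commutativity of the square in the lemma reduces to the \emph{relative} virtual GRR identity
\[\ch(f_\ast V) = f_!\bigl(\ch(V)\cdot \td(\BT_f)\bigr) \quad\text{in } H^\ast(\FZ)\]
for every $V\in K^\ast(\FX)$.

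Next, since $f$ is quasi-smooth proper representable, $\BT_f$ is perfect of Tor-amplitude $[0,1]$, and (after passing to a suitable smooth atlas if needed) $f$ admits a factorization as a quasi-smooth closed immersion followed by a smooth proper morphism of schemes/algebraic spaces. I would verify the two cases separately. For a smooth proper representable morphism, the derived tangent bundle coincides with the classical relative tangent bundle and both sides of the identity coincide with the classical relative GRR of Baum--Fulton--MacPherson, transported along the Betti realization as in \cite[Section 3]{khanVFC}. For a quasi-smooth closed immersion $i\colon \FX\hookrightarrow \FY$, the class $[\FX/\FY]$ is the refined Euler class of the virtual normal bundle $\BN_i = \BT_i[1]^\vee$, and the identity becomes a self-intersection / excess intersection formula in the form
\[\ch(i_\ast V) = i_\ast\bigl(\ch(V)\cdot \td(\BT_i)\bigr)\cdot e(\BN_i)^{-1}\cdot(\cdots),\]
which unwinds, after applying $i_\ast$ to both sides with the virtual fundamental class convention, to the desired equality. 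Alternatively, one can cite the derived GRR theorem of Toën or the virtual GRR in \cite[Theorem 3.23]{khanVFC} and its corollaries, which is stated for exactly this class of morphisms.

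The main obstacle is keeping careful track of the identification of $f_!$ defined here (via Khan's virtual fundamental class $[\FX/\FZ]\in H^{\textup{BM}}_\ast(\FX/\FZ)$ followed by proper pushforward) with the operational pushforward implicit in any cited GRR statement; this requires checking the base change/compatibility properties of $[\FX/\FZ]$ under the Betti realization, as provided by \cite[Theorem 3.13]{khanVFC}, and ensuring the Todd class factor coming from $\BT_f$ matches the correction factor appearing in the virtual GRR. Once these compatibilities are in hand, the two-step reduction above completes the argument.
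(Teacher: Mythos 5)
Your reduction via Todd multiplicativity and the projection formula to the relative identity $\ch(f_\ast V)=f_!\big(\ch(V)\,\td(\BT_f)\big)$ is fine, but the way you propose to establish that identity has a genuine gap. A quasi-smooth proper representable morphism of derived Artin stacks does not in general factor as a quasi-smooth closed immersion followed by a smooth proper morphism: such factorizations (e.g.\ through $\BP^n_{\FZ}$) require relative projectivity, and the target here is a stack, so ``passing to a smooth atlas'' does not reduce the statement to schemes or algebraic spaces --- cohomological pushforwards to $\FZ$ are not computed on an atlas without a descent argument you do not supply. Moreover, even granting a factorization, the smooth proper case is not simply ``classical Baum--Fulton--MacPherson GRR transported along the Betti realization'': that transport is precisely the nontrivial content of the lemma, and your quasi-smooth immersion case, with the excess-intersection correction left as ``$(\cdots)$'', is essentially the statement to be proved rather than a citable input.

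The paper's proof avoids any factorization of $f$: it factors the Chern character itself as $K^\ast(-)\to K^\ast(-)_\BQ\to K^{\textup{ét}}(-)_\BQ\to KH^{\textup{ét}}(-)_\BQ\to H^\ast_{\textup{mot}}(-)\to H^\ast(-)$, invokes Khan's GRR theorem \cite[Corollary 3.25]{khanVFC} for the arrow $KH^{\textup{ét}}_\BQ\to H^\ast_{\textup{mot}}$ in the algebraic/motivic setting, checks that the rationalization and localization maps commute with pushforwards, uses Ayoub's result that the Betti realization commutes with pushforwards, and defines the topological Borel--Moore class $[\FX/\FZ]$ as the Betti image of the motivic one, so that $f_!$ commutes with the last arrow. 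Your alternative of citing \cite[Theorem 3.23]{khanVFC} points in the right direction, but the compatibility you flag is not the base-change statement \cite[Theorem 3.13]{khanVFC}; what is actually needed is the comparison between Khan's motivic statement and singular cohomology (and between the motivic and topological virtual classes), which your plan neither identifies correctly nor supplies.
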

\begin{proof}
    The Chern character map $\ch\colon K^\ast(-)\to H^\ast(-)$ can be factored as the composition
    \[K^\ast(-)\rightarrow K^\ast(-)_\BQ\xrightarrow{(A)} K^{\textup{ét}}(-)_\BQ\xrightarrow{(B)} KH^{\textup{ét}}(-)_\BQ\xrightarrow{\ch} H^\ast_{\textup{mot}}(-)\xrightarrow{(C)} H^\ast(-)\,.\]
Here, $K^{\textup{ét}}(-)_\BQ$ and $KH^{\textup{ét}}(-)_\BQ$ are the étale localizations of $K^{\ast}(-)_\BQ$ and $KH(-)_\BQ$, $KH$ is the homotopy invariant version of $K$-theory, and $H^\ast_{\textup{mot}}$ is the motivic cohomology with coefficients in $\bigoplus_{n\in \BZ}\BQ(n)[2n]$. We refer to \cite[Sections 4, 5]{khan} and \cite[Example 2.10, 2.13]{khanVFC} for details. A Grothendieck--Riemann--Roch theorem for $KH^{\textup{ét}}(-)_\BQ\xrightarrow{\ch} H^\ast_{\textup{mot}}(-)$ is proven in \cite[Corollary 3.25]{khanVFC}. The maps $(A), (B)$ are easily seen to commute with pushforward of maps, when defined. The map $(C)$ is induced by the Betti realization (see \cite[Section 2]{ayoub}), which also commutes with pushforwards as shown in loc. cit. Note that we are defining $[\FX/\FZ]\in H^{\textup{BM}}_\ast(\FX/\FZ)$ as the image of $[\FX/\FZ]\in H^{\textup{BM,mot}}_\ast(\FX/\FZ)$ under the Betti realization morphism  $H^{\textup{BM,mot}}_\ast(\FX/\FZ)\to  H^{\textup{BM}}_\ast(\FX/\FZ)$, so it is clear that $f_!$ commutes with $(C)$. 
\end{proof}

Recall that we have Lie brackets on $H_\ast(\FM^\rig)$ and on $K_\ast(\FM^\rig)$. The former is obtained from the vertex algebra structure on $H_\ast(\FM)$ and the isomorphism $H_\ast(\FM^\rig)\simeq \widecheck H_\ast(\FM)$, and the latter is the commutator of the $K$-Hall product. 

\begin{theorem}\label{thm: homliftsLiealgebras}
    If $A_1, A_2\in H_\ast(\FM^\rig)$ are homological lifts of $\phi_1, \phi_2\in K_\ast(\FM^\rig)$ then $[A_1, A_2]$ is a homological lift of $[\phi_1, \phi_2]$.
\end{theorem}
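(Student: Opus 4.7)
The strategy is to reduce the claim to a vertex-algebra-level compatibility relating Liu's multiplicative vertex algebra on $K_\ast^\reg(\FM)$ with Joyce's additive vertex algebra on $H_\ast(\FM)$, via the Chern character and the change of variable $u = e^z$ of Proposition~\ref{prop: equiv}. Recall from Theorem~\ref{thm: liealgebrahom} and its proof that the $K$-Hall commutator on $K_\ast(\FM^\rig)$ is extracted from Liu's state-field correspondence $Y^K(\phi_1, u)\phi_2$ by the operation $\Res_{u=1} u^{-1}(\cdot)$, while the Joyce bracket on $H_\ast(\FM^\rig) \simeq \widecheck H_\ast(\FM)$ is $\Res_{z=0} Y^H(A_1, z)A_2$. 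By Remark~\ref{rmk: residuechangevariables}, these residue operations correspond under $u = e^z$, so it suffices to prove an enhanced $Y$-level homological lift: the formal Laurent series $Y^H(A_1, z)A_2$ should be a ``formal-series homological lift'' of $Y^K(\phi_1, u)\phi_2$ via $\tau$ and $u = e^z$, in the natural formal-series extension of Definition~\ref{def: homlift}.

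To prove this $Y$-level compatibility, I would propagate the homological lift property through the four ingredients of the state-field correspondences using Lemma~\ref{lem: homlift}. Kunneth compatibility (Lemma~\ref{lem: homlift}(1)) gives that $A_1 \boxtimes A_2$ lifts $\phi_1 \boxtimes \phi_2$. The translation operators $D(u)$ (Liu) and $e^{zD}$ (Joyce) both come from pullback along the $B\BG_m$-action $\Psi$, and they match under $u = e^z$ because $\ch([L]) = e^{c_1(L)}$ for the tautological line bundle $L$ on $B\BG_m$. The cap with $\Gamma(u) = \Lambda_{-u}(\Ext_{12}^\vee)\otimes \Lambda_{-u^{-1}}(\Ext_{21}^\vee)$ corresponds, after applying $\tau$ and substituting $u = e^z$, to cap with the cohomological factor $(-1)^{\chi(\alpha,\beta)} z^{\chi(\alpha,\beta)+\chi(\beta,\alpha)} c_{z^{-1}}(\Theta)$ of Joyce's $Y^H$, up to Todd-type corrections that ultimately cancel against the Todd contributions built into $\tau$. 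Finally, Lemma~\ref{lem: homlift}(3) propagates the lift through $\Sigma_\ast$.

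With the $Y$-level compatibility in hand, taking residues on both sides and descending to the rigidifications yields the claim. On the cohomology side we use $\widecheck H_\ast(\FM) \simeq H_\ast(\FM^\rig)$, and on the $K$-theory side we use the Lie algebra homomorphism $\widecheck K_\ast^\reg(\FM)_\BQ \to K_\ast(\FM^\rig)_\BQ$ from Theorem~\ref{thm: liealgebrahom}. By construction, the resulting class $[A_1, A_2]$ is a homological lift of $[\phi_1, \phi_2]$.

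The main obstacle is the precise Chern-character and Todd bookkeeping needed to match $\ch \Gamma(e^z)$ with the cohomological factor $z^{\rk \Theta} c_{z^{-1}}(\Theta)$ once one accounts for the Todd contributions of $\tau$ and the ``bilinearity'' relation $\Sigma^\ast \BT_\FM = p_1^\ast \BT_\FM + p_2^\ast \BT_\FM + \Ext_{12}[1] + \Ext_{21}[1]$ on $\FM \times \FM$. The cleanest way to organize this would be to first establish, as a standalone result, that the twisted Chern character $\tau$ induces a morphism from Liu's multiplicative vertex algebra on $K_\ast^\reg(\FM)$ to Joyce's additive vertex algebra on $H_\ast(\FM)$ via $u = e^z$; the present theorem would then follow from the standard fact that vertex algebra morphisms induce Lie algebra morphisms on the Borcherds quotients, together with the preservation of homological lifts by the vertex algebra operations recorded in Lemma~\ref{lem: homlift}.
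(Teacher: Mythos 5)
Your overall plan shares the paper's key idea (match the two bracket ``coproducts'' under $u=e^z$ via a Chern--Todd identity and then take residues), but the reduction you propose has a genuine gap: you route the $K$-theoretic side through Liu's multiplicative vertex algebra on $K^\reg_\ast(\FM)$ and the Lie algebra map $\widecheck K^\reg_\ast(\FM)_\BQ\to K_\ast(\FM^\rig)_\BQ$ of Theorem~\ref{thm: liealgebrahom}. The theorem, however, concerns arbitrary $\phi_1,\phi_2\in K_\ast(\FM^\rig)$ admitting homological lifts; such classes need not come from $\widecheck K^\reg_\ast(\FM)$ (a homological lift only yields the finiteness condition of Remark~\ref{rmk: homliftfiniteness}, not membership in regular $K$-homology, and the map from $\widecheck K^\reg_\ast(\FM)$ is far from surjective, cf.\ Example~\ref{ex: deltanotregular}). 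More fundamentally, the object $Y^K(\phi_1,u)\phi_2$ on which your ``$Y$-level homological lift'' is supposed to live is simply not defined for general operational classes: Liu's state-field correspondence needs a presentation of $\phi_1\boxtimes\phi_2$ as a pushforward from a scheme in the class $\mathcal C$ in order to expand $\Gamma(u)$ around $u=1$ (Lemma~\ref{lem: symmetryexpansion}); for a general element of $K_\ast$ only the two expansions $\Gamma_\pm(u)$ in $u^{\mp1}$ make sense, which is precisely why the $K$-Hall product is built from $\Gamma_-$ and the operator $[u^0]$ rather than from a residue at $u=1$. A proof along your lines would therefore only establish the statement for classes in the image of $\widecheck K^\reg_\ast(\FM)$, which is too weak for the intended applications (e.g.\ Remark~\ref{rmk: homliftswallcrossing}, where regularity is only conjectural).

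The paper's proof avoids this by never forming $Y^K$ at all: it works with the two separately well-defined maps $m_K$ (using $\Gamma_-$) and $\sigma^\ast\circ m_K$ (using $\Gamma_+$) on $K^\ast(\FM^\rig\times S)$, reduces the statement to commutativity of one square against $\tau$ using Lemma~\ref{lem: homlift}(1) for the Kunneth part, and only \emph{after} applying $\tau$ assembles the two expansions into a single class $\ch(\Gamma(u))$ in the completion of $H^\ast(-)[(1-u)^{\pm1}]$, where the coefficient of $(1-u)^{-i}$ has large cohomological degree for large $i$; there the residue-at-$u=1$ manipulation of Remark~\ref{rmk: polesresidue} is legitimate with no regularity hypothesis. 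Finally, the computation you flag as ``the main obstacle'' but do not carry out is exactly the substantive content of the paper's argument: Lemma~\ref{lem: KtoH} together with $\Psi_1^\ast\Sigma^\ast T_{\alpha+\beta}=T_\alpha+T_\beta-\Ext_{12}\otimes u^{-1}-\Ext_{21}\otimes u$ shows that the Todd factors do not cancel but recombine into $\Psi_1^\ast\Sigma^\ast\big(\ch(V)\td(T_{\alpha+\beta})\big)$, which is what makes $\tau$ intertwine $b_K$ with $b_H$. So the skeleton is recoverable, but as written your argument neither makes sense in the stated generality nor supplies the identity on which everything rests.
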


\subsection{Proof of compatibility of Lie algebras and homological lifts}
\label{subsec: homliftsLie}

This Section is dedicated to the proof of Theorem \ref{thm: homliftsLiealgebras}. We start by observing that we have the following diagram:
\begin{center}
    \begin{tikzcd}
        K^\ast(\FM^\rig\times S)\arrow[rr, "{[\phi_1, \phi_2]_S}", bend left]\arrow[d, "\tau"]\arrow[r, "b_K"] &K^\ast(\FM^\rig\times \FM^\rig\times S)\arrow[d, "\tau"] \arrow[r, "(\phi_1\boxtimes\phi_2)_S"]& K^\ast(S) \arrow[d, "\tau"]\\
        H^\ast(\FM^\rig\times S)\arrow[r, "b_H"]\arrow[rr, "{\slash [A_1, A_2]}", bend right]& H^\ast(\FM^\rig\times \FM^\rig\times S)\arrow[r, "\slash(A_1\boxtimes A_2)"] &H^\ast(S)
    \end{tikzcd}
\end{center}
The maps $b_K, b_H$ above are, roughly speaking, the dual maps to the Lie brackets on $K$-homology and homology, respectively; we will describe them explicitly below. The square on the right commutes by hypothesis and Lemma \ref{lem: homlift}(1), so it is enough to prove that the left square commutes as well. For ease of notation, we show it when $S=\pt$, but the general case is the same. 

First of all, the map $b_H$, when restricted to $H^\ast(\FM_{\alpha+\beta}^\rig)\to H^\ast(\FM_{\alpha}^\rig\times \FM_{\beta}^\rig)$, is given by\footnote{To be more precise, the formula above defines a map $H^\ast(\FM_{\alpha+\beta})\to H^\ast(\FM_{\alpha}\times \FM_{\beta})$ which descends to $b_H\colon H^\ast(\FM_{\alpha+\beta}^\rig)\to H^\ast(\FM_{\alpha}^\rig\times \FM_{\beta}^\rig)$. The same applies to the desctiption of $m_K$ below.}
\[b_H=\Res_{z=0} (-1)^{\chi(\alpha, \beta)}z^{\chi_\sym(\alpha, \beta)}c_{z^{-1}}(\Theta)\Psi_1^\ast \Sigma^\ast(-)\,,\]
where we are regarding $\Psi_1^\ast$ as a map 
\[H^\ast(\FM\times \FM)\to H^\ast(B\BG_m\times \FM\times \FM)\simeq H^\ast(\FM\times \FM)\llbracket z\rrbracket\,.\]
The map $b_K$ is the anti-symmetrization of the ``coproduct'' $m_K$, i.e. $b_K=m_K-\sigma^\ast\circ m_K$ where $\sigma$ switches the two copies of $\FM^\rig$ and 
\[m_K=[u^0]\Gamma_-(u)\otimes \Psi_1^\ast \Sigma^\ast(-)\,.\]

For a derived stack $\FX$, we introduce the completion 
\[H^\ast(\FX)[(1-u)^{\pm 1}]^{\widehat{\hspace{0.2cm}}} \subseteq H^\ast(\FX)\lpp (1-u)^{-1}\rpp \]
of $H^\ast(\FX)[(1-u)^{\pm 1}]$ as the set of Laurent series in $(1-u)^{-1}$ with coefficients in $H^\ast(\FX)$ with the property that, for any $N\in \BZ_{+}$, their image modulo $H^{\geq N}(\FX)$ is a Laurent polynomial in $1-u$; in other words, the coefficient of $(1-u)^{-i}$ has large cohomological degree for large $i$. Since homology is the direct sum over degrees, we have a pairing
\[\langle -, -\rangle\colon H_\ast(\FX)\otimes H^\ast(\FX)[(1-u)^{\pm 1}]^{\widehat{\hspace{0.2cm}}}\to \BQ[(1-u)^{\pm 1}]\,.\]
There is a residue map $\Res_{u=1}\colon H^\ast(\FX)[(1-u)^{\pm 1}]^{\widehat{\hspace{0.2cm}}}\to H^\ast(\FX)$ and it satisfies
\[\langle -, \Res_{u=1}-\rangle=\Res_{u=1}\langle - , -\rangle\,.\]

An element of $H^\ast(\FX)[(1-u)^{\pm 1}]^{\widehat{\hspace{0.2cm}}}$ admits an expansions in $u$ or in $u^{-1}$. By the proof of Lemma \ref{lem: symmetryexpansion}, for any $V\in K^\ast(\FX)$ we can define an element of $H^\ast(\FX)[(1-u)^{\pm 1}]^{\widehat{\hspace{0.2cm}}}$
with the property that its $u$-expansion is $\ch(\Lambda_{-u}(V))$ and its $u^{-1}$-expansion is $(-u)^{\rk(V)}\ch\big(\Lambda_{-u^{-1}}(V^\vee)\otimes \det(V)\big)$. Note that the algebraic splitting principle is not necessary for this, since the statement only depends on the image of $V$ in topological $K$-theory, i.e. on the homotopy class of the map $\FX\to \FM_{\Perf}\simeq  \BZ\times BU$ induced by a perfect complex representing $V$; therefore the splitting principle applies even if algebraically $V$ does not admit a resolution by vector bundles.

We recall the complex
\[\Gamma(u)=\Lambda_{-u}(\Ext_{12}^\vee)\otimes \Lambda_{-u^{-1}}(\Ext_{21}^\vee)\,.\]
We will denote by $\ch(\Gamma(u))$ the element of $H^\ast(\FX)[(1-u)^{\pm 1}]^{\widehat{\hspace{0.2cm}}}$ obtained from $\Gamma(u)$ as explained above. Its $u^{-1}$-expansion is equal to $\ch(\Gamma_-(u))$ and its $u$-expansion is equal to $\ch(\Gamma_+(u))$ where
\[\Gamma_+(u)=(-u)^{-\rk_{21}}\Lambda_{-u}(\Ext_{12}^\vee)\otimes \Lambda_{-u}(\Ext_{21})\otimes \det(\Ext_{21}^\vee)=\sigma^\ast \Gamma_-(u^{-1})\,.\]
Using the above relation, we find that
\[\sigma^\ast \circ m_K=[u^0]\Gamma_+(u)\otimes \Psi_1^\ast \Sigma^\ast(-)\,;\]
combining this with Remark \ref{rmk: polesresidue} we obtain
\[\tau(b_K(V))=\Res_{u=1} u^{-1}  \ch\big(\Gamma(u)\big)\ch\big(\Psi_1^\ast \Sigma^\ast V\big)\td(T_{\alpha})\td(T_\beta) \in H^\ast(\FM^\rig_\alpha\times \FM^\rig_\beta)\]
where $T_\alpha$ is shorthand for $\BT_{\FM_\alpha}$. We will use the following elementary lemma to compare $\Gamma(u)$ and $c_{z^{-1}}(\Theta)$:
\begin{lemma}\label{lem: KtoH}
Let $V\in K^\ast(\FX)$. 
We have equalities
\begin{align*}
\ch\big(\Lambda_{-u^{-1}}(V^\vee) \big)\cdot \td\big(V\otimes u\big)&=z^{\rk(V)}c_{z^{-1}}\big(V\big)\\
\ch\big(\Lambda_{-u}(V^\vee) \big)\cdot \td\big(V\otimes u^{-1}\big)&=(-z)^{\rk(V)}c_{z^{-1}}\big(V^\vee\big)
\end{align*}
in $H^\ast(\FX)[(1-u)^{\pm 1}]^{\widehat{\hspace{0.2cm}}}$ after the change of variable $u=e^z$.
\end{lemma}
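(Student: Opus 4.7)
My plan is to prove the lemma by reducing to the case of a line bundle via the topological splitting principle. Since $\ch$, $\td$, $\Lambda_{-t}(-)$ and $c_{t}(-)$ all factor through topological $K$-theory and behave well under direct sums, I may pull back $V$ along a fibration that splits it as a sum of line bundles $L_1 + \ldots + L_r$ without changing either side. Let $x_i = c_1(L_i)$ be the Chern roots. The substitution $u = e^z$ amounts to treating $u$ as a formal line bundle with $c_1(u) = z$, so $L_i \otimes u$ has first Chern class $x_i + z$ and $L_i \otimes u^{-1}$ has first Chern class $x_i - z$. It therefore suffices to prove the identities for a single line bundle $V = L$ and take products over the Chern roots.

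For the first identity with $V = L$ I would compute directly
\[\ch\big(\Lambda_{-u^{-1}}(L^\vee)\big) = 1 - u^{-1} e^{-x} = 1 - e^{-(x+z)}\,,\qquad \td(L \otimes u) = \frac{x+z}{1 - e^{-(x+z)}}\,,\]
so their product is $x + z = z \cdot (1 + z^{-1}x) = z \cdot c_{z^{-1}}(L)$. Taking the product over all Chern roots yields $\prod_{i=1}^r (x_i + z) = z^r c_{z^{-1}}(V)$, since by the splitting principle $z^r c_{z^{-1}}(V) = \sum_{i=0}^r e_i(x_1, \ldots, x_r)\, z^{r-i}$, where $e_i$ denotes the $i$-th elementary symmetric polynomial. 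The second identity is strictly analogous: $\ch(\Lambda_{-u}(L^\vee)) = 1 - e^{z-x}$ and $\td(L \otimes u^{-1}) = (x-z)/(1 - e^{-(x-z)})$, whose product collapses to $x - z = -z \cdot c_{z^{-1}}(L^\vee)$; multiplying over the Chern roots gives $\prod_i(x_i - z) = (-z)^r c_{z^{-1}}(V^\vee)$.

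The only non-trivial point, which I expect is the main obstacle, is checking that both sides genuinely lie in the completed ring $H^\ast(\FX)[(1-u)^{\pm 1}]^{\widehat{\hspace{0.2cm}}}$ after the substitution $u = e^z$, so that the asserted equality takes place in a well-defined object. The right-hand side $z^{\rk V} c_{z^{-1}}(V)$ is polynomial in $z$ of degree $\rk V$, hence a power series in $(1-u)$ via the expansion $z = -\sum_{n \geq 1} (1-u)^n/n$. On the left, $\ch(\Lambda_{-u^{-1}}(V^\vee))$ is already a Laurent polynomial in $u$ by Lemma \ref{lem: symmetryexpansion}, while the potential denominators $1 - e^{-(x_i + z)}$ appearing in $\td(V \otimes u)$ are cancelled exactly by the corresponding factors in $\ch(\Lambda_{-u^{-1}}(V^\vee))$. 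Thus the product has no unexpected pole and lands in the expected completed ring, agreeing term-by-term with the right-hand side. The analogous cancellation between $\ch(\Lambda_{-u}(V^\vee))$ and the denominators of $\td(V \otimes u^{-1})$ handles the second identity.
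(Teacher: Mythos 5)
Your proposal is correct and follows essentially the same route as the paper: both sides are multiplicative in $V$, so the topological splitting principle reduces the claim to a line bundle with Chern root $x$, where the identities collapse to $(1-e^{-(x+z)})\cdot\frac{x+z}{1-e^{-(x+z)}}=z\,c_{z^{-1}}(L)$ and $(1-e^{z-x})\cdot\frac{x-z}{1-e^{z-x}}=-z\,c_{z^{-1}}(L^\vee)$, exactly the computation in the paper. Your extra remarks on convergence in $H^\ast(\FX)[(1-u)^{\pm 1}]^{\widehat{\hspace{0.2cm}}}$ are fine but not a new ingredient.
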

\begin{proof}
All the 4 quantities are multiplicative in $V$, in the sense that they all satisfy $f(V+V')=f(V)f(V')$. By the splitting principle (as before, we do not need it algebraically, but only in topological $K$-theory), it is enough to verify the claim when $V$ is a line bundle. In this case, the equalities turn into
\begin{align*}
(1-e^{-z-\alpha})\frac{z+\alpha}{1-e^{-z-\alpha}}&=z(1+\alpha/z)\\
(1-e^{z-\alpha})\frac{-z+\alpha}{1-e^{z-\alpha}}&=-z(1-\alpha/z)
\end{align*}
where $\alpha=c_1(V)$.\qedhere
\end{proof}

Using the previous lemma for $V=\Ext_{12}, \Ext_{21}$, we obtain the identity
\[\ch(\Gamma(u))=(-1)^{\chi(\alpha, \beta)}z^{\chi_\sym(\alpha, \beta)}c_{z^{-1}}(\Theta)\td(-\Ext_{12}\otimes u^{-1})\td(-\Ext_{21}\otimes u)\,.\]
Note also that
\[\Psi_1^\ast\Sigma^\ast T_{\alpha+\beta}=T_\alpha+T_\beta-\Ext_{12}\otimes u^{-1}-\Ext_{21}\otimes u\,.\]
Using the last two equations together, one obtains the equality 
\begin{align*}\ch\big(\Gamma(u)\big)&\ch\big(\Psi_1^\ast \Sigma^\ast V\big)\td(T_{\alpha})\td(T_\beta)\\
&=(-1)^{\chi(\alpha, \beta)}z^{\chi_\sym(\alpha, \beta)}c_{z^{-1}}(\Theta)\Psi_1^\ast \Sigma^\ast \big(\ch(V)\td(T_{\alpha+\beta})\big)
\end{align*}
in the completion $H^\ast(\FX)[(1-u)^{\pm 1}]^{\widehat{\hspace{0.2cm}}}$, after the change of variables $u=e^z$. Applying $\Res_{u=1}u^{-1}=\Res_{z=0}$ (see Remark \ref{rmk: residuechangevariables}) to both sides gives the equality
\[\tau(b_K(V))=b_H(\tau(V))\]
as desired.

\subsection{Homological lifts of $\varepsilon$ classes}\label{subsec: homliftsepsilon}
In the (co)homological setting, Joyce defines elements that we denote by
\[\varepsilon_\alpha^{\mu, H}\in H_{2-2\chi(\alpha, \alpha)}(\FM^\rig_\alpha)\,\]
assuming the existence of framing functors. Note that $1-\chi(\alpha, \alpha)$ is the dimension of the derived stack $\FM^\rig_\alpha$. When all semistable objects are stable, these are simply the pushforward of the virtual fundamental class $[M_\alpha^\mu]^\vir$ to $\FM^\rig_\alpha$. The general construction utilizes the moduli of Joyce--Song pairs $P_{(1,\alpha)}^{\mu_+}$, where stable=semistable holds, and defines $\varepsilon_\alpha^{\mu, H}$ recursively through a formula that resembles \eqref{eq: framingdefinitionepsilon}. More precisely,
    \begin{align}\label{eq: framingdefinitionepsilonhomology}
        \Pi_\ast\big(c_{\rk}(\BT_\Pi)&\cap \varepsilon_{(\alpha, 1)}^{\mu_+, H}\big)\\&=\sum_{\substack{\alpha_{1}+\ldots+\alpha_n=\alpha\\
        \mu(\alpha_i)=\mu(\alpha)}}\frac{(-1)^{n-1}}{n!}\lambda(\alpha_1)\big[\big[\ldots\big[\varepsilon_{\alpha_1}^{\mu, H},\varepsilon_{\alpha_2}^{\mu, H}\big],\ldots, \varepsilon_{\alpha_n}^{\mu, H}\big]\,. \nonumber
    \end{align}
    where, on the left hand side, $\varepsilon_{(\alpha, 1)}^{\mu_+, H}$ is defined as the homological virtual fundamental class $[P_{(1,\alpha)}^{\mu_+}]^\vir$. The equality holds in the Lie algebra $H_\ast(\FM^\rig)$.
\begin{theorem}\label{thm: homliftsjoyce}
    Assume that there is a framing functor as in Section \ref{sec: framing}. Then Joyce's classes $\varepsilon^{\mu, H}_\alpha$ are homological lifts of our classes $\varepsilon_\alpha^\mu$. 
\end{theorem}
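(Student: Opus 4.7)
The plan is to match the two inductive formulas \eqref{eq: framingdefinitionepsilon} (from Theorem \ref{thm: comparisonJL}) and \eqref{eq: framingdefinitionepsilonhomology} term by term, and extract the theorem by induction on a well-founded ordering on $C(\CA)_\pe$ — for instance refining the partial order in which $\alpha' < \alpha$ whenever $\alpha = \alpha' + \alpha''$ with $\alpha'' \neq 0$, $\mu(\alpha') = \mu(\alpha) = \mu(\alpha'')$ and $\FM_{\alpha''}^\mu \neq \emptyset$; this is well-founded by Assumption \ref{ass: stability}(5). As an essential input, the first observation is that $\varepsilon_{(\alpha,1)}^{\mu_+, H} = [P_{(1,\alpha)}^{\mu_+}]^{\vir}$ is a homological lift of $\varepsilon_{(\alpha,1)}^{\mu_+}$: by Proposition \ref{prop: characterizingstabilitypairs}(c), $\mu_+$-semistability equals $\mu_+$-stability, so $\FP_{(\alpha,1)}^{\mu_+,\rig}$ is a proper, quasi-smooth algebraic space, and Lemma \ref{lem: homlift}(4) directly applies.

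Next I will transport this lifting statement through the pushforward $\Pi$. The splitting-principle identity $\ch(\Lambda_{-1}(V^\vee))\cdot \td(V) = c_{\rk V}(V)$ for a vector bundle $V$, combined with parts (2) and (3) of Lemma \ref{lem: homlift} (the latter justified because $\Pi$ is smooth and, when restricted to the $\mu_+$-stable locus, proper and representable with $\BT_\Pi$ a genuine vector bundle), yields that
\[\Pi_\ast\big(c_{\rk}(\BT_\Pi) \cap \varepsilon_{(\alpha, 1)}^{\mu_+, H}\big) \quad\text{is a homological lift of}\quad \Pi_\ast\big(\Lambda_{-1}(\BT_\Pi^\vee) \cap \varepsilon_{(\alpha, 1)}^{\mu_+}\big).\]
Theorem \ref{thm: comparisonJL} identifies the right-hand class with the nested-commutator expression in the $\varepsilon_{\alpha_i}^\mu$, while \eqref{eq: framingdefinitionepsilonhomology} identifies the left-hand class with the same expression in the $\varepsilon_{\alpha_i}^{\mu, H}$.

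To close the induction, fix $\alpha$ and assume that $\varepsilon_{\alpha'}^{\mu, H}$ is a homological lift of $\varepsilon_{\alpha'}^\mu$ for every $\alpha' < \alpha$. Every tuple $(\alpha_1, \ldots, \alpha_n)$ with $n \geq 2$ appearing in the two sums consists of classes strictly below $\alpha$, so Theorem \ref{thm: homliftsLiealgebras} applied iteratively shows that the iterated commutator $[[\ldots[\varepsilon_{\alpha_1}^{\mu,H}, \varepsilon_{\alpha_2}^{\mu,H}], \ldots], \varepsilon_{\alpha_n}^{\mu,H}]$ is a homological lift of its $K$-theoretic counterpart. Subtracting these $n\geq 2$ contributions from both sides leaves the $n=1$ terms $\lambda(\alpha)\varepsilon_\alpha^{\mu,H}$ and $\lambda(\alpha)\varepsilon_\alpha^{\mu}$; since $\lambda(\alpha) > 0$ (the framing functor is non-zero on $\mu$-semistables by Definition \ref{def: framing}(2)), dividing yields the inductive step. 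The base case, where no non-trivial decomposition exists and $\varepsilon_\alpha^\mu = \delta_\alpha^\mu$ is the fundamental class of the proper algebraic space $M_\alpha^\mu$, is another direct application of Lemma \ref{lem: homlift}(4).

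The main obstacle I anticipate is the careful verification that Lemma \ref{lem: homlift}(3) really applies to $\Pi$ on the relevant locus — in particular, that the virtual pushforward on $K$-theory $\Pi_\ast(\Lambda_{-1}(\BT_\Pi^\vee)\cap -)$ computed via the morphism $\Upsilon$ of Section \ref{subsec: morphismsKHall} coincides with the one used in Theorem \ref{thm: comparisonJL}, and that the corresponding Todd/Chern manipulation on homology matches the pushforward implicit in \eqref{eq: framingdefinitionepsilonhomology}. All remaining bookkeeping is standard, and the conceptual structure is a direct synthesis of Theorems \ref{thm: comparisonJL}, \ref{thm: homliftsLiealgebras} and Lemma \ref{lem: homlift}.
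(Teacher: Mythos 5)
Your proposal is correct and follows essentially the same route as the paper's proof: induction on $\alpha$, Lemma \ref{lem: homlift}(4) for the Joyce--Song pair moduli, the identity $\ch(\Lambda_{-1}(\BT_\Pi^\vee))\td(\BT_\Pi)=c_{\rk}(\BT_\Pi)$ together with Lemma \ref{lem: homlift}(2),(3) to compare the two pushforwards, and Theorem \ref{thm: homliftsLiealgebras} for the $n\geq 2$ commutator terms. Your extra bookkeeping (explicit well-founded order, dividing by $\lambda(\alpha)>0$, separate base case) just makes explicit what the paper leaves implicit.
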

\begin{proof}
    We argue by induction on $\alpha$, as in the proof of Theorem \ref{thm: comparisonJL}. By induction and Theorem \ref{thm: homliftsLiealgebras}, $\big[\big[\ldots\big[\varepsilon_{\alpha_1}^{\mu, H},\varepsilon_{\alpha_2}^{\mu, H}\big],\ldots, \varepsilon_{\alpha_n}^{\mu, H}\big]$ is a homological lift of $\big[\big[\ldots\big[\varepsilon_{\alpha_1}^{\mu},\varepsilon_{\alpha_2}^{\mu}\big],\ldots, \varepsilon_{\alpha_n}^{\mu}\big]$ for all non-trivial ($n>1$) partitions of $\alpha$. Therefore, to prove that the remaining term $\varepsilon_{\alpha}^{\mu, H}$ on the right hand side of \eqref{eq: framingdefinitionepsilonhomology} is a homological lift of the remaining term $\varepsilon_{\alpha}^{\mu}$ on the right hand side of \eqref{eq: framingdefinitionepsilon}, it is enough to show that the left hand side of \eqref{eq: framingdefinitionepsilonhomology} is a homological lift of the left hand side of \eqref{eq: framingdefinitionepsilon}.

    By Lemma \ref{lem: homlift}(4), the class $\varepsilon_{(\alpha, 1)}^{\mu+, H}$ is a homological lift of $\varepsilon_{(\alpha, 1)}^{\mu+}$. Note that we have the identity
    \[\ch(\Lambda_{-1}(\BT_\Pi^\vee))\td(\BT_\Pi)=c_{\rk}(\BT_\Pi)\]
    by setting $z=0$ (i.e. $u=1$) in Lemma \ref{lem: KtoH}. By (2) and (3) in Lemma \ref{lem: homlift} 
    it follows that $\Pi_\ast\big(c_\rk(\BT_\Pi)\cap \varepsilon_{(\alpha, 1)}^{\mu+, H}\big)$ is a homological lift of $\Pi_\ast\big(\Lambda_{-1}(\BT_\Pi^\vee)\cap \varepsilon_{(\alpha, 1)}^{\mu+}\big)$, as we wanted.\qedhere
\end{proof}

We expect that (canonical) homological lifts exist, even when there is no framing functor. We state this as a conjecture:

\begin{conjecture}[Homological lift and homogeneity]\label{conj: homlift}
If $\mu$ is a stability condition as in Assumption \ref{ass: stability} and $\alpha\in C(\CA)_{\pe}$, then the classes $\varepsilon_\alpha^\mu\in K_\ast(\FM^\rig)$ admit a homological lift with the expected homological degree:\[\varepsilon_\alpha^{\mu, H}\in H_{2-2\chi(\alpha, \alpha)}(\FM^\rig)\,.\]
\end{conjecture}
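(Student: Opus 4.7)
The plan is to prove existence of a homological lift by combining two strategies: wall-crossing reduction to cases where a lift is manifest, and a direct geometric construction for the remaining cases. The unifying principle is that Theorem \ref{thm: homliftsLiealgebras} guarantees that the subset of $K_\ast(\FM^\rig)_\BQ$ consisting of classes admitting a homological lift is closed under the Borcherds Lie bracket on $H_\ast(\FM^\rig)_\BQ$.

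First I would dispose of the case in which $\mu$-stability coincides with $\mu$-semistability on objects of class $\alpha$. In that case $\varepsilon_\alpha^\mu = \delta_\alpha^\mu$ is pushed forward from the $K$-theoretic virtual fundamental class of the proper good moduli space $M_\alpha^\mu$, and Lemma \ref{lem: homlift}(4) identifies the pushforward of the homological virtual fundamental class $[M_\alpha^\mu]_H \in H_{2-2\chi(\alpha,\alpha)}(M_\alpha^\mu)$ as a homological lift in the expected degree. More generally, if $\mu$ is connected (in the sense of \cite[Assumption 5.3]{joyce}) to some $\mu'$ for which the wall-crossing formula \eqref{eq: epsilonwc} only involves $\mu'$-stable topological types, the formula expresses $\varepsilon_\alpha^\mu$ as a nested commutator of classes with lifts. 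A direct computation with the $\BZ$-grading on Joyce's vertex algebra shows that the Borcherds bracket preserves the virtual-dimension grading on $H_\ast(\FM^\rig)$, so combining this observation with Theorem \ref{thm: homliftsLiealgebras} produces the desired lift in homological degree $2-2\chi(\alpha,\alpha)$.

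For the cases that resist such a reduction I would follow the suggestion at the end of Section \ref{sec: intro} and attempt to build a proper derived DM stack $\widetilde\FM_\alpha^\mu$ via a derived Kirwan-type desingularization (cf. \cite{HRS}), equipped with a map $p\colon \widetilde\FM_\alpha^\mu \to \FM_\alpha^\rig$ and a perfect complex $\CE$ such that $\varepsilon_\alpha^\mu = p_\ast([\widetilde\FM_\alpha^\mu]_K \cap \CE)$. Once such data are in hand, the proper DM stack admits a canonical homological virtual fundamental class, and parts (2)--(4) of Lemma \ref{lem: homlift} assemble to give a homological lift of $\varepsilon_\alpha^\mu$; the degree check reduces to a virtual Riemann--Roch calculation for $p$, as in Lemma \ref{lem: GRR}.

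The hardest step will be matching the $K$-theoretic pushforward from $\widetilde\FM_\alpha^\mu$ with the precise combinatorial definition of $\varepsilon_\alpha^\mu$ as the logarithm of $\delta_\alpha^\mu$ in Definition \ref{def: K-invariants}. This is the $K$-theoretic counterpart of the no-pole phenomenon surveyed in Section \ref{subsec: motivic}, which in the motivic setting is a deep theorem proved by quite different methods in \cite{JO06III, intDTII, BRinertia}; a $K$-theoretic analogue must encode Joyce's wall-crossing combinatorics directly on the Kirwan stratification of the stack of semistable objects, and the compatibility between the iterated blow-up structure and the $K$-Hall logarithm \eqref{eq: epsilonfromdelta} is the heart of the matter. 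A secondary issue will be proving that the resulting lift is canonical, independent of choices of desingularization; this can be circumvented if one settles for mere existence, as in the conjecture as stated.
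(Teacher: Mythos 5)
You should be aware that the statement you set out to prove is not proved in the paper at all: it appears there as Conjecture \ref{conj: homlift}, explicitly left open. The only cases settled in the paper are the presence of a framing functor (Theorem \ref{thm: homliftsjoyce}), the fact that classes admitting homological lifts are closed under the Lie bracket (Theorem \ref{thm: homliftsLiealgebras}), and the observation that the bracket respects the virtual-dimension degree so the conjecture is wall-crossing compatible (Remark \ref{rmk: homliftswallcrossing}). Your first branch reproduces exactly these known ingredients: Lemma \ref{lem: homlift}(4) handles the case where $\mu$-stable equals $\mu$-semistable, and bracket-closure plus the grading handles classes reachable by wall-crossing from such a situation. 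But this branch cannot cover the conjecture, because for non-primitive $\alpha$ there are in general strictly semistable objects for \emph{every} admissible stability condition, so no path as in \eqref{eq: epsilonwc} terminates in a purely stable chamber; the reduction is not available precisely in the cases the conjecture is about.

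Your second branch is where the conjecture actually lives, and there you assume what must be proved. You posit a proper derived DM desingularization $p\colon \widetilde\FM_\alpha^\mu\to\FM_\alpha^{\mu,\rig}$ and a perfect complex $\CE$ with $\varepsilon_\alpha^\mu=p_\ast\big([\widetilde\FM_\alpha^\mu]_K\cap\CE\big)$, but no construction of the pair $(\widetilde\FM_\alpha^\mu,\CE)$ is given, and the identification of such a pushforward with the $K$-Hall logarithm \eqref{eq: epsilonfromdelta} is, as you yourself concede, ``the heart of the matter.'' That identification is the $K$-theoretic analogue of the motivic no-pole theorem; note via Remark \ref{rmk: homliftfiniteness} that any homological lift forces the finiteness condition, so your argument would in particular have to resolve Conjecture \ref{conj: nopole}, which the paper also leaves open outside the framed setting. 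A genuine proof would need to (i) construct the Kirwan-type resolution with a virtual structure compatible with the quasi-smooth derived structure on $\FM_\alpha^\mu$, (ii) match its pushforward, stratum by stratum, with the alternating sum of $\ast$-products of $\delta$-classes defining $\varepsilon_\alpha^\mu$, and (iii) extract the degree statement $2-2\chi(\alpha,\alpha)$; none of these steps is carried out or reduced to a verifiable claim in your sketch, so what you have is a research program (essentially the one the paper itself suggests via \cite{HRS}), not a proof.
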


As explained in Remark \ref{rmk: homliftfiniteness}, this conjecture is closely related to the no-pole Conjecture~\ref{conj: nopole}. The homogeneity property with respect to the homological degree is an even stronger property.

\begin{remark}\label{rmk: homliftswallcrossing}
The homological lift conjecture is compatible under wall-crossing by Theorem \ref{thm: homliftsLiealgebras}. This is a consequence of the homological Lie bracket being well behaved with respect to the degree, in the sense that it maps
\[[-,-]\colon H_{2-2\chi(\alpha, \alpha)}(\FM^\rig_\alpha)\otimes H_{2-2\chi(\beta, \beta)}(\FM^\rig_\beta)\to H_{2-2\chi(\alpha+\beta, \alpha+\beta)}(\FM^\rig_{\alpha+\beta})\,.\]
\end{remark}

\subsection{Cohomological descendents from $K$-theory descendents}
\label{subsec: homliftdescendents}

Suppose that we have homological lifts $\varepsilon_\alpha^{\mu, H}$ of $\varepsilon_\alpha^\mu$. The $K$-theoretic classes $\varepsilon_\alpha^\mu$ satisfy wall-crossing formulas as shown in Section \ref{sec: NAL}. The homological lifts are expected to satisfy the same wall-crossing formulas, and indeed Joyce proves such formulas for his invariants, under stronger assumptions. We will now explain that (co)homological wall-crossing formulas follow formally from $K$-theoretic wall-crossing formulas, at least when we restrict ourselves to tautological integrals on moduli of sheaves/complexes on a variety $X$. By \eqref{eq: epsilonwc} and Theorem \ref{thm: homliftsLiealgebras}, both sides of
\[\varepsilon_{\alpha}^{\mu', H}
   \overset{?}{=}\sum_{\alpha_1+\ldots+\alpha_l=\alpha}\tilde U(\alpha_1, \ldots, \alpha_n; \mu, \mu')\cdot [[\ldots [\varepsilon_{\alpha_1}^{\mu, H}, \varepsilon_{\alpha_2}^{\mu, H}], \ldots, ],\varepsilon_{\alpha_n}^{\mu, H}]\]
are homological lifts of the same element $\varepsilon_\alpha^{\mu'}$, and in particular their images under $\tau^\vee$ are the same. Hence, deducing cohomological wall-crossing from $K$-theoretic wall-crossing is related to injectivity properties of $\tau^\vee$, or equivalently surjectivity of $\tau$. Of course, $\tau$ is far from surjective since for example there are no odd classes in the image.

Given a smooth projective variety $X$, suppose that $\CA$ is either $\Coh(X)$ or, more generally, the heart of a $t$-structure on $D^b(X)$. Then, there is a universal sheaf/complex $\CF$ on $\FM\times X$. 

\begin{definition}[Tautological classes]\label{def: tautclasses}
Given $\gamma\in H^\ast(X)$ and $k\geq 1$, we let
\[\ch_k(\gamma)=p_\ast\big(\ch_k(\CF)q^\ast\gamma\big)\in H^\ast(\FM)\]
where $p, q$ are the projections of $\FM\times X$ onto $\FM$ and $X$, respectively. 

We say that a monomial 
\begin{equation}
    \label{eq: monomialchk}
D=\prod_{i=1}^n \ch_{k_i}(\gamma_i)\end{equation}
is algebraic if $\gamma_1\otimes \ldots \otimes \gamma_n$ is in the image of the cycle class map $\textup{CH}^\ast(X^n)\to H^\ast(X^n)$. 

We denote by 
\[H^\ast_{\textup{taut,alg}}(\FM)\subseteq H^\ast_{\textup{taut}}(\FM) \subseteq H^\ast(\FM)\]
the subalgebra spanned by algebraic monomials and the subalgebra of tautological classes, respectively.
\end{definition}

\begin{remark}\label{rmk: hodgeconj}
    If the Hodge conjecture holds for powers of $X$, then $H^\ast_{\textup{taut,alg}}(\FM)$ is spanned by Hodge balanced monomials of the form \eqref{eq: monomialchk} with $\gamma_i\in H^{p_i, q_i}(X)$ and $\sum_{i=1}^n p_i=\sum_{i=1}^n q_i$. Suppose $M$ is a moduli space with a virtual fundamental class. Since virtual fundamental classes $[M]^\vir$ are algebraic, unbalanced monomials have trivial integral since\footnote{This argument was explained to the second author by Y. Bae.}
    \begin{align*}
        \nonumber\int_{[M]^\vir} &\prod_{i=1}^n \ch_{k_i}(\gamma_i)&\\
       & =\int_{X^n} (\gamma_1\otimes \ldots \otimes \gamma_n)\cap q_\ast\left((\ch_{k_1}(\CF_1)\ldots \ch_{k_n}(\CF_n))\cap p^\ast[M]^\vir\right)=0\,,
        \end{align*}
    where $\CF_i$ is the pullbacks of $\CF$ along the $i$-th projection $M\times X^n\to M\times X$ and $q\colon M\times X^n\to X^n$, $p\colon M\times X^n\to M$ are the two obvious projections.  
\end{remark}

The following proposition provides a tool to lift equalities from $K$-homology to homology.

\begin{proposition}\label{prop: injectivityhomlift}
    Let $A, B\in H_\ast(\FM)$ and suppose that $\tau^\vee(A)=\tau^\vee(B)$. Then $A$ and $B$ agree on algebraic tautological classes, i.e. 
    \[\int_A D=\int_B D\]
    for any $D\in H^\ast_{\textup{taut,alg}}(\FM)$.

    If $A, B$ are algebraic, in the sense that they are in the image of $\textup{CH}_\ast(Z)\to H_\ast(Z)\xrightarrow{f_\ast} H_\ast(\FM)$ for some morphism $f\colon Z\to \FM$ from $Z$ a proper algebraic space, and the Hodge conjecture holds for powers of $X$ then the statement holds for any $D\in H^\ast_{\textup{taut}}(\FM)$.
\end{proposition}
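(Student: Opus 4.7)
The plan is to prove the first assertion by showing that every algebraic tautological class $D\in H^\ast(\FM)$ lies in the image of $\tau\colon K^\ast(\FM)_\BQ\to H^\ast(\FM)_\BQ$. Once this is done, the hypothesis $\tau^\vee(A)=\tau^\vee(B)$ gives $\int_A D=\bigl(\tau^\vee(A)\bigr)(V)=\bigl(\tau^\vee(B)\bigr)(V)=\int_B D$ whenever $D=\tau(V)$. Since $\td(\BT_\FM)^{-1}$ is a universal polynomial in the $\ch_k(\BT_\FM)$ and hence itself algebraic tautological, the question reduces further to showing that any algebraic tautological class lies in the image of the ordinary Chern character $\ch\colon K^\ast(\FM)_\BQ\to H^\ast(\FM)_\BQ$.

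First I would treat the ``coefficient'' part on $X^n$. For any smooth projective variety $Y$, a classical consequence of Grothendieck--Riemann--Roch is that the rational Chern character $K^\ast(Y)_\BQ \to H^\ast(Y,\BQ)$ surjects onto the algebraic part of cohomology, so any algebraic class $\gamma=\gamma_1\otimes\cdots\otimes \gamma_n\in H^\ast(X^n)$ equals $\ch(\delta)$ for some $\delta\in K^\ast(X^n)_\BQ$. Next I would isolate the individual Chern character components on $\FM\times X^n$: although $\ch_k(\CF)$ is not directly a Chern character, the Adams operations satisfy $\ch(\psi^m\CF)=\sum_k m^k\ch_k(\CF)$, so inverting a Vandermonde matrix over enough integers $m$ expresses each $\ch_k(\CF)$, and hence any polynomial in the pulled-back $\ch_{k_i}(\CF_i)$, as a rational linear combination of Chern characters.

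Combining these, for $D=(p_n)_\ast\Gamma$ with $\Gamma=\prod_i\ch_{k_i}(\CF_i)\cdot q_n^\ast\gamma$, the class $\Gamma\cdot p_n^\ast\td(\BT_\FM)^{-1}\cdot q_n^\ast\td(X^n)^{-1}$ is algebraic tautological on $\FM\times X^n$, so it equals $\ch(W)$ for some $W\in K^\ast(\FM\times X^n)_\BQ$ by the previous two reductions. Applying the virtual Grothendieck--Riemann--Roch of Lemma \ref{lem: GRR} to the smooth projective morphism $p_n$, together with the projection formula, yields
\[\tau_\FM\bigl((p_n)_\ast W\bigr)=(p_n)_\ast\bigl(\ch(W)\,q_n^\ast\td(X^n)\bigr)\cdot\td(\BT_\FM)=(p_n)_\ast\Gamma=D,\]
placing $D$ in the image of $\tau$. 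For the second assertion, the computation of Remark \ref{rmk: hodgeconj} applies to any algebraic $A$ and shows that $A$ pairs to zero with Hodge-unbalanced tautological monomials; under the Hodge conjecture on $X^n$ the remaining Hodge-balanced monomials are algebraic tautological, and the first part concludes.

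The main obstacle I anticipate is the Adams operations argument, since it requires Adams operations on the algebraic $K$-theory of the derived stack $\FM\times X^n$ with the expected compatibility with $\ch$. If the algebraic framework is insufficient, one should be able to work with Blanc's topological $K$-theory (cf.\ Remark \ref{rmk: blanc}), where Adams operations and the Chern character behave in the expected way, without affecting the rest of the argument because only Chern character identities are used.
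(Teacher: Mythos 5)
Your overall strategy mirrors the paper's proof (Adams operations plus a Vandermonde inversion to isolate $\ch_{k_i}(\CF_i)$, a class $\delta$ on $X^n$ realizing the algebraic coefficients via $\ch$, and virtual GRR along $p_n\colon \FM\times X^n\to \FM$ to produce $V=p_\ast(\cdots)$), but there is a genuine gap in the way you use it: you claim that every algebraic tautological class $D$ lies \emph{exactly} in the image of $\tau$ (equivalently, of $\ch$), and in particular that finitely many Adams operations express each $\ch_k(\CF)$ \emph{exactly} as a rational combination of Chern characters. This is false on $\FM$. The stack $\FM$ has cohomology in unboundedly many degrees (already its components of the form $B\GL_n$ do), so $\ch$ of any class has infinitely many potentially nonzero components, and a finite Vandermonde inversion can only kill the unwanted components up to a prescribed degree bound, never all of them. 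Concretely, on a component $B\BG_m\subseteq \FM_{\mathsf{Vect}}$ the image of $\ch\colon K^\ast(B\BG_m)_\BQ\to H^\ast(B\BG_m)_\BQ$ consists of finite combinations of $e^{at}$ and does not contain the degree-two class $t$; the same completion phenomenon obstructs your key claim in the situations relevant here, and switching to Blanc's topological $K$-theory does not remove it (it only helps with the coefficients on $X^n$, cf.\ Remark \ref{rmk: blanccohomology}).

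The missing idea is a truncation: one does not need $D$ in the image of $\tau$, only modulo classes of sufficiently high cohomological degree. Since homology of $\FM$ is a direct sum over degrees, $A$ and $B$ lie in $H_{\leq N'}(\FM)$ for some $N'$, so they pair to zero with $H^{>N'}(\FM)$. Choosing $N=N'+n\dim X$ and running your Adams/Vandermonde construction only up to degree $N$ produces classes $\widetilde\CF_i$ with $\ch(\widetilde\CF_i)\equiv \ch_{k_i}(\CF_i)$ modulo $H^{>N}(\FM\times X^n)$, and then GRR gives $V\in K^\ast(\FM)_\BQ$ with $\ch(V)\equiv D$ (or $D\cdot\td(\BT_\FM)^{-1}$, after using that $\td(\BT_\FM)^{\pm 1}$ is algebraic tautological) modulo $H^{>N'}(\FM)$; this congruence, together with the boundedness of the degrees of $A$ and $B$, is what lets you conclude $\int_A D=\tau^\vee(A)(V)=\tau^\vee(B)(V)=\int_B D$. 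With this modification your argument becomes the paper's proof; as written, the step ``placing $D$ in the image of $\tau$'' does not hold. Your treatment of the second assertion via Remark \ref{rmk: hodgeconj} is fine.
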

\begin{proof}
    Since $\td(\BT_\FM)$ can itself be expressed in terms of algebraic tautological classes \cite[Proposition 3.3]{shencobordism}, it is enough to prove that for any monomial \eqref{eq: monomialchk} there is some $V\in K^\ast(\FM)$ such that
    \[\int_A D=\int_A \ch(V)=\int_B \ch(V)=\int_B D\,.\]
    Note that $D$ is given by
    \[p_\ast\left( \prod_{i}\ch_{k_i}(\CF_i)q^\ast (\gamma_1\otimes \ldots \otimes \gamma_n)\right)\]
    where $\CF_i, p, q$ are as in Remark \ref{rmk: hodgeconj}. We will construct the class $V$ using Adams operations to isolate $\ch_{k_i}(\CF_i)$ from $\ch(\CF_i)$.

    Recall that the Adams operation $\psi^j$ on $K$-theory has the property that $\ch_k(\psi^j \CF)=j^k \ch_k(\CF)$. Let $N=N'+n\cdot \dim(X)$ where $N'$ is such that  
    \[A, B\in H_{\leq N'}(\FM)\,.\]

    Given $k\leq N$, we can pick rational numbers $a_j^k$ such that 
    \[\sum_{i=0}^N a_j^k j^m\]
    is $1$ when $m=k$ and $0$ when $m\in [0,N]\setminus \{k\}$; indeed, $a_{j}^k$ are the entries in the inverse of a Vandermonde matrix. Now let 
    \[\widetilde \CF_i=\sum_{j=0}^N a_j^{k_i}\psi^j(\CF_i)\in K^\ast(\FM\times X^n)\,.\]
    We have
    \[\ch(\widetilde \CF_i)=\ch_{k_i}(\CF_i) \mod H^{>N}(\FM\times X^n)\,.\]

Since the Chern character induces an isomorphism between $K$-theory and Chow for $X^n$ (with $\BQ$-coefficients), we can pick a class $\delta\in K^\ast(X^n)_\BQ$ so that
\[\ch(\delta)=(\gamma_1\otimes\ldots \otimes \gamma_n)\td(T_{X^n})^{-1}\,.\]

Finally, we claim that
\[V=p_\ast\big(\widetilde \CF_1\otimes \ldots\otimes \widetilde \CF_n \otimes q^\ast \delta\big)\]
does the job. Indeed, by Grothendieck--Riemann--Roch and the definition of $\widetilde \CF_i$, we have
\[\ch(V)=p_\ast(\ch_{k_1}(\CF_1)\ldots \ch_{k_n}(\CF_n)q^\ast (\gamma_1\otimes\ldots \otimes \gamma_n)) \mod H^{>N'}(\FM)\,.\]

 The second part of the statement follows from Remark \ref{rmk: hodgeconj}. \qedhere
\end{proof}

We will see in Appendix \ref{appendix} that a better version of the previous proposition can be achieved if we replace algebraic $K$-theory by topological $K$-theory.

\appendix

\section{$\sE$-Hall algebra for additive invariant $\sE$}
\label{appendix}

In this section we will explain how the material in the present paper generalizes if we replace algebraic $K$-theory by other additive invariants, such as Hochschild homology or Blanc's topological $K$-theory.

\subsection{Additive invariants}

Let $R$ be a ring and let $\mathsf{Mod}_R$ be the category of $R$-modules. An additive invariant with values in $\mathsf{Mod}_R$, in the sense of Tabuada \cite{tabuadaadditifs}, is a functor $\sE\colon \mathsf{dgcat}\to \mathsf{Mod}_R$ from the category of ($\BC$-linear) dg categories to the category of $R$-modules which satisfies 2 properties: the image by $\sE$ of a Morita equivalence of dg categories is an isomorphism, and if $\mathcal C=\langle \mathcal C_1, \mathcal C_2\rangle$ is a semi-orthogonal decomposition then $\sE(\CC_1)\oplus \sE(\CC_2)\xrightarrow{\sim} \sE(\CC)$. The most basic example of an additive invariant is $K_0$ (with $R=\BZ$).

By \cite[Théorème 6.1.]{tabuadaadditifs}, $\sE$ being additive is equivalent to factoring through
\[\mathsf{dgcat}\to \mathsf{Hmo}_0\to \mathsf{Mod}_R\]
where $\mathsf{Hmo}_0$ is a category whose objects are the same as $\mathsf{dgcat}$ and 
\[\Hom_{\mathsf{Hmo}_0}(\CC, \CD)=K_0\big(\mathrm{rep}(\CC, \CD)\big)\,.\]
This means that if $F\colon \CC\to \CD$ is a dg functor then $\sE(F)$ is the image of $[F]\in K_0\big(\mathrm{Fun}^{\textup{ex}}(\CC, \CD)\big)$ through the composition
\[K_0\big(\mathrm{Fun}^{\textup{ex}}(\CC, \CD)\big)\to K_0\big(\mathrm{rep}(\CC, \CD)\big)\to \mathrm{Hom}_{R}(\sE(\CC), \sE(\CD))\,,\]
and in particular if $0\to F_1\to F_2\to F_3\to 0$ is an exact sequence of dg functors $\CC\to \CD$ then $\sE(F_2)=\sE(F_1)+\sE(F_3)$.

Any additive invariant is a module over $K_0$, in the sense that for any two dg categories $\CC$ and $\CD$ we have a canonical morphism of $R$-modules
    \begin{equation}
        \label{eq: moduleoverk0}
    K_0(\mathcal C)\otimes_\BZ \sE(\mathcal D)\to \sE(\mathcal C\otimes \mathcal D)\,.\end{equation}
Indeed, this morphism is obtained by adjunction from
    \begin{align*}
        K_0(\CC)=K_0(\textup{Fun}^{\textup{ex}}({\bf 1} ,\CC))\xrightarrow{\otimes \id_{\CD}} K_0(\textup{Fun}^{\textup{ex}}(\CD, \CC\otimes \CD))\to \Hom_{R}(\sE(\CD), \sE(\CC\otimes \CD))\,. 
    \end{align*}

We say that $\sE$ is lax monoidal if it comes together with $R$-module homomorphisms
\begin{align*}R&\to \sE(\Perf(\pt))\\
\sE(\CC)\otimes_R \sE(\CD)&\to E(\CC\otimes \CD)
\end{align*}
satisfying some natural compatibilities, and we say that $\sE$ is (strongly) monoidal if these homomorphisms are isomorphisms.

\begin{example}
Hochschild homology $HH(\mathcal \CC)=\bigoplus_{i\in \BZ}HH_i(\mathcal C)$ is a strongly monoidal additive invariant with values in $\BC$ vector spaces. Other variations such as cyclic homology or periodic cyclic homology are also monoidal additive invariants. On the other hand, $HH_0(-)$ is lax monoidal but not strongly monoidal.
\end{example}

\subsection{$\sE$-Hall algebra and $\varepsilon$ invariants} Let $\sE$ be an additive invariant which we will further assume that commutes with infinite direct sums (this is required for the analog of Proposition \ref{prop: Ktheoryrig} to hold for $\sE$). 

We define for a derived stack $\FX$
\[\sE^\ast(\FX)=\sE(\Perf(\FX))\,.\]
Just as $K^\ast$, $\sE^\ast$ admits arbitrary pullbacks and pushforwards along maps of stacks whose pushforward preserves perfect complexes. There is a tensor product map
\begin{equation}\label{eq: E*moduleoverK*}
K^\ast(\FX)\otimes_\BZ \sE^\ast(\FX)\xrightarrow{\eqref{eq: moduleoverk0}} \sE(\Perf(\FX)\otimes \Perf(\FX))\xrightarrow{\otimes} \sE(\Perf(\FX))=\sE^\ast(\FX)\,.\end{equation}
When $\sE$ is lax monoidal there is also a tensor product map $\sE^\ast(\FX)\otimes_R \sE^\ast(\FX)\to \sE^\ast(\FX)$. 

We may define the $\sE$-homology group $\sE_\ast(\FX)$ in complete analogy with Definition \ref{def: K-hom}, with the caveat that there are two possible slightly different variations: if $\sE$ is a general additive invariant, an element of $\sE_\ast(\FX)$ is defined to be a collection of functionals $$\{\phi_S\colon \sE^\ast(\FX\times S)\to \sE^\ast(S)\}$$ which are $K^\ast(S)\otimes_\BZ R$-linear, with the same compatibility requirement. If $\sE$ is lax monoidal it seems more natural to require the functionals to be $\sE^\ast(S)$-linear, or otherwise $\sE_\ast(\FX)$ becomes potentially very large. Either option provides a reasonable theory, so in what follows we will leave ambiguous which option we take. If $\sE$ is strongly monoidal and we require $\sE^\ast(S)$-linearity then 
\[\sE_\ast(\FX)\simeq \Hom_R(\sE^\ast(\FX), R)\,\]
is just a space of functionals.

We have analogs of all the functoriality properties of Section \ref{subsec: K-hom}. For example, for any additive invariant we have a cap product 
\[\cap \colon K^\ast(\FX)\otimes \sE_\ast(\FX)\to \sE_\ast(\FX)\]
obtained by dualizing \eqref{eq: E*moduleoverK*}. The analog of \eqref{eq: GtoKmorphism} is still a map $G(\FX)\to \sE_\ast(\FX)$. Indeed, \eqref{eq: GtoKmorphism} is obtained from Lemma \ref{lem: gtokhom}, which gives a functor $D^b\Coh(\FX)\otimes \Perf(\FX\times S)\to \Perf(S)$,
and we can use it together with \eqref{eq: moduleoverk0} to obtain
\begin{align*}G(\FX)\otimes \sE^\ast(\FX\times S)=K_0(D^b\Coh(\FX))\otimes \sE(\Perf(\FX\times S)) 
&\to \sE(D^b\Coh(\FX)\otimes \Perf(\FX\times S))\\
&\to \sE(\Perf(S))=\sE^\ast(S)\,.
\end{align*}
In particular, we get fundamental classes $[\FX]_{\sE}\in \sE_\ast(\FX)$ for stacks as in Theorem \ref{thm: deltainvariants}.

With this in place, all the material from Section \ref{sec: Khallinvariants} is adapted to additive invariants $\sE$ in a straightforward way, and we obtain a $\sE$-Hall algebra
\[\mathbb E(\CA)\coloneqq \sE_\ast(\FM_{\CA}^\rig)\]
and invariants
\[\delta_{\alpha}^{\mu, \sE}, \varepsilon_{\alpha}^{\mu, \sE}\in \mathbb E(\CA)\,.\]
For $\varepsilon$ invariants we need to either assume that $R$ contains $\BQ$ or extend to a ring containing $\BQ$ as in Definition~\ref{def: KHall}.

\subsection{Non-abelian localization}
The last ingredient in our setup is the non-abelian localization formula for additive invariants, which will soon appear in a revision of \cite{HLNAL}. Once we have it, it follows by the same reasoning that the invariants $\delta^{\mu, \sE}_\alpha, \varepsilon^{\mu, \sE}_\alpha$ satisfy the exact same wall-crossing formulas \eqref{eq: deltawc} and \eqref{eq: epsilonwc}. We explain the statement here.

To imitate the notation used in $K$-theory we denote by
\[\chi_\sE(\FX, -)\colon \sE^\ast(\FX)\to \sE^\ast(\pt)\,\]
the map induced by the pushforward morphism $\Perf(\FX)\to \Perf(\pt)=D^b\Coh(\pt)$, where 
$\FX$ is a stack satisfying the hypothesis of Theorem~\ref{thm: NAloc}.

\begin{theorem}[{\cite{HLNAL}}]Let $\FX$ be as in Theorem \ref{thm: NAloc} and let $\sE$ be an additive invariant. Then, for any $V \in \sE^\ast(\FX)$, the following formula holds:
\begin{equation}\label{eq: NAlocE}\chi_\sE(\FX, V) = \chi_\sE(\mathcal \FX^{\textup{ss}}, V|_{\mathcal \FX^{ss}}) + \sum_{c\in \Gamma\setminus \{0\}} \chi_\sE(\FZ_{c}, V|_{\FZ_{c}} \otimes E_{c})\in \sE^\ast(\pt)\,.\end{equation}
\end{theorem}

\subsection{Topological $K$-theory versus cohomology}

In  \cite{B16}, Blanc defines topological $K$-theory of a dg category as a spectrum which, by taking homotopy groups, produces abelian groups $K^{\mathrm{top}}_i(\CC)$ satisfying Bott periodicity $K_{i+2}^{\mathrm{top}}(\CC)\simeq K_i^{\mathrm{top}}(\CC)$. Then
\[K^{\mathrm{top}}(\CC)=K^{\mathrm{top}}_0(\CC)\oplus K^{\mathrm{top}}_1(\CC)\]
is another example of a lax monoidal additive invariant (with $R=\BZ$). Let us denote by $K^\ast_{\textup{top}}(\FX), K_\ast^{\textup{top}}(\FX)$ the topological $K$-theory of $\Perf(\FX)$ and topological $K$-homology of $\FX$. 

Topological $K$-theory admits a canonical map from algebraic $K$-theory $K_0(-)\to K_0^{\textup{top}}(-)$ and the Chern character map factors through
\[\ch\colon K^\ast_{\textup{top}}(\FX)\to H^\ast(\FX)\,.\]
If $X$ is a separated finite type scheme over $\BC$ then the Chern character map is an isomorphism after tensoring with $\BQ$ \cite[Theorem 1.1.b]{B16}. Results in a similar direction for (global quotient) stacks can be found in \cite{HLPomerleano}. 

The entire content of Section \ref{sec: Ktocoh} can be adapted to topological $K$-theory. In particular, we can talk about an element of $H_\ast(\FM^\rig_\CA)$ being a homological lift of the $\varepsilon$ class
\[\varepsilon_{\alpha}^{\mu, K_{\textup{top}}}\in K_\ast^{\textup{top}}(\FM_\CA^\rig)\eqqcolon \BK_{\textup{top}}(\CA)\,.\]
Since topological $K$-theory can ``see'' non-algebraic cohomology classes we get the following improvement of Proposition \ref{prop: injectivityhomlift}:

\begin{proposition}\label{prop: injectivityhomlifttop}
    Let $\FM$ be a moduli stack of complexes on a smooth projective variety $X$. Suppose that $A, B\in H_\ast(\FM)$ are homological lifts of the same class in $K_\ast^{\textup{top}}(\FM)$. Then $A$ and $B$ agree on tautological classes, i.e. 
    \[\int_A D=\int_B D\]
    for any $D\in H^\ast_{\textup{taut}}(\FM)$.
\end{proposition}
\begin{proof}
   The proof goes exactly as in Proposition \ref{prop: injectivityhomlift}, but we further use the fact that $\ch\colon K^\ast_{\textup{top}}(X^n)_\BQ\to H^\ast(X^n)$ is an isomorphism.
\end{proof}

We expect that having the same homological lift actually implies that the two classes in homology agree. Indeed, this would be a consequence of the following natural looking conjecture:

\begin{conjecture}
For any (reasonable) stack $\FX$ and any $N\geq 0$ the truncated Chern character map
\[\ch_{\leq N}\colon K^\ast_{\textup{top}}(\FX)_\BQ\to H^\ast(\FX)\twoheadrightarrow H^{\leq N}(\FX)\]
is surjective.
\end{conjecture}

\end{document}